\newcommand\ChangeRT[1]{\noalign{\hrule height #1}}
\newcommand{\CC}{\mathbb{C}}
\newcommand{\ZZ}{\mathbb{Z}}
\newcommand{\PP}{\mathbb{P}}
\newcommand{\QQ}{\mathbb{Q}}
\newcommand{\mmm}{\mathfrak{m}}
\newcommand{\EEE}{\mathscr{E}}
\newcommand{\JJJ}{{\mathcal{J}}}
\newcommand{\III}{{\mathcal{I}}}
\newcommand{\AAA}{{\mathscr{A}}}
\newcommand{\BBB}{{\mathscr{B}}}
\newcommand{\MMM}{{\mathscr{M}}}
\newcommand{\FFF}{{\mathscr{F}}}
\newcommand{\GGG}{{\mathscr{G}}}
\newcommand{\OOO}{{\mathscr{O}}} 
\newcommand{\KKK}{{\mathscr{K}}}
\newcommand{\LLL}{{\mathscr{L}}} 
\newcommand{\NNN}{{\mathscr{N}}}
\newcommand{\DDD}{\mathscr{D}}
\newcommand{\p}{\operatorname{p}_{\mathrm{a}}}
\newcommand{\dd}{\mathrm{d}}
\newcommand{\dis}{\operatorname{discrep}}
\newcommand{\h}{\operatorname{h}}
\newcommand{\Sing}{\operatorname{Sing}}
\newcommand{\Def}{\operatorname{Def}}
\newcommand{\Cl}{\operatorname{Cl}}
\newcommand{\tors}{\mathrm{tors}}
\newcommand{\Pic}{\operatorname{Pic}}
\newcommand{\Bs}{\operatorname{Bs}}
\newcommand{\unit}{\operatorname{unit}}
\newcommand{\red}{{\operatorname{red}}}
\newcommand{\mult}{{\operatorname{mult}}}
\newcommand{\Supp}{{\operatorname{Supp}}}
\newcommand{\coker}{{\operatorname{coker}}}
\newcommand{\gr}{\operatorname{gr}}
\newcommand{\wt}{\operatorname{wt}}
\newcommand{\axis}{\mathrm{\mbox{-}axis}}
\newcommand{\Sat}{\operatorname{Sat}}
\newcommand{\Clsc}{\operatorname{Cl^{\operatorname{sc}}}}
\DeclareMathOperator{\totimes}{\mathbin{\tilde\otimes}}
\DeclareMathOperator{\toplus}{\mathbin{\tilde\oplus}}
\DeclareMathOperator{\btoplus}{\mathbin{\widetilde {\vcenter{\hbox{\scalebox{1.4}{$\oplus$}}}}}}
\newcommand{\Oplus}{\ensuremath{\vcenter{\hbox{\scalebox{1.5}{$\oplus$}}}}}
\newcommand{\tOplus}{\ensuremath{\vcenter{\hbox{\scalebox{1.5}{$\tilde\oplus$}}}}}
\newcommand{\ldeg}{\operatorname{\mathit{l}{\mbox{-}}\mathrm{deg}}}
\newcommand{\qldeg}{\operatorname{\mathit{ql}{\mbox{-}}\mathrm{deg}}}
\newcommand{\len}{\operatorname{len}}
\newcommand{\mumu}{{\boldsymbol{\mu}}}
\renewcommand{\emptyset}{\varnothing}
\newcommand{\typec}[1]{$\mathrm{\left(#1\right)}$}
\newcommand{\type}[1]{$\mathrm{#1}$}
\newcommand{\xref}[1]{\textup{\ref{#1}}}
\newcounter{number}
\renewcommand{\thenumber}{{\rm\arabic{number}${}^{\mathrm{o}}$}}
\newcommand{\no}{\refstepcounter{number}{\rm{\thenumber}}}
\theoremstyle{definition}
\newtheorem{subcase}[equation]{Subcase}
\theoremstyle{plain}
\newtheorem{theorem}[subsection]{Theorem}
\newtheorem{lemma}[subsection]{Lemma}
\newtheorem{proposition}[subsection]{Proposition}
\newtheorem{stheorem}[equation]{Theorem}
\newtheorem{corollary}[subsection]{Corollary}
\newtheorem{scorollary}[equation]{Corollary}
\newtheorem*{claim*}{Claim}
\newtheorem{sclaim}[equation]{Claim}
\newtheorem{slemma}[equation]{Lemma}
\newtheorem{sproposition}[equation]{Proposition}
\theoremstyle{definition}
\newtheorem{definition}[subsection]{Definition}
\newtheorem{convention}[subsection]{Convention}
\newtheorem{cremark}[subsection]{Comment}
\newtheorem*{definition*}{Definition}
\newtheorem{sdefinition}[equation]{Definition}
\newtheorem{example-remark}[subsection]{Remark-Example}
\newtheorem{subexample-remark}[equation]{Remark-Example}
\newtheorem{scase}[equation]{}
\newtheorem{snotation}[equation]{Notation}
\newtheorem{setup}[subsection]{Setup}
\newtheorem*{notation*}{Notation}
\newtheorem{example}[subsection]{Example}
\newtheorem*{example*}{Example}
\newtheorem{sexample}[equation]{Example}
\newtheorem{sremark}[equation]{Remark}
\title{Toward the classification of threefold extremal contractions with one-dimensional fibers}
\author{Shigefumi Mori}
\author{Yuri Prokhorov}
\address{\noindent
{\bf Shigefumi~Mori:}
\newline\noindent
Kyoto University Institute for Advanced Study,
Kyoto University, Kyoto, Japan;
\newline\noindent
Research Institute for Mathematical Sciences, Kyoto University, Kyoto, Japan; 
\newline\noindent
Chubu University Academy of Emerging Sciences, Chubu University, Aichi, Japan
}
\email{mori@kurims.kyoto-u.ac.jp}
\address{{\bf Yuri~Prokhorov:} 
\newline\noindent
Steklov Mathematical Institute of Russian Academy of Sciences, Moscow, Russian Federation;
\newline\noindent
Department of Algebra, 
Moscow State Lomonosov University, Russian Federation; 
\newline\noindent
HSE University, Russian Federation 
}
\email{prokhoro@mi-ras.ru}
\date{}
\begin{document}
\begin{abstract}
An extremal curve germ is a germ of a threefold $X$ with terminal singularities along a connected reduced
complete curve~$C$ such that there exists a $K_X$-negative contraction $f : X \to Z$ with~$C$ being a fiber. 
We give a rough classification of extremal curve germs with reducible central curve~$C$.
\end{abstract}
\maketitle
\tableofcontents

\section{Introduction}
Let $(X, C)$ be a germ of an analytic threefold with terminal singularities along a reduced complete connected curve $C$.
We say that $(X, C)$ is an \textit{extremal curve germ} if there is a contraction \mbox{$f : X\to Z \ni o$} such that 
$C = f^{-1}(o)_{\red}$ and $-K_X$ is $f$-ample. According to the dimension of the exceptional locus, extremal curve germ $(X, C)$ can be related to one of three classes: it is said to be 
\textit{flipping} if the corresponding contraction $f : X\to Z \ni o$ is birational and 
its exceptional locus is one-dimensional, $(X, C)$  is said to be \textit{divisorial} if $f$ is birational and 
the exceptional locus is two-dimensional, and, finally,  $(X, C)$  is said to be 
a \textit{$\QQ$-conic bundle germ} if the target of the contraction $f$ is a surface and all its fibers are 
one-dimensional. Note that the exceptional locus of a divisorial extremal curve germ 
is not necessarily equi-dimensional and therefore the target $Z \ni o$ is not necessarily $\QQ$-Gorenstein (cf. \cite[Theorem~3.1]{MP:IA}).

For birational geometry extremal curve germs are important
because they can be regarded
as building blocks in the three-dimensional minimal model program.
In the case where $C$ is irreducible, there is a classification of extremal curve germs, see \cite{KM92} and \cite{MP:cb1}:
they are divided into several classes: \typec{III}, \typec{k1A}, \typec{k2A}, \typec{cD/2}, \typec{cAx/2}, \typec{cE/2}, \typec{cD/3}, \typec{IIA}, \typec{II^\vee}, \typec{IE^\vee}, \typec{ID^\vee}, 
\typec{IC}, 
\typec{IIB}, \typec{kAD}, and \typec{k3A}.
We note that \typec{III} contains smooth ones, and
that \typec{k1A} contains \typec{IA^\vee} (cf. Remark~\ref{rem:cla}).

In the present paper we give a rough classification of extremal curve germs with reducible central curve $C$. 
In this case $(X,C_i)$ is also an
extremal curve germ for any component $C_i\subset C$.
Thus the main problem here is to glue together various $(X,C_i)$'s. 
It turns out that this is not always possible and there are a lot of natural restrictions.
Our main result is the following theorem.

\begin{theorem}
\label{thm:main}
Let $(X,C)$ be an extremal curve germ with reducible central curve
\[
C=\bigcup_{i=1}^{N} C_i,\quad N\ge 2,
\]
where the $C_i$ are irreducible components.
Then
one of the cases in Table~\xref{tab:table} below is possible, 
where $\Sing^{\mathrm{nG}}(X)$
is the set of non-Gorenstein points of $X$, the notation $n\times(\mathrm{\textasteriskcentered})$ means that 
$(X,C)$ has exactly $n$ components of type \typec{\textasteriskcentered}, and
the notation $(\textasteriskcentered)+n\times(\mathrm{\textasteriskcentered\textasteriskcentered})$ means that 
$(X,C)$ has a component of type \typec{\textasteriskcentered} and $n$ components of type \typec{\textasteriskcentered\textasteriskcentered}.
The column $f$ describes the nature of the corresponding contraction $f:X\to Z$: flipping \textup{(f)}, divisorial \textup{(d)} or $\QQ$-conic bundle \textup{(cb)}.

All the cases in the table except possibly for~\ref{thm:main:k3A} do occur.

{\rm
\setlength{\tabcolsep}{12pt}
\renewcommand{\arraystretch}{1.6}
\begin{longtable}{p{0.007\textwidth}|p{0.36\textwidth}|p{0.039\textwidth}|p{0.041\textwidth}|p{0.27\textwidth}
}
\ChangeRT{1pt}
& $(X,C)$ &$f$ &$N$& $\Sing^{\mathrm{nG}}(X)$
\\\ChangeRT{1.pt}
\endfirsthead
\ChangeRT{1pt}
& \multicolumn{2}{c}{$(X,C)$} && $\Sing^{\mathrm{nG}}(X)$
\\\ChangeRT{1.pt}
\endhead
\multicolumn{5}{c}{$X$ has at most one non-Gorenstein point}
\\\ChangeRT{1pt}
\no 
\label{thm:main:Gor}&
$X$ is Gorenstein & cb &2& $\varnothing$
\\\hline 
\no 
\label{thm:main:cD/2-cE/2}&
$2{\times}(\mathrm{cAx/2})$, $2{\times}(\mathrm{cD/2})$ or $2{\times}(\mathrm{cE/2})$
& cb
&2 &
\type{cAx/2}, \type{cD/2} or \type{cE/2}
\\\hline 
&
&f&2&
\\\cline{3-4}
\no
\label{thm:main:cD/3}&$N{\times}(\mathrm{cD/3})$&d&$\le 4$&\type{cD/3}
\\\cline{3-4}
&&cb&$\le 5$&
\\\hline
&&f&$\le 4$&
\\\cline{3-4}
\no
\label{thm:main:IIA}& 
$N{\times}(\mathrm{IIA})$
&d
&$\le 7$ &\type{cAx/4}
\\\cline{3-4}
&&cb&$\le 7$&
\\\hline 
\no 
\label{thm:main:IIdual2}&
$2{\times}(\mathrm{II^\vee})$
&cb&2& \type{cAx/4}
\\\hline
\no
\label{thm:main:IIdual}&
\multirow{3}{*}{$(\mathrm{II^\vee})+(N-1){\times}(\mathrm{IIA})$}
& f& $2$ &\multirow{3}{*}{\type{cAx/4}}
\\\cline{3-4}
&& d& $\le 4$ &
\\\cline{3-4}
&&cb&$\le 5$&
\\\hline
\no
\label{thm:main:IIB}& 
$(\mathrm{IIB})+(N-1){\times}(\mathrm{IIA})$
&
d&2 &
\multirow{2}{*}{\type{cAx/4}}
\\\cline{3-4}
&&cb&$\le 3$&
\\\hline 
\no 
\label{thm:main:IC} &
\multirow{3}{*}{$(\mathrm{IC})+(N-1){\times}(\mathrm{k1A})$}
&f
&$2$& $\frac 1m(2,m-2,1)$
\\\cline{3-4}
&&d&$\le 4$&$m$ is odd, $m\ge 5$
\\\cline{3-4}
&&cb&$\le 5$&
\\\hline 
\no 
\label{thm:main:k1A}&
$N{\times}(\mathrm{k1A})$ &f,d,cb& &\type{cA/m}
\\\hline
\multicolumn{5}{c}{$X$ has exactly two non-Gorenstein points}
\\\hline 
\no 
\label{thm:main:k3A}&
\multirow{2}{*}{$(\mathrm{k3A})+(N-1){\times}(\mathrm{k1A})$} &d
&2& 1) $\frac 1{2k-1}(1,-1, k)$
\\\cline{3-4}
&&cb&$\le 3$&2) $\frac12 (1,1,1)$
\\\hline 
\no 
\label{thm:main:kAD}&
$(\mathrm{kAD})+(N-1){\times}(\mathrm{\textasteriskcentered})$,
&f&2 &
1) $\frac 1{2k-1}(1,-1,k)$
\\\cline{3-4}
&where
each $(\textasteriskcentered)$ is either&d&$\le 4$&2) \type{cA/2}, \type{cAx/2}, or~\type{cD/2}
\\\cline{3-4}
&\typec{k1A}, \typec{cD/2} or \typec{cAx/2}&cb&$\le 5$&
\\\hline
\multicolumn{5}{c}{$X$ has two or more non-Gorenstein points}
\\\hline 
\no 
\label{thm:main:k2A+k1A}&
$n{\times}(\mathrm{k2A})+k{\times}(\mathrm{k1A})$, $n\ge1$
&&
\\\hline
\caption{\mbox{Extremal curve germs with reducible central fiber}}
\label{tab:table}
\end{longtable}
}
\end{theorem}

Thus our table provides a satisfactory description of extremal curve germs with 
reducible central fiber
in all cases except for the case~\ref{thm:main:k2A+k1A}.
Note however that in many cases the estimate of the number $N$ of components of 
the central curve $C$
seems to be not optimal. 

The following important fact is a special case of M.~Reid's General 
Elephant Conjecture; it is a direct consequence of 
\cite[Theorem~1.3]{MP20} and Theorem~\ref{thm:main}.

\begin{corollary}
\label{cor}
Let $(X,C)$ be an extremal curve germ.
Assume that $(X,C)$ is not as in~\ref{thm:main:k2A+k1A}.
Then a general member of $|-K_X|$ is normal and has only Du Val singularities.
\end{corollary}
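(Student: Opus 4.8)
The plan is to deduce the statement by feeding the classification of Theorem~\ref{thm:main} into the general elephant criterion \cite[Theorem~1.3]{MP20}. I would first dispose of the case $N=1$: for an irreducible extremal curve germ the general member of $|-K_X|$ is already known to be normal with only Du Val singularities, so such germs (which are never of the excluded form \ref{thm:main:k2A+k1A}) are covered outright. The new content is therefore the reducible case $N\ge 2$, where I would apply \cite[Theorem~1.3]{MP20} in the form that normality and the Du Val property of a general $D\in|-K_X|$ persist across the points at which distinct components $C_i$ meet, provided that none of the components of $C$ is of the troublesome type \typec{k2A}.

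The principal step is then a bookkeeping check against the table. Reading off the last two columns of Theorem~\ref{thm:main}, in every row from \ref{thm:main:Gor} through \ref{thm:main:kAD} the components of $(X,C)$ are of types \typec{cD/3}, \typec{IIA}, \typec{II^\vee}, \typec{IIB}, \typec{IC}, \typec{kAD}, \typec{k3A}, \typec{cAx/2}, \typec{cD/2}, \typec{cE/2}, \typec{k1A}, or Gorenstein, and correspondingly $\Sing^{\mathrm{nG}}(X)$ consists only of points of the explicitly listed cyclic quotient and compound Du Val types. In particular no component of type \typec{k2A} occurs in any of these rows. Since, by Theorem~\ref{thm:main}, the only reducible germs carrying a \typec{k2A} component are those of the excluded case \ref{thm:main:k2A+k1A}, the hypothesis under which \cite[Theorem~1.3]{MP20} guarantees a normal Du Val general elephant is met by every $(X,C)$ not of that form, which is exactly the assertion of the corollary.

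I expect the single genuinely non-formal point to lie inside the appeal to \cite[Theorem~1.3]{MP20}, namely the verification that the general elephants produced on the separate one-component germs $(X,C_i)$ can be chosen to glue into one global $D\in|-K_X|$ that remains normal, and Du Val, at the points where the $C_i$ meet. This is precisely where type \typec{k2A} is the obstruction: at such a point the local elephant is forced into a shape incompatible with the adjacent component, which is why case \ref{thm:main:k2A+k1A} must be set aside. For all the remaining configurations the compatibility is supplied by \cite[Theorem~1.3]{MP20}, and the proof reduces to the routine table inspection above.
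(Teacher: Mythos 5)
Your argument follows essentially the same route as the paper, whose entire proof is the observation that the corollary is a direct consequence of \cite[Theorem~1.3]{MP20} together with Theorem~\ref{thm:main}: the classification converts the hypothesis ``not as in~\ref{thm:main:k2A+k1A}'' into ``no component of type \typec{k2A}'', and \cite[Theorem~1.3]{MP20} then gives the normal Du Val general elephant. Your inspection of the table (rows \ref{thm:main:Gor}--\ref{thm:main:kAD} contain no \typec{k2A} component) is exactly the role Theorem~\ref{thm:main} plays in this deduction, so on the reducible case your proof is the paper's proof.

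There is, however, one genuine error: your disposal of the irreducible case. You claim both that irreducible germs ``are never of the excluded form~\ref{thm:main:k2A+k1A}'' and that for them the statement ``is already known''. These cannot both stand: the general elephant conjecture is open precisely for divisorial and $\QQ$-conic bundle germs of type \typec{k2A} with irreducible central curve (this is the same open problem as case~\ref{thm:main:k2A+k1A}, which is why the paper says the fact is only ``expected'' there), so under your reading the corollary would assert, for $N=1$, something nobody has proved. The consistent reading --- and the one the paper intends --- is that an irreducible \typec{k2A} germ counts as ``as in~\ref{thm:main:k2A+k1A}'' (namely $n=1$, $k=0$) and is excluded by hypothesis, while the remaining irreducible germs are covered by \cite[Theorem~1.3]{MP20} and its predecessors \cite{KM92}, \cite{MP:cb1}, \cite{MP-1p}. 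Relatedly, your gloss of \cite[Theorem~1.3]{MP20} as a statement about gluing local elephants at the points where components meet is speculative and misplaces the difficulty: the obstruction attached to \typec{k2A} is not an incompatibility at intersection points but the fact that the conjecture itself is unresolved for such germs, already when $C$ is irreducible. With these corrections your proof coincides with the paper's.
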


It is expected that this fact is valid also for extremal curve germs of type~\ref{thm:main:k2A+k1A}.

The proof of Theorem~1.3 in \cite{MP20} shows that the cases in the table can 
be naturally divided in classes according to complexity of 
a general
member $D \in |-K_X |$:
\begin{itemize}
\item 
in the case \xref{thm:main:Gor} the linear system $|-K_X|$ is base point 
free; 
\item 
in the cases 
\xref{thm:main:cD/2-cE/2} -- \xref{thm:main:IIdual},
\xref{thm:main:k1A} the divisor $D$ does not contain any 
component of $C$, hence $D\cap C$ is the unique 
non-Gorenstein point $P\in X$ and $D$ is just a general 
anticanonical member of the singularity germ $(X\ni P)$ (see 
\cite[{(6.3)}]{Reid:YPG}, \cite[(0.4.14)]{Mori:flip});
\item 
in the cases \xref{thm:main:IIB}, \xref{thm:main:IC}, 
\xref{thm:main:k3A}--\ref{thm:main:k2A+k1A} \ $\dim \Bs |-K_X|=1$ and
$D$ 
contains all the components 
which are of types \typec{IIB}, \typec{IC}, \typec{k2A}, \typec{k3A}, and 
\typec{kAD}.
\end{itemize}

The structure of the paper is as follows. Sect.~\ref{sect:pre} contains some preliminary material.
In Sect.~\ref{sect:IC} we disprove the case of extremal curve germs 
of type~\typec{IC}+ \typec{k2A}
and in Sect.~\ref{sect:kAD-k3A} we do the same for types
\typec{k2A}+\typec{kAD} and \typec{k2A}+\typec{k3A}.
The main theorem is proved in Sect.~\ref{sect:proof}.
Examples given in Sect.~\ref{sect:ex} show that all the cases in the table except possibly~\ref{thm:main:k3A}
occur.
Appendix~\ref{sect:app} contains some general facts about extremal curve germs.

\subsection*{Acknowledgements}
This work was supported by the Kyoto University Institute for Advanced Study, and the Research Institute for Mathematical Sciences (RIMS), an International Joint Usage/Research Center located in 
Kyoto University.
In particular, this paper was conceived during the second author's visit to RIMS, Kyoto University in February 2020. The authors are very grateful to the institute for the support and hospitality.
The second author was also partially supported by the HSE University Basic Research Program.
The authors would like to thank the referee for careful reading the manuscript and 
many comments that help to improve the exposition.

\section{Preliminaries}
\label{sect:pre}
\begin{definition}
\label{def1}
Let $(X,\, C)$ be the analytic germ 
of a threefold with terminal singularities along a reduced connected complete curve. We 
say that $(X,\, C)$ is an \emph{extremal curve germ} if there is a contraction 
\[
f: (X,\, C)\xlongrightarrow{\hspace*{17pt}}(Z,o) 
\]
such that $C=f^{-1}(o)_{\red}$ and $-K_X$ is $f$-ample. 
Furthermore, $f$ is called \emph{flipping} if its exceptional locus coincides 
with $C$ and \emph{divisorial} if its exceptional locus is two-dimensional.
If $f$ is not birational, then $Z$ is a surface and $(X,\, C)$ is said to be a 
\emph{$\QQ$-conic bundle germ} \cite{MP:cb1}.
\end{definition}

\subsection{Notation}
For the classification of threefold terminal singularities we refer to \cite{Mori:sing} and \cite{Reid:YPG}.
Throughout the paper the techniques of \cite{Mori:flip} will be used freely. 
Below for convenience of the reader we recall only very basic definitions and facts. 

\begin{scase}
\label{Term-sing}
Let $(X,\, P)$ be a threefold terminal singularity $(X,\, P)$ of index $m\ge 0$.
Then we have 
\begin{equation}
\label{eq:Term-sing}
\uppi_1(X\setminus \{P\})\simeq \Clsc(X,P)\simeq \ZZ/m\ZZ.
\end{equation} 
(see \cite[Lemma~5.1]{Kaw:Crep})
By $(X^\sharp, P^\sharp)$ we denote its \emph{index-$1$ cover} \cite[(3.5-3.6)]{Reid:YPG}.
For any object $V$ on $X$ by $V^\sharp$ we denote its pull-back on~$X^\sharp$. 
\end{scase}

\begin{scase}(see \cite[\S 2]{Mori:flip})
Let $X$ be an analytic threefold with terminal singularities and let $C\subset X$ be a connected reduced curve. 
For coherent $\OOO_X$-modules $\GGG\supset \FFF$ the \emph{saturation} of $\FFF$ in $\GGG$ is the smallest 
$\OOO_X$-submodule $\Sat_{\GGG}(\FFF)$ containing $\FFF$ such that $\GGG/\Sat_{\GGG}(\FFF)$ has no submodules of finite length.
Let $\III_C\subset \OOO_X$ be the ideal sheaf of $C$. Then 
$\III_C^{(n)}$ denotes its \emph{symbolic $n$th power}, that is, $\Sat_{\OOO} \III_C^n$. 
For a coherent $\OOO_X$-module $\FFF$, set 
\begin{equation*}
F^n \FFF:=\Sat_{\FFF}\big(\III_C^{(n)}\FFF\big)=\Sat_{\FFF} \big(\III_C^{n}\FFF\big). 
\end{equation*} 
Further, set 
\begin{equation*}
\gr_C^n\OOO:=\III_C^{(n)}/\III_C^{(n+1)},\qquad \gr_C^n\upomega:=F^n\upomega_X/F^{n+1}\upomega_X. 
\end{equation*}
\end{scase}
\begin{scase}
Let $C\subset (X,\, P)$ be a germ of a smooth curve. 
Assume that $C^\sharp$ is also smooth. Define
\begin{equation*}
\ell(P):=\len_P \III_C^{\sharp (2)}/\III_C^{\sharp 2},
\end{equation*}
where $\III_C^\sharp$ is the ideal sheaf of $C^\sharp$ in $X^\sharp$. 
If we write the equation of $X^\sharp$ in $\CC^4$ in the form 
\[
\phi\equiv x_4^rx_1 \mod (x_1,\, x_2,\, x_3)^2
\]
so that $C^\sharp$ is the $x_4\axis$,
then by \cite[2.16]{Mori:flip} we have
\begin{equation}
\label{eq:ell(P)}
\ell(P)=r.
\end{equation}
\end{scase}

\subsection{}
Let $(X,\, C)$ be an extremal curve germ (here $C$ is possibly reducible).
Let $P\in C\subset X$ be a point of index $m\ge 1$.

\begin{sdefinition}[{\cite[(8.4)]{Mori:flip}}]
Let $\JJJ$ be a $C$-laminal ideal of pure width $d$
(cf. \cite[(8.2)]{Mori:flip}). We say that $\JJJ$ is a \emph{nested complete
intersection} (nested c.i., in short) at~$P$ if we have $(u, v)_P = \III_{C,P}$ and $(u^d,\, v)_P=\JJJ_P$ for some $u,\, v\in \OOO_{X,P}$, 
and that $\JJJ$ is \emph{locally a nest c.i. on a subset} if $\JJJ$ is a nested c.i.
at each point of the set. 
\end{sdefinition} 

\begin{sdefinition}[{\cite[(8.10)]{Mori:flip}}]
\label{def:232}
Let $\JJJ$ be a $C$-laminal ideal of pure width $d$, and $\JJJ^\sharp$ the canonical lifting
of $\JJJ$ at~$P$. If an $\ell$-basis $\{s_1,\, s_2\}$ of $\III_{C,P}$ satisfies 
$(s_1^d,\,s_2)_P = \JJJ_{P^\sharp}^\sharp$,
we say that the ordered $\ell$-basis $(s_1, s_2)$ is a \emph{$(1, d)$-monomializing $\ell$-basis} of
$\III_C\supset \JJJ$ at~$P$, and that $\JJJ$ is \emph{$(1, d)$-monomializable} at~$P$ if such $(s_1,s_2)$ exists.
We also say that $\JJJ$ is \emph{$(1, d)$-monomializable on a subset} if $\JJJ$ is $(1, d)$-monomializable 
at each point of the set.
We note that $\JJJ$ is $(1,d)$-monomializable at a Gorenstein point $P$ if and only if it is nested c.i.
\end{sdefinition}

\begin{sproposition}[{cf. \cite[(2.6)]{KM92}}]
\label{extention-class}
Let $\LLL$, $\MMM$, $\NNN$ be $\ell$-locally free $\OOO_C$-modules fitting in the $\ell$-exact sequence:
\begin{equation*}
0 \xlongrightarrow{\hspace*{17pt}} \LLL \xlongrightarrow{\hspace*{17pt}} \MMM \xlongrightarrow{\hspace*{17pt}} \NNN \xlongrightarrow{\hspace*{17pt}} 0.
\leqno{\mathrm{(E)}}
\end{equation*}
Then the extension class $[\mathrm{E}]$ belongs to $H^1(C,\, \NNN^{\totimes(-1)}\totimes \LLL)$, 
and $(\mathrm{E})$ is $\ell$-split if and only if $[\mathrm{E}]=0$.
\end{sproposition}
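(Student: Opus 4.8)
The plan is to reproduce the classical extension-class computation of \cite[(2.6)]{KM92} inside the $\ell$-category of $\OOO_C$-modules, the only real work being to check that the $\ell$-operations $\totimes$ and $(-)^{\totimes(-1)}$ do not obstruct the standard homological bookkeeping. First I would produce the class by the usual local-splitting recipe. Because $\NNN$ is $\ell$-locally free, it is $\ell$-locally a direct summand of a free $\OOO_C$-module, so over a suitable open covering $\{U_i\}$ of $C$ the restricted sequences $(\mathrm{E})|_{U_i}$ admit $\ell$-splittings $\sigma_i\colon \NNN|_{U_i}\to\MMM|_{U_i}$. On overlaps the differences $\sigma_i-\sigma_j$ vanish after composing with $\MMM\to\NNN$, hence factor through $\LLL$, yielding $\ell$-homomorphisms $\NNN\to\LLL$ over $U_i\cap U_j$. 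I would then invoke the identification $\HHom_\ell(\NNN,\LLL)\simeq\NNN^{\totimes(-1)}\totimes\LLL$, which is the definition of the $\ell$-dual together with the compatibility of $\HHom_\ell$ with $\totimes$ for $\ell$-locally free modules, so that $(\sigma_i-\sigma_j)$ is a \v{C}ech $1$-cocycle with values in $\NNN^{\totimes(-1)}\totimes\LLL$. Its cohomology class is $[\mathrm{E}]\in H^1\bigl(C,\,\NNN^{\totimes(-1)}\totimes\LLL\bigr)$, independent of the choices of cover and splittings by the standard verification.

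For the splitting criterion I would argue that $(\mathrm{E})$ is $\ell$-split precisely when the local splittings $\sigma_i$ can be adjusted to agree globally. Adjusting $\sigma_i$ by an $\ell$-homomorphism $\tau_i\in\bigl(\NNN^{\totimes(-1)}\totimes\LLL\bigr)(U_i)$ changes the cocycle $(\sigma_i-\sigma_j)$ by the coboundary $(\tau_i-\tau_j)$; hence a global $\ell$-splitting exists iff the cocycle is a coboundary, i.e.\ iff $[\mathrm{E}]=0$. Equivalently, this is the edge isomorphism $\operatorname{Ext}^1_\ell(\NNN,\LLL)\simeq H^1\bigl(C,\,\HHom_\ell(\NNN,\LLL)\bigr)$ coming from the local-to-global spectral sequence, the higher local $\ell$-Ext sheaves $\mathscr{E}\!xt^{\,q}_\ell(\NNN,\LLL)$ vanishing for $q\ge 1$ by $\ell$-local freeness of $\NNN$.

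The main obstacle is entirely local, at the finitely many non-Gorenstein points of $C$. Away from them $C$ is smooth and $\NNN$, $\LLL$ are ordinary locally free sheaves, so the construction and the criterion are the classical ones. At the bad points one must check two things: that $\ell$-local freeness of $\NNN$ genuinely supplies the local $\ell$-splittings $\sigma_i$ (i.e.\ that $\NNN$ is $\ell$-locally projective there, not merely locally free on the smooth locus), and that $\HHom_\ell$ is identified with $\NNN^{\totimes(-1)}\totimes\LLL$ after tilde-saturation rather than only up to torsion. Both reduce to the defining local properties of $\ell$-locally free modules in \cite{Mori:flip}, so I expect the argument to go through once these compatibilities are spelled out.
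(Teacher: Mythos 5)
The paper contains no proof of this proposition: it is quoted verbatim (as ``cf.'') from \cite[(2.6)]{KM92}, and your \v{C}ech-cocycle argument is exactly the standard proof of that cited result, so your approach matches the intended one. The sketch is correct, including the two local points you rightly flag: at a non-Gorenstein point the local $\ell$-splitting exists because the exact sequence of free $\sharp$-sheaves on the index-$1$ cover splits and any splitting can be averaged over the $\mumu_m$-action (characteristic $0$), and for $\ell$-locally free sheaves one indeed has $\HHom_{\ell}(\NNN,\LLL)\simeq\NNN^{\totimes(-1)}\totimes\LLL$, so the cocycle of differences of splittings lives in the right sheaf.
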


\begin{sproposition}
\label{thicken-I}
Assume that $C$ is c.i. at every Gorenstein point
and so is $C^{\sharp}$ at every non-Gorenstein point.
Further, assume that $\gr^1_C\OOO$
fits in an $\ell$-exact sequence
\[
0 \longrightarrow\AAA\longrightarrow\gr^1_C\OOO\longrightarrow\BBB\longrightarrow0
\]
for some $\ell$-invertible sheaves $\AAA, \BBB$.
Let $\JJJ$ be such that $\III_C^{(2)} \subset \JJJ \subset \III_C$ and $\JJJ/\III_C^{(2)} =\AAA$. Then $\JJJ$ is
$(1,2)$-monomializable at~$P$. We have 
\begin{equation*}
\AAA= \ker\left[\beta_\JJJ:\gr_C^1\OOO \xlongrightarrow{\hspace*{17pt}} \gr^1(\OOO,\, \JJJ)\right],\qquad \BBB=\gr^1(\OOO,\, \JJJ), 
\end{equation*}
and
an $\ell$-exact sequence
\begin{equation*}
0 \xlongrightarrow{\hspace*{17pt}} \BBB^{\totimes 2}\xlongrightarrow{\hspace*{17pt}} \gr^2(\OOO,\, \JJJ)\xlongrightarrow{\hspace*{17pt}}\AAA \xlongrightarrow{\hspace*{17pt}} 0.\leqno{\mathrm{E}(\JJJ, 
2)} 
\end{equation*}
\end{sproposition}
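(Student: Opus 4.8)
The plan is to analyze the local structure of $\JJJ$ near $P$ through the canonical lifting $\JJJ^\sharp$ and reduce the monomializability to the Gorenstein criterion recorded in Definition~\ref{def:232}. First I would use the hypothesis that $C$ is c.i.\ at Gorenstein points and $C^\sharp$ is c.i.\ at non-Gorenstein points to produce an $\ell$-basis $\{s_1, s_2\}$ of $\III_{C,P}$; this is exactly the setting in which the theory of $\ell$-bases from \cite[\S 8]{Mori:flip} applies. The decomposition $0 \to \AAA \to \gr^1_C\OOO \to \BBB \to 0$ into $\ell$-invertible sheaves should then be lifted, so that I can choose the basis compatibly: one generator projecting to a generator of $\BBB$ and the other detecting the sub-sheaf $\AAA$. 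Since $\JJJ$ is defined so that $\JJJ/\III_C^{(2)} = \AAA$, the element corresponding to $\AAA$ should be the one that gets raised to the square, giving the candidate identity $(s_1^2, s_2)_{P^\sharp} = \JJJ^\sharp_{P^\sharp}$.

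Next I would verify the two structural claims. The identifications $\AAA = \ker[\beta_\JJJ]$ and $\BBB = \gr^1(\OOO, \JJJ)$ should follow essentially from unwinding the definition of the associated graded of the filtration $F^\bullet$ with respect to $\JJJ$: the width-$2$ laminal structure means that passing from $\III_C$ to $\JJJ$ changes the degree-one piece by exactly killing the $\AAA$-direction, which is what $\beta_\JJJ$ records. For the $\ell$-exact sequence $\E(\JJJ, 2)$, the plan is to compute $\gr^2(\OOO, \JJJ)$ directly from the monomial description: once $(s_1^2, s_2)$ generates $\JJJ^\sharp$, the degree-two graded piece is spanned by the image of $s_1^2$ (contributing a copy of $\BBB^{\totimes 2}$, matching the square of the degree-one generator) and by the images coming from $\AAA$, which sit in degree two of the $\JJJ$-adic filtration because $\AAA \subset \JJJ$ but $\AAA \not\subset \III_C^{(2)}$ in the relevant sense. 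Proposition~\ref{extention-class} then guarantees that the extension class lives in the correct cohomology group and governs whether the sequence splits.

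The main obstacle I anticipate is the descent of the monomializing basis from the index-$1$ cover $X^\sharp$ back to $X$, that is, making the choice of $\{s_1, s_2\}$ equivariant with respect to the $\mumu_m$-action so that the identity $(s_1^2, s_2)_{P^\sharp} = \JJJ^\sharp_{P^\sharp}$ actually produces a genuine $(1,2)$-monomializing $\ell$-basis in the sense of Definition~\ref{def:232} rather than merely an upstairs complete intersection. The canonical lifting is constructed precisely to interact well with the grading, so the key technical point is to check that the $\AAA$-direction is preserved by the group action; this is where the c.i.\ hypotheses at non-Gorenstein points are genuinely used. Once equivariance is secured, the Gorenstein-point reduction at the end of Definition~\ref{def:232} handles the remaining cases, and the two displayed formulas and the sequence $\E(\JJJ, 2)$ follow formally from the graded computation.
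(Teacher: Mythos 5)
Your overall route is the paper's route: pick an $\ell$-basis $\{s_1,s_2\}$ of $\III_{C,P}$ at $P^\sharp$ compatible with the chain $\III_C^{(2)}\subset\JJJ\subset\III_C$, establish $(s_1^2,s_2)_{P^\sharp}=\JJJ^\sharp_{P^\sharp}$, and then read off the graded assertions. But you have the roles of $\AAA$ and $\BBB$ backwards at the decisive step: you write that ``the element corresponding to $\AAA$ should be the one that gets raised to the square.'' Since $\AAA=\JJJ/\III_C^{(2)}$, a lift $s_2$ of a generator of $\AAA$ already lies in $\JJJ$ and therefore enters $\JJJ^\sharp$ \emph{linearly}; it is the generator $s_1$ mapping onto $\BBB=\III_C/\JJJ=\gr^1(\OOO,\JJJ)$ that is not in $\JJJ$, and only its square $s_1^2\in(\III_C^\sharp)^2\subset\JJJ^\sharp$ appears. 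With your assignment the identity $(s_1^2,s_2)_{P^\sharp}=\JJJ^\sharp_{P^\sharp}$ is simply false: the left side contains the $\BBB$-generator, while $\JJJ^\sharp$ cannot, because that generator has nonzero image in $\III_C/\JJJ$. Your second paragraph, where $s_1^2$ is said to contribute $\BBB^{\totimes 2}$, silently uses the correct convention, so the write-up contradicts itself; the sequence $\mathrm{E}(\JJJ,2)$ with sub $\BBB^{\totimes 2}$ and quotient $\AAA$ only comes out with $s_1\mapsto\BBB$ and $s_2$ spanning $\AAA$.

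Second, you never actually prove the identity --- you call it a ``candidate'' and move on. The inclusion $(s_1^2,s_2)\subset\JJJ^\sharp$ is immediate, but the reverse inclusion is the whole content, and the paper gets it by a short Nakayama argument: from $\III_C^\sharp=(s_1)+\JJJ^\sharp$ at $P^\sharp$ one obtains $\JJJ^\sharp=(s_2)+(\III_C^\sharp)^2=(s_1^2,s_2)+\III_C^\sharp\JJJ^\sharp$, whence $\JJJ^\sharp=(s_1^2,s_2)$ by Nakayama's lemma; the graded statements and the $\ell$-exactness of $\mathrm{E}(\JJJ,2)$ are then quoted from \cite[(8.10)(i)]{Mori:flip}. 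By contrast, the difficulty you flag as central --- $\mumu_m$-equivariance of the basis --- is not really an issue: $\AAA$ and $\BBB$ are $\ell$-invertible sheaves defined on $X$, so the corresponding submodule and quotient of $\III_C^\sharp$ are automatically $\mumu_m$-stable and semi-invariant generators exist; the c.i.\ hypotheses are used only to ensure $\III^\sharp_{C,P^\sharp}$ is two-generated, so that an $\ell$-basis exists at all. Finally, the appeal to Proposition~\ref{extention-class} is superfluous: the statement asserts only $\ell$-exactness of $\mathrm{E}(\JJJ,2)$, not any splitting, so no extension class needs to be examined here.
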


\begin{proof}[Sketch of the proof]
Let $P$ be a non-Gorenstein point of $C$ and, using the notation of~\ref{def:232}, one can choose generators $s_1, s_2$ of 
$\III^\sharp_{C}$   at~$P^\sharp$ such that $s_1$ and $s_2$ generate at $P^\sharp$ the invertible sheaves 
$\BBB^\sharp=\III^{\sharp}_{C}/\JJJ^{\sharp}$
and $\AAA^\sharp = \JJJ^{\sharp}/(\III^{\sharp}_{C})^{2}$, respectively.
Then $\III^{\sharp}_{C} = (s_1)+\JJJ^{\sharp}$ at $P^\sharp$, and $\JJJ^{\sharp}=(s_1^2, s_2)$ at $P^\sharp$ follows from 
$\JJJ^\sharp = (s_2)+(\III^\sharp_{C})^2 = 
(s_1^2, s_2)+ \III^\sharp_{C}\JJJ^\sharp$ at $P^\sharp$
by Nakayama's Lemma.
Hence $\JJJ$ is $(1,2)$-monomializable at~$P$.
Now the assertion of~\ref{thicken-I} follows from \cite[(8.10)~(i)]{Mori:flip}. The case of Gorenstein $P$ is treated similarly.
\end{proof}

\begin{sproposition}
\label{thicken-J}
Let $d > 1$.
Assume that $\JJJ$ is $(1,d)$-monomializable at every point of $C$ with 
\begin{equation}
\label{eq:JJJ-1d}
\AAA:= \ker\left[\beta_\JJJ:\gr_C^1\OOO \xlongrightarrow{\hspace*{17pt}} \gr^1(\OOO,\, \JJJ)\right],\qquad\BBB:=\gr^1(\OOO,\, \JJJ),
\end{equation}
and a standard $\ell$-exact sequence 
\begin{equation*}
0 \xlongrightarrow{\hspace*{17pt}} \BBB^{\totimes d}\xlongrightarrow{\hspace*{17pt}} \gr^d(\OOO,\, \JJJ) \xlongrightarrow{\hspace*{17pt}}\AAA \xlongrightarrow{\hspace*{17pt}} 0.
\leqno{\mathrm{E}(\JJJ, d)}
\end{equation*}
Further, assume that it is $\ell$-split, that is, $\gr^d(\OOO,\, \JJJ)=\BBB^{\totimes d}\toplus \AAA$ on $C$.
Let $\KKK$ be such that $F^{d+1}(\OOO,\, \JJJ)\subset \KKK \subset F^d(\OOO,\, \JJJ)$ and $\KKK/F^{d+1}(\OOO,\, \JJJ) =\AAA$. 
Then $\KKK$ is $(1,d+1)$-monomializable at every point of $C$. The sheaves $\AAA$ and $\BBB$ defined earlier remain unchanged
\textup(cf.~\eqref{eq:JJJ-1d}\textup)
and there is an $\ell$-exact sequence
\begin{equation*}
0 \xlongrightarrow{\hspace*{17pt}}\BBB^{\totimes d+1} \xlongrightarrow{\hspace*{17pt}} \gr^{d+1}(\OOO,\, \KKK)\xlongrightarrow{\hspace*{17pt}} \AAA \xlongrightarrow{\hspace*{17pt}} 0. 
\leqno{\mathrm{E}(\KKK,{d+1})} 
\end{equation*}
\end{sproposition}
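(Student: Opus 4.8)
The plan is to follow the proof of Proposition~\ref{thicken-I} closely, raising the width from $d$ to $d+1$ and replacing the use of $\gr^1_C\OOO$ by the $(1,d)$-monomializing data supplied by the hypothesis. As there, the statement is local on $C$, so I would fix a point $P\in C$ and work on the index-$1$ cover $(X^\sharp,P^\sharp)$ when $P$ is non-Gorenstein, the Gorenstein case being identical with the $\sharp$ suppressed.

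Since $\JJJ$ is $(1,d)$-monomializable at $P$, Definition~\ref{def:232} furnishes an ordered $\ell$-basis $(s_1,s_2)$ of $\III_C\supset\JJJ$ at $P$ with $\JJJ^\sharp_{P^\sharp}=(s_1^d,s_2)$. With the orders $\wt(s_1)=1$ and $\wt(s_2)=d$ determined by the laminal structure, the class of $s_1$ generates $\BBB^\sharp=\gr^1(\OOO^\sharp,\JJJ^\sharp)$, the class of $s_1^d$ generates $\BBB^{\sharp\totimes d}\subset\gr^d(\OOO^\sharp,\JJJ^\sharp)$, and $s_2$ maps onto a generator of $\AAA^\sharp$. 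Because $\mathrm{E}(\JJJ,d)$ is $\ell$-split, its class in $H^1(C,\AAA^{\totimes(-1)}\totimes\BBB^{\totimes d})$ vanishes by Proposition~\ref{extention-class}; this is exactly what allows me to normalize $s_2$ so that its image in $\gr^d(\OOO^\sharp,\JJJ^\sharp)$ lands in the $\AAA^\sharp$-summand, independently of $s_1^d$.

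I would then set $\KKK^\sharp:=(s_1^{d+1},s_2)$ at $P^\sharp$ and match it with the descent of the globally defined $\KKK$. The condition $\KKK/F^{d+1}(\OOO,\JJJ)=\AAA$ gives $\KKK=(s_2)+F^{d+1}(\OOO,\JJJ)$; since $F^{d+1}(\OOO^\sharp,\JJJ^\sharp)$ is generated by $s_1^{d+1}$ together with $s_1s_2,s_2^2,\dots$, all lying in $(s_1^{d+1},s_2)$, one obtains $\KKK^\sharp=(s_2)+(s_1^{d+1})+\III_C^\sharp\KKK^\sharp$, and Nakayama's Lemma (as in Proposition~\ref{thicken-I}) yields $\KKK^\sharp=(s_1^{d+1},s_2)$. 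Hence $(s_1,s_2)$ is a $(1,d+1)$-monomializing $\ell$-basis of $\III_C\supset\KKK$ at $P$. The invariance of $\BBB=\gr^1(\OOO,\KKK)$ is clear since $s_1$ still generates it, and $\AAA=\ker\beta_\KKK$ agrees with $\ker\beta_\JJJ$ because $\gr^1_C\OOO$ is independent of the laminal ideal. With $(1,d+1)$-monomializability established at every point of $C$, I would invoke the relevant part of \cite[(8.10)]{Mori:flip} to obtain the standard $\ell$-exact sequence $\mathrm{E}(\KKK,d+1)$; its shape $0\to\BBB^{\totimes d+1}\to\gr^{d+1}(\OOO,\KKK)\to\AAA\to 0$ is forced by the fact that $\gr^{d+1}(\OOO^\sharp,\KKK^\sharp)$ is spanned by $s_1^{d+1}$ and $s_2$.

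The main obstacle is the middle step, namely verifying that discarding the $\BBB^{\totimes d}$-generator $s_1^d$ genuinely raises the pure width from $d$ to $d+1$, that is, that $s_1^d\notin\KKK$ while $s_1^{d+1}\in\KKK$. This is precisely where $\ell$-splitness of $\mathrm{E}(\JJJ,d)$ is indispensable: without the vanishing of its extension class the generator $s_2$ cannot be separated from $s_1^d$ inside $\gr^d(\OOO,\JJJ)$, and the resulting $\KKK$ would fail to be a $C$-laminal ideal of pure width $d+1$.
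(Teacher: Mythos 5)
Your proof is correct and takes essentially the same route as the paper's: pick a $(1,d)$-monomializing $\ell$-basis $(s_1,s_2)$ at each point, use the given $\ell$-splitting to arrange that $s_2$ represents the $\AAA$-summand of $\gr^d(\OOO,\,\JJJ)$, compute $(\KKK^\sharp)_{P^\sharp}=F^{d+1}(\OOO,\,\JJJ)^\sharp_{P^\sharp}+(s_2)=(s_1^{d+1},\,s_2)$ from the description of $F^{d+1}$, and then invoke \cite[(8.10)(i)]{Mori:flip} for the $\ell$-exactness of $\mathrm{E}(\KKK,d+1)$, treating Gorenstein points in parallel. Your explicit normalization of $s_2$ against the splitting is a point the paper leaves implicit; otherwise the two arguments coincide.
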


\begin{proof}[Sketch of the proof]
Let $P$ be a non-Gorenstein point of $C$, and 
one can choose generators $s_1, s_2$ of 
$\III^\sharp_{C}$ at~$P^\sharp$ such that 
$(s_1^d, s_2) = (\JJJ^\sharp)_{P^\sharp}$. 
Then 
$s_1$ and $s_2$ generate  at $P^\sharp$ the invertible sheaves $\BBB^\sharp=\III^{\sharp}_{C}/\JJJ^{\sharp}$ and $\AAA^\sharp$, respectively, and
$\mathrm{E}(\JJJ,d)$ is $\ell$-exact (cf.~\cite[(8.10)~(i)]{Mori:flip}).
By $(F^{d+1}(\OOO, \JJJ)^\sharp)_{P^\sharp} = 
(s_1^{d+1}, s_1 s_2, s_2^2)$ (cf.~\cite[(8.3)~(i)]{Mori:flip})
and $(\KKK^\sharp)_{P^\sharp} = F^{d+1}(\OOO, \JJJ)^\sharp_{P^\sharp} + (s_2)$, one sees
$(\KKK^\sharp)_{P^\sharp} = (s_1^{d+1}, s_2)$ and that $\KKK$ is $(1, d+1)$-monomializable at~$P$. The $\ell$-exactness of the naturally induced $\mathrm{E}(\KKK, d+1)$ follows 
from~\cite[(8.10)~(i)]{Mori:flip}. The case of Gorenstein $P$ is treated similarly.
\end{proof}

\begin{stheorem}[{\cite[(8.12)]{Mori:flip}}]
\label{M88-8.12}
Let $d>1$. Assume that $\JJJ (\subset \III_C)$ is $(1,d)$-monomializable at every point of $C$. 
Then, with $\ldeg_C$ defined in \cite[(8.9.1.i)]{Mori:flip} as $l\deg_C$, we have
\begin{equation*}
\ldeg_C(\BBB) + \textstyle\frac{1}{d}\, \ldeg_C(\AAA) \ge 0.
\end{equation*}
If furthermore $(X,C)$ is an extremal contraction of birational type, then 
the inequality is strict.
\end{stheorem}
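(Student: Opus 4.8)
The plan is to reduce the global inequality on $C$ to a pointwise or local computation that is controlled by the monomializing $\ell$-basis at each point, and then to invoke the already-established degree formula from \cite[(8.12)]{Mori:flip}. First I would recall that, since $\JJJ$ is $(1,d)$-monomializable at every point of $C$, Propositions~\ref{thicken-I} and~\ref{thicken-J} furnish the $\ell$-exact sequences $\mathrm{E}(\JJJ,d)$ exhibiting $\gr^d(\OOO,\JJJ)$ as an $\ell$-sheaf built from $\BBB^{\totimes d}$ and $\AAA$. The quantity $\ldeg_C$ is additive on $\ell$-exact sequences, so the combination $\ldeg_C(\BBB)+\frac1d\ldeg_C(\AAA)$ is exactly the natural Euler-characteristic-type invariant attached to the filtration $\{F^n(\OOO,\JJJ)\}$; the whole point is to show this invariant is nonnegative, and strictly positive in the birational case.

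The key steps, in order, are as follows. I would first fix the index-$1$ cover at each non-Gorenstein point $P$ and use the generators $s_1,s_2$ with $(s_1^d,s_2)_{P^\sharp}=\JJJ^\sharp_{P^\sharp}$ to identify $\AAA^\sharp$ and $\BBB^\sharp$ locally as the invertible sheaves generated by $s_2$ and $s_1$ respectively. Next I would assemble the graded pieces $\gr^n_C\upomega$ or $\gr^n(\OOO,\JJJ)$ into a computation of $\chi$ and relate it, via Riemann--Roch on the (possibly non-reduced, $\ell$-structured) curve $C$, to the two summands $\ldeg_C(\BBB)$ and $\frac1d\ldeg_C(\AAA)$. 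The crucial input is that $-K_X$ is $f$-ample: this forces positivity of the relevant $\ell$-degrees of $\upomega_X^{-1}$-twisted pieces, which is precisely where the sign of the inequality comes from. Finally I would translate the ampleness into the asserted nonnegativity, and in the flipping/divisorial (birational) case use that $f$ contracts $C$ to a point so that $H^0$ of the structure sheaf is just scalars, upgrading $\ge 0$ to $>0$ by a vanishing/nonvanishing argument for the relevant cohomology.

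The hard part will be bookkeeping the $\ell$-structure correctly: the sheaves $\AAA,\BBB$ are only $\ell$-invertible, not genuinely invertible, and $\ldeg_C$ is defined through the $\ell$-formalism of \cite[\S8]{Mori:flip} rather than ordinary degree, so the additivity and the Riemann--Roch step must be phrased entirely in that language. In particular one must check that the $\ell$-exact sequences $\mathrm{E}(\JJJ,d)$ genuinely compute $\ldeg_C(\gr^d)$ as $d\cdot\ldeg_C(\BBB)+\ldeg_C(\AAA)$ with the correct weighting by $1/d$, matching the normalization in \cite[(8.9.1.i)]{Mori:flip}. Since the statement is quoted verbatim as \cite[(8.12)]{Mori:flip}, the cleanest route is simply to cite that theorem: the monomializability hypothesis is exactly its hypothesis, and the conclusion—together with the strict inequality under the birational assumption—is its conclusion. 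I would therefore present the proof as a direct application, spelling out only the identification of $\AAA$ and $\BBB$ from the monomializing basis so that the hypotheses of \cite[(8.12)]{Mori:flip} are visibly met.
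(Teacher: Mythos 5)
Your overall strategy (reduce to a $\chi$-computation on the graded pieces $\gr^i(\OOO,\JJJ)$ and then lean on Mori's (8.12)) points in the right direction, but your final reduction --- ``the cleanest route is simply to cite that theorem: the monomializability hypothesis is exactly its hypothesis, and the conclusion \ldots is its conclusion'' --- has a genuine gap: the hypotheses do not match. Mori's (8.12) is proved for \emph{birational} extremal curve germs (extremal neighborhoods), whereas the statement here is for arbitrary extremal curve germs, including $\QQ$-conic bundle germs. For those a verbatim citation is not available, and supplying this extension is precisely the content of the paper's proof: one must open up Mori's argument and observe that it uses only the vanishing $H^1(X,\,\OOO_X/F^n(\OOO,\JJJ))=0$ for $n>1$ (which holds for $\QQ$-conic bundles as well, by \cite[1.2]{Mori:flip}), and \emph{not} the vanishing of $H^1(\upomega_X)$, which fails in the $\QQ$-conic bundle case. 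Concretely, the paper argues: the vanishing gives $\sum_{i=1}^{n}\chi(\gr^i(\OOO,\JJJ))\ge 0$ for all $n$, while Mori's computation gives $\chi(\gr^n(\OOO,\JJJ))\le \frac{n^2}{2d}\bigl(\ldeg_C(\BBB)+\frac1d\,\ldeg_C(\AAA)\bigr)+O(n)$; summing and letting $n\to\infty$ forces the coefficient $\ldeg_C(\BBB)+\frac1d\,\ldeg_C(\AAA)$ to be nonnegative. Your alternative mechanism --- that the sign comes directly from $f$-ampleness of $-K_X$ acting on $\upomega_X^{-1}$-twisted pieces --- is not what drives the inequality and does not by itself produce it.

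The strict inequality in the birational case is likewise not simply ``Mori's conclusion'': the paper derives it from \cite[Corollary~2.12.10]{KM92} together with \cite[Remark~2 and Lemma~3]{Mori:err}, which give cubic growth of $\chi\bigl(\OOO_X/F^n(\OOO,\JJJ)\bigr)$ in $n$, hence strict positivity of the leading coefficient. Your proposed upgrade (``$H^0$ of the structure sheaf is just scalars, plus a vanishing/nonvanishing argument'') is too vague to close this, and it does not explain what distinguishes the birational case from the $\QQ$-conic bundle case, where equality genuinely occurs --- indeed, the paper later (in Section~\ref{sect:IC}) uses $\ldeg(\AAA)+2\ldeg(\BBB)=0$ to \emph{conclude} that the germ is a $\QQ$-conic bundle, so any correct proof of strictness must isolate the extra cohomological input that birationality provides.
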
 

\begin{proof} 
By the definition $\gr^n(\OOO,\, \JJJ)=F^n(\OOO,\, \JJJ)/F^{n+1}(\OOO,\, \JJJ)$
and the vanishing 
\[
H^1(X,\, \OOO_X/F^n(\OOO,\, \JJJ))=0
\]
for every $n >1$ \cite[1.2]{Mori:flip}, we have $\sum_{i=1}^{n}\chi(\gr^i(\OOO,\, \JJJ))\ge 0$
(cf. \cite[Proof of 2.3]{Mori:flip}). 
By the computation of \cite[(8.12)]{Mori:flip}, we have
\begin{equation*}
\textstyle
\chi(\gr^n(\OOO,\, \JJJ)) \le 
\frac{n^2}{2d}\left(\ldeg_C(\BBB)+ \frac{1}{d}\, \ldeg_C(\AAA) \right)+O(n).
\end{equation*}
Whence, the required inequality. 
Note that \cite[(8.12)]{Mori:flip} treats birational extremal curve germs, but 
it uses the vanishing of $H^1(\OOO_X)$ but not the vanishing of $H^1(\upomega_X)$, hence it works for $\QQ$-conic bundle cases as well.
For the last assertion we note that \cite[Corollary~2.12.10]{KM92} together with \cite[Remark~2 and Lemma~3]{Mori:err}
imply that 
\[
\chi\left(\OOO_X / F^n(\OOO,J)\right)\ge O(n^3)\quad \text{as $n$ grows.}\qedhere
\]
\end{proof} 

\begin{slemma}
Let $f: (X,\, C)\to (Z,o)$ be an extremal curve germ with $C=C_1\cup C_2$,
where $C_1$ and $C_2$ are irreducible. 
If $f$ is a $\QQ$-conic bundle, we additionally assume that $(Z,o)$ is smooth
\textup(cf. Corollary~\xref{cor:divI}\xref{cor:divIb} and Convention~\xref{conve:Qcb}\textup) and 
$X$ is not Gorenstein. 
Then there is an isomorphism 
\begin{equation}
\label{eq:refz:xic.3.6.1}
\gr^0_C \upomega \simeq\gr^0_{C_1} \upomega \oplus\gr^0_{C_2} \upomega
\end{equation}
of sheaves. 
\end{slemma}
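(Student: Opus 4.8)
The plan is to produce a natural morphism of $\OOO_C$-modules
\[
\phi\colon \gr^0_C\upomega \longrightarrow \gr^0_{C_1}\upomega\oplus\gr^0_{C_2}\upomega
\]
and to prove it is an isomorphism. Since $\III_C\subseteq\III_{C_i}$ we have $F^1\upomega_X=\Sat_\upomega(\III_C\upomega_X)\subseteq\Sat_\upomega(\III_{C_i}\upomega_X)$, the latter being the corresponding filtration step for the sub-germ $(X,C_i)$; the two resulting surjections $\gr^0_C\upomega\twoheadrightarrow\gr^0_{C_i}\upomega$ assemble into $\phi$. Off the intersection $C_1\cap C_2$ one of the ideals $\III_{C_i}$ is the unit ideal, so $\phi$ is visibly an isomorphism there. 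As is standard for extremal curve germs, $\p(C)=0$ and $C$ is a tree, so $C_1$ and $C_2$ meet in a single point $P:=C_1\cap C_2$; thus $\ker\phi$ and $\coker\phi$ are finite-length sheaves at $P$, and the entire question is local at $P$.

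The decisive structural input is that \emph{$P$ is a non-Gorenstein point} of $X$; this is exactly where the hypotheses enter (in the birational case this is automatic, while in the $\QQ$-conic bundle case it is forced by the assumption that $X$ is not Gorenstein together with the smoothness of $(Z,o)$). Granting it, let $\pi\colon(X^\sharp,P^\sharp)\to(X,P)$ be the index-one cover of index $m\ge 2$. Then $\upomega_{X^\sharp}\simeq\OOO_{X^\sharp}$ is generated by a form $\Theta$ on which the group $\ZZ/m\ZZ\simeq\Clsc(X,P)$ acts by a primitive, hence nontrivial, character $\theta$. Pulling back the filtration (which is compatible with $\pi$) gives $\gr^0_{C^\sharp}\upomega_{X^\sharp}\simeq\OOO_{C^\sharp}\cdot\Theta$, and passing to $\ZZ/m$-invariants identifies $\gr^0_C\upomega$ near $P$ with the $(-\theta)$-semiinvariant part of $\OOO_{C^\sharp}$, and likewise each $\gr^0_{C_i}\upomega$ with the $(-\theta)$-part of $\OOO_{C_i^\sharp}$.

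Now compare through the Mayer–Vietoris sequence upstairs
\[
0\longrightarrow \OOO_{C^\sharp}\longrightarrow \OOO_{C_1^\sharp}\oplus\OOO_{C_2^\sharp}\longrightarrow Q^\sharp\longrightarrow 0 .
\]
Because $\pi^{-1}(P)=\{P^\sharp\}$, the two pieces $C_1^\sharp$ and $C_2^\sharp$ meet only at the fixed point $P^\sharp$, and the mutual gluing there is the single condition that the values agree at $P^\sharp$; hence $Q^\sharp\simeq\OOO_{P^\sharp}$, one-dimensional, and $\ZZ/m\ZZ$ acts on it by evaluation at a fixed point, i.e. trivially. Since taking the $\theta$-isotypic component of a $\ZZ/m$-module is exact in characteristic zero and $\theta\neq 0$, the $(-\theta)$-part of $Q^\sharp$ vanishes, so the $(-\theta)$-parts of $\OOO_{C^\sharp}$ and of $\OOO_{C_1^\sharp}\oplus\OOO_{C_2^\sharp}$ coincide. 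This is precisely the assertion that $\phi$ is an isomorphism at $P$, which finishes the proof.

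The main obstacle is the non-Gorenstein hypothesis, and it is genuinely indispensable: at a Gorenstein point the character $\theta$ is trivial, the one-dimensional trivial-character term $Q^\sharp$ survives, and the splitting fails. Indeed, for two smooth branches meeting transversally in a smooth $X$ one computes $\gr^0_C\upomega\simeq\OOO_C$, which is \emph{not} $\OOO_{C_1}\oplus\OOO_{C_2}$; this is why the excluded case of Gorenstein $X$ must be ruled out. A secondary technical point is that for the actual terminal types the cover $X^\sharp$ is a hypersurface in $\CC^4$ and $C^\sharp$ may be non-transversal or have several branches lying in one orbit, so one must verify that $\gr^0_{C^\sharp}\upomega_{X^\sharp}$ is $\ell$-invertible and keep careful track of the characters; however, the mechanism — that a nontrivial twist by $\theta$ removes the single trivial-character gluing term $Q^\sharp$ — is unaffected by these complications.
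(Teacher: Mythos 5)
Your reduction to a question at the point $P=C_1\cap C_2$ (the natural map is injective with finite-length cokernel there) is fine, and so is your dictionary: near $P$ the failure of \eqref{eq:refz:xic.3.6.1} is measured by the $(-\theta)$-isotypic part of $Q^\sharp=\OOO/(\III^\sharp_{C_1}+\III^\sharp_{C_2})$, equivalently by the $\mumu_m$-invariant part of $\upomega_{X^\sharp}\otimes Q^\sharp$. The fatal step is the claim that $Q^\sharp\simeq\OOO_{P^\sharp}$ is one-dimensional with trivial action. The scheme-theoretic intersection $C_1^\sharp\cap C_2^\sharp$ need not be reduced: the fact that $\p(C)=0$ forces the intersection \emph{downstairs} to be a single reduced point, but this only controls the $\mumu_m$-invariant part of $Q^\sharp$; upstairs the two branches can have higher-order contact. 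The paper's own computations show this: in the situation of Section~\ref{sect:IC} one has $Q^\sharp\simeq\CC[y_2]/(y_2^2)$ (formula \eqref{eq:new:cap}), with a nontrivial character on $\CC\cdot y_2$, and in the configuration labelled ``$i=1$'' in the proof of Lemma~\ref{lemma:xic.3.3}, $Q^\sharp$ has length at least $(m+1)/2$ and the invariant part of $\upomega_{X^\sharp}\otimes Q^\sharp$ is \emph{nonzero} (it contains the class of $y_1^{(m-1)/2}\Omega$). So ``the two pieces meet only at the fixed point'' does not yield your description of $Q^\sharp$, and the character-counting argument collapses.

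This is not a repairable slip in a local argument: the vanishing you are trying to establish is precisely the content of the lemma (restated as Corollary \eqref{eq:ref(z:xic.3.6).2}), and it is \emph{not} a formal consequence of $P$ being non-Gorenstein. The paper repeatedly uses this vanishing to rule out perfectly consistent local configurations of two smooth curves through a non-Gorenstein terminal point (the case $i=1$ in Lemma~\ref{lemma:xic.3.3}, the analogous step in Lemma~\ref{lemma:xkad.3.3)a}, and Lemma~\ref{lemma:IIvee-IIB}); if your purely local argument were valid, no such contradiction could ever arise. Global input from the contraction $f$ is unavoidable, and this is how the paper argues: in the birational case it quotes \cite[(1.14)(ii)]{Mori:flip}, while in the $\QQ$-conic bundle case it notes $\gr^0_{C_i}\upomega\simeq\OOO_{C_i}(-1)$, so the cokernel has length $-\chi(\gr^0_C\upomega)$ and it suffices to prove $H^1(\gr^0_C\upomega)=0$; if this failed, \cite[Corollary~4.4.1]{MP:cb1} would force $C$ to equal the whole scheme fiber $f^{-1}(o)$, giving $-K_X\cdot C=2$, contradicting $-K_X\cdot C_i\le 1$ with strict inequality along a component on which $X$ is non-Gorenstein \cite[(0.4.10.3)]{Mori:flip}. (A secondary point: your assertion that $P$ must be non-Gorenstein is true, but it also rests on nontrivial global results, namely \cite[Corollary~1.15]{Mori:flip} and \cite[Lemma~4.4.2]{MP:cb1}, cited in Section~\ref{sect:proof}, not merely on the hypotheses as stated.)
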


\begin{proof}
If $f$ is birational, then the assertion follows from \cite[(1.14)(ii)]{Mori:flip}.
So, we assume that $f$ is a $\QQ$-conic bundle. We have an injection 
\begin{equation*}
\gr^0_C \upomega \xhookrightarrow{\hspace{17pt}} \gr^0_{C_1} \upomega \oplus\gr^0_{C_2} \upomega
\end{equation*}
whose cokernel $\FFF$ is of finite length. Note that $\gr^0_{C_i} \upomega\simeq \OOO_{C_i}(-1)$ \cite[(1.14)(i)]{Mori:flip}.
Hence, $\len(\FFF)=-\chi(\gr^0_C \upomega)$ and it is sufficient to show that $H^1(\gr^0_C \upomega)=0$.
Assume the converse. 
It follows from \cite[Corollary~4.4.1]{MP:cb1} that $C$ coincides with the whole scheme fiber $f^{-1}(o)$.
But then $-K_X\cdot C=2$. On the other hand, $-K_X\cdot C_i\le 1$ and this inequality is strict if $X$ is not Gorenstein along $C_i$ \cite[(0.4.10.3)]{Mori:flip},
a contradiction.
\end{proof}

\begin{scorollary}
In the above notation, if the point $P=C_1\cap C_2$ is of index $m$, then at~$P^\sharp$ we have
\begin{equation}
\label{eq:ref(z:xic.3.6).2}
\bigl(
\upomega_{X^{\sharp}} \otimes \OOO/(\III^{\sharp}_{C_1}+\III^{\sharp}_{C_2})
\bigr)^{\mumu_m}=0.
\end{equation}
\end{scorollary}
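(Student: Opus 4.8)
The plan is to recognize the module on the left of \eqref{eq:ref(z:xic.3.6).2} as the cokernel of the injection produced in the proof of the previous lemma, localized at $P$, and then to use the isomorphism \eqref{eq:refz:xic.3.6.1} to make it vanish. Since the statement is local at $P$, I would pass to the index-$1$ cover $\pi\colon (X^{\sharp},P^{\sharp})\to(X,P)$ and argue $\mumu_m$-equivariantly, exploiting two facts: that $\upomega_{X^{\sharp}}$ is invertible at $P^{\sharp}$ (as $X^{\sharp}$ is Gorenstein), and that the functor $(-)^{\mumu_m}$ of $\mumu_m$-invariants is exact in characteristic zero (the Reynolds/averaging operator), hence commutes with the relevant quotients. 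The merit of passing upstairs is precisely that downstairs $\upomega_X$ fails to be invertible at the non-Gorenstein point $P$, whereas upstairs the tensor products with $\upomega_{X^{\sharp}}$ behave well.

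Concretely, I would first write down the Mayer--Vietoris sequence of the two branches on $X^{\sharp}$,
\[
0 \longrightarrow \OOO_{X^{\sharp}}/(\III^{\sharp}_{C_1}\cap\III^{\sharp}_{C_2}) \longrightarrow \OOO_{X^{\sharp}}/\III^{\sharp}_{C_1}\oplus\OOO_{X^{\sharp}}/\III^{\sharp}_{C_2}\longrightarrow \OOO_{X^{\sharp}}/(\III^{\sharp}_{C_1}+\III^{\sharp}_{C_2})\longrightarrow 0,
\]
which is exact because $\pi$ is finite and flat, so that $\III^{\sharp}_{C_1}\cap\III^{\sharp}_{C_2}=\III^{\sharp}_{C}$. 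Tensoring with the invertible sheaf $\upomega_{X^{\sharp}}$ preserves exactness, and applying $(-)^{\mumu_m}$ then keeps it exact on the right as well. The standard formalism of \cite{Mori:flip} identifies the $\mumu_m$-invariants of $\upomega_{X^{\sharp}}\otimes\OOO_{C^{\sharp}}$ and of $\upomega_{X^{\sharp}}\otimes\OOO_{C_i^{\sharp}}$, near $P$, with the sheaves $\gr^0_C\upomega$ and $\gr^0_{C_i}\upomega$ respectively. Granting this, the invariant sequence reads
\[
0 \longrightarrow \gr^0_C\upomega \longrightarrow \gr^0_{C_1}\upomega\oplus\gr^0_{C_2}\upomega \longrightarrow \bigl(\upomega_{X^{\sharp}}\otimes\OOO/(\III^{\sharp}_{C_1}+\III^{\sharp}_{C_2})\bigr)^{\mumu_m}\longrightarrow 0
\]
near $P$, exhibiting the third term as exactly the cokernel of the first arrow. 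The isomorphism \eqref{eq:refz:xic.3.6.1} asserts that this first arrow is an isomorphism of sheaves, so its cokernel is zero; localizing at $P$ yields \eqref{eq:ref(z:xic.3.6).2}.

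The step I expect to be delicate is the middle identification, namely that $(-)^{\mumu_m}$ applied to $\upomega_{X^{\sharp}}\otimes\OOO_{C_i^{\sharp}}$ returns \emph{exactly} $\gr^0_{C_i}\upomega$. Downstairs $\upomega_X\otimes\OOO_{C_i}$ carries torsion at $P$ (since $\upomega_X$ is not invertible there), whereas $\gr^0_{C_i}\upomega$ is its torsion-free quotient; I would check that $\upomega_{X^{\sharp}}\otimes\OOO_{C_i^{\sharp}}$ is already torsion-free, which follows from $\upomega_{X^{\sharp}}$ being invertible and $C_i^{\sharp}$ being reduced, hence pure one-dimensional, so that taking invariants introduces no spurious finite-length term and lands on the saturated sheaf $\gr^0_{C_i}\upomega$. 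It is worth noting that the vanishing \eqref{eq:ref(z:xic.3.6).2} is genuinely a statement about the \emph{invariant} part: upstairs the sheaf $\upomega_{X^{\sharp}}\otimes\OOO/(\III^{\sharp}_{C_1}+\III^{\sharp}_{C_2})$ is nonzero, but its $\mumu_m$-invariants vanish because the $\mumu_m$-action is twisted by a nontrivial character. Once this bookkeeping is in place the conclusion is immediate.
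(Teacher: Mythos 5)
Your proposal is correct and follows essentially the same route as the paper: the paper's proof writes down exactly this Mayer--Vietoris sequence on $X^{\sharp}$ tensored with $\upomega_{X^{\sharp}}$, observes that the $\mumu_m$-invariant part of the injection is \eqref{eq:refz:xic.3.6.1} at~$P$, and concludes the vanishing of the invariants of the cokernel. Your additional care about identifying $(\upomega_{X^{\sharp}}\otimes\OOO_{C_i^{\sharp}})^{\mumu_m}$ with $\gr^0_{C_i}\upomega$ via torsion-freeness is a correct elaboration of a step the paper leaves implicit.
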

\begin{proof}
We have an exact sequence
\begin{equation*}
0 \longrightarrow
\upomega_{X^{\sharp}}\otimes \OOO/\III_{C^{\sharp}}
\xlongrightarrow{\hspace*{7pt}\varphi \hspace*{7pt}}
\upomega_{X^{\sharp}}\otimes \bigl(\OOO/\III_{C_1}^{\sharp} \oplus \OOO/\III_{C_2}^{\sharp}\bigr)
\xlongrightarrow{\hspace*{7pt}}
\upomega_{X^{\sharp}}\otimes \OOO/\bigl(\III_{C_1}^{\sharp}+\III_{C_2}^{\sharp}\bigr)
\longrightarrow
0
\end{equation*}
on the index 1 cover $X^{\sharp}$ (e.g. $\III_{C}^{\sharp}$ denotes the defining ideal
of $C^{\sharp}$ in $X^{\sharp}$) and the $\mumu_m$-invariant part of
$\varphi$ is \eqref{eq:refz:xic.3.6.1} at~$P$. Thus \eqref{eq:refz:xic.3.6.1}
implies \eqref{eq:ref(z:xic.3.6).2}.
\end{proof} 

\begin{convention}
\label{conve:Qcb}
In Sections~\ref{sect:IC} and~\ref{sect:kAD-k3A} of this paper we consider extremal curve germs $(X,C)$ with reducible $C$ 
which contain a component of type~\typec{IC}, \typec{kAD} or \typec{k3A}.
So in the case of $\QQ$-conic bundles we may assume that the base is smooth in these cases.
Indeed, $\QQ$-conic bundles over singular base have been classified in \cite[Theorem~1.3]{MP:cb2}
and according to this classification each component $C_i\subset C$ is locally imprimitive of type 
\typec{II^\vee}
or \typec{k1A} (see~\ref{thm:main:IIdual2} and~\ref{thm:main:k1A}). 
\end{convention}

\section{Case \typec{IC}+ \typec{k2A}}
\label{sect:IC}
The purpose of this section is to disprove the following situation.

\begin{setup}
\label{setup:IC}
Let $(X,\, C)$ be an extremal curve germ such
that $C = C_1 \cup C_2$, where $C_1$ and $C_2$ are 
irreducible, $C_1$ is of type~\typec{k2A}, and $C_2$ is
of type~\typec{IC}. Let $C_1 \cap C_2=\{P\}$. Then $P \in C_2$ is the \typec{IC} point
of index $m$ (odd and $m \ge 5$).
\end{setup} 

\begin{theorem}
\label{thm:IC}
The case \xref{setup:IC} does not
occur.
\end{theorem}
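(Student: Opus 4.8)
The plan is to derive a numerical contradiction by analyzing the anticanonical degree contributions from the two components and the constraints imposed at the joining point $P$. First I would record the local structure of each piece. Since $C_1$ is of type \typec{k2A}, it passes through a non-Gorenstein point of index $2$ and contributes a specific amount to $-K_X\cdot C_1$; since $C_2$ is of type \typec{IC}, the point $P=C_1\cap C_2$ is the \typec{IC} point of index $m$ with $m$ odd and $m\ge 5$, so the local class group and the cyclic quotient structure $\frac1m(2,m-2,1)$ at $P$ are fixed. The key invariants to track are $-K_X\cdot C_i$ for each component, the width/laminal data of $C$ at $P$, and the behavior of $\gr^0_C\upomega$ and $\gr^1_C\OOO$ near $P^\sharp$.

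The core of the argument should run through the filtration machinery of Propositions~\ref{thicken-I} and~\ref{thicken-J} together with Theorem~\ref{M88-8.12}. My strategy is to construct a suitable $C$-laminal ideal $\JJJ$ (of pure width $d$ compatible with the \typec{IC} singularity) that is $(1,d)$-monomializable at every point of $C$, splitting $\gr^1_C\OOO$ into $\ell$-invertible pieces $\AAA$ and $\BBB$ as in~\eqref{eq:JJJ-1d}. Iterating Proposition~\ref{thicken-J}, I would compute $\ldeg_C(\AAA)$ and $\ldeg_C(\BBB)$ by summing the local contributions: the \typec{k2A} end of $C$ and the \typec{IC} end $P$ impose opposing sign constraints. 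Feeding these into the inequality
\begin{equation*}
\ldeg_C(\BBB)+\tfrac{1}{d}\,\ldeg_C(\AAA)\ge 0
\end{equation*}
of Theorem~\ref{M88-8.12}—strict in the birational (flipping or divisorial) case—should produce an incompatibility between the required positivity and the degrees forced by the \typec{IC} point's index-$m$ geometry.

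For the bookkeeping at $P$ I would exploit the corollary to the splitting lemma, namely the vanishing
\begin{equation*}
\bigl(\upomega_{X^{\sharp}}\otimes\OOO/(\III^{\sharp}_{C_1}+\III^{\sharp}_{C_2})\bigr)^{\mumu_m}=0
\end{equation*}
from~\eqref{eq:ref(z:xic.3.6).2}, which I expect to be essential in pinning down how the two local $\ell$-bases at $P^\sharp$ must fit together and in ruling out the $\ell$-splitting that would otherwise relax the degree inequality. Concretely, the $\mumu_m$-action forces the monomializing $\ell$-basis $(s_1,s_2)$ at $P^\sharp$ to carry specific weights determined by the $\frac1m(2,m-2,1)$ structure, and these weights control the twists appearing in $\AAA^\sharp$ and $\BBB^\sharp$. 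By Convention~\ref{conve:Qcb} I may take the base smooth in the $\QQ$-conic bundle case, so no separate singular-base analysis is needed.

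The main obstacle I anticipate is verifying that the laminal ideal $\JJJ$ with the required width and splitting actually exists globally along the reducible curve $C$—that is, that the local data at the \typec{IC} point $P$ and at the \typec{k2A} point can be glued into an honest $C$-laminal structure of pure width, and that the $\ell$-exact sequence $\mathrm{E}(\JJJ,d)$ genuinely splits. The splitting hypothesis is what Theorem~\ref{M88-8.12} needs, yet it is precisely the obstruction class in $H^1(C,\NNN^{\totimes(-1)}\totimes\LLL)$ of Proposition~\ref{extention-class} that must be shown to vanish; reconciling this vanishing with the weight constraints from the index-$m$ point, while keeping $d$ in the range where the inequality bites, is the delicate step. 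If a single $\JJJ$ does not suffice, the fallback is to run the iterated thickening of Proposition~\ref{thicken-J} and extract the contradiction from the asymptotic Euler-characteristic estimate $\chi(\gr^n(\OOO,\JJJ))$ directly, comparing it against the cohomological positivity $\sum_i\chi(\gr^i(\OOO,\JJJ))\ge 0$ used in the proof of Theorem~\ref{M88-8.12}.
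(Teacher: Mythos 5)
Your outline does track the paper's strategy: the vanishing \eqref{eq:ref(z:xic.3.6).2} is used to pin down the local geometry at $P$, local $\ell$-splittings of $\gr^1_C\OOO$ restricted to $C_1$ and $C_2$ are glued into a global $\AAA\toplus\BBB$, and Theorem~\xref{M88-8.12} is applied through the monomializable-ideal machinery. However, there are concrete gaps. First, a factual one: a \typec{k2A} component does not carry an index-$2$ point. In Setup~\xref{setup:IC} the second non-Gorenstein point $R\in C_1$ has index $m'>1$, and the proof in fact establishes $m'\ge 3$; the entire numerical argument is a relation among $m$, $m'$ and the quotient weight $a'$ at $R$ (normalized as $\frac{1}{m'}(1,a',-1)$ after a deformation making $R$ ordinary), so fixing that index at $2$ would derail the degree computations from the start.

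Second, and more seriously, your expectation that feeding the degrees into the inequality of Theorem~\xref{M88-8.12} ``should produce an incompatibility'' at a single stage is false, and the iterated thickening you relegate to a fallback is actually the proof. With the splitting the paper constructs, $d=2$ gives $\ldeg(\AAA)+2\ldeg(\BBB)=\frac{m'+1-2a'}{m'}\ge 0$, which is perfectly consistent; only by combining it with $(K_X\cdot C_1)<0$ does one force $2a'=m'+1$, hence \emph{equality}, whereupon the strictness clause of Theorem~\xref{M88-8.12} rules out the flipping and divisorial cases and leaves the $\QQ$-conic bundle case. The contradiction appears only after one further step: one checks $H^1(C,\AAA^{\totimes(-1)}\totimes\BBB^{\totimes 2})=0$ (this is exactly the obstruction class you flagged, and it vanishes by a computation of the $\mumu_m$-invariants on $C_1^\sharp\cap C_2^\sharp$), applies Proposition~\xref{thicken-J} to obtain a $(1,3)$-monomializable $\KKK$, and then $d=3$ yields $\ldeg(\AAA)+3\ldeg(\BBB)=\ldeg(\BBB)=-\frac{m+m'}{2mm'}<0$, the desired contradiction. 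Finally, you omit the deformation arguments entirely: Lemma~\xref{lemma:xic.3.1} (making $R$ ordinary) and Lemma~\xref{lemma:xic.3.8)} (deforming the patching so that the distinguished factor of $\gr^1_{C_2}\OOO$ is generic at $P$) are what make the explicit $\ell$-bases, the splittings $\AAA_i\toplus\BBB_i$ of Lemmas~\xref{lemma:xic.3.12)} and~\xref{lemma:xic.3.14)}, and their gluing in Lemma~\xref{lemma:xic.3.16)} computable; without them your ``gluing the local data into an honest $C$-laminal structure'' has no concrete starting point.
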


\subsection{}
\label{z:xic.3}
The proof will be done through a series of lemmas.
We know that $X$ has a point $R\neq P$ of index $m'>1$ on $C_1$ and is 
smooth outside $\{P,\, R\}$. 

We note that \cite[(A.3)]{KM92} applies to the curve germ $(X,\, C_2)$,
since birational extremal curve germs of type~\typec{IC} are flipping \cite[(8.3.3)]{KM92}.

\begin{lemma}[Deformation at $R$]
\label{lemma:xic.3.1}
We may assume $R$ is ordinary, that is, $(X,\, R)$ is a 
cyclic quotient singularity. In particular by \cite[(9.3)]{Mori:flip},
there is a coordinate system
\begin{equation*}
(X,\, R) =\CC^3_{z_1,\, z_2,\, z_3}/\mumu_{m'}(1,\, a',\, -1) 
\supset C_1 =(z_1\axis)/\mumu_{m'},
\end{equation*}
where $0 < a' < m'$ and $\gcd(a',\, m') =1$.
\end{lemma}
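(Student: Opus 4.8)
The plan is to reduce the statement to a deformation argument for the non-Gorenstein point $R$ on $C_1$. The claim is that in disproving \ref{setup:IC} we lose no generality by assuming that $R$ is an \emph{ordinary} terminal point, i.e.\ a cyclic quotient singularity of the displayed form. The strategy is to invoke the general deformation technique for extremal curve germs recorded in the appendix (the analogue of \cite[(A.3)]{KM92}, which we already noted applies to $(X,C_2)$), together with the explicit normal form \cite[(9.3)]{Mori:flip} for a smooth curve germ through a cyclic quotient singularity. Since the conclusion of Theorem~\ref{thm:IC} is a \emph{non-existence} statement, it suffices to show that if a germ as in \ref{setup:IC} existed, then after a suitable one-parameter deformation we would obtain one in which $R$ is ordinary; the existence of the latter would still contradict whatever obstruction we eventually derive.

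First I would recall that the component $C_1$ is of type~\typec{k2A}, so the non-Gorenstein point $R \in C_1$ (with $R \neq P$) is a point at which the germ $(X,C_1)$ is one of the standard \typec{k2A} local models. The key input is that any such germ admits a \emph{general deformation} that preserves the extremal curve germ structure along $C$ (so $f$, the contraction, the class of $C_2$, and the index-$m$ point $P$ are all unaffected) while simplifying the singularity at $R$ to an ordinary cyclic quotient singularity. Concretely, among the \typec{k2A} local pictures the generic deformation parameter replaces the special (possibly $cA/m'$ or worse) point by the cyclic quotient $\CC^3/\mumu_{m'}(1,a',-1)$, and the curve $C_1$ becomes the image of the $z_1$-axis. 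This is exactly the content of the type-\typec{k2A} analysis in \cite{Mori:flip}; the role of \cite[(A.3)]{KM92} is to guarantee that such a deformation of the local germ at $R$ extends to a deformation of the whole extremal curve germ $(X,C)$ without destroying the properties in \ref{setup:IC}.

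The main obstacle I anticipate is \emph{justifying that the local deformation at $R$ globalizes} while keeping the point $P$ and the component $C_2$ of type~\typec{IC} rigid. One must check that deforming the singularity at $R$ does not force an accompanying change at $P$—in particular that the \typec{IC} structure of $(X,C_2)$ with its index-$m$ point survives, and that $C_1 \cap C_2 = \{P\}$ with $P$ still the \typec{IC} point of the stated index. This is where the appendix machinery on deformations of extremal curve germs does the real work: one deforms in a family over a disc so that the central fiber is the original germ and the general fiber has $R$ ordinary, invoking the openness/semicontinuity properties of the contraction $f$ (flipping, since type \typec{IC} is flipping by \cite[(8.3.3)]{KM92}) to ensure the deformed total space is again an extremal curve germ of the same combinatorial type.

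Finally, with $R$ ordinary I would apply the normal form of \cite[(9.3)]{Mori:flip}: a smooth curve germ $(C_1,R)$ inside an ordinary point is analytically $(z_1\axis)/\mumu_{m'}$ inside $\CC^3_{z_1,z_2,z_3}/\mumu_{m'}(1,a',-1)$ with $0<a'<m'$ and $\gcd(a',m')=1$, giving exactly the asserted coordinate system. The reduction step costs nothing for the eventual contradiction, since any obstruction we derive for the ordinary case obstructs the general case a fortiori; so establishing Lemma~\ref{lemma:xic.3.1} amounts precisely to the deformation-plus-normal-form argument just outlined, with the globalization of the deformation being the crux.
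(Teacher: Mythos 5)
Your overall strategy coincides with the paper's: deform the germ locally at $R$ so that $R$ becomes ordinary, note that this loses no generality because Theorem~\ref{thm:IC} is a non-existence statement, and then read off the coordinate system from \cite[(9.3)]{Mori:flip}. However, the two load-bearing steps are misattributed, and the one you single out as ``the crux'' is resolved incorrectly. First, the local deformation statement you need is \cite[(9.6)]{Mori:flip}: a curve germ of type~\typec{k2A} can be deformed to one with only ordinary points \emph{in such a way that the deformation is trivial outside a small neighborhood of $R$}. This triviality is exactly what makes the globalization a non-issue --- the deformation extends trivially to all of $X$, and $P$, $C_2$ and the \typec{IC} structure are untouched for free, with no semicontinuity or rigidity argument required. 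What you call the crux is therefore not where the content lies. Second, the genuinely non-trivial input is that the deformed threefold is again an extremal curve germ; this is \cite[Theorem~3.2]{MP:IA}, which you never invoke. In its place you cite \cite[(A.3)]{KM92} and ``appendix machinery on deformations,'' but neither exists in that role: \cite[(A.3)]{KM92} is a structural result on flipping germs (in this paper it is used only to obtain the coordinate description at the \typec{IC} point $P$ in Lemma~\ref{lemma:xic.3.3}), and the paper's appendix contains no deformation results at all.

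A further slip: you justify the persistence of the contraction by saying $f$ is flipping ``since type \typec{IC} is flipping.'' Only $(X,C_2)$ is known to be flipping; the type of the contraction of the whole curve $C$ in Setup~\ref{setup:IC} is not known at this stage (indeed the argument of the section later forces it to be a $\QQ$-conic bundle before reaching the final contradiction). So the gap is concrete: without \cite[(9.6)]{Mori:flip} for the locally-trivial deformation and \cite[Theorem~3.2]{MP:IA} for the stability of the extremal curve germ property under small deformation, your proof does not close, even though the plan is the right one.
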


\begin{proof}
As in \cite[(9.6)]{Mori:flip}, we may deform a curve germ of $C_1$ in $X$
to a curve germ $(X_\lambda,\, C_{1,\lambda})$ of type~\typec{k2A} with only ordinary points
in such a way that the deformation is trivial outside
of some neighborhood of $R$. This trivially extends to a deformation $(X_\lambda,\, C_{\lambda})$
of $X$.
By \cite[Theorem~3.2]{MP:IA} every small deformation of $X$ is an extremal curve germ. 
Since we are going to derive a contradiction, it is enough to work on
$(X_\lambda,\, C_{\lambda})$.
\end{proof}

\begin{lemma}
\label{lemma:xic.3.3}
There exists a coordinate system
\begin{equation*}
(X,\, P)=\CC^3_{y_1,\, y_2,\, y_3}/\mumu_m(2,\, m-2,\, 1) \supset C_1=(y_2\axis)/\mumu_m
\end{equation*}
such that $C^{\sharp}_2$, the inverse image of $C_2$ by
the index-$1$ cover, is the locus of
$(t^2,\, t^{m-2},\, 0)$ ($t \in \CC$). Furthermore, we have
$m' \ge 3$ and $2(m'-a')<m'$.
\end{lemma}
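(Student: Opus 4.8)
The plan is to assemble the coordinate system from the two classifications that meet at~$P$, and then to extract the numerical constraints from the local data at~$R$ together with the positivity of $-K_X\cdot C_1$. Since $(X,C_2)$ is a birational extremal curve germ of type~\typec{IC}, it is flipping by \cite[(8.3.3)]{KM92}, so \cite[(A.3)]{KM92} applies and yields the standard presentation of the \typec{IC} germ: $(X,P)\cong\CC^3_{y_1,y_2,y_3}/\mumu_m(2,m-2,1)$ with $m$ odd and $m\ge5$, and $C_2^\sharp$ the monomial curve $(t^2,t^{m-2},0)$. First I would fix these coordinates and record that $C_2$ is tangent to the $y_1\axis$ and lies in $\{y_3=0\}$.

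Next I would locate $C_1$. As $(X,C_1)$ is of type~\typec{k2A}, the curve $C_1$ is smooth and its index-$1$ cover $C_1^\sharp$ is a smooth $\mumu_m$-invariant curve through $P^\sharp$; hence, after a coordinate change, $C_1^\sharp$ is one of the coordinate axes. Using that $C_1\neq C_2$ and $C_1\cap C_2=\{P\}$, together with the residual coordinate freedom preserving the normal form of $C_2^\sharp$, I would straighten $C_1^\sharp$ to an axis and then eliminate the $y_1$- and $y_3$-axes by comparing weights and the resulting local type, leaving $C_1=(y_2\axis)/\mumu_m$, of weight $m-2\equiv-2\bmod m$.

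The substance of the lemma is the pair of inequalities, and here I expect the main difficulty. By Lemma~\ref{lemma:xic.3.1} we have $(X,R)=\CC^3_{z_1,z_2,z_3}/\mumu_{m'}(1,a',-1)$ with $C_1$ the weight-$1$ axis, so $R$ is an ordinary terminal point of the form $\frac1{m'}(1,-1,a')$. I would compute $-K_X\cdot C_1$ as a sum of its two local contributions, at~$P$ (where the curve has weight $m-2$) and at~$R$ (weights $a',-1$ transverse to the axis). Since $X$ is non-Gorenstein along $C_1$, one has $0<-K_X\cdot C_1<1$ by \cite[(0.4.10.3)]{Mori:flip} (this is precisely the inequality used in the proof of the lemma on $\gr^0_C\upomega$ above); imposing it on the explicit expression, together with $0<a'<m'$ and $\gcd(a',m')=1$, should first exclude $R=\frac12(1,1,1)$, giving $m'\ge3$, and then force $2(m'-a')<m'$, i.e.\ $a'>m'/2$.

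The hard part is the contribution at~$P$: because the curve weight there is $-2$ rather than $\pm1$, $P$ is not an ordinary type-$A$ axis point, so its local intersection term must be read off from the \typec{IC} normal form rather than from a standard formula --- either via the index-$1$ cover and orbifold Riemann--Roch, or via a general member $D\in|-K_X|$, which contains $C_1$ and is Du Val. I expect the cleanest route to the \emph{strict} inequality to pass through such a surface $D\supset C_1\cup C_2$: resolving $D$ minimally, one analyzes the chains of exceptional curves over~$P$ and over~$R$ and the position of the proper transform of $C_1$, and the requirement that this configuration lie on a Du Val surface with $0<-K_X\cdot C_1<1$ should pin down $2(m'-a')<m'$.
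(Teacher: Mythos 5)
Your outline reproduces the paper's first and last steps (the \typec{IC} normal form from \cite[(A.3)]{KM92}, and the inequalities from $K_X\cdot C_1<0$), but the central step --- placing $C_1$ on the $y_2$-axis --- has a genuine gap. With weights $(2,\,m-2,\,1)$, the \typec{k2A} condition of \cite[(9.3)]{Mori:flip} (some $y_i$ restricts to a coordinate on $C_1^{\sharp}$ and $\wt(y_i)+\wt(y_j)\equiv 0 \bmod m$ for some $j$) does eliminate the $y_3$-axis, but it does \emph{not} eliminate the case $i=1$: if $y_1$ is a coordinate on $C_1^{\sharp}$, then after rescaling the generator of $\mumu_m$ the point $P$ becomes an ordinary point $\frac1m\bigl(1,\frac{m+1}2,-1\bigr)$ with $C_1$ the weight-$1$ axis, which is a perfectly legitimate local \typec{k2A} configuration. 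No comparison of weights or of ``the resulting local type'' can rule this out, because nothing is locally wrong with it; even the constraint $\p(C)=0$ is satisfied, since the $\mumu_m$-invariant part of the (long) intersection scheme $\OOO_{C_1^{\sharp}}/\III^{\sharp}_{C_2}\OOO_{C_1^{\sharp}}$ is still just $\CC$. The paper excludes $i=1$ using \emph{global} input from the contraction: the splitting $\gr^0_C\upomega\simeq\gr^0_{C_1}\upomega\oplus\gr^0_{C_2}\upomega$ (proved via $H^1$-vanishing for $f$) gives \eqref{eq:ref(z:xic.3.6).2}, i.e.\ $\bigl(\upomega_{X^{\sharp}}\otimes\OOO/(\III^{\sharp}_{C_1}+\III^{\sharp}_{C_2})\bigr)^{\mumu_m}=0$; whereas if $y_1$ is a coordinate on $C_1^{\sharp}$, semi-invariance forces $y_2,y_3$ to vanish on $C_1^{\sharp}$ to orders $m-1$ and $(m+1)/2$, so $\III^{\sharp}_{C_2}\cdot\OOO_{C_1^{\sharp}}\subset y_1^{(m+1)/2}\OOO_{C_1^{\sharp}}$, and then $y_1^{(m-1)/2}\,\dd y_1\wedge\dd y_2\wedge\dd y_3$ is a nonzero $\mumu_m$-invariant element of that module --- a contradiction. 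Without this step (or an equivalent global argument) your proof does not determine which axis carries $C_1$, and this is not a cosmetic point: if $C_1$ were the $y_1$-axis, the local contribution at $P$ would be $\frac{m-1}{2m}$ instead of $\frac{m+1}{2m}$, and the inequality $2(m'-a')<m'$ would no longer follow.

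On the numerical part, your worry goes in the wrong direction. Once $C_1=(y_2\axis)/\mumu_m$ is established, $P$ \emph{is} an ordinary type-\type{A} axis point for $C_1$ --- the index-$1$ cover $X^{\sharp}$ is smooth at $P^{\sharp}$, and renormalizing the generator of $\mumu_m$ puts it in the form $\frac1m\bigl(1,\frac{m-1}2,-1\bigr)$ with $C_1$ the weight-$1$ axis --- so the standard formula \cite[(2.3.1) and (4.9)]{Mori:flip} applies verbatim and yields
\begin{equation*}
0 > K_X\cdot C_1 = -1+\frac{m+1}{2m}+\frac{m'-a'}{m'},
\end{equation*}
from which $2(m'-a')<m'$ and $m'\ge 3$ follow immediately. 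The proposed detour through a general member $D\in|-K_X|$ containing $C_1\cup C_2$ and its minimal resolution is unnecessary and would be considerably harder to make rigorous than the computation it replaces; moreover no separate argument for ``strictness'' is needed, since $K_X\cdot C_1<0$ is strict by the definition of an extremal curve germ.
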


\begin{proof}
If we ignore $C^{\sharp}_1$, this is in \cite[(A,3)]{Mori:flip}. Since $C_1$ is 
of type~\typec{k2A}, we see that $C^{\sharp}_1$ is smooth and there exist
$i,j \in \{1,\, 2,\, 3\}$ such that $y_i|_{C^{\sharp}_1}$ is a coordinate
for $C^{\sharp}_1$ and $\wt(y_i) + \wt(y_j) \equiv 0 \mod m$ \cite[(9.3)]{Mori:flip}.
Hence $i\in \{1,\, 2\}$. 

Assume that $i= 1$. 
We are going to compute the left hand side of \eqref{eq:ref(z:xic.3.6).2}.
We see $\upomega_{X^{\sharp}}=\Omega\cdot \OOO_{X^{\sharp}}$ with
$\Omega=\dd y_1 \wedge \dd y_2 \wedge \dd y_3$, and
$\III^{\sharp}_{C_2} = (y_3,\, y_1^{m-2}-y_2^2)$. Since $y_1|_{C_1^{\sharp}}$ is a
coordinate of $C_1^{\sharp}$ and $y_2$, $y_3$ are $\mumu_m$-semi-invariants,
we see
$y_2\cdot \OOO_{C_1^{\sharp}} \subset y_1^{m-1}\cdot \OOO_{C_1^{\sharp}}$ and
$y_3\cdot \OOO_{C_1^{\sharp}} \subset y_1^{(m+1)/2}\cdot \OOO_{C_1^{\sharp}}$.
Thus
\begin{equation*}
\III^{\sharp}_{C_2} \cdot \OOO_{C^{\sharp}_1} \subset 
\bigl(y_1^{(m+1)/2},\, y_1^{m-2}\bigr)\cdot \OOO_{C_1^{\sharp}} =
y_1^{(m+1)/2}\cdot \OOO_{C_1^{\sharp}}
\end{equation*}
because $m \ge 5$. Then $y_1^{(m-1)/2} \Omega$ induces a non-zero
$\mumu_m$-invariant element of 
$\upomega_{X^{\sharp}} \otimes \OOO_{C_1^{\sharp}}/\bigl(\III^{\sharp}_{C_2}\cdot \OOO_{C_1^{\sharp}}\bigr)$.
This contradicts \eqref{eq:ref(z:xic.3.6).2} and hence $i=2$. 

We choose a different coordinate system:
\begin{equation*}
\CC^3_{y'_1,\, y'_2,\, y'_3}/\mumu_m(2,\, m-2,\, 1) =
\CC^3_{y'_1,\, y'_2,\, y'_3}/\mumu_m\left(m-1,\, 1,\, \textstyle\frac{m-1}{2}\right) 
\supset C^{\sharp}_1 = (y'_2\axis)/\mumu_m
\end{equation*}
Then $C_2^{\sharp}$ is parametrized by 
$(t^2,\, t^{m-2}\cdot (\unit),\, t^{m+1} \cdot (\cdots))$.
A $\mumu_m$-equivariant coordinate change
\begin{equation*}
y''_1=y'_1, \quad
y''_2=y'_2 \cdot \phi_2(y'_1 y'_2), \quad
y''_3=y'_3+{y'_1}^{(m+1)/2}\phi_3(y'_1 y'_2)
\end{equation*}
with convergent power series $\phi_2(u),\phi_3(u)$ such that
$\phi_2(0) \neq0$ expresses $C_1^{\sharp}$ and $C_2^{\sharp}$ as
in Lemma~\ref{lemma:xic.3.3}.
Then by $0 > (K_X \cdot C_1)$, we see
\begin{equation*}
0 > -1 + \frac{m+1}{2m} + \frac{m'-a'}{m'} = 
\frac{m'-a'}{m'} - \frac{m-1}{2m} > \frac{m'-a'}{m'} - \frac{1}{2}
\end{equation*}
\cite[(2.3.1) and (4.9)]{Mori:flip}, whence $m' \ge 3$ and $2(m'-a')<m'$. 
This concludes the proof of Lemma~\ref{lemma:xic.3.3}.
\end{proof}

\subsection{}
\label{(z:xic.3.7)}
We fix the meaning of the symbols in Lemma~\ref{lemma:xic.3.3} during the proof of Theorem~\ref{thm:IC}.
In \cite[\S\S8--9]{Mori:flip} and \cite{KM92}, we used the symbol $P^{\sharp}$ to denote
a $\mumu_m$-invariant divisor on $C^{\sharp}$. Since $C$ is reducible in
our case, we use the symbol $P_i^{\sharp}$ to denote a divisor on $C_i^{\sharp}$
defined by a $\mumu_m$-semi-invariant coordinate, $i\in \{1,\, 2\}$ to avoid
confusion. Furthermore $\wt_i$ denotes the weight induced by $C_i$, that is
\begin{equation}
\label{eq:IC:def-wt1-2}
\begin{array}{lll}
\wt_1(y_1,\, y_2,\, y_3) &\equiv& \left(m-1,\, 1,\, \textstyle \frac{m-1}{2}\right), 
\\
\wt_2(y_1,\, y_2,\, y_3) &\equiv& (2,\, m-2,\, 1).
\end{array}
\end{equation} 
By \cite[(A.3.2)]{KM92}, there is an $\ell$-splitting
\begin{equation}
\label{eq:ref(z:xic.3.7).1}
\gr^1_{C_2}\OOO = (4P_2^{\sharp}) \toplus (-1+(m-1) P_2^{\sharp}),
\end{equation}
in which the factor $(4P_2^{\sharp})$ is unique. We note that
$\gr^1_{C_2} \OOO$ has an $\ell$-free $\ell$-basis 
$y_3$, $(y_1^{m-2}-y_2^2)$, and the $\ell$-invertible
sheaf $(4P_2^{\sharp})$ has an $\ell$-free $\ell$-basis $v$, where
\begin{equation}
\label{eq:refz:xic.3.7).2}
v = \lambda_1 y_1^{\frac{m-5}2}y_3 + \mu_1 \left(y_1^{m-2}-y_2^2\right)
\end{equation}
for some $\lambda_1$, $\mu_1 \in \OOO_{C_2}$.

Thus $\mu_1(P) \neq0$ if $m > 5$. To avoid the special case 
$\mu_1(P)=0$ when $m=5$, we use a deformation method \cite[(9.6)]{Mori:flip}.

\begin{lemma}[Deformation of patching]
\label{lemma:xic.3.8)}
We may assume $\mu_1(P)
\neq0$, that is $y_3$ and $v$ form an $\ell$-free $\ell$-basis of
$\gr^1_{C_2} \OOO$ at~$P$.
\end{lemma}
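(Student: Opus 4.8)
\textbf{The plan} is to realize the desired non-vanishing $\mu_1(P)\neq 0$ by deforming the patching data of the extremal curve germ, following the deformation technique of \cite[(9.6)]{Mori:flip}. The special case to be eliminated is $m=5$ with $\mu_1(P)=0$; for $m>5$ the conclusion is already automatic from the fact that $\bigl(y_1^{m-2}-y_2^2\bigr)$ contributes a nonzero value at $P$ in the expression \eqref{eq:refz:xic.3.7).2}, so the content of the lemma is entirely in the boundary case. First I would set up a one-parameter family $(X_\lambda, C_\lambda)$ which is trivial away from a neighborhood of $P$ and which moves only the gluing (extension) class governing how $\gr^1_{C_2}\OOO$ splits, keeping the singularity type \typec{IC} at $P$ and the ordinary point $R$ on $C_1$ fixed. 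As in Lemma~\xref{lemma:xic.3.1}, the point is that by \cite[Theorem~3.2]{MP:IA} every small deformation of $X$ remains an extremal curve germ, so it suffices to produce the contradiction on a generic member of such a family.

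The key technical step is to understand the coefficient $\mu_1$ as (the class of) an extension datum and to see that it can be perturbed to a nonzero value at $P$ without destroying any of the structure established in Lemma~\xref{lemma:xic.3.3}. Concretely, I would fix the coordinate system $(y_1,y_2,y_3)$ in which $C_2^\sharp$ is parametrized by $(t^2, t^{m-2}, 0)$ and the two weights \eqref{eq:IC:def-wt1-2} are as listed, and then track how the $\ell$-basis element $v$ spanning the distinguished factor $(4P_2^\sharp)$ of \eqref{eq:ref(z:xic.3.7).1} varies under an admissible $\mumu_m$-equivariant deformation of the defining equation of $X^\sharp$ at $P^\sharp$. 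The uniqueness of the factor $(4P_2^\sharp)$ asserted in \cite[(A.3.2)]{KM92} guarantees that $v$ is intrinsically determined up to an $\ell$-unit, so $\lambda_1,\mu_1$ are well-defined up to simultaneous scaling; the goal is to show the locus $\{\mu_1(P)=0\}$ is a proper (codimension $\ge 1$) condition in the relevant deformation space, hence avoidable for generic $\lambda$.

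The main obstacle I anticipate is verifying that the deformation genuinely moves $\mu_1(P)$ rather than leaving it identically zero — in other words, checking that the perturbation of the patching is \emph{unobstructed} in the direction that changes the $(y_1^{m-2}-y_2^2)$-component of $v$ at $P$, while remaining compatible with both the \typec{IC} structure at $P$ and the \typec{k2A} structure along $C_1$. This amounts to an infinitesimal computation in the cohomology group $H^1\bigl(C,\,\NNN^{\totimes(-1)}\totimes\LLL\bigr)$ controlling extensions (Proposition~\xref{extention-class}), showing that the relevant tangent direction is realized by an actual deformation of $X$ and not merely a trivial coordinate change. Once this is in place, I would invoke \cite[(9.6)]{Mori:flip} to produce the family explicitly and conclude that after replacing $(X,C)$ by a generic member we may assume $\mu_1(P)\neq 0$, so that $y_3$ and $v$ form an $\ell$-free $\ell$-basis of $\gr^1_{C_2}\OOO$ at $P$, as claimed.
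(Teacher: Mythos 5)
Your proposal follows essentially the same route as the paper's proof: deform the patching data (Mori's technique \cite[(9.6)]{Mori:flip}), invoke \cite[Theorem~3.2]{MP:IA} so that every small deformation remains an extremal curve germ, and conclude that $\mu_1(P)\neq 0$ holds for a generic such deformation, exactly as in \cite[(9.6.4)]{Mori:flip}. Two minor discrepancies: in the paper the twist is supported along $C_2$ away from $P$ — one keeps neighborhoods $U_1\Supset V_1$ of $C_1$ (hence the germs at both $P$ and $R$) and $U_2$ of $C_2\setminus V_1$ unchanged and deforms only their gluing — rather than being concentrated near $P$ as you describe, and the infinitesimal $H^1$ unobstructedness computation you anticipate is not needed, since \cite[(9.6.4)]{Mori:flip} already supplies the required genericity.
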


\begin{proof} 
By \cite[Theorem~3.2]{MP:IA} every small deformation of $X$ is an extremal curve germ.
So, to derive a contradiction with~\ref{setup:IC}, we take small neighborhoods $U_1 \Supset V_1$ of $C_1$ in $X$ and a small neighborhood $U_2$ of $C_2 \setminus V_1$ in
$X\setminus V_1$ and we deform the patching of $U_1$ and $U_2$ (cf.
\cite[(9.6)]{Mori:flip}) keeping
$U_1, V_1$, and $U_2$ unchanged. Then for a small deformation of $X$, we have
$\mu_1(P) \neq0$ as in \cite[(9.6.4)]{Mori:flip}. 
\end{proof}

\begin{lemma}
\label{lemma:xic.3.10)}
We have an $\ell$-splitting
$\gr^1_{C_1}\OOO= \LLL \toplus \MMM$
with $\ell$-isomorphisms
\begin{align*}
\LLL &\simeq (P_1^{\sharp} + R^{\sharp}), 
\\
\MMM &\simeq\gr^0_{C_1} \upomega \simeq
\bigl(-1+\textstyle\frac{m+1}2P_1^{\sharp}+(m'-a')R^{\sharp}\bigr).
\end{align*}
Furthermore, $\gr^1_{C_1}\OOO$ \textup(resp. $\LLL$\textup) has an $\ell$-free
$\ell$-basis
$y_1,\, y_3$ \textup(resp. $y_1+\nu y_3 y_2^{\frac{m-1}2}$ for some $\nu\in \OOO_{C_1,P}$\textup) at~$P$.
\end{lemma}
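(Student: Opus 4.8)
The plan is to determine $\gr^1_{C_1}\OOO$ entirely from its $\ell$-structure at the two non-Gorenstein points $P$ and $R$ of the \typec{k2A} germ $(X,C_1)$, using the explicit coordinates furnished by Lemmas~\ref{lemma:xic.3.1} and~\ref{lemma:xic.3.3}, and then to upgrade the pointwise decomposition to a global $\ell$-splitting by a cohomology-vanishing argument based on Proposition~\ref{extention-class}. Throughout I read off local $\ell$-summands from the $\mumu$-weights of generators, using the normalization fixed by equation~\eqref{eq:IC:def-wt1-2}, in which an $\ell$-invertible summand generated locally by a semi-invariant of weight $-a$ is recorded as $(aP_1^{\sharp})$ (resp.\ $(aR^{\sharp})$); this is the same bookkeeping that produced the \typec{IC} splitting~\eqref{eq:ref(z:xic.3.7).1}.

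First I would compute local generators. At $R=\CC^3_{z_1,z_2,z_3}/\mumu_{m'}(1,a',-1)$ with $C_1=(z_1\axis)/\mumu_{m'}$ one has $\III^{\sharp}_{C_1}=(z_2,z_3)$, so $\gr^1_{C_1}\OOO$ is $\ell$-freely generated at $R$ by $z_2,z_3$ of weights $a'$ and $-1$, giving local summands $((m'-a')R^{\sharp})$ and $(R^{\sharp})$. At $P=\CC^3_{y_1,y_2,y_3}/\mumu_m(2,m-2,1)$ with $C_1=(y_2\axis)/\mumu_m$ one has $\III^{\sharp}_{C_1}=(y_1,y_3)$, and by~\eqref{eq:IC:def-wt1-2} the generators $y_1,y_3$ have $\wt_1$-weights $m-1$ and $\tfrac{m-1}2$, giving local summands $(P_1^{\sharp})$ and $(\tfrac{m+1}2 P_1^{\sharp})$.

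Next I would pin down the two factors globally. Writing $\upomega_{X^{\sharp}}=\Omega\cdot\OOO_{X^{\sharp}}$ with $\Omega=\dd y_1\wedge \dd y_2\wedge \dd y_3$ at $P$ and $\Omega=\dd z_1\wedge \dd z_2\wedge \dd z_3$ at $R$, the weight of $\Omega$ is $(m-1)+1+\tfrac{m-1}2\equiv\tfrac{m-1}2$ at $P$ and $1+a'-1=a'$ at $R$; these match the weights of $y_3$ and $z_2$, so $\gr^0_{C_1}\upomega$ is the $\ell$-invertible sheaf with those local generators, and together with $\gr^0_{C_1}\upomega\simeq\OOO_{C_1}(-1)$ (\cite[(1.14)(i)]{Mori:flip}) this gives $\MMM:=\gr^0_{C_1}\upomega\simeq(-1+\tfrac{m+1}2 P_1^{\sharp}+(m'-a')R^{\sharp})$. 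The complementary generators $y_1$ and $z_3$, both of weight $-1$, assemble into an $\ell$-invertible sheaf of degree $0$, namely $\LLL\simeq(P_1^{\sharp}+R^{\sharp})$. The pointwise data then organize into an $\ell$-exact sequence
\[
0 \xlongrightarrow{\hspace*{17pt}} \LLL \xlongrightarrow{\hspace*{17pt}} \gr^1_{C_1}\OOO \xlongrightarrow{\hspace*{17pt}} \MMM \xlongrightarrow{\hspace*{17pt}} 0,
\]
which by Proposition~\ref{extention-class} is $\ell$-split once $H^1\!\bigl(C_1,\MMM^{\totimes(-1)}\totimes\LLL\bigr)=0$. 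Since $\MMM^{\totimes(-1)}\totimes\LLL$ has leading degree $1$ and only non-negative $\ell$-contributions, so $\ldeg_{C_1}>-1$, this vanishes on $C_1\simeq\PP^1$, yielding $\gr^1_{C_1}\OOO=\LLL\toplus\MMM$.

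Finally, for the local $\ell$-basis claim I note that both $y_1$ and $y_3y_2^{(m-1)/2}$ have $\wt_1$-weight $m-1$, so any weight-$(-1)$ generator of the summand $\LLL$ at $P$ has the form $y_1+\nu y_3y_2^{(m-1)/2}$ with $\nu\in\OOO_{C_1,P}$, the coefficient $\nu$ being determined by the requirement that this element span $\LLL$, i.e.\ map to a generator complementary to $\MMM=\gr^0_{C_1}\upomega$. The main obstacle is precisely this passage from local to global: one must confirm that $\LLL$ exists as a genuine global $\ell$-subsheaf realizing the prescribed local weights and that the extension class dies, which is exactly where the degree bound for the vanishing of $H^1$ and Proposition~\ref{extention-class} are indispensable. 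The secondary delicate point is securing the identification $\MMM\simeq\gr^0_{C_1}\upomega$ itself — not merely an $\ell$-invertible sheaf of the correct local weights — which rests on the weight computation for $\Omega$ together with the normalization $\deg\gr^0_{C_1}\upomega=-1$.
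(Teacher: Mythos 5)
Your local computations are correct: the weights of $z_2,z_3$ at $R$, of $y_1,y_3$ at $P$, and of $\Omega$ at both points, hence also the $\ell$-isomorphism class $\gr^0_{C_1}\upomega\simeq\bigl(-1+\frac{m+1}2P_1^{\sharp}+(m'-a')R^{\sharp}\bigr)$. But the proof has a genuine gap at its central step: the claim that the generators $y_1$ and $z_3$ ``assemble into'' a global $\ell$-invertible sheaf $\LLL\simeq(P_1^{\sharp}+R^{\sharp})$, so that $\gr^1_{C_1}\OOO$ sits in an $\ell$-exact sequence with precisely this $\LLL$ and this $\MMM$. Matching weights point by point is necessary but not sufficient. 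By \cite[(2.8)]{KM92} some $\ell$-splitting $\gr^1_{C_1}\OOO=\FFF_1\toplus\FFF_2$ exists, and its local weight contributions are forced to be $\{1,\frac{m+1}{2}\}$ at $P$ and $\{1,m'-a'\}$ at $R$; but nothing in your argument decides how these weights pair up across the two points, nor how the underlying integer degrees distribute between the summands. For instance, the mixed splitting $\bigl(P_1^{\sharp}+(m'-a')R^{\sharp}\bigr)\toplus\bigl(-1+\frac{m+1}{2}P_1^{\sharp}+R^{\sharp}\bigr)$ and the degree swap $\bigl(-1+P_1^{\sharp}+R^{\sharp}\bigr)\toplus\bigl(\frac{m+1}{2}P_1^{\sharp}+(m'-a')R^{\sharp}\bigr)$ have exactly the same local data, the same underlying sheaf, and the same $h^0$ and $h^1$, and nothing you wrote excludes them. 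Note also that you never establish $\gr^1_{C_1}\OOO\simeq\OOO_{C_1}\oplus\OOO_{C_1}(-1)$ as sheaves, which you implicitly use when assigning degree $0$ to $\LLL$ and $-1$ to $\MMM$.

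This is exactly the point where the paper does not argue from scratch but quotes the structure theory of germs with two non-Gorenstein points: the $\ell$-splitting together with the identification $\MMM\simeq\gr^0_{C_1}\upomega$ is \cite[(9.9), (9.9.1)]{Mori:flip}, the shape $\LLL\simeq\bigl(q(P)P_1^{\sharp}+q(R)R^{\sharp}\bigr)$ is \cite[(9.5.1)]{Mori:flip}, and $q(P)=q(R)=1$ follows from \cite[(9.8)]{Mori:flip} because $P$ and $R$ are ordinary, so $\ell(P)=\ell(R)=0$ by \eqref{eq:ell(P)}. Those results are proved in \cite[\S 9]{Mori:flip} by global arguments (deformation, axial multiplicity, the detailed classification of such germs), not by weight bookkeeping. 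Your closing remark that Proposition~\ref{extention-class} together with the $H^1$ degree bound supplies the local-to-global passage misplaces the difficulty: that proposition only splits an $\ell$-exact sequence once one is given; it can neither produce an $\ell$-subsheaf $\LLL$ with prescribed local weights nor decide which of the competing pairings actually occurs. Your $H^1$ vanishing computation is fine as far as it goes, but it splits an extension you have not constructed.
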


\begin{proof}
We note $m,\, m' \ge 3$ by Lemma~\ref{lemma:xic.3.3} and Setup~\ref{setup:IC}. So we
need to quote from \cite[\S9]{Mori:flip}, where $P'$ is our $R$. The
$\ell$-splitting is given in \cite[(9.9)]{Mori:flip} and $\MMM$ is given in \cite[(9.9.1)]{Mori:flip}. We
have $\LLL \simeq \bigl(q(P)P_1^{\sharp} + q(R)R^{\sharp}\bigr)$,
where $q(P):=\qldeg(\LLL,P)$ and $q(R):=\qldeg(\LLL,R)$ \cite[(9.5.1)]{Mori:flip}.
Further, $q(P)=q(R)=1$ by \cite[(9.8)]{Mori:flip} (since $P$ and $R$ are ordinary, $\ell(P) = \ell(R)=0$, see \eqref{eq:ell(P)}).
\end{proof}

\begin{lemma}
\label{lemma:xic.3.12)}
$\gr^1_C\OOO\totimes \OOO_{C_1}$ has an
$\ell$-splitting
$\gr^1_C \OOO\totimes \OOO_{C_1} = \AAA_1 \toplus \BBB_1$, where
\begin{equation*}
\AAA_1=\bigl(-1+(m-1)P_1^{\sharp} + R^{\sharp}\bigr), \quad \BBB_1=\bigl(-1+\textstyle\frac{m+1}2 P_1^{\sharp} +
(m'-a')R^{\sharp}\bigr)
\end{equation*}
such that the $\ell$-homomorphism 
\begin{equation*}
\gr^1_C \OOO \totimes \OOO_{C_1} \xlongrightarrow{\hspace*{17pt}} \gr^1_{C_1} \OOO = \LLL \toplus \MMM
\end{equation*}
\textup(see Lemma~\xref{lemma:xic.3.10)}\textup) is the direct sum of $\AAA_1 \to \LLL$ and $\BBB_1 \to
\MMM$. Furthermore, $\gr^1_C \OOO \totimes \OOO_{C_1}$ has an $\ell$-free
$\ell$-basis $y_3,\, y_1(y_1^{m-2}-y_2^{2})$ at~$P$, and $\AAA_1$ has an $\ell$-free $\ell$-basis with
$\wt_2 \equiv \wt_2(y_2)$.
\end{lemma}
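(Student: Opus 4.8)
The plan is to realise $\gr^1_C\OOO\totimes\OOO_{C_1}$ as the source of a natural $\ell$-homomorphism to $\gr^1_{C_1}\OOO$ and to extract everything from the two special points $P$ and $R$. Since $\III_C=\III_{C_1}\cap\III_{C_2}\subset\III_{C_1}$, the inclusion of ideals induces a map $\gr^1_C\OOO\to\gr^1_{C_1}\OOO$, and forming the $\ell$-tensor product of the source with $\OOO_{C_1}$ yields an $\ell$-homomorphism $\phi\colon\gr^1_C\OOO\totimes\OOO_{C_1}\to\gr^1_{C_1}\OOO$. Because $C_2$ meets $C_1$ only at $P$, one has $\III_C=\III_{C_1}$ locally on $C_1\setminus\{P\}$, so $\phi$ is an $\ell$-isomorphism away from $P$; in particular near $R$ the source already coincides with $\gr^1_{C_1}\OOO=\LLL\toplus\MMM$ of Lemma~\ref{lemma:xic.3.10)}. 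Thus the whole content of the lemma is local at $P$, and all global $R^\sharp$-coefficients are simply read off from $\LLL$ and $\MMM$.

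Next I would carry out the local computation at $P^\sharp$. On the index-$1$ cover of Lemma~\ref{lemma:xic.3.3} we have $\III^\sharp_{C_1}=(y_1,y_3)$ and $\III^\sharp_{C_2}=(y_3,\,y_1^{m-2}-y_2^2)$, whence $\III^\sharp_C=\III^\sharp_{C_1}\cap\III^\sharp_{C_2}=(y_3,\,w)$ with $w:=y_1(y_1^{m-2}-y_2^2)$: both ideals contain $y_3$, while modulo $y_3$ the principal ideals $(y_1)$ and $(y_1^{m-2}-y_2^2)$ are coprime in $\CC[y_1,y_2]$ (here $m$ is odd, so $y_1^{m-2}-y_2^2$ is irreducible and prime to $y_1$). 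Consequently $\gr^1_C\OOO$, and after $\ell$-tensoring the source of $\phi$, is $\ell$-free at $P$ on $y_3$ and $w$, which is the asserted $\ell$-basis. With the action $\wt_2\equiv(2,m-2,1)$ of \eqref{eq:IC:def-wt1-2} one computes $\wt_2(y_3)=1$ and $\wt_2(w)=\wt_2(y_1)+\wt_2(y_1^{m-2}-y_2^2)=2+(m-4)=m-2=\wt_2(y_2)$, so the second generator realises the weight claimed for $\AAA_1$.

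With the local picture in hand I would produce the splitting by intersecting the two natural projections. Writing $\pi_\LLL,\pi_\MMM$ for the projections of $\gr^1_{C_1}\OOO=\LLL\toplus\MMM$, set $\AAA_1:=\ker(\pi_\MMM\comp\phi)$ and $\BBB_1:=\ker(\pi_\LLL\comp\phi)$; by construction $\phi(\AAA_1)\subset\LLL$ and $\phi(\BBB_1)\subset\MMM$. Away from $P$ these $\ell$-subsheaves are complementary because $\phi$ is an isomorphism onto a split module, so it remains to verify complementarity at $P$. Restricting $\phi$ to $C_1^\sharp$ (where $y_1=0$) sends $w\mapsto -y_2^2\,y_1$ and $y_3\mapsto y_3$; matching against the $\ell$-basis $y_1+\nu y_3y_2^{(m-1)/2}$ of $\LLL$ and against the class of $y_3$ generating $\MMM\simeq\gr^0_{C_1}\upomega$ from Lemma~\ref{lemma:xic.3.10)}, one sees the images of $w$ and $y_3$ land, up to strictly higher order in $y_2$, in the $\LLL$- and $\MMM$-directions respectively. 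Since $m\ge5$ forces the weights $1$ and $m-2$ to be distinct modulo $m$, these higher-order cross terms cannot obstruct complementarity, and $\gr^1_C\OOO\totimes\OOO_{C_1}=\AAA_1\toplus\BBB_1$ follows. By construction $\phi$ is then the direct sum of $\AAA_1\to\LLL$ and $\BBB_1\to\MMM$, the latter being an $\ell$-isomorphism, so $\BBB_1\simeq\MMM=(-1+\tfrac{m+1}2P_1^\sharp+(m'-a')R^\sharp)$, while the former vanishes to order $2$ in $y_2$ at $P$, converting the weight $\wt_2(w)=m-2$ into the coefficient $(m-1)P_1^\sharp$ and giving $\AAA_1=(-1+(m-1)P_1^\sharp+R^\sharp)$.

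The main obstacle is the bookkeeping of this last step: translating the $\mumu_m$-weights at $P$ into the $P_1^\sharp$-coefficients of the notation of \cite[\S9]{Mori:flip} and confirming the global degree $-1$, while simultaneously checking that the two kernels are genuine complementary $\ell$-summands rather than merely generically so. I expect the cleanest way to secure the splitting to be either a direct verification that the $2\times2$ transition matrix of $\phi$ at $P$ is invertible after the weight bookkeeping, or an appeal to Proposition~\ref{extention-class} showing that the class of $0\to\AAA_1\to\gr^1_C\OOO\totimes\OOO_{C_1}\to\MMM\to0$ in $H^1(C_1,\MMM^{\totimes(-1)}\totimes\AAA_1)$ vanishes; the numerics of $\AAA_1$ and $\MMM$ above should make this $H^1$ vanish, and the inequalities $m'\ge3$, $2(m'-a')<m'$ of Lemma~\ref{lemma:xic.3.3} are what keep the relevant degrees in range.
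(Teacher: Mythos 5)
Your construction is sound and it takes a genuinely different route from the paper's. The paper computes the cokernel of $\phi:\gr^1_C\OOO\totimes\OOO_{C_1}\to\gr^1_{C_1}\OOO$ (invariant part $\CC\cdot y_1y_2$), combines this with the global vanishing $H^1(\gr^1_C\OOO\otimes\OOO_{C_1})=0$ to identify the underlying sheaf as $\OOO(-1)^{\oplus2}$, invokes the $\ell$-splitting criterion \cite[(2.8)]{KM92} --- which a priori leaves two candidate $\ell$-splittings --- and then pins down the correct one and the direct-sum compatibility with $\LLL\toplus\MMM$ by Hom-vanishing numerics; this forces a case division ($m'-a'>1$ versus $m'-a'=1$, where the candidates coincide and the splitting must be rearranged by a homomorphism $\BBB_1\to\AAA_1$). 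Your route --- defining $\AAA_1=\ker(\pi_\MMM\comp\phi)$, $\BBB_1=\ker(\pi_\LLL\comp\phi)$ and checking complementarity locally at $P$ --- avoids the global $H^1$ input, the appeal to \cite[(2.8)]{KM92}, and the case division altogether: the cross-term coefficients in the generators of the two kernels are semi-invariant of non-zero weight (this is where $\wt(w)\not\equiv\wt(y_3)$, i.e. $m\neq3$, enters), hence vanish at~$P^\sharp$, so the transition matrix from the $\ell$-basis $(w,y_3)$ to these generators is congruent to the identity modulo the maximal ideal and is invertible. That is exactly your first suggested finish, and it works uniformly in $m'-a'$; what it costs is the explicit matrix computation that the paper's more abstract argument sidesteps.

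Two caveats, both at the step you yourself flag as the main obstacle. First, the degree bookkeeping for $\AAA_1$ is garbled as written: the $P_1^\sharp$-coefficient of a sheaf on $C_1$ is governed by $\wt_1$, not $\wt_2$, and no ``conversion'' of $\wt_2(w)=m-2$ produces $(m-1)P_1^\sharp$. What actually gives the answer is that the induced map $\AAA_1\to\LLL$ vanishes to order exactly $2$ in $y_2$ (the generator of $\AAA_1$ maps to $-y_2^2\cdot(\mathrm{unit})\cdot\bigl(y_1+\nu y_3y_2^{\frac{m-1}2}\bigr)$), so that $\AAA_1\simeq\LLL-2P_1^\sharp=(P_1^\sharp+R^\sharp)-2P_1^\sharp=\bigl(-1+(m-1)P_1^\sharp+R^\sharp\bigr)$; your $\wt_2$-computation $\wt_2(w)\equiv\wt_2(y_2)$ proves the separate, final assertion of the lemma about the basis of $\AAA_1$, not its degree. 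Second, of your two proposed ways to secure the splitting, only the transition-matrix one delivers the full statement: the extension-class argument via Proposition~\ref{extention-class} would produce \emph{some} complement of $\AAA_1$ isomorphic to $\MMM$, but not automatically one that $\phi$ maps into $\MMM$, so the asserted direct-sum decomposition of $\phi$ would still require the kernel description (or a Hom-vanishing argument as in the paper, with its attendant case division when $m'-a'=1$). With the first finish carried out, your proof is complete and, in this respect, somewhat cleaner than the paper's.
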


\begin{proof}
We see that
\begin{equation*}
\coker \bigr[\III_C^{\sharp} \xlongrightarrow{\hspace*{17pt}} \bigl(\III_{C_1}^{\sharp}/{\III_{C_1}^{\sharp 2}}\bigr)\bigr]
= \III_{C_1}^{\sharp}/\bigl(\III_C^{\sharp}+{\III_{C_1}^{\sharp 2}}\bigr)
\\
=(y_1,\, y_3)/\bigl(y_3,\, y_1^2,\, y_1y_2^2\bigr)
\end{equation*}
has $\mumu_m$-invariant part $\CC \cdot y_1y_2$. This is the cokernel of 
$\gr^1_C \OOO \totimes \OOO_{C_1} \to\gr^1_{C_1} \OOO$ as a sheaf homomorphism.
Thus there is an isomorphism $\gr^1_C \OOO \totimes \OOO_{C_1} \simeq \OOO(-1)^{\oplus 2}$ as sheaves by
$\gr^1_{C_1} \OOO \simeq \OOO_{C_1} \oplus \OOO_{C_1}(-1)$ and 
$H^1(\gr^1_C \OOO \otimes \OOO_{C_1}) = 0$ \cite[Proof of (2.3)]{Mori:flip}, \cite[Lemma~3.5.1]{MP-1p}. 
In view of the $\ell$-free $\ell$-basis, we see
the $\ell$-splitting \cite[(2.8)]{KM92}:
\begin{equation*}
\gr^1_C \OOO \totimes \OOO_{C_1} =
\begin{cases}
(-1+(m-1)P_1^{\sharp} + R^{\sharp}) \toplus (-1 + \frac{m+1}2P_1^{\sharp} + (m'-a')R^{\sharp}), \text{or} 
\\
(-1+\frac{m+1}2 P_1^{\sharp} + R^{\sharp}) \toplus (-1 + (m-1) P_1^{\sharp} + (m'-a')R^{\sharp}).
\end{cases}
\end{equation*}
If $m'-a'=1$, then these two cases coincide. We claim that the second case is impossible
if $m'-a' > 1$. Indeed 
\begin{equation*}
\FFF:=\bigl(-1+(m-1)P_1^{\sharp}+(m'-a')R^{\sharp}\bigr)
\end{equation*} 
in the second case has no non-zero
$\ell$-homomorphism to $\LLL$ and $\MMM$ by
\begin{equation*}
\FFF^{\totimes(-1)} \totimes \LLL =(-1+2P_1^{\sharp}+(a'+1)R^{\sharp}),\quad
\FFF^{\totimes(-1)} \totimes \MMM =\bigl(-1+\textstyle\frac{m+3}{2}P_1^{\sharp}\bigr),
\end{equation*}
$m \ge 5$ and $m'-a' \ge 2$. Thus we are in the first case. 
By a similar reason,
$\gr^1_C \OOO \totimes \OOO_{C_1} \to\gr^1_{C_1} \OOO$ is a direct sum as claimed if $m'-a'>1$. If $m'-a'=1$,
we can rearrange the $\ell$-splitting $\gr^1_C \OOO \totimes \OOO_{C_1} =\AAA_1\toplus \BBB_1$ by a non-zero homomorphism 
$ \BBB_1\to \AAA_1$ to attain the direct sum.
For the $\ell$-free
$\ell$-basis, we just note 
\begin{equation*}
\wt_2 \Bigl(y_1(y^2_2 - y_1^{m-2}) + \nu y_3 y_2^{\frac{m+3}2}\Bigr) \equiv \wt_2(y_2)
\qedhere
\end{equation*}
\end{proof}

\begin{lemma}
\label{lemma:xic.3.14)}
$\gr^1_C \OOO \totimes \OOO_{C_2}$ fits in an
$\ell$-exact sequence
\begin{equation*}
0 \xlongrightarrow{\hspace*{17pt}} \AAA_2\xlongrightarrow{\hspace*{17pt}}\gr^1_C \OOO \totimes \OOO_{C_2} \xlongrightarrow{\hspace*{17pt}} \BBB_2 \xlongrightarrow{\hspace*{17pt}} 0,
\end{equation*}
where $\AAA_2 = \bigl(2P_2^{\sharp}\bigr)$,\, $\BBB_2 = \bigl(-1+(m-1)P_2^{\sharp}\bigr)$, and $\AAA_2$ has an $\ell$-free
$\ell$-basis with $\wt_2 \equiv \wt_2(y_2)$ at~$P$.
\end{lemma}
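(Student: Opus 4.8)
The plan is to follow the blueprint of Lemma~\ref{lemma:xic.3.12)} for the other component, working $\mumu_m$-equivariantly on the index-$1$ cover at $P$ and then globalizing. First I would record the local picture: in the coordinates of Lemma~\ref{lemma:xic.3.3} one has $\III^\sharp_{C_1}=(y_1,\,y_3)$ and $\III^\sharp_{C_2}=(y_3,\,y_1^{m-2}-y_2^2)$, so that $\III^\sharp_C=\III^\sharp_{C_1}\cap\III^\sharp_{C_2}=\bigl(y_3,\,y_1(y_1^{m-2}-y_2^2)\bigr)$, the last equality because $y_1$ and $y_1^{m-2}-y_2^2$ are coprime. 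I would then check that the classes of $y_3$ and $y_1(y_1^{m-2}-y_2^2)$ form an $\ell$-free $\ell$-basis of $\gr^1_C\OOO\totimes\OOO_{C_2}$ at~$P$. Computing weights from~\eqref{eq:IC:def-wt1-2} gives $\wt_2(y_3)\equiv1$ and $\wt_2\bigl(y_1(y_1^{m-2}-y_2^2)\bigr)\equiv 2+(m-4)\equiv m-2\equiv\wt_2(y_2)$; this already yields the last assertion of the lemma and isolates the weight-$(m-2)$ generator $y_1(y_1^{m-2}-y_2^2)$ as the one spanning the candidate subsheaf $\AAA_2$.

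Next, as in Lemma~\ref{lemma:xic.3.12)}, I would study the natural $\ell$-homomorphism $\gr^1_C\OOO\totimes\OOO_{C_2}\to\gr^1_{C_2}\OOO$ induced by $\III_C\subset\III_{C_2}$, now using the decomposition $\gr^1_{C_2}\OOO=(4P_2^\sharp)\toplus(-1+(m-1)P_2^\sharp)$ of~\eqref{eq:ref(z:xic.3.7).1} together with the normalization $\mu_1(P)\neq0$ secured in Lemma~\ref{lemma:xic.3.8)}, so that $y_3$ is the weight-$1$ generator of the factor $(-1+(m-1)P_2^\sharp)$. Composing with the projection onto this factor produces a surjection onto $\BBB_2=(-1+(m-1)P_2^\sharp)$, whose kernel is the rank-one subsheaf generated by the weight-$(m-2)$ element $y_1(y_1^{m-2}-y_2^2)$, namely $\AAA_2$. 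Because $C_2^\sharp$ is the cuspidal branch $(t^2,\,t^{m-2},\,0)$, the factor $y_1=t^2$ multiplying the weight-$(m-4)$ generator of $\gr^1_{C_2}\OOO$ is exactly what raises the weight from $-4$ to $-2$, i.e. lowers the local type from $(4P_2^\sharp)$ to $(2P_2^\sharp)$, so that $\AAA_2\simeq(2P_2^\sharp)$.

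To pin down the global parts ($0$ for $\AAA_2$ and $-1$ for $\BBB_2$) and the $\ell$-exactness, I would invoke $H^1(\gr^1_C\OOO\otimes\OOO_{C_2})=0$ (\cite[Proof of (2.3)]{Mori:flip}, \cite[Lemma~3.5.1]{MP-1p}) to control $\ldeg_C(\gr^1_C\OOO\totimes\OOO_{C_2})=\ldeg_C\AAA_2+\ldeg_C\BBB_2$ and to force $\BBB_2$ to be the $\ell$-invertible quotient carrying the weight-$1$ generator $y_3$. The $\ell$-saturation of the subsheaf generated by $y_1(y_1^{m-2}-y_2^2)$ is then $\AAA_2=(2P_2^\sharp)$, the quotient is $\ell$-locally free, and the asserted $\ell$-exact sequence follows.

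The main obstacle I anticipate is precisely this last identification. Unlike the $C_1$ case of Lemma~\ref{lemma:xic.3.12)}, where a Hom-vanishing dichotomy forced an $\ell$-splitting, here the sequence need not $\ell$-split, so I cannot read off $\AAA_2$ and $\BBB_2$ from a direct-sum decomposition; instead I must verify directly that the subsheaf spanned by the weight-$(m-2)$ generator is $\ell$-saturated of the precise type $(2P_2^\sharp)$ — rather than $(4P_2^\sharp)$ — and that the quotient is the full $\ell$-invertible $\BBB_2$ and not a proper subsheaf of it. Keeping track of the interaction with the distinguished (unique) factor $(4P_2^\sharp)$ of~\eqref{eq:ref(z:xic.3.7).1} through the cuspidal parametrization of $C_2^\sharp$ is the delicate bookkeeping that makes this the crux of the argument.
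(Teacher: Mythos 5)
Your overall route is the paper's: compute $\III_C^\sharp=(y_3,\,y_1(y_1^{m-2}-y_2^2))$ at $P^\sharp$, map $\gr^1_C\OOO\totimes\OOO_{C_2}$ into $\gr^1_{C_2}\OOO$, use the $\ell$-splitting \eqref{eq:ref(z:xic.3.7).1} normalized by Lemma~\ref{lemma:xic.3.8)}, and take $\AAA_2$ to be the kernel of the projection along the unique factor $(4P_2^\sharp)$, identified as $(2P_2^\sharp)$ because $y_1=t^2$ on $C_2^\sharp$. There is, however, one concrete error, stated twice in your proposal: that kernel is \emph{not} generated by $y_1(y_1^{m-2}-y_2^2)$. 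By \eqref{eq:refz:xic.3.7).2} and $\mu_1(P)\neq0$ we have
\begin{equation*}
y_1\bigl(y_1^{m-2}-y_2^2\bigr)=\mu_1^{-1}\,y_1v-\mu_1^{-1}\lambda_1\,y_1^{\frac{m-3}2}y_3,
\end{equation*}
which has a nonzero $y_3$-component whenever $\lambda_1\neq0$; so this element does not lie in the kernel, and the rank-one submodules $\OOO_{C_2}^\sharp\cdot y_1(y_1^{m-2}-y_2^2)$ and $\OOO_{C_2}^\sharp\cdot y_1v$ coincide only when $\lambda_1=0$. The kernel is $\OOO_{C_2}^\sharp\cdot y_1v$, i.e.\ the paper's $\AAA_2:=y_1(4P_2^\sharp)$. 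What Lemma~\ref{lemma:xic.3.8)} actually buys is the equality of the ambient rank-two modules
\begin{equation*}
\OOO_{C_2}^{\sharp}\cdot y_1\bigl(y_1^{m-2}-y_2^2\bigr)+\OOO_{C_2}^{\sharp}\cdot y_3=y_1\OOO_{C_2}^{\sharp}\cdot v+\OOO_{C_2}^{\sharp}\cdot y_3,
\end{equation*}
not of the rank-one pieces inside them.

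The slip is harmless in the end, because your type computation --- $y_1=t^2$ times the generator of $(4P_2^\sharp)$, lowering the local type to $(2P_2^\sharp)$ and giving $\wt_2(y_1v)\equiv\wt_2(y_2)$ --- is in fact applied to $y_1v$, i.e.\ to the correct kernel; once the generator is corrected, your second paragraph \emph{is} the paper's proof. Your remaining apparatus is then unnecessary: the quotient is $\gr^1_{C_2}\OOO/(4P_2^\sharp)\simeq\bigl(-1+(m-1)P_2^\sharp\bigr)=\BBB_2$ directly from \eqref{eq:ref(z:xic.3.7).1}, so no $H^1$-vanishing or $\ldeg$ bookkeeping is needed to pin down the global parts. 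Also, the difficulty you anticipate as the crux (possible non-splitness) is not there: any semi-invariant on $C_2^\sharp$ of weight $\equiv\wt_2(y_3)-\wt_2(v)\equiv 5$ vanishes to order $\ge 5$ at $P^\sharp$, hence lies in $(y_1)$, so the second factor of \eqref{eq:ref(z:xic.3.7).1} is automatically contained in $\gr^1_C\OOO\totimes\OOO_{C_2}$ and the sequence even $\ell$-splits; the lemma simply does not need, and therefore does not assert, this.
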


\begin{proof}
In 
$(\gr^1_{C_2}\OOO)^{\sharp} = \OOO_{C_2}^{\sharp} \cdot v \oplus \OOO_{C_2}^{\sharp}\cdot y_3$, 
with $v$ as in \eqref{eq:refz:xic.3.7).2} we see
\begin{equation*}
(\gr^1_C\OOO\totimes \OOO_{C_2})^{\sharp} = \OOO_{C_2}^{\sharp}\cdot y_1 (y_2^2 - y_1^{m-2})+ \OOO_{C_2}^{\sharp}\cdot y_3 
= y_1 \OOO_{C_2}^{\sharp} \cdot v + \OOO_{C_2}^{\sharp}\cdot y_3
\end{equation*}
by Lemma~\ref{lemma:xic.3.8)}. Since $v$ is an $\ell$-free $\ell$-basis of
$(4P^{\sharp})$, if we set
$y_1(4P^{\sharp})$ as $\AAA_2$ then we see $\wt_2(y_1 v) \equiv \wt_2(y_2)$ and
the statement of~\ref{lemma:xic.3.14)} follows from \eqref{eq:ref(z:xic.3.7).1}. 
\end{proof}

\begin{lemma}
\label{lemma:xic.3.16)}
The $\ell$-subbundles
\begin{equation*}
\begin{array}{llll}
\AAA_1 & \subset&\gr^1_C \OOO \totimes \OOO_{C_1} &(\text{cf. Lemma~\xref{lemma:xic.3.12)}}),
\\[7pt]
\AAA_2 & \subset&\gr^1_C \OOO \totimes \OOO_{C_2} &(\text{cf. Lemma~\xref{lemma:xic.3.14)}})
\end{array}
\end{equation*}
are restrictions of an $\ell$-subbundle $\AAA \subset\gr^1_C \OOO$. In
particular, if we set the $\ell$-invertible sheaf $\BBB=\gr^1_C \OOO /\AAA$, then
we have 
$\ell$-isomorphisms $\BBB \totimes \OOO_{C_i} = \BBB_i$ for $i\in \{1,\, 2\}$.
\end{lemma}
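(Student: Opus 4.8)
The plan is to glue the two $\ell$-subbundles along the single node $P = C_1\cap C_2$. Since $\gr^1_C\OOO$ is $\ell$-locally free of rank $2$ and the two components meet only at $P$, an $\ell$-subbundle $\AAA\subset\gr^1_C\OOO$ is the same datum as a pair of rank-$1$ $\ell$-subbundles on $C_1$ and $C_2$ whose fibers agree inside the common fiber of $\gr^1_C\OOO$ at $P$. Away from $P$ I would take $\AAA=\AAA_i$ on $C_i$, so everything reduces to verifying the matching condition at $P$.

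To check matching I would work on the index-$1$ cover and read off the $\mumu_m$-weights occurring in the two-dimensional fiber of $\gr^1_C\OOO$ at $P$ from the $\ell$-bases of Lemmas~\ref{lemma:xic.3.12)} and~\ref{lemma:xic.3.14)}. On $C_1$ the $\ell$-basis $y_3$, $y_1(y_1^{m-2}-y_2^2)$ carries weights $\wt_2(y_3)\equiv 1$ and $\wt_2\bigl(y_1(y_1^{m-2}-y_2^2)\bigr)\equiv 2m-2\equiv m-2$; on $C_2$ the $\ell$-basis $y_3$, $y_1 v$ carries weights $1$ and $\wt_2(y_1 v)\equiv\wt_2(y_2)=m-2$. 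Because $m\ge 5$ forces $1\not\equiv m-2\pmod m$, the fiber splits as the direct sum of its $\wt_2\equiv 1$ and $\wt_2\equiv m-2$ eigenlines, each one-dimensional.

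The key point then follows at once: Lemmas~\ref{lemma:xic.3.12)} and~\ref{lemma:xic.3.14)} say that $\AAA_1$ and $\AAA_2$ are each generated at $P$ by an $\ell$-basis element of weight $\wt_2(y_2)=m-2$, so the fiber of each at $P$ is exactly the one-dimensional $\wt_2\equiv m-2$ eigenline. Thus $\AAA_1$ and $\AAA_2$ have the same fiber at $P$ and glue to a rank-$1$ $\ell$-subbundle $\AAA\subset\gr^1_C\OOO$ with $\AAA\totimes\OOO_{C_i}=\AAA_i$; concretely, near $P$ one takes $\AAA^\sharp$ to be the weight-$(m-2)$ direct summand of $(\gr^1_C\OOO)^\sharp$, whose restriction to each $C_i^\sharp$ recovers $\AAA_i^\sharp$. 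I expect the only delicate point to be confirming that this gluing is compatible with the $\ell$-structure at $P$, but this is guaranteed precisely by the coincidence of the weight $\wt_2(y_2)$ on both sides, so no extension-class obstruction can appear.

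Finally, $\AAA$ is a subbundle, i.e.\ locally a direct summand, by the $\ell$-splitting of Lemma~\ref{lemma:xic.3.12)} on $C_1$ and as the weight-$(m-2)$ summand at $P$, so the quotient $\BBB:=\gr^1_C\OOO/\AAA$ is $\ell$-invertible. Restricting the quotient map to each component then yields $\BBB\totimes\OOO_{C_i}=(\gr^1_C\OOO\totimes\OOO_{C_i})/\AAA_i=\BBB_i$, which is the remaining assertion.
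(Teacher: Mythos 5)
Your gluing criterion is where the gap lies. You assert that an $\ell$-subbundle of $\gr^1_C\OOO$ is the same datum as a pair of rank-one $\ell$-subbundles on $C_1$ and $C_2$ whose \emph{fibers agree at the reduced point} $P$, and your weight computation only verifies agreement of fibers there. But the matching has to take place over the scheme-theoretic intersection on the index-$1$ cover, and that intersection is non-reduced: since $\III_{C_1}^{\sharp}=(y_1,\,y_3)$ and $\III_{C_2}^{\sharp}=(y_3,\,y_1^{m-2}-y_2^2)$, one has $\OOO_{C_1^{\sharp}\cap C_2^{\sharp}}\simeq\CC[y_2]/(y_2^2)$, of length $2$ (this is \eqref{eq:new:cap}). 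From the sequence $0\to\OOO_{C^{\sharp}}\to\OOO_{C_1^{\sharp}}\oplus\OOO_{C_2^{\sharp}}\to\OOO_{C_1^{\sharp}\cap C_2^{\sharp}}\to0$, local generators of $\AAA_1^{\sharp}$ and $\AAA_2^{\sharp}$ lift to a single section of $(\gr^1_C\OOO)^{\sharp}$ --- which is what produces $\AAA$ --- only if their images agree \emph{modulo} $y_2^2$, a strictly stronger condition than agreement at the closed point. Writing $\phi_1=y_3$ and $\phi_2=y_1(y_2^2-y_1^{m-2})$ for the $\ell$-free $\ell$-basis at $P$, the failure is visible non-equivariantly: the subsheaves generated by $\phi_2$ on $C_1^{\sharp}$ and by $\phi_2+y_2\phi_1$ on $C_2^{\sharp}$ have equal fibers at $P^{\sharp}$, yet their images in $(\gr^1_C\OOO)^{\sharp}\otimes\CC[y_2]/(y_2^2)$ cannot be made equal by any unit rescaling, so they glue to no subbundle.

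For this reason the congruence you check, $\wt_2(\phi_1)\equiv1\not\equiv m-2\equiv\wt_2(\phi_2)$, is not the relevant one. The obstruction space is the weight-$\wt_2(\phi_2)$ part of the length-four module $\CC[y_2]/(y_2^2)\,\phi_1\oplus\CC[y_2]/(y_2^2)\,\phi_2$, which a priori could contain $y_2\phi_1$ and $y_2\phi_2$ as well as $\phi_2$; what must be verified is that $\wt_2(y_2)\not\equiv0$ (ruling out $y_2\phi_2$) and $\wt_2(y_2\phi_1)\not\equiv\wt_2(\phi_2)$ (ruling out $y_2\phi_1$), so that this space is the single line $\CC\cdot\phi_2$ and the two generators --- both semi-invariant of weight $\wt_2(y_2)$ --- can be scaled to have the same image and hence lift. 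This is exactly the computation in the paper's proof, and it succeeds here ($\wt_2(y_2\phi_2)\equiv m-4$ and $\wt_2(y_2\phi_1)\equiv m-1$, both $\not\equiv m-2$), so your strategy is reparable; but as written the decisive verification is absent and is replaced by the claim that ``no extension-class obstruction can appear,'' which is precisely the point at issue. A secondary defect: your concrete recipe of taking $\AAA^{\sharp}$ to be the monomial summand $\OOO_{C^{\sharp}}\phi_2$ is not legitimate either, since the prescribed $\AAA_i$ need not restrict to $\OOO_{C_i^{\sharp}}\phi_2$ --- their generators are semi-invariants of the correct weight but may involve $\phi_1$-terms --- so one must lift the given generators rather than choose a summand in advance.
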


\begin{proof}
We note an exact sequence
\begin{equation}
\label{eq:new:es}
0 \xlongrightarrow{\hspace*{17pt}} \OOO_{C^{\sharp}} \xlongrightarrow{\hspace*{17pt}} \OOO_{C_1^{\sharp}} \oplus \OOO_{C_2^{\sharp}} \xlongrightarrow{\hspace*{17pt}} 
\OOO_{C_1^{\sharp} \cap C_2^{\sharp}} \xlongrightarrow{\hspace*{17pt}} 0
\end{equation}
and an isomorphism
\begin{equation}
\label{eq:new:cap}
\OOO_{C_1^{\sharp} \cap C_2^{\sharp}} \simeq \CC[y_1,\, y_2,\, y_3]/(y_1,\, y_2^2,\, y_3)
\simeq \CC[y_2]/(y_2^2).
\end{equation}
We note that $\gr^1_C \OOO$ is $\ell$-free at~$P$ and has an $\ell$-free $\ell$-basis
$\phi_1 = y_3$ and $\phi_2 = y_1(y_2^2 - y_1^{m-2})$, and that $\AAA_1$ and
$\AAA_2$ have
$\ell$-free $\ell$-bases both with $\wt_2 \equiv \wt_2(y_2) \equiv \wt_2(\phi_2)$
by Lemmas~\ref{lemma:xic.3.12)} and~\ref{lemma:xic.3.14)}.
In the exact sequence
\begin{equation*}
0 \to\gr^1_C \OOO^{\sharp} 
\xlongrightarrow{\hspace*{7pt}\alpha\hspace*{7pt}}
{\gr^1_C\OOO^{\sharp}} \otimes (\OOO_{C_1}^{\sharp} \oplus \OOO_{C_2}^{\sharp}) 
\xlongrightarrow{\hspace*{17pt}} 
\CC[y_2]/(y_2^2) \phi_1 \oplus \CC[y_2]/(y_2^2) \phi_2 \to 0
\end{equation*}
obtained from the above, we consider the part of $\wt_2 \equiv \wt_2(\phi_2)$.
Then
\begin{equation*}
\coker (\alpha)_{\wt_2 \equiv \wt_2(\phi_2)} = \CC \cdot \phi_2
\end{equation*}
because $\wt_2(y_2) \not\equiv 0$ and $\wt_2 (\phi_2) \not\equiv \wt_2 (y_2 \phi_1)$.
Therefore the $\ell$-free $\ell$-bases of $\AAA_1$ and $\AAA_2$ (up to constant
multiplication) lift to a section of $\gr^1_C \OOO^{\sharp}$. 
\end{proof}
By $\ell$-invertible sheaves $\AAA$ and $\BBB$ of Lemma~\ref{lemma:xic.3.16)}, we have $\mathrm{E}(\JJJ, 2)$ of Proposition~\ref{thicken-I}, and Theorem~\ref{M88-8.12} with $d=2$ implies
\begin{multline*}
\ldeg(\AAA) + 2 \ldeg(\BBB)
= \left(-1 + \textstyle\frac{m-1}{m} + \frac1{m'}\right) + \textstyle\frac2m +\\
+ 2\left(\left(-1 + \textstyle\frac{m+1}{2m} + \textstyle\frac{m'-a'}{m'}\right)+ \left(-1+\textstyle \frac{m-1}m\right)\right) 
=\textstyle \frac{m'+1-2a'}{m'} \ge 0.
\end{multline*}
On the other hand, we have
\begin{equation*}
0 > (K_X \cdot C_1) = -1 +\textstyle \frac{m+1}{2m} + \frac{m'-a'}{m'} = \textstyle\frac{m+1}{2m} -\frac{a'}{m'}.
\end{equation*}
Thus we have
\begin{equation*}
\textstyle\frac{m'+1}2 \,\ge \, a' > \textstyle\frac{m+1}{2m} \cdot m' > \frac{m'}2.
\end{equation*}
Whence $2a'=m'+1$, $m>m'$, $\ldeg(\AAA) + 2 \ldeg(\BBB)=0$, and $(X,\, C)$ is a $\QQ$-conic bundle
by Theorem~\ref{M88-8.12}.
Under the notation of Proposition~\ref{thicken-J} with $d=2$, we have an $\ell$-exact sequence
\begin{equation*}
0 \xlongrightarrow{\hspace*{17pt}}\BBB^{\totimes 2} \xlongrightarrow{\hspace*{17pt}} \gr^2(\OOO,\, \JJJ) \xlongrightarrow{\hspace*{17pt}} \AAA \xlongrightarrow{\hspace*{17pt}} 0.
\leqno{\mathrm{E}(\JJJ)}
\end{equation*}
The extension class $[E(\JJJ)]$ belongs to $H^1(\AAA^{\totimes -1} \totimes \BBB^{\totimes 2})$.
We see 
\begin{eqnarray*}
\AAA^{\totimes (-1)} \totimes \BBB^{\totimes 2}\totimes\OOO_{C_1} &= 
&-\bigl(-1+(m-1)P_1^{\sharp}+R^{\sharp}\bigr)+
2\bigl(-1+\textstyle\frac{m+1}{2}P_1^{\sharp} +\textstyle\frac{m'-1}{2}R^{\sharp}\bigr)
\\
&=&-1+2 P_1^{\sharp}+(m'-2)R^{\sharp},
\end{eqnarray*}
and
\begin{eqnarray*}
\AAA^{\totimes (-1)} \totimes \BBB^{\totimes 2}\totimes\OOO_{C_2} 
&=&
-\bigl(2 P_2^{\sharp})+2(-1+(m-1)P_2^{\sharp}\bigr)
\\
&=&-1+(m-4)P_2^{\sharp}.
\end{eqnarray*}
Hence $H^1(C,\, \AAA^{\totimes (-1)} \totimes \BBB^{\totimes 2})=0$ by
\[
\left((\AAA^\sharp)^{\totimes (-1)} \totimes (\BBB^\sharp)^{\totimes 2}\totimes \OOO_{C_1^\sharp \cap C_2^\sharp }\right)^{\mumu_m}=0
\]
(cf. \eqref{eq:new:es} and~\eqref{eq:new:cap}),
and $\JJJ$ extends to $\KKK$ which is $(1,3)$-monomializable. Hence by Theorem~\ref{M88-8.12} with $d=3$, we have 
\begin{equation*}
\ldeg(\AAA) + 3 \ldeg(\BBB)= \ldeg(\BBB)=-\textstyle\frac{m+m'}{2mm'}\ge 0,
\end{equation*}
which is a contradiction.
Thus a \typec{k2A} component cannot meet a \typec{IC} component. This completes the proof of Theorem~\ref{thm:IC}. 

\section{Cases \typec{k2A}+\typec{kAD} and \typec{k2A}+\typec{k3A}.}
\label{sect:kAD-k3A}

The purpose of this section is to disprove the following situation.

\begin{setup}
\label{notation:xkad.1)}
Let $(X,\, C)$ be an extremal curve germ such that
$C=C_1 \cup C_2$, where $C_1$ and $C_2$ are irreducible, $C_1$ is of type
\typec{k2A}, and $C_2$ is of type~\typec{kAD} or \typec{k3A}.

In this situation $X$ has exactly three non-Gorenstein points: 
\begin{itemize}
\item 
an ordinary point $P$ of odd index $m \ge 3$, where $\{P\}=C_1 \cap C_2$,
\item 
$Q$ of index $m' \ge 3$ on $C_1$,
\item 
$R$ of index $2$ on $C_2$.
\end{itemize}
Moreover, if $(X,C_2)$ is of type~\typec{k3A}, then $C_2$ has a type~\typec{III} point $S$ of $X$.

\begin{scase}
Indeed, since the germ $(X,C_2)$ 
is of type~\typec{kAD} or \typec{k3A}, it has exactly two non-Gorenstein points: an ordinary point $P$ 
of odd index $m \ge 3$ and an index $2$ point $R$ (see \cite[2.13, 2.12]{KM92}, and \cite{Mori:err}). By \cite[Corollary~1.4(iv)]{MP20} \ 
$C_1$ does not contain index $2$ points, so $C_1 \cap C_2=\{P\}$.
Moreover, in the case \typec{k3A} there is a type~\typec{III} point on $C_2$ and $R$ is an ordinary point \cite[2.12]{KM92}.
\end{scase}
\end{setup} 

\begin{sremark}
\label{rem:Cl}
Note that under the assumption of \xref{notation:xkad.1)} the Weil divisor class group $\Cl(X)$ is torsion free (cf.~\ref{not:app}),
i.e. $(X,C)$ is primitive.
If furthermore $(X,C)$ is a $\QQ$-conic bundle germ, then the base $(X,o)$ must be smooth.

Indeed, since $(X,C_2)$ is primitive, the group $\Cl(X)_{\tors}$ 
is contained in the kernel of the natural homomorphism $\Cl(X) \to \Cl(X,C_2)$.
Hence $\Cl(X)_{\tors}\subset \Cl(X,C_1)_{\tors}$ and the image of $\Cl(X)_{\tors}$ in $\Cl(X,P)$
is trivial by Corollary~\ref{cor:Clsc}. On the other hand, since $(X,C_1)$ is locally primitive at~$P$ and $Q$,
we have an injection $\Cl(X,C_1)_{\tors}\subset \Cl(X,P)$ again by Corollary~\ref{cor:Clsc},
a contradiction.
\end{sremark}

\begin{theorem}\label{thm:KAD-k3A}
The case \xref{notation:xkad.1)} does not occur.
\end{theorem}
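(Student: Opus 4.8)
The plan is to mimic the structure of the proof of Theorem~\ref{thm:IC} for the \typec{IC}+\typec{k2A} case, but now with $C_2$ of type \typec{kAD} or \typec{k3A}. The strategy is to derive numerical constraints on the indices $m$ (at $P$), $m'$ (at $Q$), and the local invariants at the index-$2$ point $R$ on $C_2$, and then apply Theorem~\ref{M88-8.12} to a suitable monomializable ideal $\JJJ$ to force a contradiction. First I would fix normal-form coordinates at the \typec{k2A} point $P=C_1\cap C_2$, exactly as in Lemma~\ref{lemma:xic.3.3}: since $C_1$ is of type \typec{k2A}, $C_1^\sharp$ is smooth, and the vanishing~\eqref{eq:ref(z:xic.3.6).2} coming from Corollary following the splitting~\eqref{eq:refz:xic.3.6.1} should pin down which coordinate axis $C_1^\sharp$ is and how $C_2^\sharp$ sits relative to it. As in Lemma~\ref{lemma:xic.3.1}, I would first deform $C_1$ so that $Q$ becomes an ordinary (cyclic quotient) point, which is legitimate by \cite[Theorem~3.2]{MP:IA}; the point $P$ stays \typec{k2A} and $R$ stays an index-$2$ point.

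Next I would produce $\ell$-splittings of $\gr^1_{C_i}\OOO$ for $i=1,2$, analogous to Lemmas~\ref{lemma:xic.3.10)}--\ref{lemma:xic.3.14)}. The component $C_1$ is again \typec{k2A}, so its side is handled verbatim as in Lemma~\ref{lemma:xic.3.10)} and Lemma~\ref{lemma:xic.3.12)}, giving an $\ell$-subbundle $\AAA_1\subset \gr^1_C\OOO\totimes\OOO_{C_1}$ with a basis of weight $\wt_2(y_2)$ at $P$. The genuinely new work is on the $C_2$ side: for \typec{kAD} and \typec{k3A} I would quote the relevant splitting of $\gr^1_{C_2}\OOO$ from the classification in \cite{KM92} (and the corrections in \cite{Mori:err}), identifying the canonical factor and its $\ell$-basis $v$ in terms of the local equation at the \typec{kAD}/\typec{k3A} point. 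A deformation-of-patching argument as in Lemma~\ref{lemma:xic.3.8)} should let me assume the two bases glue, so that the exact-sequence data of Lemma~\ref{lemma:xic.3.16)} goes through: the subbundles $\AAA_1,\AAA_2$ patch to a global $\ell$-subbundle $\AAA\subset\gr^1_C\OOO$ with quotient $\BBB$, using that the weights at $P$ match and that the relevant $\mumu_m$-invariant part of the obstruction vanishes (the analogue of~\eqref{eq:new:es}--\eqref{eq:new:cap}).

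Once $\AAA$ and $\BBB$ are in place, I would run the same inequality engine as at the end of Section~\ref{sect:IC}. Proposition~\ref{thicken-I} gives $\mathrm{E}(\JJJ,2)$ and Theorem~\ref{M88-8.12} with $d=2$ yields an inequality $\ldeg(\AAA)+2\ldeg(\BBB)\ge 0$, which I would combine with the anticanonical constraint $0>(K_X\cdot C_1)$ (computed via \cite[(2.3.1),(4.9)]{Mori:flip}). These should either be immediately incompatible or, as in the \typec{IC} case, force equality, pin down $(X,C)$ as a $\QQ$-conic bundle, and push the argument to the next step: checking $H^1(\AAA^{\totimes -1}\totimes\BBB^{\totimes 2})=0$ via the $\mumu_m$-invariant vanishing over $C_1^\sharp\cap C_2^\sharp$, applying Proposition~\ref{thicken-J} with $d=2$ to extend $\JJJ$ to a $(1,3)$-monomializable $\KKK$, and invoking Theorem~\ref{M88-8.12} with $d=3$ to get $\ldeg(\BBB)\ge 0$, contradicting an explicit negative value.

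The main obstacle I anticipate is the $C_2$ side. Unlike \typec{IC}, where the splitting~\eqref{eq:ref(z:xic.3.7).1} of $\gr^1_{C_2}\OOO$ and its canonical factor are recorded directly in \cite[(A.3.2)]{KM92}, the \typec{kAD} and especially \typec{k3A} cases carry an extra non-Gorenstein index-$2$ point $R$ (and, for \typec{k3A}, a further type-\typec{III} point $S$), so I will need the precise local model and the correct $\ell$-degree contributions of $\AAA$ and $\BBB$ along $C_2$ at both $P$ and $R$, drawing on \cite[2.12,2.13]{KM92} and \cite{Mori:err}. Getting these local invariants right---and verifying that the weight-matching needed for the gluing in the \typec{k3A} case survives the presence of the \typec{III} point---is where the delicate bookkeeping lies; the final numerical contradiction should then fall out exactly as in Section~\ref{sect:IC}.
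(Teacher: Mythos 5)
Your preparatory steps --- making $Q$ ordinary, pinning down coordinates at $P$ via \eqref{eq:ref(z:xic.3.6).2}, splitting $\gr^1_{C_i}\OOO$ on each branch, and gluing to a global $\ell$-splitting $\gr^1_C\OOO=\AAA\toplus\BBB$ --- do match the paper's Lemmas~\xref{lemma:xkad.3.1)}, \xref{lemma:xkad.3.3)a}, \xref{lemma:xkad.3.10)}--\xref{lemma:xkad.3.14)o} and Corollary~\xref{corollary:xkad.3.18)} (modulo the preliminary deformation~\xref{case:xkad.3.2z)} reducing to \typec{kAD} with $m\ge5$, $\ell(R)=0$, or \typec{k3A} with $m=3$). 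The genuine gap is your final step: the ``inequality engine'' of Section~\xref{sect:IC} does not run here, and the paper does not use it. In the \typec{kAD} subcase the splitting data are $\AAA_1=(-1+\frac{m+1}2P_1^\sharp+(m'-a')Q^\sharp)$, $\BBB_1=(Q^\sharp)$, $\AAA_2=(\frac{m-1}2P_2^\sharp+R^\sharp)$, $\BBB_2=(-1+R^\sharp)$, so that $\ldeg(\AAA)+2\ldeg(\BBB)=\frac{m'-a'+2}{m'}-\frac12$; Theorem~\xref{M88-8.12} with $d=2$ only yields $m'-a'\ge m'/2-2$, which is perfectly compatible with the constraint $m'-a'<m'/2$ coming from $K_X\cdot C_1<0$ --- no contradiction, no forced equality, hence no $\QQ$-conic bundle conclusion. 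Worse, the passage to $d=3$ via Proposition~\xref{thicken-J} requires $\mathrm{E}(\JJJ,2)$ to be $\ell$-split, and here the obstruction space is genuinely nonzero: on $C_2$ one computes $\AAA_2^{\totimes(-1)}\totimes\BBB_2^{\totimes2}=(-3+\frac{m+1}2P_2^\sharp+R^\sharp)$, of underlying degree $-3$, so $H^1(C_2,\cdot)\neq0$. This is a negativity problem on the branch $C_2$ itself, caused by the index-$2$ point $R$ sitting inside $\BBB_2$, not a gluing problem at the node, so no $\mumu_m$-invariance argument over $C_1^\sharp\cap C_2^\sharp$ can remove it. Even granting the splitting, $\ldeg(\AAA)+3\ldeg(\BBB)=\frac{m'-a'+3}{m'}-1$ is positive for, e.g., $m'=3$, $a'=2$, so $d=3$ would not finish the argument anyway. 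In the \typec{k3A} subcase the engine cannot even start: $C_2$ carries a type~\typec{III} point $S$ at which $C$ fails to be a complete intersection (one only has $\len\coker\bigl(\tilde S^2(\gr^1_C\OOO)\to\gr^2_C\OOO\bigr)\le1$, by \cite[(2.12.1)]{KM92}), so the hypotheses of Proposition~\xref{thicken-I} and of Theorem~\xref{M88-8.12} fail at $S$; you noticed $S$ but treated it as bookkeeping, whereas it invalidates the monomialization hypothesis outright.

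This is exactly why the paper changes strategy after Corollary~\xref{corollary:xkad.3.18)}. For \typec{k3A} (Proposition~\xref{proop:k3a.1}) it computes $\upomega$-cohomology with a length-$1$ correction at $S$, obtains $H^1(\gr^2_C\upomega_X)\neq0$, concludes that $f$ is a $\QQ$-conic bundle over a smooth base via \cite[(1.2.1)]{Mori:flip} and Remark~\xref{rem:Cl}, and then contradicts Lemma~\xref{t1t2-lin-indep} using $\h^0(\tilde S^2(\gr^1_C\OOO_X))=0$. For \typec{kAD} it builds the deeper filtration $\NNN\subset\MMM\subset\JJJ\subset\III_C$, uses a deformation of patching (Lemma~\xref{lemma:xkad.3.20)z}) to \emph{force} the needed splitting of $\gr^2(\OOO,\JJJ)\totimes\OOO_{C_2}$ --- precisely because the cohomological splitting criterion is unavailable --- then shows $H^1(\upomega_X/\upomega_X\totimes\NNN)\neq0$, again concludes $\QQ$-conic bundle, and derives the contradiction from the multiplicities of the generators $t_1,t_2$ of $\mmm_{Z,o}$ along the generic points of $C_1$ and $C_2$. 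Your outline reproduces the setup but misses the two ideas that actually close the proof: the $\QQ$-conic-bundle reduction via non-vanishing of $H^1$ of $\upomega$-sheaves, and the multiplicity/length argument against the generators of $\mmm_{Z,o}$.
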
 

The proof of Theorem~\ref{thm:KAD-k3A} will be done in a series
of lemmas.

\begin{slemma}[Deformation at $Q$]
\label{lemma:xkad.3.1)}
We may assume $Q$ is ordinary, that is,
$(X,\, Q)$ is a cyclic quotient singularity. In particular by \cite[(9.3)]{Mori:flip}, there
is a coordinate system:
\begin{equation*}
(X,\, Q)=\CC^3_{z_1,\, z_2,\, z_3}/\mumu_{m'}(1,a',-1) \supset C_1=(z_1\axis)/\mumu_{m'},
\end{equation*}
where $0 < a' < m'$ and $\gcd(a',m')=1$.
\end{slemma}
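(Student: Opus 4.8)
The plan is to mirror the argument of Lemma~\ref{lemma:xic.3.1}, since the point $Q$ on the \typec{k2A} component $C_1$ plays here exactly the role that $R$ played there. First I would note that $(X,\, C_1)$ is by itself an extremal curve germ of type~\typec{k2A}, with $Q$ its non-Gorenstein point of index $m'\ge 3$ lying on $C_1$ away from $P=C_1\cap C_2$.

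Next, following the deformation technique of \cite[(9.6)]{Mori:flip}, I would deform the curve germ $(X,\, C_1)$ to a germ $(X_\lambda,\, C_{1,\lambda})$ of type~\typec{k2A} carrying only ordinary singular points, arranging the deformation to be trivial outside a small neighborhood of $Q$. Because the deformation is supported near $Q$, it leaves the point $P$, the whole component $C_2$, and the points $R$ and (in the \typec{k3A} case) $S$ untouched; hence it extends trivially to a deformation $(X_\lambda,\, C_\lambda)$ of the entire germ $(X,\, C)$. By \cite[Theorem~3.2]{MP:IA} every small deformation of $X$ is again an extremal curve germ, so $(X_\lambda,\, C_\lambda)$ still satisfies all the hypotheses of Setup~\ref{notation:xkad.1)}.

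Finally, since the aim of this section is to produce a contradiction with \ref{notation:xkad.1)}, it is enough to carry out the remaining argument on the deformed germ, on which $Q$ is ordinary; the explicit cyclic quotient presentation $(X,\, Q)=\CC^3_{z_1,\, z_2,\, z_3}/\mumu_{m'}(1,a',-1)$ with $C_1=(z_1\axis)/\mumu_{m'}$ then follows directly from \cite[(9.3)]{Mori:flip}. The main point to verify is that the deformation can be genuinely localized at $Q$, so that the patching of $C_1$ with $C_2$ and the structure at $P$, $R$ (and $S$) are preserved; this is exactly what the localized construction of \cite[(9.6)]{Mori:flip} supplies, and it is the only place where care is required.
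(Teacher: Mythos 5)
Your proposal is correct and coincides with the paper's own argument: the paper proves this lemma by declaring it ``exactly the same'' as the proof of Lemma~\ref{lemma:xic.3.1}, i.e.\ a local deformation at $Q$ as in \cite[(9.6)]{Mori:flip} making $Q$ ordinary, extended trivially to the whole germ, combined with \cite[Theorem~3.2]{MP:IA} and the observation that working on the deformed germ suffices since the goal is a contradiction. Your additional remark that the localized deformation leaves $P$, $C_2$, $R$ (and $S$) untouched, so the hypotheses of Setup~\ref{notation:xkad.1)} persist, is exactly the implicit content of the paper's reduction.
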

The proof of Lemma~\ref{lemma:xkad.3.1)} is exactly the same as one of Lemma~\ref{lemma:xic.3.1}.

\begin{scase}
\label{case:xkad.3.2z)}
By deformation methods as in \cite[6.1]{MP20}, we can successively deform $(X,C_2)$ as follows:
\par\medskip\noindent\qquad
\begin{tabularx}{0.7\textwidth}{lcl}
\typec{kAD} with $\ell(R)=2$ & $\xRightarrow{\text{\text{\cite[Lemma~5.5]{MP20}}}}$ & \typec{k3A};
\\
\typec{k3A} with $m \ge 5$ & $\xRightarrow{\text{\text{\cite[Lemma~5.3]{MP20}}}}$ & \typec{kAD} with $\ell(R)=0$;
\\
\typec{kAD} with $\ell(R) = 1$& $\xRightarrow{\text{\cite[Lemma~5.7]{MP20}}}$ & \typec{kAD} with $\ell(R)=0$.
\end{tabularx}
\par\medskip\noindent
Indeed, \cite[Remark~1]{Mori:err} explains that \typec{kAD} in (2.13) of \cite{KM92} comes out of two sources; one is (2.13.3.1) via
(2.13.10) and (2.13.12) where $\ell(R) \le 1$ and $m \ge 5$; and the other is (2.13.3.2) via (2.13.4) where $\ell(R)=2$ and $m \ge 3$. Thus
\typec{kAD} with $m=3$ has $\ell(R)=2$ \cite[(2.13.3.2)]{KM92} and deforms to \typec{k3A} in the above deformations. Hence we arrive at
the situation where $R$ is a cyclic quotient singularity and
$C_2$ is of type~\typec{kAD} with $m \ge 5$ and $\ell(R)=0$ or \typec{k3A} with $m=3$. 
\end{scase}

\begin{slemma}
\label{lemma:xkad.3.3)a}
There exists a coordinate system
\begin{equation*}
(X,\, P)=\CC^3_{y_1,\, y_2,\, y_3}/\mumu_m\textstyle \bigl(1, \textstyle\frac{m+1}2,-1\bigr) \supset
\begin{cases}
C_2 = (y_1\axis)/\mumu_m,
\\
C_1 = (y_3\axis)/\mumu_m,
\end{cases}
\end{equation*}
where $m$ is odd and $m \ge 5$ \textup(resp. $m = 3$\textup)
if $C_2$ is of type~\typec{kAD} \textup(resp. \typec{k3A}\textup).
Furthermore, we have $m' \ge 3$ and $m'-a' < m'/2$.
\end{slemma}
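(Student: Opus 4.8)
The plan is to follow the proof of Lemma~\ref{lemma:xic.3.3}, modified by the fact that here the shared point $P$ is \emph{ordinary} for both germs, so that $C_2$ is a genuine smooth axis at $P$ rather than a cuspidal locus. First I would invoke the local classification of the kAD and k3A germs at their index-$m$ point $P$ \textup(see \cite[(2.12),(2.13)]{KM92} and \cite{Mori:err}\textup): it presents $(X,P)$ as a cyclic quotient $\CC^3/\mumu_m(1,a,-1)$ in which $C_2$ is the weight-$1$ axis, and the requirement that $C_2$ be of type kAD \textup(resp.\ k3A\textup) fixes $a=\tfrac{m+1}{2}$ and $m\ge5$ \textup(resp.\ $m=3$\textup). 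This clean normal form is available precisely because, by~\ref{case:xkad.3.2z)}, we have already reduced to the case where $R$ is ordinary with $\ell(R)=0$.

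Next I would feed in the curve $C_1$. Since $(X,C_1)$ is of type~\typec{k2A}, its cover $C_1^\sharp$ is smooth and, by \cite[(9.3)]{Mori:flip}, some $\mumu_m$-semi-invariant coordinate $y_i$ restricts to a uniformizing parameter of $C_1^\sharp$ with $\wt(y_i)+\wt(y_j)\equiv 0\pmod m$ for a suitable $j$. With weights $(1,\tfrac{m+1}{2},-1)$ the only zero-sum pair of residues is $\{1,-1\}$, so $y_2$ is excluded and $i\in\{1,3\}$. The possibility $i=1$ would make $C_1$ tangent to $C_2=(y_1\axis)$; I would rule it out exactly as in Lemma~\ref{lemma:xic.3.3}, namely by noting that then $\III^\sharp_{C_2}\cdot\OOO_{C_1^\sharp}=(y_1^{(m+1)/2})$, so that $y_1^{(m-1)/2}\Omega$ with $\Omega=\dd y_1\wedge\dd y_2\wedge\dd y_3$ is a nonzero $\mumu_m$-invariant element of $\upomega_{X^\sharp}\otimes\OOO_{C_1^\sharp}/(\III^\sharp_{C_2}\cdot\OOO_{C_1^\sharp})$, contradicting~\eqref{eq:ref(z:xic.3.6).2}. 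Hence $i=3$, the tangent directions of $C_1$ and $C_2$ at $P$ are distinct, and a $\mumu_m$-equivariant change of coordinates fixing $C_2=(y_1\axis)$ and clearing the lower-order terms of $C_1^\sharp$ places $C_1=(y_3\axis)$.

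It remains to extract the numerical inequalities from $K_X\cdot C_1<0$. Renormalizing the $\mumu_m$-action so that the $C_1$-parameter $y_3$ acquires weight $1$ converts the transverse weights at $P$ into $\{m-1,\tfrac{m-1}{2}\}$, which is exactly the configuration arising at the point $P$ in Lemma~\ref{lemma:xic.3.3}; hence the local contribution of $P$ to $K_X\cdot C_1$ equals $\tfrac{m+1}{2m}$, while the ordinary point $Q=\CC^3/\mumu_{m'}(1,a',-1)$ contributes $\tfrac{m'-a'}{m'}$. By \cite[(2.3.1),(4.9)]{Mori:flip} this yields
\[
0>K_X\cdot C_1=-1+\frac{m+1}{2m}+\frac{m'-a'}{m'},
\]
so that $\tfrac{m'-a'}{m'}<\tfrac{m-1}{2m}<\tfrac12$, i.e.\ $m'-a'<m'/2$; and since $m'-a'\ge1$ this forces $m'\ge3$.

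The main obstacle is the construction of the simultaneous normal form in the first two steps, and in particular the exclusion of the tangential case $i=1$ via the invariant vanishing~\eqref{eq:ref(z:xic.3.6).2}. Once the coordinates are pinned down, the anticanonical computation is formally identical to that of Lemma~\ref{lemma:xic.3.3}, so the genuine work lies in correctly transporting the local structure of the kAD/k3A point to $P$ and in verifying that $C_1$ and $C_2$ meet transversally there.
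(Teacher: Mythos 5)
Your proposal follows the paper's own proof essentially step for step: the same normal form at $P$ taken from \cite[(2.12), (2.13)]{KM92} together with the reduction of~\ref{case:xkad.3.2z)}, the same exclusion of $i=1$ by exhibiting $y_1^{\frac{m-1}2}\Omega$ as a nonzero $\mumu_m$-invariant section contradicting \eqref{eq:ref(z:xic.3.6).2}, the same $\mumu_m$-equivariant coordinate change moving $C_1^\sharp$ onto the $y_3\axis$ while fixing $C_2$, and the identical numerical estimate $0>-1+\frac{m+1}{2m}+\frac{m'-a'}{m'}$ giving $m'\ge 3$ and $m'-a'<m'/2$.

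There is, however, one concrete slip. Your claim that ``with weights $(1,\frac{m+1}2,-1)$ the only zero-sum pair of residues is $\{1,-1\}$, so $y_2$ is excluded and $i\in\{1,3\}$'' fails exactly in the \typec{k3A} case $m=3$: there $\frac{m+1}2=2\equiv -1\pmod 3$, so $\wt(y_1)+\wt(y_2)\equiv 0\pmod m$ and $i=2$ is perfectly admissible under the condition of \cite[(9.3)]{Mori:flip}. Since the lemma is stated for both the \typec{kAD} case ($m\ge 5$) and the \typec{k3A} case ($m=3$), your weight argument breaks down in one of the two cases it must cover. The gap is one line long: when $m=3$ the coordinates $y_2$ and $y_3$ have the same weight, so if $i=2$ one simply swaps them and reduces to $i=3$ --- this is precisely what the paper does (``If $m=3$, then we can still assume $i=3$ switching $y_2$ and $y_3$ if necessary''). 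With that amendment your argument is complete and coincides with the paper's.
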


The proof is very similar to that of Lemma~\ref{lemma:xic.3.3}.
If we ignore $C_1$, this is in \cite[(2.12.2)]{KM92} in case of \typec{k3A}. 
In case of \typec{kAD}, it follows from (2.13.3), 
(2.13.4), (2.13.9) and (2.13.10) of \cite{KM92}.

\begin{proof}
Since $C_1$ is of type~\typec{k2A}, we see that
$C_1^\sharp$ (the inverse image of $C_1$ by the index one cover $X^\sharp \to X$) is smooth
and there exist $i, j \in \{1,2,3\}$ such that $y_i |_{C_1^\sharp}$
is a coordinate for $C_1^\sharp$ and $\wt(y_i) + \wt(y_j) \equiv 0 \mod m$
\cite[(9.3)]{Mori:flip}. 

Assume that $i=1$. As in the proof of $i\neq 1$ in Lemma~\ref{lemma:xic.3.3}, we have
\begin{equation}
\label{eq:n-xkad.3.6)).1}
\bigl(\upomega_{X^\sharp} \otimes (\OOO/\III^\sharp_{C_1}+\III^\sharp_{C_2})\bigr)^{\mumu_m}=0. 
\end{equation} 

We see $\upomega_{X^\sharp} = \Omega \cdot\OOO_{X^\sharp}$ with 
$\Omega=\dd y_1 \wedge \dd y_2 \wedge \dd y_3$, and $\III^\sharp_{C_2}=(y_2,\, y_3)$.
Since $y_1 |_{C^\sharp_1}$ is a coordinate and $y_2$, $y_3$ are 
$\mumu_m$-semi-invariants, we see 
$y_2\cdot \OOO_{C_1^\sharp} \subset y_1^{\frac{m+1}2} \cdot \OOO^\sharp_{C_1}$ and
$y_3 \cdot \OOO^\sharp_{C_1} \subset y_1^{m-1}\cdot \OOO^\sharp_{C_1}$. Thus
\begin{equation*}
\III^\sharp_{C_2} \cdot \OOO^\sharp_{C_1} \subset y_1^{\frac{m+1}2} \cdot \OOO^\sharp_{C_1},
\end{equation*}
and $y_1^{\frac{m-1}2} \Omega$ induces a non-zero $\mumu_m$-invariant section
of $\upomega_{X^\sharp} \otimes \bigl(\OOO^\sharp_{C_1} /(\III^\sharp_{C_2} \cdot \OOO^\sharp_{C_1})\bigr)$. 
This contradicts \eqref{eq:n-xkad.3.6)).1}. Thus $i\neq 1$. 

If $m \ge 5$, then we have $i=3$. If $m=3$, then we can still assume
$i=3$ switching $y_2$ and $y_3$ if necessary. Thus $i=3$ in any case.
Since $y_3 |_{C_1^\sharp}$ is a coordinate for $C_1^\sharp$, we can make a 
$\mumu_m$-equivariant coordinate change
\begin{equation*}
y'_1=y_1-y_3^{m-1} \phi_1(y_3^m),\quad y'_2 = y_2 - y_3^{\frac{m-1}2} \phi_2(y_3^m), \quad g'_3 = y_3
\end{equation*}
with convergent power series $\phi_1(u), \phi_2(u)$ so that $C_1, C_2$ are
as described in Lemma~\ref{lemma:xkad.3.3)a}. 

Then by $0 > (K_X \cdot C_1)$, we have
\begin{equation*}
0 > -1 + \frac{m+1}{2m} + \frac{m'-a'}{m'} = \frac{m'-a'}{m'} - \frac{m-1}{2m} 
> \frac{m'-a'}{m'} - \frac12
\end{equation*}
\cite[(2.3.1) and (4.9)]{Mori:flip}, whence $m' \ge 3$ and $m'-a' < m'/2$. 
This completes the proof of Lemma~\ref{lemma:xkad.3.3)a}.
\end{proof} 

\subsection{}\label{case:xkad.3.7)g}
We fix the meaning of the symbols in Lemma~\ref{lemma:xkad.3.3)a} during the
proof of Theorem~\ref{thm:KAD-k3A}. As in Section~\ref{sect:IC}, we use the symbol $P^\sharp_i$ to denote a
divisor on
$C^\sharp_i$ defined by a $\mumu_m$-semi-invariant coordinate, $i\in \{1,\, 2\}$ to avoid
confusion. 
As in \eqref{eq:IC:def-wt1-2}, we use $\wt_i$ to denote the weight induced by $C_i$,
that is
\begin{equation}
\label{eq:kAD:def-wt1-2}
\begin{array}{lll}
\wt_1(y_1,\, y_2,\, y_3) &\equiv \bigl(-1,\, \textstyle \frac{m-1}2,\,1\bigr), 
\\
\wt_2(y_1,\, y_2,\, y_3) &\equiv \bigl(1,\, \textstyle\frac{m+1}2,\,-1\bigr). 
\end{array}
\end{equation}
In view of~\ref{case:xkad.3.2z)},
we treat \typec{kAD} and \typec{k3A} 
in two subcases: 
\begin{subcase}
\label{case:xkad.3.7)gcase-1}
The germ $(X,C_2)$ is of type~\typec{kAD} with $m\ge 5$ and $\ell(R)=0$ \cite[5.7]{MP20}, and
\end{subcase}
\begin{subcase}
\label{case:xkad.3.7)gcase-4}
The germ $(X,C_2)$ is of type~\typec{k3A} with $m=3$ \cite[5.4]{MP20}.
\end{subcase} 

Then we have a coordinate system at $R$
\begin{equation*}
(X,\, R)=\CC^3_{v_1,\, v_2,\, v_3}/\mumu_2(1,1,1) \supset C_2 =(v_1\axis)/\mumu_2,
\end{equation*}
Furthermore, we have an $\ell$-splitting (see \eqref{eq:ref(z:xic.3.7).1})
\begin{equation}
\label{gr1_C-O}
\gr^1_{C_2}\OOO =
\begin{cases}
\bigl(\frac{m-1}2 P_2^\sharp + R^\sharp\bigr) \toplus \bigl(-1 + P_2^\sharp + R^\sharp\bigr) &
\text{$m \ge 5$, \quad case~\ref{case:xkad.3.7)gcase-1}}
\\
\bigl(-1+ P_2^\sharp+R^\sharp\bigr) \toplus \bigl(-1 + P_2^\sharp + R^\sharp\bigr) &
\text{$m=3$, \quad case~\ref{case:xkad.3.7)gcase-4}}
\end{cases}
\end{equation}
Indeed, the expression for $\gr^1_{C_2}\OOO$ in the case~\ref{case:xkad.3.7)gcase-1}
follows from \cite[(5.26)]{MP20}.
In the case~\ref{case:xkad.3.7)gcase-4} we have 
\[
\gr_{C_2}^0\upomega ^*=(-1+2P_2^\sharp+R^\sharp)
\]
by \cite[(5.15)]{MP20}, hence
\[
\gr_{C_2}^0\upomega=(-1+P_2^\sharp+R^\sharp).
\]
because $P$ and $R$ are points of indices $3$ and $2$, respectively. Further,
\[
\gr_{C_2}^1\upomega ^*=(0)\toplus (0)  
\]
by \cite[(5.17)]{MP20}. Since $\upomega_X$ is $\ell$-invertible on $X$, we have
\[
\gr_{C_2}^1 \OOO= \gr_{C_2}^1\upomega ^*\otimes \upomega_X=
\gr_{C_2}^1\upomega ^*\otimes  \gr_{C_2}^0\upomega=\bigl(-1+ P_2^\sharp+R^\sharp\bigr) \toplus \bigl(-1 + P_2^\sharp + R^\sharp\bigr),
\]
which is \eqref{gr1_C-O} in the case~\ref{case:xkad.3.7)gcase-4}.
We note that the axial multiplicity of $X$ at $R$ is equal to~$1$.

\begin{slemma}
\label{lemma:xkad.3.10)}
For $\gr^1_{C_1} \OOO$, we have an $\ell$-splitting
$\gr^1_{C_1} \OOO = \LLL \toplus \MMM$ with $\ell$-isomorphisms
\begin{align*}
\LLL &\simeq \bigl(P_1^\sharp + Q^\sharp\bigr), 
\\
\MMM &\simeq \gr^0_{C_1} \upomega \simeq \bigl(-1 + \textstyle\frac{m+1}2 P_1^\sharp + (m'-a') Q^\sharp\bigr).
\end{align*}
Furthermore,
$\gr^1_{C_1} \OOO$ \textup(resp. $\LLL$\textup) has an $\ell$-free $\ell$-basis $y_1, y_2$
\textup(resp. $y_1+\nu y_2 y_3^{\frac{m-3}2}$ for some $\nu \in \OOO_{C_1,P}$\textup)
at~$P$.
\end{slemma}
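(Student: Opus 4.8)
The plan is to compute the $\ell$-splitting of $\gr^1_{C_1}\OOO$ by quoting the general machinery of \cite[\S9]{Mori:flip} for the \typec{k2A} germ $(X,C_1)$, exactly as was done for the analogous Lemma~\ref{lemma:xic.3.10)} in the previous section. First I would observe that $m,m'\ge 3$ (by Lemma~\ref{lemma:xkad.3.3)a} and Setup~\ref{notation:xkad.1)}), so that the hypotheses needed to invoke \cite[\S9]{Mori:flip} are met, with the point denoted $P'$ there playing the role of our index-$m'$ point $Q$. The $\ell$-splitting $\gr^1_{C_1}\OOO=\LLL\toplus\MMM$ then comes directly from \cite[(9.9)]{Mori:flip}, and the identification of $\MMM$ with $\gr^0_{C_1}\upomega$ is \cite[(9.9.1)]{Mori:flip}.

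Next I would pin down the two summands as divisors on $C_1^\sharp$. For $\MMM$, its class is read off from the duality with $\gr^0_{C_1}\upomega$: the anticanonical contribution at $P$ is $\frac{m+1}2 P_1^\sharp$ (reflecting $\wt_1$ in~\eqref{eq:kAD:def-wt1-2}, where $y_2|_{C_1^\sharp}$ has weight $\frac{m-1}2$, so the dualizing generator picks up weight $\frac{m+1}2$) and at $Q$ it is $(m'-a')Q^\sharp$, with the global twist $-1$ coming from $\gr^0_{C_1}\upomega\simeq\OOO_{C_1}(-1)$. For $\LLL$, I would use \cite[(9.5.1)]{Mori:flip} to write $\LLL\simeq\bigl(q(P)P_1^\sharp+q(Q)Q^\sharp\bigr)$ with $q(P)=\qldeg(\LLL,P)$ and $q(Q)=\qldeg(\LLL,Q)$, and then evaluate these quasi-$\ell$-degrees via \cite[(9.8)]{Mori:flip}. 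Since both $P$ and $Q$ are ordinary points (cf.~Lemma~\ref{lemma:xkad.3.1)} and Setup~\ref{notation:xkad.1)}), we have $\ell(P)=\ell(Q)=0$ by~\eqref{eq:ell(P)}, which forces $q(P)=q(Q)=1$ and hence $\LLL\simeq\bigl(P_1^\sharp+Q^\sharp\bigr)$.

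Finally, for the $\ell$-free $\ell$-basis at~$P$, I would note that $C_1=(y_3\axis)/\mumu_m$ in the coordinates of Lemma~\ref{lemma:xkad.3.3)a}, so $\III^\sharp_{C_1}=(y_1,y_2)$ near $P^\sharp$ and the two transverse coordinates $y_1,y_2$ give an $\ell$-free $\ell$-basis of $\gr^1_{C_1}\OOO$. To produce the stated generator of the subbundle $\LLL$, I would check which combination of $y_1$ and the higher-weight correction $y_2 y_3^{\frac{m-3}2}$ has the $\mumu_m$-weight matching the $\LLL$-summand; using $\wt_1(y_2)\equiv\frac{m-1}2$ and $\wt_1(y_3)\equiv 1$ from~\eqref{eq:kAD:def-wt1-2}, one computes $\wt_1\bigl(y_2 y_3^{\frac{m-3}2}\bigr)\equiv\frac{m-1}2+\frac{m-3}2\equiv -1\equiv\wt_1(y_1)\bmod m$, so $y_1+\nu y_2 y_3^{\frac{m-3}2}$ is the desired generator for a suitable $\nu\in\OOO_{C_1,P}$.

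The main obstacle I anticipate is not the invocation of \cite[\S9]{Mori:flip}, which is essentially mechanical once the parallel with Lemma~\ref{lemma:xic.3.10)} is set up, but rather the bookkeeping of the two distinct weight systems $\wt_1$ and $\wt_2$ and the correct translation of the index-$1$-cover data into divisor classes on $C_1^\sharp$ versus $C_1$. In particular one must be careful that the coordinate change of Lemma~\ref{lemma:xkad.3.3)a} (which normalizes $C_2$ along the $y_1\axis$) does not disturb the description of $C_1$ along the $y_3\axis$, and that the weight $\frac{m+1}2$ appearing in $\MMM$ is genuinely the anticanonical weight rather than the coordinate weight. Matching these conventions against the already-fixed normalizations in~\eqref{eq:kAD:def-wt1-2} is where the only real care is needed.
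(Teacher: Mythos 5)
Your route coincides with the paper's own. The paper's entire proof of this lemma is the remark that it ``is exactly the same as \cite[(2.13.8-9)]{KM92} and Lemma~\ref{lemma:xic.3.10)} except that the coordinates are numbered differently,'' and the proof of Lemma~\ref{lemma:xic.3.10)} is precisely the chain of citations you give: the $\ell$-splitting from \cite[(9.9)]{Mori:flip}, the identification $\MMM\simeq\gr^0_{C_1}\upomega$ from \cite[(9.9.1)]{Mori:flip}, the formula $\LLL\simeq\bigl(q(P)P_1^\sharp+q(Q)Q^\sharp\bigr)$ from \cite[(9.5.1)]{Mori:flip}, and $q(P)=q(Q)=1$ from \cite[(9.8)]{Mori:flip} because $P$ and $Q$ are ordinary, so $\ell(P)=\ell(Q)=0$ by \eqref{eq:ell(P)}. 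Up to and including the identification of $\LLL$ and $\MMM$, your proposal is the intended argument.

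However, the one step where you actually compute something is arithmetically wrong. By \eqref{eq:kAD:def-wt1-2} we have $\wt_1(y_2)\equiv\frac{m-1}{2}$ and $\wt_1(y_3)\equiv 1$, hence $\wt_1\bigl(y_2y_3^{\frac{m-3}{2}}\bigr)\equiv\frac{m-1}{2}+\frac{m-3}{2}=m-2\equiv-2\pmod m$, not $-1$ as you assert. Since $\wt_1(y_1)\equiv-1$ and any $\nu\in\OOO_{C_1,P}$ is $\mumu_m$-invariant (a series in $y_3^m$), the element $y_1+\nu\, y_2y_3^{\frac{m-3}{2}}$ is then not $\wt_1$-semi-invariant, so your check does not establish that it can serve as an $\ell$-free $\ell$-basis of $\LLL$. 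The exponent that genuinely matches weights is $\frac{m-1}{2}$, since $\wt_1\bigl(y_2y_3^{\frac{m-1}{2}}\bigr)\equiv\frac{m-1}{2}+\frac{m-1}{2}\equiv-1\equiv\wt_1(y_1)$; this is the exact parallel of the basis $y_1+\nu\, y_3y_2^{\frac{m-1}{2}}$ in Lemma~\ref{lemma:xic.3.10)}. Equivalently, with the exponent $\frac{m-3}{2}$ as printed one must take $\nu$ to be a weight-one semi-invariant, i.e.\ divisible by $y_3$, rather than an arbitrary element of $\OOO_{C_1,P}$. A correct weight computation would have exposed this discrepancy instead of ``confirming'' the printed formula, so this step of your proof fails as written. (Your derivation of the coefficient $\frac{m+1}{2}P_1^\sharp$ in $\MMM$ is also loose: $y_2$ restricts to zero on $C_1^\sharp$, and the relevant weight is that of $\dd y_1\wedge\dd y_2\wedge\dd y_3$, namely $\frac{m-1}{2}$, whose complement $m-\frac{m-1}{2}=\frac{m+1}{2}$ gives the coefficient; the conclusion does agree with \cite[(9.9.1)]{Mori:flip}.)
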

This is exactly the same as \cite[(2.13.8-9)]{KM92} and Lemma~\xref{lemma:xic.3.10)}
except that the coordinates are numbered
differently.

\begin{slemma}
\label{lemma:xkad.3.12)q}
$\gr^1_C \OOO \totimes \OOO_{C_1}$ has an $\ell$-splitting 
$\gr^1_C\OOO \totimes \OOO_{C_1} = \AAA_1 \toplus \BBB_1$, where
\begin{equation*}
\AAA_1 = \bigl(-1 + \textstyle\frac{m+1}2 P_1^\sharp +(m'-a')Q^\sharp\bigr), \qquad \BBB_1 = (Q^\sharp).
\end{equation*}
Furthermore, the defining ideal $\III_C^\sharp$ (at~$P^\sharp$) of $C$
is generated by $y_1y_3$ and $y_2$.
\end{slemma}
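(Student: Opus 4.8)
The plan is to follow the proof of Lemma~\ref{lemma:xic.3.12)} closely, working on the index-$1$ cover at $P$. First I would record the defining ideal. In the coordinates of Lemma~\ref{lemma:xkad.3.3)a} the curves $C_1^\sharp$ and $C_2^\sharp$ are the $y_3$- and the $y_1$-axes, so $\III_{C_1}^\sharp=(y_1,y_2)$ and $\III_{C_2}^\sharp=(y_2,y_3)$; intersecting these monomial ideals gives $\III_C^\sharp=\III_{C_1}^\sharp\cap\III_{C_2}^\sharp=(y_2,\,y_1y_3)$ at $P^\sharp$, which is the ``furthermore'' assertion. It follows that the images of $y_2$ and $y_1y_3$ form an $\ell$-free $\ell$-basis of $\gr^1_C\OOO\totimes\OOO_{C_1}$ at $P$, and one reads off $\wt_2(y_2)\equiv\tfrac{m+1}{2}$ and $\wt_2(y_1y_3)\equiv0$.

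Next I would determine the underlying sheaf. As in Lemma~\ref{lemma:xic.3.12)}, the cokernel of the natural $\ell$-homomorphism $\gr^1_C\OOO\totimes\OOO_{C_1}\to\gr^1_{C_1}\OOO$ is the $\mumu_m$-invariant part of $\III_{C_1}^\sharp/(\III_C^\sharp+\III_{C_1}^{\sharp 2})=(y_1,y_2)/(y_2,\,y_1y_3,\,y_1^2)=\CC\cdot y_1$. Since $\wt_2(y_1)\equiv1\not\equiv0$, this invariant part vanishes, so the comparison map is everywhere surjective and hence, the two sheaves being $\ell$-locally free of rank two, an isomorphism of $\OOO_{C_1}$-modules; with Lemma~\ref{lemma:xkad.3.10)} this yields $\gr^1_C\OOO\totimes\OOO_{C_1}\simeq\OOO_{C_1}\oplus\OOO_{C_1}(-1)$ as a sheaf. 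This is the one place where the \typec{kAD}/\typec{k3A} situation differs from~\typec{IC}: there the invariant cokernel had length $1$, here it is zero. The difference is not lost, only hidden --- on the cover the map has matrix $\left(\begin{smallmatrix}0&y_3\\1&0\end{smallmatrix}\right)$ in the bases $(y_2,y_1y_3)$ and $(y_1,y_2)$, so it drops rank at $P^\sharp$, and this rank drop is exactly what will separate $\BBB_1=(Q^\sharp)$ from $\LLL=(P_1^\sharp+Q^\sharp)$.

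It remains to produce the $\ell$-splitting. By \cite[(2.8)]{KM92} the $\ell$-bundle $\gr^1_C\OOO\totimes\OOO_{C_1}$ is a sum of two $\ell$-invertible sheaves; their $P$-data are the weights $\{\tfrac{m+1}{2},0\}$ found above, their $Q$-data are $\{m'-a',1\}$ inherited from $\gr^1_{C_1}\OOO=\LLL\toplus\MMM$ (legitimate since $C=C_1$ near $Q$), and their degrees as sheaves are $\{-1,0\}$. This leaves two possible pairings, and I would exclude the wrong one exactly as in Lemma~\ref{lemma:xic.3.12)}: its offending summand $\FFF$ admits no nonzero $\ell$-homomorphism to $\LLL$ or to $\MMM$, since $H^0(\FFF^{\totimes(-1)}\totimes\LLL)=H^0(\FFF^{\totimes(-1)}\totimes\MMM)=0$ by the degree count together with $m\ge3$ and the inequality $m'-a'<m'/2$ of Lemma~\ref{lemma:xkad.3.3)a}; this contradicts the isomorphism of the previous paragraph. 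The surviving pairing is $\AAA_1=\MMM=\bigl(-1+\tfrac{m+1}{2}P_1^\sharp+(m'-a')Q^\sharp\bigr)$, carried by $y_2$, and $\BBB_1=(Q^\sharp)$, carried by $y_1y_3$.

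The crux is this last step. Because the invariant cokernel vanishes, the comparison map is a genuine isomorphism of sheaves, so the whole distinction between $\BBB_1$ and $\LLL$ is concentrated in the $\mumu_m$-weight at $P^\sharp$ and must be extracted from the weight bookkeeping rather than from any drop in sheaf degree. Carrying this weight shift correctly through the vanishing estimates --- and checking that they persist for the small index $m=3$ of the \typec{k3A} subcase~\ref{case:xkad.3.7)gcase-4} --- is the delicate point; everything else is a transcription of the \typec{IC} argument.
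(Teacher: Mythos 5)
Your first two paragraphs are correct and agree with the paper's computation: $\III_C^\sharp=(y_2,\,y_1y_3)$ at $P^\sharp$, the cokernel of the $\ell$-injection $\gr^1_C\OOO\totimes\OOO_{C_1}\to\gr^1_{C_1}\OOO$ is $\CC\cdot y_1$ on the cover, and its $\mumu_m$-invariant part vanishes, so the map is an isomorphism of sheaves. But your third paragraph diverges from the paper, which does not enumerate splittings at all: since the cokernel $\CC\cdot y_1$ has $\wt_1\equiv-1$, the weight of the $\ell$-basis of $\LLL$, while the $\ell$-basis of $\MMM$ has $\wt_1\equiv\tfrac{m-1}{2}\not\equiv-1$ for $m\ge3$, the factor $\MMM$ lies inside the image of the $\ell$-injection, so the splitting $\gr^1_{C_1}\OOO=\LLL\toplus\MMM$ of Lemma~\xref{lemma:xkad.3.10)} restricts to the submodule and gives $\AAA_1=\MMM$, $\BBB_1=y_3\cdot\LLL=(Q^\sharp)$ in one stroke. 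Your plan instead transcribes the enumeration-plus-exclusion argument of Lemma~\xref{lemma:xic.3.12)}; that route can be made to work, but as written it has a gap exactly at the step you yourself call the crux.

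The gap: with $P$-data $\{\tfrac{m+1}{2},0\}$, $Q$-data $\{m'-a',1\}$ and sheaf degrees $\{0,-1\}$ there are \emph{four} a priori pairings, not two. In Lemma~\xref{lemma:xic.3.12)} both summands had degree $-1$, so only the pairing of $P$-data with $R$-data was ambiguous; here the degree assignment is an independent choice, and nothing in \cite[(2.8)]{KM92} ties the degree-$0$ summand to the $P$-datum $0$ --- that tie is precisely what separates $(Q^\sharp)\toplus\MMM$ from, say, $\bigl(\tfrac{m+1}{2}P_1^\sharp+Q^\sharp\bigr)\toplus\bigl(-1+(m'-a')Q^\sharp\bigr)$, a pairing your list omits. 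The omission is repairable inside your own framework in either of two ways: (a) for each omitted pairing, the degree-$0$ summand $\FFF$ with $P$-datum $\tfrac{m+1}{2}$ also satisfies $H^0(\FFF^{\totimes(-1)}\totimes\LLL)=H^0(\FFF^{\totimes(-1)}\totimes\MMM)=0$ (after normalizing coefficients into $[0,m)$ and $[0,m')$ the degree comes out $\le-1$ in all cases), so your Hom-vanishing exclusion applies verbatim; or (b) observe that the one-dimensional $H^0$ of $\gr^1_C\OOO\totimes\OOO_{C_1}$ is spanned near $P$ by $y_1y_3+\nu y_2y_3^{(m-1)/2}=y_3\bigl(y_1+\nu y_2y_3^{(m-3)/2}\bigr)$, which does not vanish at $P^\sharp$ on the cover, so the degree-$0$ summand must carry the $P$-datum $0$. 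Note also that the exclusion of the wrong pairing among your two tacitly needs $m'-a'>1$ (if $m'-a'=1$ the two pairings coincide, exactly as in Lemma~\xref{lemma:xic.3.12)}); the inequality $2(m'-a')<m'$ of Lemma~\xref{lemma:xkad.3.3)a} does not rule out $m'-a'=1$, so that degenerate case needs its own (trivial) sentence.
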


\begin{proof}
\label{case:xkad.3.13)w} 
This is similar to Lemma~\ref{lemma:xic.3.12)}.
At $P$, we see that
\begin{align*}
\coker \left(\III_C^\sharp \xlongrightarrow{\hspace*{17pt}} \bigl(\III^\sharp_{C_1}/\III^{\sharp 2}_{C_1}\bigr)\right) &
= \III^\sharp_{C_1}/\bigl(\III^\sharp_C + \III^{\sharp 2}_{C_1}\bigr) 
\\
&= (y_1,\, y_2)/\bigl(y_2,\, y_1y_3,\, y_1^2\bigr) 
\\
&= \CC \cdot y_1.
\end{align*}
Since
\begin{equation*}
\gr^1_{C_1} \OOO =\bigl(P^\sharp_1 + Q^\sharp\bigr) 
\toplus \bigl(-1 + \textstyle\frac{m+1}2 P^\sharp_1 + (m'-a')Q^\sharp\bigr),
\end{equation*}
we obtain~\ref{lemma:xkad.3.12)q} by the $\ell$-injection $\gr^1_C\OOO \totimes \OOO_{C_1} \to
\gr^1_{C_1}
\OOO$.
\end{proof} 

\begin{slemma}\label{lemma:xkad.3.14)o}
$\gr^1_C \OOO \totimes \OOO_{C_2}$ has an $\ell$-splitting
$\gr^1_C\OOO\totimes \OOO_{C_2} = \AAA_2 \toplus \BBB_2$, where
\begin{equation*}
\AAA_2 = \bigl(i+\textstyle\frac{m-1}2 P^\sharp_2 +R^\sharp\bigr),\qquad \BBB_2=(-1+R^\sharp),
\end{equation*}
where $i=0$ \textup(resp. $-1$\textup) for \xref{case:xkad.3.7)gcase-1}
\textup(resp. for \xref{case:xkad.3.7)gcase-4}\textup).
Furthermore, the $\ell$-injection 
\[
\gr^1_C\OOO \totimes \OOO_{C_2} \longrightarrow \gr^1_{C_2} \OOO
\]
induces an $\ell$-isomorphism of $\AAA_2$ with the first summand of the $\ell$-decomposition \eqref{gr1_C-O},
where for $m=3$ we need to choose the $\ell$-decomposition \eqref{gr1_C-O} properly.
\end{slemma}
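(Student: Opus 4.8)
The plan is to mimic the proof of Lemma~\ref{lemma:xic.3.14)}: work on the index-$1$ cover at $P^\sharp$, compute the image of $\gr^1_C \OOO \totimes \OOO_{C_2}$ inside $\gr^1_{C_2}\OOO$ from the explicit generators of $\III_C^\sharp$, and then separate the two $\ell$-invertible pieces by a $\wt_2$-count against the given splitting~\eqref{gr1_C-O}. By Lemma~\ref{lemma:xkad.3.12)q} the ideal $\III_C^\sharp$ is generated at $P^\sharp$ by $y_2$ and $y_1 y_3$, while $\III_{C_2}^\sharp = (y_2, y_3)$ gives the $\ell$-free $\ell$-basis $y_2, y_3$ of $\gr^1_{C_2}\OOO$ at $P$. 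Since $C_1$ meets $C_2$ only at $P$, away from $P$ we have $C = C_2$ and hence $\gr^1_C \OOO \totimes \OOO_{C_2} = \gr^1_{C_2}\OOO$ there; the two sheaves can therefore differ only at $P$. At $P^\sharp$ the $\ell$-injection $\gr^1_C \OOO \totimes \OOO_{C_2} \to \gr^1_{C_2}\OOO$ has image
\[
(\gr^1_C \totimes \OOO_{C_2})^\sharp = \OOO_{C_2}^\sharp \cdot y_2 + \OOO_{C_2}^\sharp \cdot y_1 y_3 = \OOO_{C_2}^\sharp \cdot y_2 \oplus y_1 \OOO_{C_2}^\sharp \cdot y_3,
\]
because $y_1$ is a coordinate on the smooth curve $C_2^\sharp$.

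Next I would match the two summands with~\eqref{gr1_C-O}. From~\eqref{eq:kAD:def-wt1-2} one has $\wt_2(y_2) = \tfrac{m+1}2$ and $\wt_2(y_3) \equiv -1$, so (using the convention, read off from the \typec{IC} computation, that the coefficient of $P_2^\sharp$ in an $\ell$-invertible summand equals $-\wt_2$ of its local generator modulo $m$) the generator $y_2$ spans the first summand of~\eqref{gr1_C-O} and $y_3$ spans the second. Since the displayed image respects this diagonal decomposition, the splitting of $\gr^1_{C_2}\OOO$ descends to an $\ell$-splitting $\gr^1_C \OOO \totimes \OOO_{C_2} = \AAA_2 \toplus \BBB_2$, where $\AAA_2$ is generated by $y_2$ and $\BBB_2$ by $y_1 y_3$. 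As $y_2$ is left unchanged, $\AAA_2$ maps $\ell$-isomorphically onto the first summand of~\eqref{gr1_C-O}; this is exactly $(i + \tfrac{m-1}2 P_2^\sharp + R^\sharp)$ with $i = 0$ in case~\ref{case:xkad.3.7)gcase-1} and $i = -1$ in case~\ref{case:xkad.3.7)gcase-4}, recording the presence or absence of the $-1$ base term. For $m = 3$ one has $\wt_2(y_2) \equiv \wt_2(y_3) \equiv 2$, so the two summands of~\eqref{gr1_C-O} are abstractly equal; here one simply chooses the decomposition so that its first summand is the one generated by $y_2$, which is the meaning of ``properly'' in the statement.

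Finally I would compute $\BBB_2$. Its local generator at $P$ is $y_1 y_3$, which is $\mumu_m$-invariant since $\wt_2(y_1 y_3) \equiv 1 + (-1) \equiv 0$; hence its $P_2^\sharp$-coefficient drops to $0$. Passing from the generator $y_3$ of the second summand $(-1 + P_2^\sharp + R^\sharp)$ to $y_1 y_3$ multiplies by the coordinate $y_1$ of $C_2^\sharp$, which lowers $\ldeg$ by $1/m$; together with the unchanged behaviour near $R$ this yields $\BBB_2 = (-1 + R^\sharp)$, with $\ldeg(\BBB_2) = -1 + \tfrac12$. I expect the only real subtlety to be this last bookkeeping across the quotient $C_2^\sharp \to C_2$ --- keeping track of how the single factor $y_1$ contributes $1/m$ to the $\ell$-degree while removing one unit from the $P_2^\sharp$-coefficient --- together with making the choice of splitting for $m = 3$ compatible with the identification of $\AAA_2$; everything else is a direct transcription of the \typec{IC} argument.
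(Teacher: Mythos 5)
Your proposal is correct and takes essentially the same route as the paper's own proof: both use the generators $y_2,\ y_1y_3$ of $\III_C^\sharp$ from Lemma~\ref{lemma:xkad.3.12)q} to compute the map $\gr^1_C \OOO \totimes \OOO_{C_2} \to \gr^1_{C_2}\OOO$ at $P^\sharp$ (you via its image $\OOO_{C_2^\sharp}\, y_2 \oplus y_1\OOO_{C_2^\sharp}\, y_3$, the paper via its cokernel $\CC\cdot y_3$), then match the summands of \eqref{gr1_C-O} by $\mumu_m$-weights, treating $m=3$ separately by re-choosing the splitting. One small caution: the weight convention only forces the first summand of \eqref{gr1_C-O} to be generated at $P^\sharp$ by $y_2$ plus a correction term lying in $y_1\OOO_{C_2^\sharp}\cdot y_3$ (not by $y_2$ exactly), but since any such correction lies in your computed image, the descent of the splitting and the identifications of $\AAA_2$ and $\BBB_2$ go through unchanged.
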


\begin{proof}
This is similar to Lemma~\ref{lemma:xic.3.16)}.
Similarly to the proof of Lemma~\ref{lemma:xkad.3.12)q}, we have
\begin{align*}
\coker \Bigl(\III^\sharp_C \xlongrightarrow{\hspace*{17pt}} \bigl(\III^\sharp_{C_2}/\III^{\sharp 2}_{C_2}\bigr)\Bigr) &
= \III^\sharp_{C_2}/\bigl(\III^\sharp_C + \III^{\sharp 2}_{C_2}\bigr)
\\
&=(y_2,\, y_3)/\bigl(y_2,\, y_1y_3,\, y_3^2\bigr) 
\\
&= \CC \cdot y_3.
\end{align*}
Thus the $\ell$-injection $\gr^1_C \OOO \totimes \OOO_{C_2} \to \gr^1_{C_2} \OOO$ 
implies~\ref{lemma:xkad.3.14)o}
by the $\ell$-splitting
\begin{equation*}
\gr^1_{C_2} \OOO = \bigl(i+\textstyle\frac{m-1}2 P^\sharp_2 + R^\sharp\bigr) \toplus (-1+P_2^\sharp + R^\sharp)
\end{equation*}
in \eqref{gr1_C-O}. 
Indeed $\CC\cdot y_3$ has an $\ell$-surjective map only from the second factor
of $\gr_{C_2}^1 \OOO$ if
$m \ge 5$. If $m=3$, then we are in~\ref{case:xkad.3.7)gcase-4};
the summands of $\gr_{C_2}^1 \OOO_X$ are isomorphic to each other and the first assertion is proved.
The rest is obvious. 
\end{proof} 

\begin{slemma}
\label{lemma:xkad.3.16)}
Under the notation of Lemma~\xref{lemma:xkad.3.3)a}, let $\EEE^\sharp$ be a free $\OOO_{C^\sharp}$-module at~$P$ with
$\mumu_m$-action. Let $\FFF^\sharp_i$ be a free $\OOO^\sharp_{C_i}$-submodule of $\EEE^\sharp_i =
\EEE^\sharp \otimes \OOO_{C^\sharp_{C_i}}$ such that $\mumu_m(\FFF^\sharp_i) = \FFF^\sharp_i$ and
$\EEE^\sharp_i/\FFF^\sharp_i$ is free for $i\in \{1,\, 2\}$. If we have 
$\FFF^\sharp_1 \otimes \CC_{P^\sharp} =\FFF^\sharp_2 \otimes \CC_{P^\sharp}$ in 
$\EEE^\sharp \otimes \CC_{P^\sharp}$, then there is a free $\OOO_{C^\sharp}$-submodule
$\FFF^\sharp$ of $\EEE^\sharp$ such that $\FFF^\sharp \otimes \OOO^\sharp_{C_i} = \FFF^\sharp_i$ and
$\EEE^\sharp/\FFF^\sharp$ is free.

In particular, assume that $\EEE^\sharp$ has an $\ell$-free $\ell$-basis whose
$C^\sharp_1$-wts are mutually distinct. Then the associated $\ell$-free $\OOO_C$-module
$\EEE$ at~$P$ has the property: arbitrary $\ell$-splittings 
$\EEE\totimes \OOO_{C_j} = \FFF^1_j \toplus \cdots \toplus \FFF^r_j$, $j=1,\, 2$ into 
$\ell$-invertible sheaves such that
\begin{equation*}
\sum^2_{j=1}\wt_j\bigl(\text{$\ell$-free $\ell$--basis of $\FFF^i_j$}\bigr) \equiv 0 \mod m
\end{equation*}
for all $i$ induce an $\ell$-splitting $\EEE=\FFF^1 \toplus \cdots \toplus \FFF^r$ such
that $\FFF^i \totimes \OOO_{C_j} = \FFF^i_j$ for all $i,j$.
\end{slemma}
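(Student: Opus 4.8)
The first assertion is a descent (gluing) statement for the node $C^\sharp=C_1^\sharp\cup C_2^\sharp$, and my plan is to present every module in sight as a fibre product over its fibre at $P^\sharp$. By Lemma~\ref{lemma:xkad.3.3)a} one has $\III^\sharp_{C_1}+\III^\sharp_{C_2}=(y_1,y_2,y_3)$, so the normalization sequence (cf.~\eqref{eq:new:es}) reads $0\to\OOO_{C^\sharp}\to\OOO_{C_1^\sharp}\oplus\OOO_{C_2^\sharp}\to\CC_{P^\sharp}\to0$. Tensoring with the free module $\EEE^\sharp$ keeps it exact, giving $0\to\EEE^\sharp\to\EEE_1^\sharp\oplus\EEE_2^\sharp\to V\to0$ with $V:=\EEE^\sharp\otimes\CC_{P^\sharp}$; thus $\EEE^\sharp=\EEE_1^\sharp\times_V\EEE_2^\sharp$. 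Since $\EEE_i^\sharp/\FFF_i^\sharp$ is free, the fibre $W:=\FFF_i^\sharp\otimes\CC_{P^\sharp}$ injects into $V$ and is independent of $i$ by hypothesis; I would then define $\FFF^\sharp:=\FFF_1^\sharp\times_W\FFF_2^\sharp$, the kernel of the surjection $\FFF_1^\sharp\oplus\FFF_2^\sharp\to W$. By construction $\FFF^\sharp\subset\EEE^\sharp$, and the inclusion of the sequence for $\FFF^\sharp$ into that for $\EEE^\sharp$ is a monomorphism of short exact sequences; the snake lemma then yields $0\to\EEE^\sharp/\FFF^\sharp\to\GGG_1^\sharp\oplus\GGG_2^\sharp\to V/W\to0$ with $\GGG_i^\sharp:=\EEE_i^\sharp/\FFF_i^\sharp$, exhibiting $\EEE^\sharp/\FFF^\sharp$ as the fibre product $\GGG_1^\sharp\times_{V/W}\GGG_2^\sharp$.

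The technical core is a local computation on the node $\OOO_{C^\sharp}\simeq\CC\{s,t\}/(st)$: the fibre product of two free modules over the branches $\CC\{s\}$ and $\CC\{t\}$ along their common full fibre at the origin is again free, a basis arising by lifting a basis of the fibre to each branch and pairing the lifts. Applied to $\FFF^\sharp=\FFF_1^\sharp\times_W\FFF_2^\sharp$ and to $\EEE^\sharp/\FFF^\sharp=\GGG_1^\sharp\times_{V/W}\GGG_2^\sharp$, this shows that $\FFF^\sharp$ is a free submodule with free quotient. To see $\FFF^\sharp\otimes\OOO_{C_i^\sharp}=\FFF_i^\sharp$ I would use the restriction map $\FFF^\sharp\otimes\OOO_{C_i^\sharp}\to\FFF_i^\sharp$: it is surjective, since any $f_i\in\FFF_i^\sharp$ together with a lift of its fibre to the other branch defines an element of $\FFF^\sharp$ restricting to $f_i$, and it is a surjection of free $\OOO_{C_i^\sharp}$-modules of the same rank $\dim W$, hence an isomorphism. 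Every step here is natural and $W\subset V$ is $\mumu_m$-stable, so the construction is $\mumu_m$-equivariant and $\mumu_m(\FFF^\sharp)=\FFF^\sharp$.

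For the \emph{in particular} clause I would first read off from~\eqref{eq:kAD:def-wt1-2} the identity $\wt_1(s)+\wt_2(s)\equiv0\pmod m$, valid for every $\mumu_m$-semi-invariant $s$, together with the fact that the character by which $\mumu_m$ acts on $V$ is $\wt_2=-\wt_1$. Because the given $\ell$-basis of $\EEE^\sharp$ has mutually distinct $C_1^\sharp$-weights, hence distinct $\mumu_m$-characters, the fibre splits into one-dimensional weight spaces $V=\bigoplus_w V_w$. The fibre at $P^\sharp$ of an $\ell$-invertible summand $\FFF_j^i$ is the line $V_w$ with $w=\wt_2(\FFF_j^i)$; the hypothesis $\wt_1(\FFF_1^i)+\wt_2(\FFF_2^i)\equiv0$, combined with $\wt_1+\wt_2\equiv0$, forces $\wt_2(\FFF_1^i)=\wt_2(\FFF_2^i)$, so $\FFF_1^i$ and $\FFF_2^i$ have the same fibre. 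This is precisely the matching-fibre hypothesis of the first part; applying it to the $\mumu_m$-stable lifts $\FFF_j^{i\sharp}$ — whose branch quotients are free, being the sum of the remaining summands — yields $\mumu_m$-stable $\FFF^{i\sharp}\subset\EEE^\sharp$ with $\FFF^{i\sharp}\otimes\OOO_{C_j^\sharp}=\FFF_j^{i\sharp}$, and $\FFF^i$ is obtained by taking $\mumu_m$-invariants. Finally $\EEE=\FFF^1\toplus\cdots\toplus\FFF^r$, because the summation map $\bigoplus_i\FFF^{i\sharp}\to\EEE^\sharp$ restricts on each branch to the given splitting and is therefore an isomorphism: its cokernel $M$ satisfies $M=sM$ and $M=tM$ over $\OOO_{C^\sharp}$, whence $M=st\,M=0$.

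The main obstacle is this last clause, and precisely the step of converting the numerical condition $\sum_j\wt_j(\FFF^i_j)\equiv0$ into the geometric equality of fibres $\FFF_1^i\otimes\CC_{P^\sharp}=\FFF_2^i\otimes\CC_{P^\sharp}$. Everything rests on the distinctness of the $C_1^\sharp$-weights, which alone forces each weight space of $V$ to be a line and thereby pins down the fibre of a summand from its weight; without it the fibre of a summand need not be determined by its weight, and the gluing could genuinely fail.
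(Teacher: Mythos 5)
Your proposal is correct and follows essentially the same route as the paper: the paper's own (very terse) proof consists of noting $\OOO_{C_1^\sharp\cap C_2^\sharp}=\CC$, writing the exact sequence $0\to\OOO^\sharp_C\to\OOO^\sharp_{C_1}\oplus\OOO^\sharp_{C_2}\to\CC\to0$, declaring the first part ``a corollary to it'' and the second ``a reformulation of the first'' via the relation $\wt_1(\phi)+\wt_2(\phi)\equiv0\bmod m$. Your fibre-product formulation, the freeness of the glued module, and the conversion of the weight condition into equality of fibres (using the one-dimensionality of the weight spaces of $\EEE^\sharp\otimes\CC_{P^\sharp}$, forced by the distinctness of the $C_1^\sharp$-weights) are exactly the details the paper leaves implicit.
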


\begin{proof}
We note by Lemma~\ref{lemma:xkad.3.3)a} that $\OOO_{C^\sharp_1 \cap
C^\sharp_2} =
\CC$, that is $\III^\sharp_{C_1}+\III^\sharp_{C_2} = (y_1,\, y_2,\, y_3)$. Thus we have an exact sequence
\begin{equation*}
0 \xlongrightarrow{\hspace*{17pt}} \OOO^\sharp_C \xlongrightarrow{\hspace*{17pt}} \OOO^\sharp_{C_1} \oplus \OOO^\sharp_{C_2} \xlongrightarrow{\hspace*{17pt}} \CC \xlongrightarrow{\hspace*{17pt}} 0.
\end{equation*}
The first part is the corollary to it. The second part is a reformulation of the first.
We note that $\wt_1(\phi)+ \wt_2(\phi) \equiv 0 \mod m$ (see \eqref{eq:kAD:def-wt1-2}). 
\end{proof} 

\begin{scorollary}\label{corollary:xkad.3.18)}
$\gr^1_C \OOO$ has an $\ell$-splitting 
$\gr^1_C \OOO = \AAA \toplus \BBB$ which induces $\ell$-isomorphisms 
$\AAA \totimes \OOO_{C_i} = \AAA_i$ and $\BBB \totimes \OOO_{C_i} = \BBB_i$ for $i\in \{1,\, 2\}$.
\end{scorollary}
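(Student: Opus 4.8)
The plan is to apply Lemma~\ref{lemma:xkad.3.16)} to the $\ell$-free $\OOO_C$-module $\EEE=\gr^1_C\OOO$. First I would verify the hypothesis of the second part of that lemma, namely that $\gr^1_C\OOO$ has an $\ell$-free $\ell$-basis at~$P^\sharp$ whose $C^\sharp_1$-weights are mutually distinct. Arguing as in Lemma~\ref{lemma:xkad.3.12)q}, the defining ideal $\III^\sharp_C$ at~$P^\sharp$ is generated by $y_1y_3$ and $y_2$, so $\gr^1_C\OOO$ is $\ell$-free at~$P$ with $\ell$-basis the images of $y_1y_3$ and $y_2$. From the weight assignment~\eqref{eq:kAD:def-wt1-2} we have $\wt_1(y_1y_3)\equiv \wt_1(y_1)+\wt_1(y_3)\equiv -1+1\equiv 0$ while $\wt_1(y_2)\equiv\frac{m-1}2$, and since $m\ge 3$ these two residues are distinct modulo $m$. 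Thus the distinctness hypothesis holds.

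Next I would feed into the lemma the two $\ell$-splittings already produced component by component: on $C_1$ the splitting $\gr^1_C\OOO\totimes\OOO_{C_1}=\AAA_1\toplus\BBB_1$ of Lemma~\ref{lemma:xkad.3.12)q}, and on $C_2$ the splitting $\gr^1_C\OOO\totimes\OOO_{C_2}=\AAA_2\toplus\BBB_2$ of Lemma~\ref{lemma:xkad.3.14)o}. To invoke the conclusion I must match the summands across the two curves so that each matched pair has total weight $\equiv 0\bmod m$. For the $\AAA$-summands: by Lemma~\ref{lemma:xkad.3.12)q} the factor $\AAA_1$ corresponds under $\gr^1_C\OOO\totimes\OOO_{C_1}\to\gr^1_{C_1}\OOO$ to $\MMM\simeq\gr^0_{C_1}\upomega$, whose $\ell$-basis has $\wt_1\equiv\wt_1(y_1)\equiv -1$ coming from the $y_1y_3$ generator read on $C_1$; correspondingly $\AAA_2$ is the summand whose $\ell$-basis is read off the same generator on $C_2$, with $\wt_2(y_1y_3)\equiv\wt_2(y_1)+\wt_2(y_3)\equiv 1-1\equiv 0$. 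The point I would check carefully is that these two bases are the restrictions of the \emph{same} $\ell$-basis vector $y_1y_3$ of $\gr^1_C\OOO$, so that $\AAA_1\otimes\CC_{P^\sharp}=\AAA_2\otimes\CC_{P^\sharp}$ inside $\gr^1_C\OOO\otimes\CC_{P^\sharp}$; the weight-sum condition $\wt_1+\wt_2\equiv 0\bmod m$ of Lemma~\ref{lemma:xkad.3.16)} then holds automatically by the last line of its proof. Likewise the $\BBB$-summands $\BBB_1=(Q^\sharp)$ and $\BBB_2=(-1+R^\sharp)$ are both cut out by the residual generator $y_2$, matching across $P$.

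With the weight bookkeeping in place the second part of Lemma~\ref{lemma:xkad.3.16)} delivers an $\ell$-splitting $\gr^1_C\OOO=\AAA\toplus\BBB$ with $\AAA\totimes\OOO_{C_i}=\AAA_i$ and $\BBB\totimes\OOO_{C_i}=\BBB_i$ for $i\in\{1,2\}$, which is exactly the assertion of the corollary. The main obstacle I anticipate is precisely the matching step: one must be sure that the local $\ell$-bases of $\AAA_1$ and $\AAA_2$ (and of $\BBB_1,\BBB_2$) are genuine restrictions of single $\ell$-basis vectors of $\gr^1_C\OOO$ at~$P$, rather than merely abstractly isomorphic invertible sheaves; this is where the explicit description $\III^\sharp_C=(y_1y_3,\,y_2)$ and the distinct-weight property do the real work, ruling out any cross-term mixing between the two summands. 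Once the identification of summands is pinned down, the conclusion is immediate from the lemma, so no further estimates are needed here.
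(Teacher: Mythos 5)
Your overall route is exactly the paper's: the paper proves this corollary by feeding the component\-/wise splittings of Lemmas~\ref{lemma:xkad.3.12)q} and~\ref{lemma:xkad.3.14)o} into the gluing Lemma~\ref{lemma:xkad.3.16)}, and the two verification points you single out (distinctness of the $\wt_1$-weights of the $\ell$-basis $y_1y_3,\,y_2$ of $\gr^1_C\OOO$ at~$P$, and the matching of the summands across $P$ with weight sums $\equiv 0 \bmod m$) are precisely the hypotheses that need checking. However, your execution of the matching step is wrong. In the divisor notation, a coefficient $c$ at $P_i^\sharp$ means that the local $\ell$-free generator has $\wt_i\equiv -c \pmod m$: compare \eqref{eq:ref(z:xic.3.7).1}, where the summand $(4P_2^\sharp)$ has $\ell$-basis $v$ with $\wt_2(v)\equiv -4$. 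Hence $\BBB_1=(Q^\sharp)$ and $\BBB_2=(-1+R^\sharp)$, having coefficient $0$ at $P_i^\sharp$, are the summands generated by $y_1y_3$ (which has $\wt_1\equiv\wt_2\equiv 0$), while $\AAA_1=\bigl(-1+\frac{m+1}2P_1^\sharp+(m'-a')Q^\sharp\bigr)$ and $\AAA_2=\bigl(\frac{m-1}2P_2^\sharp+R^\sharp\bigr)$ are generated by the $y_2$-direction, of weights $\wt_1(y_2)\equiv\frac{m-1}2$ and $\wt_2(y_2)\equiv\frac{m+1}2$. You have this backwards: you pair the $\AAA_i$ with $y_1y_3$ and the $\BBB_i$ with $y_2$. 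One can also see the error directly: under $\gr^1_C\OOO\totimes\OOO_{C_1}\hookrightarrow\gr^1_{C_1}\OOO=\LLL\toplus\MMM$ the image of $y_1y_3$ is $y_3\bar y_1$, which points along $\LLL$ (whose $\ell$-basis is $y_1+\nu y_2y_3^{\frac{m-3}2}$ by Lemma~\ref{lemma:xkad.3.10)}); so the $y_1y_3$-generated summand is $\BBB_1\subset\LLL$, not $\AAA_1\simeq\MMM$, and it is $\MMM$, not $\LLL$, whose basis lies in the $y_2$-direction.

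Moreover, your bookkeeping is internally inconsistent: you assign $\wt_1\equiv\wt_1(y_1)\equiv-1$ to the basis of $\AAA_1$ (note that $\wt_1(y_1y_3)\equiv 0$, not $-1$; the weight $-1$ belongs to $y_1$, which generates $\III_{C_1}^\sharp$, not $\III_C^\sharp$) and $\wt_2\equiv 0$ to the basis of $\AAA_2$, and these satisfy $\wt_1+\wt_2\equiv-1\not\equiv0\pmod m$; so the condition of Lemma~\ref{lemma:xkad.3.16)}, which you claim ``holds automatically,'' would fail for the pairing as you describe it. The corollary is of course true and your skeleton works once the identification is corrected: with $\AAA_i$ matched through $y_2$ and $\BBB_i$ matched through $y_1y_3$, the weight sums are $\frac{m-1}2+\frac{m+1}2\equiv 0$ and $0+0\equiv 0$, the distinct-weight hypothesis holds since $\frac{m-1}2\not\equiv 0$ for odd $m\ge3$, and Lemma~\ref{lemma:xkad.3.16)} then yields the splitting $\gr^1_C\OOO=\AAA\toplus\BBB$ exactly as in the paper.
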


In view of Lemma~\ref{lemma:xkad.3.16)}, this follows from Lemma~\ref{lemma:xkad.3.12)q} and
Lemma~\ref{lemma:xkad.3.14)o}. 

Now we disprove~\ref{case:xkad.3.7)gcase-4} by the following.

\begin{proposition}[Case \typec{k3A} with $m=3$]
\label{proop:k3a.1}
The subcase \xref{case:xkad.3.7)gcase-4} does not occur.
\end{proposition}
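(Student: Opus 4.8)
The plan is to reuse, for this $m=3$ subcase, the same thickening-plus-$\ell$-degree argument that disproved type~\typec{IC}+\typec{k2A}, exploiting the fact that the small index $m=3$ will make the relevant inequality fail already at $d=2$, with no recourse to $d=3$. Concretely, I would start from the $\ell$-splitting $\gr^1_C\OOO=\AAA\toplus\BBB$ of Corollary~\ref{corollary:xkad.3.18)}, whose restrictions to the two components are recorded in Lemmas~\ref{lemma:xkad.3.12)q} and~\ref{lemma:xkad.3.14)o}; with $m=3$ (so $\tfrac{m+1}2=2$, $\tfrac{m-1}2=1$) and $i=-1$ for the present case these read
$\AAA\totimes\OOO_{C_1}=(-1+2P_1^\sharp+(m'-a')Q^\sharp)$, $\BBB\totimes\OOO_{C_1}=(Q^\sharp)$, and $\AAA\totimes\OOO_{C_2}=(-1+P_2^\sharp+R^\sharp)$, $\BBB\totimes\OOO_{C_2}=(-1+R^\sharp)$.

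Next I would feed this $\ell$-splitting into Proposition~\ref{thicken-I} to produce an ideal $\JJJ$ with $\III_C^{(2)}\subset\JJJ\subset\III_C$ that is $(1,2)$-monomializable at every point of $C$, together with its sequence $\mathrm{E}(\JJJ,2)$. The complete-intersection hypotheses needed here hold: at $P^\sharp$ the ideal $\III_C^\sharp=(y_1y_3,\,y_2)$ is c.i. by Lemma~\ref{lemma:xkad.3.12)q}, and at the ordinary points $Q$, $R$ the curve $C^\sharp$ is smooth, hence c.i. Then Theorem~\ref{M88-8.12} with $d=2$ yields $\ldeg(\AAA)+2\ldeg(\BBB)\ge 0$. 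Adding the contributions of $C_1$ and $C_2$ exactly as in the \typec{IC}+\typec{k2A} computation, I find $\ldeg(\AAA)=-\tfrac12+\tfrac{m'-a'}{m'}$ and $\ldeg(\BBB)=\tfrac1{m'}-\tfrac12$, so that
\[
\ldeg(\AAA)+2\ldeg(\BBB)=-\tfrac12+\tfrac{2-a'}{m'}.
\]
By Lemma~\ref{lemma:xkad.3.3)a} we have $m'\ge 3$ and $m'-a'<m'/2$, which force $a'\ge 2$; hence $\tfrac{2-a'}{m'}\le 0$ and the displayed quantity is $\le-\tfrac12<0$, contradicting Theorem~\ref{M88-8.12}.

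Thus the argument is short once the preceding lemmas are in place, and, in contrast to the \typec{IC} case, it needs neither the strict (birational) form of Theorem~\ref{M88-8.12} nor a passage to $d=3$: the weak inequality already fails, so I never have to analyze the extension class of $\mathrm{E}(\JJJ,2)$. The one step I expect to require genuine care — the only obstacle, really, since everything else is mechanical substitution — is confirming the complete-intersection (hence $(1,2)$-monomializability) hypothesis of Proposition~\ref{thicken-I} at the Gorenstein type~\typec{III} point $S\in C_2$ that is present in the \typec{k3A} case. There $C$ coincides with $C_2$ locally, so I would extract that $C_2$ is c.i. at $S$ from the local normal form of a type~\typec{III} point in the \typec{k3A} classification (\cite[2.12]{KM92}), after which monomializability at the Gorenstein point $S$ is automatic and, being Gorenstein, $S$ contributes nothing to either $\ell$-degree.
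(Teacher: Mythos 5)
Your arithmetic is fine: with the splittings of Lemmas~\ref{lemma:xkad.3.12)q} and~\ref{lemma:xkad.3.14)o} one does get $\ldeg(\AAA)+2\ldeg(\BBB)=-\tfrac12+\tfrac{2-a'}{m'}<0$, and this extra negativity (coming from the summand $i=-1$ in $\AAA_2$) is indeed special to the $m=3$ case. But the step you flag as ``requiring genuine care'' is not a technicality that care can fix --- it is false, and it is fatal to the whole approach. In the \typec{k3A} case the curve $C_2$ passes through the type~\typec{III} point $S$, where $X$ has a Gorenstein terminal (hence isolated cDV, hence hypersurface) singularity, and a smooth curve through a singular Gorenstein point of a threefold is never a complete intersection: if $\III_{C,S}=(u,v)$, then $\OOO_{X,S}/(u,v)\simeq\CC\{t\}$ is regular, so $\mmm_{X,S}=(u,v,t')$ for a lift $t'$ of a uniformizer, forcing the embedding dimension of $(X,S)$ to be at most $3$, contradicting the singularity of $X$ at $S$ (which has embedding dimension $4$). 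Consequently the hypothesis of Proposition~\ref{thicken-I} ($C$ c.i. at every Gorenstein point) fails at $S$, and no ideal $\JJJ$ can be $(1,d)$-monomializable at $S$, since by Definition~\ref{def:232} monomializability at a Gorenstein point is equivalent to nested c.i. Hence Theorem~\ref{M88-8.12} is not applicable to $(X,C)$ in this subcase, and your contradiction never gets off the ground. This obstruction is exactly why the paper abandons the $\ell$-degree machinery of Section~\ref{sect:IC} here (note that the paper also avoids it in the \typec{kAD} subcase, where the machinery would apply but the resulting inequality gives no contradiction).

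The paper's actual proof of Proposition~\ref{proop:k3a.1} is cohomological. It uses Corollary~\ref{corollary:xkad.3.18)} to compute $H^i(\gr^n_C\upomega_X)$ for $n=0,1$, and it deals with the point $S$ only through the bound $\len\coker\bigl(\tilde S^2(\gr^1_C\OOO_X)\to\gr^2_C\OOO_X\bigr)\le 1$ of \cite[(2.12.1)]{KM92} --- the possible nonvanishing of this cokernel being precisely the symptom of the failure of c.i. at $S$. From $\h^1(\tilde S^2(\gr^1_C\OOO_X)\totimes\upomega_X)\ge 2$ it deduces $H^1(\upomega_X/F^3\upomega_X)\neq0$, concludes that $f$ is a $\QQ$-conic bundle over a smooth base, and then derives a contradiction from $\mmm_{Z,o}\OOO_X\subseteq F_C^2(\OOO_X)$, Lemma~\ref{t1t2-lin-indep}, and $\h^0(\tilde S^2(\gr^1_C\OOO_X))=0$. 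To salvage your route you would need a version of Theorem~\ref{M88-8.12} with controlled correction terms at a non-c.i. Gorenstein point; nothing in the cited toolkit provides that.
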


\begin{proof}
Assume that we are in the situation of~\ref{case:xkad.3.7)gcase-4}.
Since $\gr_C^0 \upomega_X^\sharp$ at~$P^\sharp$
is not generated by an $\mumu_{3}$-invariant, 
the $\ell$-homomorphism 
$\gr_C^0 \upomega_X  \to \gr_{C_1}^0 \upomega_X \toplus \gr_{C_2}^0 \upomega_X$ is an $\OOO_{C}$-module isomorphism. In this sense
\begin{equation}
\label{eq:revision:grC0omega}
\gr_C^0 \upomega_X\simeq\gr_{C_1}^0 \upomega_X \toplus \gr_{C_2}^0 \upomega_X
= (-1+2P_1^\sharp+(m'-a')Q^\sharp) \toplus (-1+P_2^\sharp+R^\sharp),
\end{equation}
(see \cite[(5.15)]{MP20})
and $H^i(C,\, \gr_C^0 \upomega_X)=H^i(C_1,\OOO_{C_1}(-1)^{\oplus 2})=0$ for $i\in \{0,\, 1\}$.
By Lemma~\ref{lemma:xkad.3.14)o}
\begin{eqnarray*}
\AAA_2\totimes \upomega_X &=& \bigl(-1+2P_2^\sharp\bigr),
\\
\BBB_2\totimes \upomega_X &=& \bigl(-1+P_2^\sharp\bigr),
\end{eqnarray*}
$(\AAA \totimes \upomega_X)^\sharp$, $(\BBB \totimes \upomega_X)^\sharp$ at~$P^\sharp$
are not 
generated by $\mumu_m$-invariants. Whence similarly to (\ref{eq:revision:grC0omega}) we have $\OOO_{C}$-isomorphisms 
$\AAA \totimes \upomega_X \simeq  \AAA_1 \totimes \upomega_X \toplus  \AAA_2 \totimes \upomega_X$
and 
$\BBB \totimes \upomega_X \simeq \BBB_1 \totimes \upomega_X \toplus  \BBB_2 \totimes \upomega_X$.
Hence we have the following isomorphisms of $\OOO_{C}$-modules by Corollary~\ref{corollary:xkad.3.18)}.
\begin{eqnarray*}
\gr_{C}^1 \upomega_X 
&=& \AAA \totimes \upomega_X \toplus \BBB \totimes \upomega_X
\\
&\simeq& \underset{i\in \{1,\, 2\}}\btoplus\bigl(\AAA_i \totimes \upomega_X \toplus \BBB_i \totimes \upomega_X\bigr).
\end{eqnarray*}
By Lemma~\ref{lemma:xkad.3.12)q}
\begin{eqnarray*}
\AAA_1\totimes \upomega_X &=& \bigl(-1+P_1^\sharp+2(m'-a')Q^\sharp\bigr),
\\
\BBB_1\totimes \upomega_X &=& \bigl(-1+2P_1^\sharp+(m'-a'+1)Q^\sharp\bigr),
\end{eqnarray*}
we have $H^i(C_j,\, \AAA_j\totimes \upomega_X)=H^i(C_j,\, \BBB_j\totimes 
\upomega_X)=0$
and $H^i(\gr_{C}^1 \upomega_X)=0$ for all $i,\, j$ by $2(m'-a')<m'$ (see Lemma~\ref{lemma:xkad.3.3)a}).

Let $F^n_C (\OOO_X)$ be the $n$-th symbolic power of the defining ideal $\III_C$
of $C$ and $F^n (\upomega_X)= F^n_C (\OOO_X)\totimes \upomega_X$. Then by
\begin{equation}\label{gr^n-omega-to-omega/F^n+1}
0 \xlongrightarrow{\hspace*{17pt}} \gr_C^n \upomega_X \xlongrightarrow{\hspace*{17pt}}
\upomega_X/F_C^{n+1} \upomega_X \xlongrightarrow{\hspace*{17pt}} 
\upomega_X/F_C^{n} \upomega_X \xlongrightarrow{\hspace*{17pt}} 0
\end{equation}
with $n \in \{0,1\}$,
we have $H^i(X,\, \upomega_X/F^1 \upomega_X) = H^i(X,\, \upomega_X/F^2 \upomega_X)=0$ for 
$i\in \{0,\, 1\}$.
Whence we have $H^i(C,\, \gr_C^2 \upomega_X)\simeq H^i(X,\, \upomega_X/F^3 \upomega_X)$ from $n=2$.
The natural $\ell$-homomorphism 
\begin{equation*}
\alpha : \tilde{S}^2(\gr_C^1 \OOO_X) \xlongrightarrow{\hspace*{17pt}} \gr_C^2 \OOO_X \end{equation*}
is an $\ell$-isomorphism on $X \setminus \{S\}$ because $C^\sharp \subset X^\sharp$
is a complete intersection at~$P^\sharp, Q^\sharp, R^\sharp$. At $S$, the length of 
$\coker(\alpha)$ is at most 1 \cite[(2.12.1)]{KM92}.
Hence the length of 
\begin{equation*}
\coker\left[\alpha \totimes \upomega_X : \tilde{S}^2\bigl(\gr_C^1 \OOO_X\bigr)\totimes \upomega_X 
\xlongrightarrow{\hspace*{17pt}} \gr_C^2
\upomega_X\right] \simeq (\coker \, \alpha)\otimes \upomega_X
\end{equation*}
is also at most~$1$,
and 
\begin{equation}
\label{eq:H1(CS2}
\dim H^1\left(C,\, \tilde{S}^2\bigl(\gr_C^1 \OOO_X\bigr)\totimes \upomega_X\right) \le \dim H^1 (\gr^2_{C}\upomega_X)+1. 
\end{equation} 
We have the following by Corollary~\ref{corollary:xkad.3.18)}.
\begin{eqnarray*}
\tilde{S}^2\bigl(\gr_C^1 \OOO_X\bigr)\totimes \upomega_X
&=& \bigl(\AAA^{\totimes 2}\totimes \upomega_X\bigr) \toplus \bigl(\AAA \totimes \BBB \totimes \upomega_X\bigr)
\toplus \bigl(\BBB^{\totimes 2}\totimes \upomega_X\bigr) 
\\
&=& \bigl(\AAA^{\totimes 2}\totimes \upomega_X\bigr) \toplus 
\left(\underset{i\in \{1,\, 2\}}{\btoplus}
\left(\bigl(\AAA_i \totimes \BBB_i \totimes \upomega_X\bigr) 
\toplus \bigl(\BBB_i^{\totimes 2}\totimes\upomega_X\bigr)\right)\right),
\end{eqnarray*}
because we can apply the argument for
$\gr_C^0 \upomega_X$ (\ref{eq:revision:grC0omega})
to $\AAA \totimes \BBB \totimes \upomega_X$ and 
$\BBB^{\totimes 2}\totimes \upomega_X$ by
\begin{eqnarray*}
\AAA_2 \totimes \BBB_2 \totimes \upomega_X &=& \bigl(-2+2P_2^\sharp+R^\sharp\bigr),
\\
\BBB_2^{\totimes 2}\totimes\upomega_X &=& \bigl(-2+P_2^\sharp + R^\sharp\bigr).
\end{eqnarray*}
The last equalities also imply
\begin{equation*}
\h^1\bigl(C,\, \AAA_2 \totimes \BBB_2 \totimes \upomega_X\bigr) = 
\h^1\bigl(C,\, \BBB_2^{\totimes 2}\totimes\upomega_X\bigr)=1.
\end{equation*}
Hence $\h^1\bigl(C,\, \tilde{S}^2(\gr_C^1 \OOO_X)\totimes \upomega_X\bigr) \ge 2$,
which implies $\h^1 (X,\, \upomega_X/F^3 \upomega_X)=\h^1(\gr_C^2 \upomega_X) \ge 1$ by \eqref{eq:H1(CS2}. 
Hence $f : (X,C)\to (Z,o)$ is a $\QQ$-conic bundle by \cite[(1.2.1)]{Mori:flip}
where $(Z,o)$ is smooth by Remark~\ref{rem:Cl}.
Then the closed subscheme of $X$ defined by $F^3 \OOO_X$
contains the whole fiber $f^{-1}(o)$ (see \cite[Theorem~4.4]{MP:cb1}), that is, $\mmm_{Z,o}\OOO_X \supset F_C^3(\OOO_X)$.

By the $\ell$-decomposition of $\gr_C^1 \OOO_X$ \eqref{corollary:xkad.3.18)} and the descriptions of $\AAA_i$'s and $\BBB_i$'s \eqref{lemma:xkad.3.12)q} and \eqref{lemma:xkad.3.14)o}, 
we see that an $\ell$-generator of $\AAA$ (resp. $\BBB$) at $P$ is non-$\ell$-invariant (resp. $\ell$-invariant) and we have the $\OOO_{C}$-module exact sequence
\begin{equation*}
0 
\xlongrightarrow{\hspace*{17pt}}
\gr_C^1 \OOO_X 
\xlongrightarrow{\hspace*{17pt}}
\gr_{C_1}^1 \OOO_X \oplus \gr_{C_2}^1 \OOO_X 
\xlongrightarrow{\hspace*{17pt}}
\CC_P 
\xlongrightarrow{\hspace*{17pt}} 0,
\end{equation*}
which further implies
$H^0(\gr_C^1 \OOO_X)=H^{0}(F^{1}_{C}(\OOO_{X})/F^{2}_{C}(\OOO_{X})) = 0$.

Hence we see 
$\mmm_{Z,o}\OOO_X \subseteq F_C^2(\OOO_X)$, and
let $t_1, t_2$ be a set of generators of $\mmm_{Z,o}$.

\begin{slemma}\label{t1t2-lin-indep}
The images $\bar{t_1}, \bar{t_2}$ of $t_1, t_2$ in $H^0(\gr_C^2 \OOO_X)$ are linearly independent.
\end{slemma}

\begin{proof}
Assume the contrary. We may assume that $\bar{t_2}=0 \in H^0(\gr_C^2 \OOO_X)$ and $t_2 \in H^0(F_C^3 \OOO_X)$.
Let $\eta$ be a generic point of $C$. Then 
$\OOO_{X,\eta}/(t_1,t_2)\OOO_{X,\eta}$ 
is of length $\ge 2 \cdot 3$.
On the other hand, $\OOO_{X,\eta}/F_C^3(\OOO_X)_{\eta}$ is of length $6$, whence 
$F_C^3(\OOO_X)_{\eta}=(t_1,t_2)\OOO_{X,\eta}$. Since $F_C^3(\OOO_X)_{\eta}=I_{C,\eta}^3$, $F_C^3(\OOO_X)_{\eta}$ cannot be a complete intersection ideal. This is a contradiction.
\end{proof}

By the description of $\AAA_i$'s and $\BBB_i$'s \eqref{lemma:xkad.3.12)q} and \eqref{lemma:xkad.3.14)o},
we see
\[
\begin{array}{rclrcl}
\AAA_1^{\totimes 2} &=& (-1 + P_1^\sharp + 2(m'-a')Q^\sharp),& \AAA_2^{\totimes 2} &=& (-1 + 2P_2^\sharp),\\
\BBB_1^{\totimes 2} &=& (2Q^\sharp),& \BBB_2^{\totimes 2} &=& (-1),\\
\AAA_1\totimes \BBB_1 &=& (-1+2P_1^\sharp+(m'-a'+1)Q^\sharp),&\AAA_2\totimes \BBB_2 &=& (-1+P_2^\sharp).\\
\end{array}
\]
Whence $\h^0(\tilde{S}^2(\gr_C^1 \OOO_X))=0$ by \eqref{corollary:xkad.3.18)}.
This however contradicts 
Lemma~\ref{t1t2-lin-indep}, because
$\h^0(\gr_C^2 \OOO_X) \le \h^0(\tilde{S}^2(\gr_C^1 \OOO_X)) + 1$ 
(similarly to \eqref{eq:H1(CS2}).
This proves Proposition~\ref{proop:k3a.1}.
\end{proof}

\subsection{(Case \typec{kAD} with $m\ge 5$)}
The rest of this section will be devoted to
proving Theorem~\ref{thm:KAD-k3A} assuming that $C_2$ is in~\ref{case:xkad.3.7)gcase-1}.
The first part of the following is the same as Corollary~\ref{corollary:xkad.3.18)}.

\begin{sremark}\label{some-formular-in-kAD}
For ease of reference, we collect some formulae in case~\ref{case:xkad.3.7)gcase-1}.
\[
\begin{array}{rclrcl}
\AAA_1 &=& \bigl(-1 + \textstyle\frac{m+1}2 P_1^\sharp +(m'-a')Q^\sharp\bigr),&\AAA_2 &=& \bigl(\textstyle\frac{m-1}2 P^\sharp_2 +R^\sharp\bigr),\\
\BBB_1 &=& (Q^\sharp)&\BBB_2&=&(-1+R^\sharp),\\
\AAA_1^{\totimes 2} &=& (-1 + P_1^\sharp + 2(m'-a')Q^\sharp),& \AAA_2^{\totimes 2} &=& (1 + (m-1)P_2^\sharp),\\
\BBB_1^{\totimes 2} &=& (2Q^\sharp),& \BBB_2^{\totimes 2} &=& (-1),\\
\AAA_1\totimes \BBB_1 &=& (-1+\frac{m+1}{2}P_1^\sharp+(m'-a'+1)Q^\sharp),&\AAA_2\totimes \BBB_2 &=& (\frac{m-1}{2}P_2^\sharp),\\
\end{array}
\]
Moreover, 
for $\gr_{C_1}^0 \upomega_X$ from Lemma~\ref{lemma:xkad.3.10)} and for $\gr_{C_2}^0\upomega_X$ from the dual of 
\cite[(5.20)]{MP20},
we obtain
\begin{equation*}
\gr_C^0 \upomega_X =\gr_{C_1}^0 \upomega_X \toplus \gr_{C_2}^0 \upomega_X
= \left(-1+\textstyle\frac{m+1}{2}P_1^\sharp+(m'-a')Q^\sharp\right) \tOplus \left(-1+\textstyle\frac{m-1}{2}P_2^\sharp+R^\sharp\right).
\end{equation*}
\end{sremark}

Let $\JJJ \subset \OOO_X$ be an ideal such that $\III^{(2)}_C \subset \JJJ \subset \III_C$ and 
$\JJJ/\III^{(2)}_C = \AAA$ in Corollary~\ref{corollary:xkad.3.18)}. Then $\JJJ$ is (1,2)-monomializable at~$P,Q$ and $R$
\cite[(8.10)]{Mori:flip} and it is a nested c.i. on $C\setminus\{P,Q,R\}$ \cite[(8.4)]{Mori:flip} and induces
an $\ell$-exact sequence

\begin{equation}
\label{eq:ref(xkad.3.19).1)}
0 \xlongrightarrow{\hspace*{17pt}}\BBB^{\totimes 2} \xlongrightarrow{\hspace*{17pt}}\JJJ \totimes \OOO_C \xlongrightarrow{\hspace*{17pt}}\AAA \xlongrightarrow{\hspace*{17pt}}0. 
\end{equation} 
We note that $\JJJ$ is a $C$-laminal ideal (i.e. associated primes of $\JJJ$ 
are among $\III_{C_1}$ and $\III_{C_2}$ and $\III^{(2)}_{C_i} \not\supset \JJJ$ for $i\in \{1,\, 2\}$),
and we will use the associated filtrations $F^\bullet(\bullet,\ \JJJ)$ \cite[(8.2)]{Mori:flip}.
We recall two $\ell$-isomorphisms
\begin{align*}
\gr^1(\OOO,\, \JJJ) &\simeq \III_C/\JJJ \simeq \BBB, 
\\
\gr^2(\OOO,\, \JJJ) &\simeq \JJJ \totimes \OOO_C.
\end{align*}
Again by the deformation method, we can reduce the statement of Theorem~\ref{thm:KAD-k3A} to the case, where a certain
$\ell$-splitting is general.

\begin{slemma}[Deformation of patching]
\label{lemma:xkad.3.20)z}
We may assume that we have an $\ell$-splitting
\begin{equation*}
\gr^2(\OOO,\, \JJJ) \totimes \OOO_{C_2} = \DDD_2 \toplus \EEE_2,
\end{equation*}
where $\DDD_2=(0)$ and $\EEE_2 = \bigl(-1+\frac{m-1}2 P^\sharp_2 + R^\sharp\bigr)$.
\end{slemma}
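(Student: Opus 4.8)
The plan is to obtain the asserted $\ell$-splitting as the \emph{generic} splitting type of the rank-two $\ell$-bundle $\gr^2(\OOO,\,\JJJ)\totimes\OOO_{C_2}$ on $C_2\simeq\PP^1$, and then to realize that generic type by a small deformation of $(X,C)$. Since every small deformation of $X$ is again an extremal curve germ by \cite[Theorem~3.2]{MP:IA}, and since our ultimate goal is to derive a contradiction, it suffices to prove the statement after replacing $(X,C)$ by a general such deformation.

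First I would restrict the $\ell$-exact sequence \eqref{eq:ref(xkad.3.19).1)} to $C_2$. As $\AAA$ is $\ell$-invertible, the only possible $\mathrm{Tor}$-contribution has finite length and is supported at $P$, hence is invisible in the $\ell$-category; using $\BBB\totimes\OOO_{C_2}=\BBB_2$ and $\AAA\totimes\OOO_{C_2}=\AAA_2$ from Corollary~\xref{corollary:xkad.3.18)} this gives an $\ell$-exact sequence
\begin{equation*}
0 \xlongrightarrow{\hspace*{17pt}} \BBB_2^{\totimes 2}\xlongrightarrow{\hspace*{17pt}} \gr^2(\OOO,\,\JJJ)\totimes\OOO_{C_2}\xlongrightarrow{\hspace*{17pt}}\AAA_2\xlongrightarrow{\hspace*{17pt}}0,
\end{equation*}
with $\BBB_2^{\totimes 2}=(-1)$ and $\AAA_2=\bigl(\tfrac{m-1}2P_2^\sharp+R^\sharp\bigr)$ as recorded in Remark~\ref{some-formular-in-kAD}. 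By Proposition~\ref{extention-class} the bundle is classified by a class in $H^1\bigl(C_2,\,\AAA_2^{\totimes(-1)}\totimes\BBB_2^{\totimes 2}\bigr)$, and the sequence $\ell$-splits exactly when this class vanishes.

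Next I would pin down the possible $\ell$-splitting types. The discrete data at $P$ and $R$ of any $\ell$-splitting must add up to those of $\BBB_2^{\totimes 2}\oplus\AAA_2$; since coprimality forces the $P_2^\sharp$- and $R^\sharp$-data to be concentrated in a single summand, every $\ell$-splitting has the shape $(p)\toplus\bigl(q+\tfrac{m-1}2P_2^\sharp+R^\sharp\bigr)$ with $p+q=-1$. A direct $\ldeg$ computation shows that the member of this family whose two $\ell$-degrees are closest to one another is $p=0$, $q=-1$, giving $\ell$-degrees $0$ and $-\tfrac1{2m}$; this is precisely $\DDD_2\toplus\EEE_2$, whereas the split case $p=-1,\,q=0$ (class zero) is far more unbalanced. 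Since $\AAA_2^{\totimes(-1)}\totimes\BBB_2^{\totimes 2}$ has $\ldeg<-1$, its $H^1$ is nonzero; and because no type is more balanced than $\DDD_2\toplus\EEE_2$ while the split type occurs only at the single class $\xi=0$, the locus of classes producing a splitting strictly more unbalanced than $\DDD_2\toplus\EEE_2$ is a proper closed subset. By upper-semicontinuity of the splitting type over the space of extension classes, a general class therefore yields $\DDD_2\toplus\EEE_2$.

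Finally I would realize a general class geometrically, exactly as in Lemma~\xref{lemma:xic.3.8)} and \cite[(9.6)]{Mori:flip}: take nested neighborhoods $U_1\Supset V_1$ of $C_1$ and a neighborhood $U_2$ of $C_2\setminus V_1$, and deform the patching of $U_1$ and $U_2$ while keeping $U_1$, $V_1$, $U_2$ unchanged. This varies the gluing along the overlap and, as in \cite[(9.6.4)]{Mori:flip}, moves the induced extension class over $H^1\bigl(C_2,\,\AAA_2^{\totimes(-1)}\totimes\BBB_2^{\totimes 2}\bigr)$; for a general patching the class is general, so after deformation $\gr^2(\OOO,\,\JJJ)\totimes\OOO_{C_2}$ acquires the splitting $\DDD_2\toplus\EEE_2$. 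The main obstacle is this last step: one must verify that the deformation of the patching genuinely surjects onto the relevant $H^1$ (equivalently, that a general patching avoids the proper closed locus of more unbalanced types) and that it does so without disturbing the fixed local models along $C_1$ and at $R$, so that the desired generic splitting is actually attained.
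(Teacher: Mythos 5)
There is a genuine gap, and it sits exactly where your argument needs to be strongest. Your enumeration of the possible $\ell$-splitting types is wrong: nothing forces the $P_2^\sharp$-datum and the $R^\sharp$-datum into the same summand, since in an $\ell$-splitting the nontrivial local weights at the two points $P$ and $R$ are distributed between the two summands \emph{independently}. Besides your family $(p)\toplus\bigl(q+\frac{m-1}2P_2^\sharp+R^\sharp\bigr)$ there are the mixed types $\bigl(\frac{m-1}2 P^\sharp_2\bigr)\toplus\bigl(-1+R^\sharp\bigr)$ and $\bigl(-1+\frac{m-1}2 P^\sharp_2\bigr)\toplus\bigl(R^\sharp\bigr)$; the paper's proof lists all four. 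This is not cosmetic. The mixed types are strictly less balanced than $\DDD_2\toplus\EEE_2$ but strictly more balanced than the split type, and they genuinely occur as non-split extensions of $\AAA_2$ by $\BBB_2^{\totimes 2}$: for instance a map $(-1)\to\bigl(\frac{m-1}2 P^\sharp_2\bigr)\toplus\bigl(-1+R^\sharp\bigr)$ whose first component does not vanish at $R^\sharp$ and whose second does not vanish at $P^\sharp$ is an $\ell$-subbundle with quotient $\AAA_2$. Hence the locus of classes producing something more unbalanced than $\DDD_2\toplus\EEE_2$ is no longer just $\{\xi=0\}$, and semicontinuity alone cannot identify the generic type: it only says the generic type is maximal among those that \emph{occur}, and you have not shown that $\DDD_2\toplus\EEE_2$ occurs at all. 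Worse, the usual proof of semicontinuity of splitting type (via $h^0$ of twists) does not even distinguish $\DDD_2\toplus\EEE_2$ from the mixed types, because their underlying sheaves and the $h^0$'s of all $\ell$-twists coincide; what does distinguish the four cases is \emph{where the unique global section $s$ of $\gr^2(\OOO,\JJJ)\totimes\OOO_{C_2}$ vanishes} (one has $h^0=1$ since the sub $(-1)$ has $h^0=h^1=0$): the desired case is exactly ``$s$ vanishes at neither $P^\sharp$ nor $R^\sharp$,'' the mixed cases are the two one-point vanishing patterns, and the split case is vanishing at both. This pointwise criterion is what the paper's proof runs on, and your argument has no substitute for it.

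Second, the step you yourself flag as ``the main obstacle'' is the actual content of the lemma, and it is missing. The paper never proves, and does not need, that patching deformations dominate $H^1\bigl(C_2,\AAA_2^{\totimes(-1)}\totimes\BBB_2^{\totimes 2}\bigr)$ -- which is what your plan requires and leaves unverified. Instead, having reduced the statement to non-vanishing of $s$ at the two points, it writes $\JJJ^\sharp=(y_2',(y_1y_3)^2)$, represents $s$ near $P$ by $\tilde s\equiv y_1^{(m-1)/2}y_2'\cdot(\mathrm{unit})$ modulo $y_1^m(y_1y_3)^2$, and applies the twisted-extension construction of \cite[(1b.8.1)]{Mori:flip} with $u_1=\tilde s+t\,(y_1y_3)^2$, $u_2=y_1y_3$. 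Since $(y_1y_3)^2$ is $\mumu_m$-invariant (for $\wt_2$) with non-zero image in $\gr^2(\OOO,\JJJ)^\sharp\otimes\CC_{P^\sharp}$, the deformed section $s_t=s+t(y_1y_3)^2$ is non-zero at $P^\sharp$ for $t\neq 0$, while the deformation is trivial on neighborhoods of $C_1$ and of the rest of $C_2$ and trivial modulo $\III_C^{(2)}$, so that $\JJJ$ and $\gr^2(\OOO,\JJJ)$ deform flatly; vanishing at $R^\sharp$ is removed by a similar twist. Arranging a two-point non-vanishing condition in this way is far weaker, and actually achievable, compared with arranging a generic extension class. If you want to salvage your plan, replace ``general class'' by ``class for which $s$ does not vanish at $P^\sharp$ or $R^\sharp$'' and then produce this explicit twist.
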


\begin{proof}
The $\ell$-exact sequence \eqref{eq:ref(xkad.3.19).1)}
induces the $\ell$-exact sequence
\begin{equation*}
0 \xlongrightarrow{\hspace*{17pt}}(-1) \xlongrightarrow{\hspace*{17pt}}\gr^2(\OOO,\, \JJJ) \totimes \OOO_{C_2} \xlongrightarrow{\hspace*{17pt}}(\textstyle\frac{m-1}2 P^\sharp_2 + R^\sharp) 
\xlongrightarrow{\hspace*{17pt}}0.
\end{equation*}
Thus $H^0(\gr^2(\OOO,\, \JJJ) \totimes \OOO_{C_2}) \simeq \CC$, and it has a unique non-zero element $s$ up to a constant multiplication.
Since $\gr^2(\OOO,\, \JJJ) \totimes \OOO_{C_2}$ has an $\ell$-splitting of $\ell$-invertible
sheaves \cite[(2.8)]{KM92}, we have one of the following $\ell$-splittings.
\begin{equation*}
\gr^2(\OOO,\, \JJJ) \totimes \OOO_{C_2} =
\begin{cases}
(-1+\frac{m-1}2 P^\sharp_2 + R^\sharp) \toplus (0) 
\\[5pt]
(\frac{m-1}2 P^\sharp_2) \toplus (-1+R^\sharp) 
\\[5pt]
(-1 + \frac{m-1}2 P^\sharp_2) \toplus (R^\sharp) 
\\[5pt]
(\frac{m-1}2 P^\sharp_2 + R^\sharp) \toplus (-1)
\end{cases}
\end{equation*} 
If $s$ does not vanish at~$P^\sharp$ or $R^\sharp$ (i.e. the images of $s$
in $\gr^2(\OOO,\, \JJJ)^\sharp \otimes \CC_{P^\sharp}$ at~$P^\sharp$ and
$\gr^2(\OOO,\, \JJJ)^\sharp \otimes \CC_{R^\sharp}$ at $R^\sharp$ are both non-zero),
then we are in the first case.

If $X \supset C$ is not in the first case, we construct a 
deformation $X_t \supset C_t$ ($t \in (\CC,0)$) of $X \supset C$ such
that $X_t \supset C_t$ ($t\not=0$) is in the first case. As in Lemma 
\ref{lemma:xic.3.8)}, this proves Lemma~\ref{lemma:xkad.3.20)z}.
To be precise, we will construct here a flat deformation $X_t \supset C_t$
of $X \subset C$ such that

\begin{enumerate}
\item 
\label{case:xkad.3.21)-i}
there exist open neighborhoods $U_1$ and $V_1$ of $C_1$, and $U_2$
of $C_2\setminus C_2 \cap V_1$
such that $V_1$ contains the closure of $U_1$, $U_1 \cap U_2=\emptyset$ and the deformation is trivial on $U_1$ and $U_2$,
\item 
\label{case:xkad.3.21)-ii}
the family is trivial modulo $\III^{(2)}_C$, and hence $\JJJ$ deforms in a 
flat family $\{\JJJ_t\}_t$ and
$\gr^2(\OOO,\, \JJJ)$ in $\{\gr^2(\OOO,\, \JJJ_t)\}_t$, and
\item 
\label{case:xkad.3.21)-iii}
the global section $s$ of $\gr^2(\OOO,\, \JJJ)$ chosen above deforms to the global
section $s_t$ of $\gr^2(\OOO,\, \JJJ_t)$ such that $s_t$ does not vanish at~$P^\sharp$.
\end{enumerate}

By its construction we have $\JJJ^\sharp = (y'_2,(y_1y_3)^2)$, where $y'_2 \in \OOO_{X^\sharp}$
is a $\mumu_m$-semi-invariant such that $y'_2 \equiv y_2 \mod (y_1y_3) \cdot \OOO_{X^\sharp}$.
We may assume that $s$ vanishes at~$P^\sharp$ because otherwise we just take the
trivial deformation. Hence $s$ is represented in a neighborhood of $P$ by $\tilde{s} \in \OOO_X$
such that
\begin{equation*}
\tilde{s} \equiv y_1^{\frac{m-1}2}y'_2 \cdot(\text{unit}) \mod y_1^m(y_1y_3)^2.
\end{equation*}
Let $u_1=\tilde{s}+t \cdot (y_1y_3)^2$ and $u_2=y_1y_3$. We use the twisted extension with
$u=(u_1,\, u_2)$ \cite[(1b.8.1)]{Mori:flip} to $X \supset C_2$
(cf. also \cite[(9.6)]{Mori:flip}). Since we are only twisting the 
patching,~\ref{case:xkad.3.21)-i} is satisfied by the construction. The deformation
is trivial modulo $\III^{(2)}_C$ because $u \equiv u(0) \mod \III^{(2)}_C$,
whence~\ref{case:xkad.3.21)-ii}. For~\ref{case:xkad.3.21)-iii}, we note that
\begin{equation*}
s_t =
\begin{cases}
s+t(y_1y_3)^2 &\text{in $U_1$} 
\\
s &\text{in $U_2$} 
\end{cases}
\end{equation*}
is the global section extending $s$. Then~\ref{case:xkad.3.21)-iii} is obvious. 
This takes care of
the non-vanishing of $s$ at~$P^\sharp$. If $s$ vanishes at $R^\sharp$, a similar construction applies and hence
we omit it here. This completes the proof of Lemma~\ref{lemma:xkad.3.20)z}.
\end{proof} 

\begin{slemma}\label{lemma:xkad.3.22)}
We have an $\ell$-splitting
$\gr^2(\OOO,\, \JJJ) \totimes \OOO_{C_1} = \DDD_1 \toplus \EEE_1$
with $\ell$-isomorphisms
\begin{align*}
\DDD_1 &\simeq \BBB_1^{\totimes 2} = (2Q^\sharp), 
\\
\EEE_1 &\simeq \AAA_1 = \bigl(-1 + \textstyle\frac{m+1}2 P^\sharp_1 + (m'-a') Q^\sharp\bigr).
\end{align*}
\end{slemma}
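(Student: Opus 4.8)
The plan is to obtain Lemma~\ref{lemma:xkad.3.22)} by restricting the $\ell$-exact sequence \eqref{eq:ref(xkad.3.19).1)} to the component $C_1$ and showing that the resulting extension is $\ell$-split. Since $\gr^2(\OOO,\,\JJJ)\simeq \JJJ\totimes\OOO_C$ and $\AAA$ is $\ell$-invertible, tensoring \eqref{eq:ref(xkad.3.19).1)} by $\OOO_{C_1}$ over $\OOO_C$ preserves $\ell$-exactness; identifying $\BBB\totimes\OOO_{C_1}=\BBB_1$ and $\AAA\totimes\OOO_{C_1}=\AAA_1$ by Corollary~\ref{corollary:xkad.3.18)}, I would get an $\ell$-exact sequence
\[
0 \longrightarrow \BBB_1^{\totimes 2} \longrightarrow \gr^2(\OOO,\,\JJJ)\totimes\OOO_{C_1} \longrightarrow \AAA_1 \longrightarrow 0 .
\]
Taking $\DDD_1$ to be the image of $\BBB_1^{\totimes 2}$ and $\EEE_1$ a complement, the lemma reduces to the claim that this sequence is $\ell$-split.

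By Proposition~\ref{extention-class} the extension class lives in $H^1\bigl(C_1,\ \AAA_1^{\totimes(-1)}\totimes\BBB_1^{\totimes 2}\bigr)$, so it is enough to prove that this group vanishes. Using the formulae of Remark~\ref{some-formular-in-kAD}, that is $\AAA_1=\bigl(-1+\tfrac{m+1}2P_1^\sharp+(m'-a')Q^\sharp\bigr)$ and $\BBB_1^{\totimes 2}=(2Q^\sharp)$, I would compute $\AAA_1^{\totimes(-1)}\totimes\BBB_1^{\totimes 2}$ and determine its underlying ordinary invertible sheaf on $C_1\simeq\PP^1$ by bringing the $\mumu_m$- and $\mumu_{m'}$-weights at $P$ and $Q$ into the ranges $[0,m)$ and $[0,m')$. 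Concretely, $\AAA_1^{\totimes(-1)}$ normalizes to $\bigl(-1+\tfrac{m-1}2P_1^\sharp+a'Q^\sharp\bigr)$, and twisting by $(2Q^\sharp)$ leaves the weight at $P$ unchanged while the weight $a'+2$ at $Q$ produces a carry into the integer part exactly when $a'+2\ge m'$. Thus the normalized integer part is either $-1$ or $0$, i.e.\ the round-down invertible sheaf is $\OOO_{C_1}(n)$ with $n\ge -1$; here one uses $m'\ge 3$ (so that the weight $2$ of $\BBB_1^{\totimes 2}$ already lies in $[0,m')$) together with $0<a'<m'$ from Lemma~\ref{lemma:xkad.3.1)}, while $m\ge 5$ of Lemma~\ref{lemma:xkad.3.3)a} guarantees $\tfrac{m-1}2\in[0,m)$. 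Hence $H^1\bigl(\AAA_1^{\totimes(-1)}\totimes\BBB_1^{\totimes 2}\bigr)=0$, the sequence $\ell$-splits, and $\DDD_1\simeq\BBB_1^{\totimes 2}=(2Q^\sharp)$, $\EEE_1\simeq\AAA_1$, as required.

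The one point that needs care is precisely this weight bookkeeping. The rational number $\ldeg\bigl(\AAA_1^{\totimes(-1)}\totimes\BBB_1^{\totimes 2}\bigr)$ is positive, but positivity of $\ldeg$ alone does \emph{not} force $H^1=0$ for an $\ell$-invertible sheaf on a curve carrying two quotient singularities, since the two fractional contributions at $P$ and $Q$ may sum to nearly $2$; what governs the cohomology is the round-down (normalized integer) degree, which must be checked not to fall below $-1$. This is also what separates the present case from its $C_2$-analogue: for $C_2$ the corresponding round-down degree is $-3$, so $H^1$ does not vanish, the extension need not split, and one has to force it by the deformation-of-patching argument of Lemma~\ref{lemma:xkad.3.20)z} (cf.\ \cite[(2.8)]{KM92}); on $C_1$ no such deformation is needed and the splitting is automatic.
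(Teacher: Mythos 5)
Your proposal is correct and takes essentially the same route as the paper's proof: restrict the $\ell$-exact sequence \eqref{eq:ref(xkad.3.19).1)} to $C_1$, compute $\BBB_1^{\totimes 2}\totimes\AAA_1^{\totimes(-1)}=\bigl(-1+\tfrac{m-1}2P_1^\sharp+(a'+2)Q^\sharp\bigr)$, conclude $H^1\bigl(C_1,\,\BBB_1^{\totimes 2}\totimes\AAA_1^{\totimes(-1)}\bigr)=0$, and invoke Proposition~\ref{extention-class} to get the $\ell$-splitting. Your extra bookkeeping on the round-down degree (and the accurate observation that on $C_2$ the corresponding round-down degree is $-3$, which is exactly why the deformation argument of Lemma~\ref{lemma:xkad.3.20)z} is needed there but not here) elaborates on, rather than departs from, the paper's argument.
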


\begin{proof}
By
\begin{equation*}
\BBB_1^{\totimes 2} \totimes \AAA_1^{\totimes (-1)} = \bigl(-1 + \textstyle\frac{m-1}2 P^\sharp_1 + (a'+2)Q^\sharp\bigr),
\end{equation*}
we have $H^1\bigl(C_1,\, \BBB_1^{\totimes 2} \totimes \AAA_1^{\totimes (-1)}\bigr) = 0$. Thus by
Proposition~\ref{extention-class},
the $\ell$-sequence $\text{\eqref{eq:ref(xkad.3.19).1)}} \totimes \OOO_{C_1}$ is $\ell$-split. 
\end{proof}

\begin{sdefinition}\label{def:xkad.3.24)}
In view of Lemma~\ref{lemma:xkad.3.16)}, there is an
$\ell$-splitting
$\gr^2(\OOO,\, \JJJ) = \DDD \toplus \EEE$, where $\DDD$ and $\EEE$ are $\ell$-invertible sheaves
on $C$ such that $\DDD \totimes \OOO_{C_i} = \DDD_i$ and $\EEE \totimes \OOO_{C_i} = \EEE_i$
with $\DDD_i$ and $\EEE_i$ given in Lemmas~\ref{lemma:xkad.3.20)z} and~\ref{lemma:xkad.3.22)}.
\end{sdefinition} 

\begin{proof}[Proof of Theorem~\xref{thm:KAD-k3A}] 
We note $\ell$-isomorphisms
\begin{align*}
\upomega_X \totimes \OOO_{C_1} &= \bigl(-1 + \textstyle\frac{m+1}2 P^\sharp_1 + (m'-a') Q^\sharp\bigr), 
\\
\upomega_X \totimes \OOO_{C_2} &= \bigl(-1 + \textstyle\frac{m-1}2 P^\sharp_2 + R^\sharp\bigr).
\end{align*}
Thus 
\begin{equation*}
H^{i}(\gr^0(\upomega,\, \JJJ))=H^{i}(\upomega \totimes \OOO_C) = 0, \qquad i\in \{0,\, 1\}
\end{equation*}
(cf. proof of Proposition~\ref{proop:k3a.1}).
We have the $\ell$-exact
sequence ($n \ge 0$)
\begin{equation}
\label{gr^n^omega-J}
0 \xlongrightarrow{\hspace*{17pt}} \gr^n(\upomega,\JJJ) \xlongrightarrow{\hspace*{17pt}}
\upomega_X/F^{n+1}(\upomega,\JJJ) \xlongrightarrow{\hspace*{17pt}} 
\upomega_X/F^{n}(\upomega,\JJJ) \xlongrightarrow{\hspace*{17pt}} 0
\end{equation}
similar to \eqref{gr^n-omega-to-omega/F^n+1}
and $\gr^0(\upomega,\JJJ)=\upomega_X/F^{1}(\upomega,\JJJ)$.
From this sequence with $n=0$,
we have $H^i\bigl(\upomega_X/F^1(\upomega,\, \JJJ)\bigr)=0$ for $i\in \{0,\, 1\}$. We note $\ell$-isomorphisms
\begin{align*}
\gr^1(\upomega,\, \JJJ) &= \upomega \totimes \gr^1(\OOO,\, \JJJ) = \upomega \totimes \BBB, 
\\
\upomega \totimes \BBB_1 &= \bigl(-1 + \textstyle\frac{m+1}2 P^\sharp_1 + (m'-a'+1)Q^\sharp\bigr), 
\\
\upomega \totimes \BBB_2 &= \bigl(-1 + \textstyle\frac{m-1}2 P^\sharp_2\bigr), 
\end{align*}
where we note $m'-a'+1 < m'$ by Lemma~\ref{lemma:xkad.3.3)a}. Thus $H^i(\gr^1(\upomega,\, \JJJ))=0$ for $i\in \{0,\, 1\}$,
and by \eqref{gr^n^omega-J} with $n=1$,
we have
\begin{equation}\label{eq:refcase:xkad.3.25).1}
H^i\bigl (\upomega/F^2(\upomega, \JJJ)\bigr) = 0, \qquad i\in \{0,\, 1\}.
\end{equation} 

We will derive a contradiction out of this by a further thickening.

Let $\MMM$ be the ideal sheaf of $\OOO_X$ such that
$\JJJ \supset \MMM \supset F^3(\OOO,\, \JJJ)$ and $\MMM/F^3(\OOO,\, \JJJ)=\DDD$ (cf. (\ref{def:xkad.3.24)})). 
Since
$\JJJ$ is (1,2)-monomializable at~$P,\, Q$ and $R$ and a nested c.i.
on $C\setminus\{P,Q,R\}$ (Corollary~\ref{corollary:xkad.3.18)}), we have an $\ell$-isomorphism
\begin{equation*}
\gr^2(\OOO,\, \JJJ) \totimes \BBB \simeq \gr^3(\OOO,\, \JJJ),
\end{equation*}
(cf. \cite[(8.10)]{Mori:flip}). Let $\MMM'$ 
be the ideal sheaf of $\OOO_X$
such that 
\begin{equation*}
\begin{array}{l}
F^3(\OOO,\, \JJJ) \supset \MMM' \supset F^4(\OOO,\, \JJJ),
\\[7pt]
\MMM'/F^4(\OOO,\, \JJJ)=\DDD \totimes \BBB.
\end{array}
\end{equation*}
Then
$\MMM\cdot \III_C \subset \MMM'$, $\MMM/\MMM'$ is a locally $\ell$-free $\OOO_C$-module,
and we have an $\ell$-exact sequence
\begin{equation}
\label{eq:refcase:xkad.3.25).2)}
0 \xlongrightarrow{\hspace*{17pt}}\EEE \totimes \BBB \xlongrightarrow{\hspace*{17pt}}\MMM/\MMM' \xlongrightarrow{\hspace*{17pt}}\DDD \xlongrightarrow{\hspace*{17pt}}0.
\end{equation} 
The computations are performed by the following scheme:
\[
\begin{tikzpicture}
\draw[black, thick, dashed] (-5,4.8) -- (3.5,4.8);
\draw (-7.5,4.8) node {$F^1(\OOO,\JJJ)=\III$};
\draw[black, thick, dashed] (-5,4) -- (3.5,4);
\draw (-7.5,4) node {$F^2(\OOO,\JJJ)=\JJJ$};

\draw[black, very thick] (-3.2,3.7) -- (-2,3.7) -- (-1,2.5)-- (4.5,2.5);
\draw[black, thick, dashed] (-5,2) --(3.5,2);
\draw (-7.5,2) node {$F^3(\OOO,\JJJ)= (\III\JJJ)'$};

\draw[black, very thick] (-4.,1.6) -- (-2,1.6) -- (-1,0.3)-- (4.5,0.3);
\draw[black, thick, dashed] (-5,-0.2) -- (3.5,-0.2);
\draw (-7.5,-0.2) node {$F^4(\OOO,\JJJ)= \JJJ^{(2)}$};

\draw (-2.8,3) node {$\DDD$};
\draw (1.,3) node {$\EEE$};
\draw (-2.9,1) node {$\BBB\otimes\DDD$};
\draw (0.6,1) node {$\BBB\otimes\EEE$};
\draw (5,2.5) node {$\MMM$};
\draw (5.1,0.2) node {$\MMM'$};
\end{tikzpicture}
\]
We note
\begin{align*}
\EEE_1 \totimes \BBB_1 \totimes \DDD_1^{\totimes (-1)} 
&= \bigl(-1 + \textstyle\frac{m+1}2 P_1^\sharp + (m'-a'-1) Q^\sharp\bigr),
\\
\EEE_2 \totimes \BBB_2 \totimes \DDD_2^{\totimes (-1)} 
&= \bigl(-1 + \textstyle\frac{m-1}2 P_2^\sharp\bigr),
\end{align*}
whence $H^1\bigl(C,\, \EEE\totimes \BBB \totimes \DDD^{\totimes (-1)}\bigr)=0$ 
and
\eqref{eq:refcase:xkad.3.25).2)} is $\ell$-split \cite[(2.6.2)]{KM92}. Thus
\begin{equation*}
\label{eq:refcase:xkad.3.25).3)}
\MMM/\MMM' = (\EEE \totimes \BBB) \toplus \DDD.
\end{equation*} 
\begin{scase}
Let $\NNN$ be the ideal sheaf of $\OOO_X$ such that
$\MMM \supset \NNN \supset \MMM'$ and $\NNN/\MMM'=\DDD$.
The computations here are arranged as follows:
\[
\begin{tikzpicture}
\draw[black, thick, dashed] (-5,4.8) -- (2.5,4.8);
\draw (-7.5,5) node {$F^1(\OOO,\JJJ)=\III$};
\draw[black, thick, dashed] (-5,4.3) -- (2.7,4.3);
\draw (-7.5,4.3) node {$F^2(\OOO,\JJJ)=\JJJ$};
\draw[black, thick, dashed] (-4,4) -- (-1.7,4) -- (-0.4,2.9)-- (4.5,2.9);
\draw (-4.6,4) node {$\MMM$};

\draw[black, very thick] (-3.2,3.8) -- (-2,3.8) -- (1.2,1)-- (4.5,1);
\draw[black, thick, dashed] (-5,2.5) --(2.7,2.5);
\draw (-7.5, 2.5) node {$F^3(\OOO,\JJJ)= (\III\JJJ)'$};

\draw[black, thick, dashed] (-4,2.2) -- (-1,2.2) -- (0.7,0.7)-- (4.2,0.7);
\draw (-4.6,2.2) node {$\MMM'$};

\draw[black, thick, dashed] (-5,0.3) -- (3.5,0.3);
\draw (-7.5,0.5) node {$F^4(\OOO,\JJJ)= \JJJ^{(2)}$};

\draw (-2.8,3.3) node {$\DDD$};
\draw (1,3.3) node {$\EEE$};

\draw (-2.4,1.7) node {$\BBB\otimes\DDD$};
\draw (1.4,1.7) node {$\BBB\otimes\EEE$};

\draw (5.1,1.) node {$\NNN$};
\end{tikzpicture}
\]
We note $H^i(\upomega \totimes \EEE) = 0$ for $i\in \{0,\, 1\}$ by
\begin{align*}
\upomega \totimes \EEE_1 &= \bigl(-1 + P^\sharp_1 + 2(m'-a')Q^\sharp\bigr), 
\\
\upomega \totimes \EEE_2 &= \bigl(-1 + (m-1) P^\sharp_2\bigr),
\end{align*}
where $2(m'-a') < m'$ (Lemma~\ref{lemma:xkad.3.3)a}). Thus by the $\ell$-exact sequence
\begin{equation*}
0 \xlongrightarrow{\hspace*{17pt}}
\upomega_X \totimes \EEE 
\xlongrightarrow{\hspace*{17pt}}
\upomega_X/\upomega \totimes \MMM 
\xlongrightarrow{\hspace*{17pt}}
\upomega_X/F^2(\upomega,\, \JJJ) 
\xlongrightarrow{\hspace*{17pt}}0
\end{equation*}
and $H^i(\upomega_X/F^2(\upomega,\, \JJJ)) = 0$ for $i\in \{0,\, 1\}$ \eqref{eq:refcase:xkad.3.25).1}, we have
$H^i(\upomega_X/\upomega_X \totimes \MMM) = 0$ for $i\in \{0,\, 1\}$. 
By the $\ell$-isomorphism $\MMM/\NNN \simeq \EEE \totimes \BBB$
and 
\begin{equation*}
\upomega \totimes \EEE \totimes \BBB_2 = \bigl(-2 + (m-1) P^\sharp_2 + R^\sharp\bigr),
\end{equation*}
we have
$H^1\bigl(\upomega \totimes \EEE \totimes \BBB_2\bigr) \not = 0$
and $H^1(\upomega_X/\upomega_X\totimes \NNN)\not = 0$, by
\begin{equation*}
0 \xlongrightarrow{\hspace*{17pt}}
\upomega_X \totimes \MMM/\NNN
\xlongrightarrow{\hspace*{17pt}}
\upomega_X/\upomega \totimes \NNN
\xlongrightarrow{\hspace*{17pt}}
\upomega_X/\upomega \totimes \MMM
\xlongrightarrow{\hspace*{17pt}}0.
\end{equation*}
As in the proof of Proposition~\ref{proop:k3a.1}, 
this implies that
$f : (X,C) \to (Z,o)$ is a $\QQ$-conic bundle
by \cite[(1.2.1)]{Mori:flip},
where $(Z,o)$ is smooth by Remark~\ref{rem:Cl}.
Then $(t_1,t_2)\OOO_X \supset \NNN$
for some generators $t_1,t_2$ of $\mmm_{Z,o}$ (see \cite[Theorem~4.4]{MP:cb1}).
Let $\eta_i$ be the generic point of $C_i$ for $i\in \{1,\, 2\}$. In view of the filtration
$\NNN \subset \MMM \subset \JJJ \subset \III_C$, we see that $\OOO_X/\NNN$ is of length $4$ at $\eta_i$
for each $i$. Thus we have $\mult_{\eta_i} t_1 \cdot \mult_{\eta_i} t_2 \le 4$ for each $i\in \{1,\, 2\}$.
Since we know by Remark~\ref{some-formular-in-kAD}
\begin{equation*}
H^0(\gr_C^1 \OOO)= H^0(\AAA)\oplus H^0(\BBB)=H^0(\AAA)=H^0(\AAA_2)\simeq \CC,
\end{equation*}
$\mult_{\eta_1} t_j \ge 2$ for each $j$, whence $\mult_{\eta_1} t_j = 2$ for each $i$.
Furthermore by $\h^0(\gr_C^1 \OOO)=1$, we may assume that $t_2\OOO_X \in F_C^2(\OOO_X)$.
By Remark~\ref{some-formular-in-kAD}
\begin{equation*}
H^0(\gr_C^2 \OOO)= H^0(\AAA^{\totimes 2})\oplus H^0(\AAA\totimes\BBB)\oplus H^0(\BBB^{\totimes 2})
=H^0(\AAA_2^{\totimes 2})\oplus H^0(\AAA_2\totimes\BBB_2),
\end{equation*}
we see that the image of $t_2$ in $\gr_C^2 \OOO \totimes \OOO_{C_1}$ is zero, whence $\mult_{\eta_1} t_2 \ge 3$, a contradiction. 
\end{scase}
The subcase~\ref{case:xkad.3.7)gcase-1} is now disproved, and Theorem~\ref{thm:KAD-k3A} is proved.
\end{proof}

\section{Proof of Theorem~\ref{thm:main}}
\label{sect:proof}
In this section we prove Theorem~\ref{thm:main}.
We start with a lemma.

\begin{lemma}
\label{lemma:components}
Let $(X,C)$ be an extremal curve germ with reducible central curve
$C=\cup_{i=1}^{N} C_i$, where $C_i$ are irreducible components.
Assume that there is a component, say $C_1\subset C$, such that 
$(X,C_1)$
is of type~\typec{cD/3}, \typec{IC}, \typec{kAD}, \typec{II^\vee}, \typec{IIB} or~\typec{k3A}.
Furthermore, assume that $X$ is Gorenstein outside of $C_1$ and $C_1$ meets all other components of $C$.
Then the following assertions hold.
\begin{enumerate}
\item\label{lemma:div0}
$N\le 5$ and $N\le 4$ if $(X,C)$ is birational.

\item\label{lemma:div3}
If $(X,C_1)$ is divisorial and is not of type \typec{II^\vee}, then $N\le 3$.

\item\label{lemma:div4}
If $(X,C_1)$ is divisorial, is not of type \typec{II^\vee} and $(X,C)$ is also divisorial,
then $N=2$.

\item\label{lemma:div1}
If $(X,C)$ is flipping, then $N=2$.
\end{enumerate}
\end{lemma}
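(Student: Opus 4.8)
The plan is to bound $N$ by controlling the points of $C_1$ at which the remaining components $C_2,\dots,C_N$ can attach, and then to feed in the global numerical data determined by the contraction. First I would split according to the behaviour of a general elephant $D\in|-K_X|$, which by \cite[Theorem~1.3]{MP20} is normal with Du Val singularities (we are never in case~\xref{thm:main:k2A+k1A}, since $C_1$ is of one of the listed special types). When $C_1$ is of type~\typec{cD/3} or~\typec{II^\vee}, the divisor $D$ contains no component of $C$ and meets $C$ only at the unique non-Gorenstein point $P$; hence every $C_i$ must pass through $P$, for otherwise $-K_X\cdot C_i=D\cdot C_i=0$, contradicting $f$-ampleness. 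When $C_1$ is of type~\typec{IC}, \typec{IIB}, \typec{kAD} or~\typec{k3A} we instead have $C_1\subset D$ and $\dim\Bs|-K_X|=1$, and the components $C_i$ with $i\ge 2$ meet $C_1$ at points lying on $D$. In either case the attachment points are concentrated on the finitely many non-Gorenstein points carried by $C_1$.

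Next I would pin down which components can occur and where. The irreducible classification together with Theorems~\ref{thm:IC} and~\ref{thm:KAD-k3A} forbids a \typec{k2A} neighbour of $C_1$, and a local inspection of $\Clsc$ at each intersection point via Corollary~\ref{cor:Clsc} shows that each attachment point of a $C_i$ is one of the non-Gorenstein points already present on $C_1$. The central local input is branch counting at such a point $P$ of index $m$: applying the splitting~\eqref{eq:refz:xic.3.6.1} to each pair $C_1,C_i$ yields the vanishing~\eqref{eq:ref(z:xic.3.6).2} of the $\mumu_m$-invariant part of $\upomega_{X^\sharp}\otimes\OOO/(\III^\sharp_{C_1}+\III^\sharp_{C_i})$, and the coordinate analysis used in Lemmas~\ref{lemma:xic.3.3} and~\ref{lemma:xkad.3.3)a} then constrains the smooth branches $C_i^\sharp$ at $P^\sharp$ to lie along admissible coordinate axes. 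Only a bounded number of branches can meet at each point, and summing over the non-Gorenstein points of $C_1$ already produces a finite bound on $N$.

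To extract the sharp constants I would bring in the type of the contraction. In the $\QQ$-conic bundle case the identity $-K_X\cdot C=2$, combined with $0<-K_X\cdot C_i\le 1$ (strict at non-Gorenstein components, by \cite[(0.4.10.3)]{Mori:flip}), caps the total and, together with the local branch count, yields $N\le 5$; in the birational case the corresponding numerical constraint is tighter and gives $N\le 4$, which proves~\ref{lemma:div0}. For~\ref{lemma:div3}--\ref{lemma:div1} I would invoke the description of divisorial and flipping contractions furnished by Appendix~\ref{sect:app} (Corollary~\xref{cor:divI}): when $(X,C_1)$ is divisorial and not of type~\typec{II^\vee}, the exceptional divisor over $C_1$ restricts the available attachment data so that $N\le 3$, and if in addition $(X,C)$ is divisorial the two exceptional divisors are compatible only for $N=2$. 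The flipping case would be approached through $\rho(X/Z)=1$, which forces $[C_i]=\lambda_i[C_1]$ in $N_1(X/Z)$ with $\lambda_i>0$, together with $\Exc(f)=C$ and the contractibility of the induced configuration on $D$.

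The hard part will be the sharp equalities $N=2$ in~\ref{lemma:div4} and~\ref{lemma:div1}: the branch count and the degree estimate alone still permit $N=3$, so excluding this requires the finer structure of the contraction rather than any further local computation. Concretely, one must show that a second attachment point (or a third branch at the distinguished point) is incompatible with $\Exc(f)=C$ in the flipping case, and with the simultaneous existence of the divisorial exceptional divisor in the divisorial case. I expect this last step to rest on the appendix analysis of how the exceptional divisor (respectively the flipped curve) meets each $C_i$, used in conjunction with the proportionality $[C_i]=\lambda_i[C_1]$ dictated by extremality.
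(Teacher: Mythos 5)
Your proposal has genuine gaps, and it misses the key idea of the paper's proof. The paper does not argue via general elephants or branch counting at non-Gorenstein points at all: it \emph{performs the contraction of $(X,C_1)$ itself} and studies the resulting germ. If $(X,C_1)$ is divisorial, contracting its exceptional divisor $\varphi:X\to X'$ produces a point $P'=\varphi(C_1)$ which is Gorenstein terminal (or of index $2$ when $C_1$ is of type \typec{II^\vee}) by Corollary~\xref{cor:divI}, the images $\varphi(C_i)$ remain $K_{X'}$-negative by Lemma~\xref{lemma:div}, and one concludes by the known classifications of Gorenstein and index-$2$ germs (Cutkosky; \cite[Theorems~4.2, 4.7]{KM92}, \cite[Theorem~12.1]{MP:cb1}). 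If $(X,C_1)$ is flipping, one flips $C_1$, uses Table~\ref{table2} to identify $\Sing^{\mathrm{nG}}(X^+)$ and $K_{X^+}\cdot C_1^+$, splits the transforms of the $C_i$ into those missing and those meeting $\Sing^{\mathrm{nG}}(X^+)$, and again reduces to the index-$\le 2$ classification via Lemmas~\xref{lemma:div} and~\xref{lemma:flip}. In particular assertion~\xref{lemma:div1} is almost immediate in this framework: after the flip, the transforms of $C_2,\dots,C_N$ form a \emph{flipping} germ of index $2$, whose central curve is irreducible by \cite[Theorem~4.2]{KM92}, so $N=2$. Your proposal never performs this reduction, and you yourself concede that your methods ``still permit $N=3$'' in cases~\xref{lemma:div4} and~\xref{lemma:div1} --- i.e.\ exactly the sharp statements are left unproved.

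Two further concrete problems. First, your numerical argument for~\xref{lemma:div0} does not produce any bound: the identity $-K_X\cdot C=2$ holds only when $C$ equals the whole scheme fiber (which can be non-reduced), and even granting it, the inequalities $0<-K_X\cdot C_i\le 1$ give no upper bound on $N$, since the degrees $-K_X\cdot C_i$ can be as small as $1/m$ for large index $m$; the ``local branch count'' you invoke is never carried out and would in any case give constants like $7$ (as in the \typec{IIA} analysis of Section~\ref{sect:proof}), not $5$. Second, your opening appeal to \cite[Theorem~1.3]{MP20} for a good member $D\in|-K_X|$ of the \emph{reducible} germ is on shaky ground: in this paper that statement (Corollary~\xref{cor}) is deduced from \cite[Theorem~1.3]{MP20} \emph{together with} Theorem~\xref{thm:main}, which itself rests on the lemma you are proving, so building the proof on it risks circularity. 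The paper's route through the contraction of $(X,C_1)$ avoids both issues entirely.
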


\begin{proof}
Note that the components of $C$ do not meet each other outside $C_1$ (because $\p(C)=0$).

Assume that $(X,C_1)$ is divisorial. Let $\varphi: X\to X'$ be the corresponding contraction,
let $C':=\varphi(C)$, and let $P':=\varphi(C_1)$.
Note that the point $P'\in X'$ is terminal (see Corollary~\ref{cor:divI}).
Each curve $C_i':=\varphi(C_i)$, $i=2,\dots,N$ passes through the point $P'$ and
is $K_{X'}$-negative by Lemma~\ref{lemma:div}.
If $C_1$ is not of type \typec{II^\vee}, then
$P'\in X'$ is Gorenstein terminal by Corollary~\ref{cor:divI}.
Hence $(X',C')$ is a Gorenstein extremal curve germ
(see Definition~\ref{def1}).
Then $N\le 3$ and $N=2$ if $(X,C)$ is birational (see \cite{Cutkosky-1988} and \cite[Theorem~4.7]{KM92}).
This proves~\ref{lemma:div3} and ~\ref{lemma:div4}.
If $C_1$ is of type \typec{II^\vee}, then $P'\in X'$ is a terminal point of index $2$ again by Corollary~\ref{cor:divI}.
Then the number of components of $C'$ is at most $3$ if $(X',C')$ 
is divisorial (see \cite[Theorem~4.7]{KM92})
and
at most $4$ if $(X',C')$ is $\QQ$-conic bundle germ (see \cite[Theorem~12.1]{MP:cb1}).
This proves 
\ref{lemma:div0} in the case where $(X,C_1)$ is divisorial.

Assume that the germ $(X,C_1)$ is flipping. 
Then $(X,C_1)$ is not of type \typec{II^\vee} \cite[Theorem~4.5]{KM92}, hence it is primitive.
Let $\chi : X\dashrightarrow X^+$ be the corresponding flip.
Let $C_1^+$ be the flipped curve and let
$C'^+\subset X^+$ be the proper transform of $C-C_1$.
Let us denote the components of $C'^+$ that does not pass through $\Sing^{\mathrm{nG}}(X^+)$ by
$A_1,\dots,A_l$, and 
the components of $C'^+$ that pass through $\Sing^{\mathrm{nG}}(X^+)$ we denote by by $B_1,\dots,B_k$.
Thus $l+k=N-1$.
Note that the curves $A_1,\dots,A_l,B_1,\dots,B_k$ do not meet each other outside $C_1^+$
and each of them meets $C_1^+$ at a single point.
Moreover, each $(X^{+}, A_i)$ and each $(X^{+}, B_j)$ is an extremal curve germ
by Lemma~\ref{lemma:flip}.

If $X^+$ is Gorenstein, then $k=0$, each germ $(X^+, A_i)$ is divisorial 
and the curves $A_1,\dots,A_l$
are disjoint by \cite[Theorem~4.7]{KM92}.
Moreover, $K_{X^+}\cdot C_1^+=1$ (see Table~\ref{table2} in Appendix~\ref{sect:app}). 
There exists a contraction $\varphi: X^+\to X'$ of $A_1\cup\dots\cup A_l$, where $X'$ is also Gorenstein.
Let $C_1':=\varphi(C_1^+)$.
Then by Lemma~\ref{lemma:div} we have 
\begin{equation}
\label{eq:newKK}
K_{X'}\cdot C_1'\le K_{X^+}\cdot C_1^+-l=1-l. 
\end{equation} 
If $K_{X'}\cdot C_1'\ge 0$, then $l\le 1$ and $N\le 2$. Let
$K_{X'}\cdot C_1'<0$. Then 
$(X',C'_1)$  must be a Gorenstein extremal curve germ and 
$K_{X'}\cdot C_1'=-1$  or $-2$ (see \cite{Cutkosky-1988}). 
Hence $l\le 3$ and $N\le 4$. This proves~ \ref{lemma:div0} in the case where $X^+$ is Gorenstein.

Thus we may assume that $X^+$ is not Gorenstein. By the classification of flips (see Table~\ref{table2} in Appendix~\ref{sect:app})
the variety
$X^+$ has a unique non-Gorenstein point, say $P^+$, and this point is of index $2$.
If $k>1$, then 
\[
\bigcap _{j=1}^k B_j=\{P^+\}. 
\]
Moreover, $K_{X^+}\cdot C_1^+=1/2$.
As above, each $(X^+, A_i)$ is a Gorenstein divisorial extremal curve germ 
and the curves $A_1,\dots,A_l$ are disjoint.
Hence, similar to \eqref{eq:newKK} by Lemma~\ref{lemma:div} and \cite[(2.3.2)]{Mori:flip} we have
\[
-1\le K_{X^+}\cdot C_1^+ -l =\textstyle\frac12-l 
\]
and so $l\le 1$.
On the other hand, $(X^+, \cup_{j=1}^k B_j)$ 
is a birational extremal curve germ of index $2$.
Therefore, $k\le 3$ by \cite[Theorem~4.7]{KM92} and so $N\le 5$.
Assume that $N=5$. Then $l=1$ and $k=3$. As above, let $\varphi: X^+\to X'$
be the contraction of $A_1$, let $B_j':=\varphi(B_j^+)$, and let $C_1':=\varphi(C_1^+)$.
Then by Lemma~\ref{lemma:div} we have $K_{X'}\cdot C_1'< K_{X^+}\cdot C_1^+ -1<0$ and 
$K_{X'}\cdot B_j'=K_{X^+} \cdot B_j^+<0$.
The curves $C_1',B_1',\dots,B_k'$ pass through the point $\varphi(P^+)$.
Therefore, 
$(X',C'_1\cup B_1'\cup\dots\cup B_k')$ is an extremal curve germ 
of index $2$. 
If it is birational, then $k+1\le 3$ by \cite[Theorem~4.7]{KM92}. This contradicts our 
assumption and so $N= 4$. 
This proves~\ref{lemma:div0}.

Finally, assume that $(X,C)$ is flipping. Then, as above, $l=0$ and $(X^+,C'^+= \cup _{j=1}^k B_j)$ is a flipping extremal curve germ of index $2$. By
\cite[Theorem~4.2]{KM92} $C'^+$ is irreducible,
hence $N=2$. This proves~\ref{lemma:div1}.
\end{proof}

\begin{sremark}
\label{rem:components}
Similar arguments show that a flipping extremal curve germ 
of index $3$ has at most $2$ components. Indeed, in the above notation, each germ $(X,C_i)$ is primitive, 
the point $P$ of index $3$ is unique and all the components $C_i\subset C$ pass through $P$.
For the minimal discrepancies of the flip $X^+$ of $C_1$ we have 
$\dis(X^+)> \dis(X)=1/3$ (see \cite[2.13]{Shokurov:nV} or \cite[Proposition~5.1.11(3)]{KMM}).
Therefore, $\operatorname{Index}(X^+)\le 2$ by \cite{Kawamata:discr}. Note that $X^+$ cannot have 
more that one point of index $2$
on $C^+_1$ because $(X^+,C^+_1)$ is primitive (see Lemma~\ref{lemma:flips2-2}\ref{lemma:flips2-2a}
and Corollary~\ref{corImp}).
Then the assertion follows from \cite[Theorem~4.2]{KM92}.
\end{sremark}

Now we consider the possibilities for $(X,C)$.

\subsection{}
Suppose that $X$ is Gorenstein. Then $(X,C)$ is not flipping \cite[Theorem~6.2(i)]{Mori:flip}.
Since $C$ is reducible, $(X,C)$ cannot be divisorial \cite[Theorem~4.7.2]{KM92}.
We obtain the case~\ref{thm:main:Gor}. 

\subsection{}
From now on we assume that $X$ is not Gorenstein.
According to \cite[Corollary~1.15]{Mori:flip} and \cite[Lemma~4.4.2]{MP:cb1} (see also \cite[Proposition~4.2]{Kollar-1999-R}) each intersection point $C_i\cap C_j$ is non-Gorenstein on $X$.
Since $C$ is connected, each germ $(X,C_i)$ has a non-Gorenstein point.
By \cite[Lemma~2.2]{MP20} each germ $(X,C_i)$ is birational and 
by \cite[Theorem~6.2]{Mori:flip} it has at most two non-Gorenstein points.

\subsection{}
\label{pf:1point}
Suppose that for each component $C_i\subset C$ the germ $(X,C_i)$ has exactly one non-Gorenstein point. 
In this situation the whole germ $(X,C)$ has exactly one non-Gorenstein point, say $P$, and 
all the components $C_i$ pass through $P$. 
Since each germ $(X,C_i)$ is birational,
it cannot be of type~\typec{IE^\vee} or \typec{ID^\vee} \cite[5.3.1]{MP:cb1}, \cite[4.2]{Mori:flip}.
Hence $(X,C_i)$ is one of the following types: 
\typec{k1A}, \typec{cD/2}, \typec{cAx/2}, \typec{cE/2}, \typec{cD/3}, 
\typec{IIA}, \typec{II^\vee}, \typec{IC} or \typec{IIB}.

If the point $P\in X$ is of the exceptional type~\type{cAx/4}, then by 
definition each component
$C_j\subset C$ is one of the types \typec{IIB}, \typec{II^\vee} or \typec{IIA}.

\begin{scase}
Suppose that every component
$C_i\subset C$ is of type~\typec{IIA}. 
The germ $(X,C)$ can be either flipping, divisorial or $\QQ$-conic bundle.

Let us prove that $N \le 7$. Recall that in this case $X$ has a unique non-Gorenstein point,
say $P$ and all the components of $C$ pass through $P$. 
The \type{cAx/4}-point $(X,P)$ in some local coordinates $y_1,\dots,y_4$ has the form:
\[
(X, P ) =(X^\sharp, P^\sharp )/ \mumu_4 = \big(\{\alpha(y_1,\dots,y_4)=0\}\subset \CC^4\big)/ \mumu_4(1, 1, 3, 2),
\]
where $\alpha$ is a semi-invariant of weight $2$,
\[
\alpha\equiv \beta(y_1, y_2)+y_3^2\mod (y_1,\dots,y_4)^3,
\]
and~$\beta$ is a quadratic
form in~$y_1$,~$y_2$ (see \cite{Mori:sing} or \cite{Reid:YPG}). Moreover we may assume that 
$\beta = y_1y_2$ or $y_1^2$. Furthermore, each component $C_i^\sharp\subset C^\sharp$ 
is
the image of the map 
\[
t\longmapsto \big(t\gamma_1^{(i)},\, t\gamma_2^{(i)},\, t^3\gamma_3^{(i)},\, t^2\gamma_4^{(i)}\big)
\]
for some $\gamma_j^{(i)}\in \CC\{t^4\}$ \cite[4.2]{Mori:flip}.
Then we see either
$\gamma_1^{(i)}(0) = 0$ and $\gamma_2^{(i)}(0)\neq 0$ or $\gamma_1^{(i)}(0) \neq 0$ and $\gamma_2^{(i)}(0) = 0$ since $C_i^\sharp$ is smooth and $\beta\big(\gamma_1^{(i)}, \, 
\gamma_2^{(i)}\big)t^2\in (t)^3$.
For $i$ with $\gamma_1^{(i)}(0) = 0$, the monomials $y_2^4$,
$y_2y_3$, $y_2^2y_4$, and $y_4^2$ are the only ones whose expansion in $t$ can start with $t^4$.
Similarly, for $i$ with $\gamma_2^{(i)}(0) = 0$, the monomials $y_{1}^4$,
$y_1y_3$, $y_1^2y_4$, and $y_4^2$ are the only ones whose expansion in $t$ can start with $t^4$.
Hence, for any $i$, the monomials $y^4_2$, $y_2y_3$, $y^2_2 y_4$, $y^2_4$, $y^4_1$, $y_1y_3$, $y^2_1 y_4$ are the only ones whose expansion in $t$ can start with $t^4$.
Since $t^4$ is an uniformizing parameter on $C$, 
this means that the the tangent space $T_{C, P}=\sum T_{C_i, P}$ is contained in the 
$\CC^7$ with coordinates
$y^4_2$, $y_2y_3$, $y^2_2 y_4$, $y^2_4$, $y^4_1$, $y_1y_3$, $y^2_1 y_4$.
On the other hand, since the curve $C=\cup_{i=1}^N C_i$
has arithmetic genus $0$ (see \cite[1.2.1]{Mori:flip}), the tangent spaces of its components $T_{C_i, P}$ are linear independent 
in $T_{C, P}$. Whence we have $N \le 7$.

Assume that $(X,C)$ is flipping. Let $\chi : X\dashrightarrow X^+$ be the flip of $C_1$.
Let $C_1^+$ be the flipped curve and let 
$C_i^+\subset X^+$ be the proper transform of $C_i$ for $i\neq 1$.
The set of non-Gorenstein points of $X^+$ consists of either
a unique point $P^+$ of index $2$ or two points $P^+$ and $Q^+$ of indices $2$ and $3$,
respectively (see Table~\ref{table2} in Appendix~\ref{sect:app}). The former case is treated 
similarly to the proof of Lemma~\ref{lemma:components}. Thus we assume that we are in the latter case, i.e. 
$\Sing^{\mathrm{nG}}(X^+)=\{P^+,\, Q^+\}$.
Each $(X^+, C_i^+)$, $i=2,\dots, N$ is a flipping extremal curve germ.
Hence $C_i^+$ passes either through $P^+$ or $Q^+$.
Then the bound $N\le 4$ follows from \cite[Theorem~4.2]{KM92} and Remark~\ref{rem:components}.
We obtain the case~\ref{thm:main:IIA}. 
\end{scase}

\begin{scase}
\label{case:IIvee-a}
Suppose that two components, say $C_1$ and $C_2$ are of type~\typec{II^\vee}.
Consider the germ $(X'=X, C'=C_1\cup C_2)$. 
By the local description of type~\typec{II^\vee} points we have $-K_{X'}\cdot C_i=1/2$ for $i\in \{1,\, 2\}$
(see \cite[(2.3.2) and Theorem~4.9]{Mori:flip}).
Let $H_i$ be a small disc that meets $C_i$ transversely at a general point.
Then $D:=2K_{X'}+H_1+H_2$ is a $2$-torsion element in $\Cl(X')$
(because $2K_{X'}+H_1+H_2$ a numerically trivial Weil divisor on $X'$ which is not Cartier).
The element $D$ defines a double cover $\pi:(X^\flat, C^\flat) \to (X', C')$ which is \'etale outside of $P$ 
and such that the inverse images of $C_1$ and $C_2$ split \cite[(1.11)]{Mori:flip}.
Thus $(X^\flat, C^\flat)$ is an extremal curve germ of index $2$ whose central fiber has four components.
By \cite[Theorem~4.7]{KM92} 
the germ $(X^\flat, C^\flat)$ cannot be birational.
Hence $(X^\flat, C^\flat)$ and $(X', C')$ are $\QQ$-conic bundle germs.
By \cite[Lemma 2.2]{MP20} we have $C=C'$. Thus $(X,C)$ is a $\QQ$-conic bundle germ over a singular base 
(because $\Cl(X)$ has a $2$-torsion, see e.g.~\cite[Construction~4.5]{MP-1p}). Then by \cite{MP:cb2} we obtain the case~\ref{thm:main:IIdual2}.
\end{scase}

\begin{scase}
\label{case:IIvee}
Suppose that exactly one component, say $C_1$ is of type~\typec{II^\vee}.
By Lemma~\ref{lemma:IIvee-IIB} below other components $C_i$, $i\neq 1$ are of type~\typec{IIA}.
The restriction on the number of component follows from Lemma~\ref{lemma:components}.
We obtain the case~\ref{thm:main:IIdual}.

\begin{slemma}
\label{lemma:IIvee-IIB}
If $C_1$ is of type \typec{II^\vee}, then $C_2$ cannot be of type \typec{IIB}.
\end{slemma}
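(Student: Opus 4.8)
The plan is to localize at the unique non-Gorenstein point $P$ of type~\type{cAx/4}, through which both $C_1$ and $C_2$ necessarily pass, and to contradict the invariant-part vanishing~\eqref{eq:ref(z:xic.3.6).2} by producing an explicit nonzero $\mumu_4$-invariant section, exactly as in the arguments excluding $i=1$ in Lemmas~\ref{lemma:xic.3.3} and~\ref{lemma:xkad.3.3)a}. First I would pass to the two-component sub-germ $(X,\,C_1\cup C_2)$, so that the isomorphism~\eqref{eq:refz:xic.3.6.1} and hence~\eqref{eq:ref(z:xic.3.6).2} become available at~$P$; for orientation I would also record the anticanonical degrees $-K_X\cdot C_1=1/2$ for~\typec{II^\vee} (as in~\ref{case:IIvee-a}) against the fact, recalled after Theorem~\ref{thm:main}, that~\typec{IIB} is one of the types for which a general $D\in|-K_X|$ contains the component. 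It is the collision of these two opposite behaviours at a single \type{cAx/4} point that should force the contradiction.

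Next I would fix the standard form of the cover $X^\sharp=\{\alpha=0\}\subset\CC^4_{y_1,\dots,y_4}$ with $\mumu_4(1,1,3,2)$ and $\alpha\equiv\beta(y_1,y_2)+y_3^2$, $\beta\in\{y_1y_2,\,y_1^2\}$, as in the treatment of~\typec{IIA} in Section~\ref{sect:proof}, and put both curves into normal form in one and the same coordinate system. Here the structural input is that $(X,C_1)$ of type~\typec{II^\vee} is \emph{locally imprimitive} (cf. Convention~\ref{conve:Qcb}), so $C_1^\sharp$ is reducible, while $(X,C_2)$ of type~\typec{IIB} is primitive, so $C_2^\sharp$ is a single smooth $\mumu_4$-invariant branch with its own parametrization from the classification in \cite{Mori:flip, KM92}, distinct from the~\typec{IIA} one. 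Reading off the $\wt_1$- and $\wt_2$-weights of $y_1,y_2,y_3$ along the two branches and using $\wt_1+\wt_2\equiv0\bmod 4$, I would then pin down a semi-invariant monomial $g$ with $g\,\Omega$ (where $\Omega$ is the residue generator of $\upomega_{X^\sharp}$) inducing a nonzero $\mumu_4$-invariant element of $\upomega_{X^\sharp}\otimes\OOO/(\III^\sharp_{C_1}+\III^\sharp_{C_2})$, which contradicts~\eqref{eq:ref(z:xic.3.6).2}.

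As a backup I would instead run the $\ell$-degree argument of Sections~\ref{sect:IC}--\ref{sect:kAD-k3A}: build the $C$-laminal ideal $\JJJ$ with $\gr^1_C\OOO=\AAA\toplus\BBB$ via Proposition~\ref{thicken-I}, evaluate $\ldeg(\AAA)$ and $\ldeg(\BBB)$ from the weight data at~$P$ together with $-K_X\cdot C_1=1/2$, and feed this into Theorem~\ref{M88-8.12} with $d=2$; I expect either the inequality $\ldeg(\BBB)+\tfrac12\ldeg(\AAA)\ge0$ to fail outright, or, after a further $(1,3)$-monomialization by Proposition~\ref{thicken-J}, the $d=3$ bound to be violated, precisely as in the~\typec{IC}+\typec{k2A} computation.

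The main obstacle will be the normal-form step: simultaneously normalizing the reducible, imprimitive $C_1^\sharp$ and the irreducible primitive~\typec{IIB} curve $C_2^\sharp$ in a fixed coordinate system on the \type{cAx/4} cover, and justifying the passage to the two-component germ so that~\eqref{eq:refz:xic.3.6.1} is legitimately in force. Once the two curves are written down together, the production of the offending invariant section (or of the violated $\ldeg$ inequality) reduces to routine weight bookkeeping.
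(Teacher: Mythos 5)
Your primary plan is in fact the same route the paper takes: localize at the \type{cAx/4} point $P$, invoke the vanishing \eqref{eq:ref(z:xic.3.6).2} for the two-component germ $(X,C_1\cup C_2)$, write both curves in normal form on the index-$1$ cover, and exhibit a nonzero $\mumu_4$-invariant section to get a contradiction. But there is a genuine error at the heart of your setup: you assert that, since \typec{IIB} is primitive, $C_2^\sharp$ is a \emph{smooth} $\mumu_4$-invariant branch. This is exactly backwards: by \cite[(4.2)]{Mori:flip} (and as the paper itself recalls in its Example for case~\ref{thm:main:IIB}), type~\typec{IIB} is \emph{characterized} by the pull-back $C_2^\sharp$ being singular; the smooth case is \typec{IIA}. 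The distinction is not cosmetic, because the lemma is false with \typec{IIA} in place of \typec{IIB} --- germs of type \typec{II^\vee}${}+{}$\typec{IIA} do occur (case~\ref{thm:main:IIdual} of Theorem~\ref{thm:main}) --- so no amount of weight bookkeeping with a smooth branch can produce the contradiction; the invariant section you are looking for simply is not there in that case. The paper's proof uses the singular normal form
\begin{equation*}
C_2^\sharp=\{z_1^2-z_4^3=z_2=z_3=0\},\qquad \wt_{\mumu_4}(z_1,\dots,z_4)=(1,3,3,2),
\end{equation*}
whose essential consequence is that $\III^\sharp_{C_2}+(\mmm^\sharp_P)^2=(z_2,z_3)+(\mmm^\sharp_P)^2$ is generated modulo $(\mmm^\sharp_P)^2$ by only \emph{two} linear forms, both of weight $3$. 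Combined with $\III^\sharp_{C_1}+(\mmm_P^\sharp)^2=(y_3,y_4)+(\mmm_P^\sharp)^2$ (the two branches $y_1=\pm y_2$ of the imprimitive \typec{II^\vee} curve), the sum $\III^\sharp_{C_1}+\III^\sharp_{C_2}$ lies in $(y_2,y_3,y_4)+(\mmm^\sharp_P)^2$ and so misses the weight-$1$ coordinate $y_1$; since the generator $\Omega$ of $\upomega_{X^\sharp}$ has weight $\equiv 3$, the class of $\Omega\otimes y_1$ is a nonzero $\mumu_4$-invariant of $\upomega_{X^\sharp}\otimes\OOO/(\III^\sharp_{C_1}+\III^\sharp_{C_2})$, contradicting \eqref{eq:ref(z:xic.3.6).2}. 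Had $C_2^\sharp$ been smooth, its ideal would contain a third linear form of weight $1$ modulo $(\mmm^\sharp_P)^2$, which kills precisely this invariant.

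Secondly, the difficulty you single out as the main obstacle --- simultaneously normalizing the reducible $C_1^\sharp$ and the \typec{IIB} curve $C_2^\sharp$ in one coordinate system --- is a non-issue in the paper's execution, and your proposal leaves it unresolved. The paper never puts the two curves in a common normal form: it takes coordinates $y_1,\dots,y_4$ adapted to $C_1$ and \emph{separate} coordinates $z_1,\dots,z_4$ adapted to $C_2$, observes that both give the same maximal ideal $\mmm^\sharp_P$ and the same weights $(1,3,3,2)$, and then works only modulo $(\mmm^\sharp_P)^2$, where nothing beyond linear parts and their $\mumu_4$-weights is needed. Your backup plan via Theorem~\ref{M88-8.12} is left entirely speculative (you do not compute $\ldeg(\AAA)$, $\ldeg(\BBB)$, nor verify the hypotheses of Propositions~\ref{thicken-I} and~\ref{thicken-J} at an imprimitive point), so it cannot be counted as closing the gap.
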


\begin{proof}
Assume that $C_1$ is of type \typec{II^\vee} and $C_2$ is of type \typec{IIB}. 
Note that in the $\QQ$-conic bundle case the base $(Z,o)$ must be smooth (see e.g. \cite[Theorem~1.3]{MP:cb2}).
Let $C_1\cap C_2=\{P\}$, let $\pi:(X^\sharp,P^\sharp)\to (X,P)$ be the index-$1$ cover, and let $C_i^\sharp:=(\pi^{-1}(C_i))_{\red}$. By \cite[A3]{Mori:flip}
there exist an embedding $X^\sharp\subset \CC^4$ and local coordinates $y_1,\dots,y_4$ near $P^\sharp$ such that $\wt_{\mumu_4}(y_1,\dots,y_4)=(1,3,3,2)$ and
for some semi-invariants $g_3,\, g_4$ with $\mult_0(g_2)\ge 1$ and $\mult_0(g_4)\ge 2$
we have
\begin{eqnarray*}
X^\sharp &=&\{y_1^2-y_2^2+g_3y_3+g_4y_4=0\},
\\
C^\sharp_1&=&\{y_1^2-y_2^2=y_3=y_4=0\}.
\end{eqnarray*}
Similarly, for $P\in C_2\subset X$ there exist local coordinates $z_1,\dots,z_4$
such that modulo permutations and changing the generator of $\mumu_4$ we have $\wt_{\mumu_4}(z_1,\dots,z_4)=(1,3,3,2)$ and
\begin{eqnarray*}
C^\sharp_2&=&\{z_1^2-z_4^3=z_2=z_3=0\}.
\end{eqnarray*}
Therefore, 
\[
\III_{C_2}^\sharp+(\mmm_P^\sharp)^2=(z_2,z_3)+(\mmm_P^\sharp)^2.
\]
Since $\mmm_P^\sharp=(y_1,\dots,y_4)=(z_1,\dots,z_4)$ and in view of the $\mumu_4$-action, we have
\[
\III_{C_1}^\sharp+\III_{C_2}^\sharp+(\mmm_P^\sharp)^2=(y_2,y_3, y_4)+(\mmm_P^\sharp)^2.
\]
Hence there exists a surjection
\[
\left(\upomega_{X^\sharp}\otimes \OOO / (\III_{C_1}^\sharp+\III_{C_2}^\sharp)\right)^{\mumu_4} \twoheadrightarrow
\left( \OOO_{X^\sharp}\cdot y_2 \otimes (\CC\oplus \CC\cdot y_1)\right)^{\mumu_4}=
\CC\cdot y_1y_2\neq 0.
\]
This contradicts \eqref{eq:ref(z:xic.3.6).2}.
\end{proof}
\end{scase}

\begin{scase}
\label{scase:IIB}
Thus we may assume that $C$ has no components of type \typec{II^\vee}.
Suppose that some component, say $C_1$ is of type~\typec{IIB}.
Then the germ $(X,C_1)$ must be divisorial \cite[Theorem~4.5]{KM92}
and any other component $C_j$, $j\neq 1$ is of type~\typec{IIA} by
\cite[1.4 (i)]{MP20}.
The restriction on the number of component follows from Lemma~\ref{lemma:components}.
We obtain the case~\ref{thm:main:IIB}. 
This finishes our treatment of \type{cAx/4}-points.
\end{scase}

\begin{scase}
\label{pf:thm:main:cD/3}
Suppose that $(X,P)$ is a point of type~\type{cD/3}.
Then by definition, each germ $(X,C_i)$ is birational of type~\typec{cD/3}.
The restriction on the number of component follows from Lemma~\ref{lemma:components}.
We obtain the case~\ref{thm:main:cD/3}. 
\end{scase}

Thus under the assumption~\ref{pf:1point}
it remains to consider the cases where all the components of $C$ are of types 
\typec{k1A}, \typec{cD/2}, \typec{cAx/2}, \typec{cE/2} or \typec{IC}.

\begin{scase} 
Suppose that $(X,P)$ is a point of type~\type{cAx/2}, \type{cD/2} or \type{cE/2}.
Since $C$ is reducible, by \cite[Theorem~4.7.4-6]{KM92} the germ $(X,C)$ is not birational. 
Thus $(X,C)$ is a $\QQ$-conic bundle germ.
Let $C_i\subset C$ be any component and let $C'=C-C_i$ be the remaining part.
Then $(X,C')$ is a birational curve germ \cite[Lemma~2.2]{MP20} and then $C'$ must be irreducible again by 
\cite[Theorem~4.7.4-6]{KM92}. We obtain the case~\ref{thm:main:cD/2-cE/2}. 
\end{scase}

Thus we may assume that~$P\in X$ is a point of type~\type{cA/n}.

\begin{scase}\label{pf:thm:main:IC}
Suppose that there is a component, say $C_1$, of type~\typec{IC}, then by \cite[Corollary~1.4(ii)]{MP20}
all other components $C_j\subset C$ are of type~\typec{k1A}.
The germ $(X,C_1)$ is flipping \cite[8.3.3]{KM92}. 
The restriction on the number of component follows from Lemma~\ref{lemma:components}.
We obtain the case~\ref{thm:main:IC}.

\end{scase}

Finally, if all the components $C_j\subset C$ are of type~\typec{k1A}, we obtain the case~\ref{thm:main:k1A}.

\subsection{}
From now on we assume that there is at least one component of $C$ 
containing two non-Gorenstein points of $X$. 
We note that if $C$ has a component $C_1$ of type~\typec{IC},
then it meets only components of types \typec{k1A} or \typec{k2A} by \cite[Corollary~1.4(ii)]{MP20},
and the case of \typec{k2A} is disproved by Theorem~\ref{thm:IC}. Hence $C_1$ can meet only \typec{k1A} components.
But then $(X,C)$ has a unique non-Gorenstein point. 
This contradicts our assumption and so $C$ has no components of type~\typec{IC}.

A component of type~\typec{k2A} 
contains only points of type \type{cA/n} and 
does not meet components of type~\typec{k3A}, \typec{kAD}, nor \typec{IC} by Theorems~\ref{thm:KAD-k3A} and~\ref{thm:IC}.
Thus the only components of $C$ that meet a \typec{k2A} component are of types \typec{k2A} or \typec{k1A}, and they exhaust $C$ if
$C$ contains component of type~\typec{k2A} since $C$ is connected. Then we obtain the case~\ref{thm:main:k2A+k1A}.
So from now on we assume that each component of $C$ containing two non-Gorenstein points is of type 
\typec{kAD} or \typec{k3A}. Let $C_1$ such a component.

\begin{scase} 
\label{pf:thm:main:k3A}
Suppose that $C_1$ is of type~\typec{k3A}. Then $(X,C_1)$ is divisorial by \cite[(5.1)]{KM92}, and
by \cite[Corollary~1.4(ii)]{MP20} and Theorem~\ref{thm:KAD-k3A}
all other components $C_j\subset C$ are of type~\typec{k1A}. 
The restriction on the number of component follows from Lemma~\ref{lemma:components}.
So, we obtain the case~\ref{thm:main:k3A}.
\end{scase}

\begin{scase} \label{pf:thm:main:kAD}
Finally, let $C_1$ is of type~\typec{kAD}.
Then by \cite[Corollary~1.4(iii)]{MP20} and Theorem~\ref{thm:KAD-k3A}
all other components $C_j\subset C$ are of type~\typec{k1A}, \typec{cD/2} or \typec{cAx/2}.
The restriction on the number of component follows from Lemma~\ref{lemma:components}.
We obtain the case~\ref{thm:main:kAD}. 
\end{scase}
This finishes the proof of Theorem~\ref{thm:main}.

\section{Examples}
\label{sect:ex}
In this section we collect some examples of extremal curve germs with reducible central curve.
We use the notation of Theorem~\ref{thm:main}.

\subsection{}
The case~\ref{thm:main:Gor} is trivial. Indeed, in this case 
the base $Z$ is smooth and
there is an embedding $X \hookrightarrow \PP^2\times Z$
so that the contraction
$f$ is induced by the projection to the second factor and the fibers are conics (see e.g. \cite{Cutkosky-1988}).

\subsection{}
Here is an example of extremal curve germs as in~\ref{thm:main:cD/2-cE/2} of Theorem~\ref{thm:main}.

\begin{sexample}[case~\ref{thm:main:cD/2-cE/2}, cf. {\cite[Example~3.12]{P97}}]
By \cite[Theorem 12.1]{MP:cb1} the threefold $X$ can be embedded to $\PP(1,1,1,2)\times Z$, where $Z\subset \CC^2$ is a small disk around the origin,
so that the contraction $f: X\to Z$ is the projection to the second factor and $C$ is the fiber over $(0,0)\in Z$.
Let $u,\, v$ be coordinates on $\CC^2$ and let $x_1,\, x_2,\, x_3,\, y$ be quasi-homogeneous 
coordinates of $\PP(1,1,1,2)$ with $\deg x_i=1$, $\deg y=2$. Suppose that $X$ is given by the equations
\begin{equation*}
x_1x_2- y u=
x_3^2+ y(c_1v^2+c_2v^3+c_3uv+c_4u^2v)+ u(x_1^2+x_2^2)=0,
\end{equation*}
where $c_1,\dots, c_4 \in \CC$. 
The central fiber $C$ of the projection $X\to\CC^2$ has two components 
meeting at~$P:=(0,0,0,1;0,0)$.
In the affine chart $y\neq 0$ we can eliminate $u$ and obtain a local equation of~$X$ at~$P$:
\[
\bigl\{x_3^2+c_1v^2+c_2v^3+c_3vx_1x_2+c_4x_1^2x_2^2v + x_1x_2(x_1^2+x_2^2)=0\bigr\}/\mumu_2(1,1,1,0).
\]
According to the classification of threefold terminal singularities (see e.g. \cite{Mori:sing} or \cite{Reid:YPG})
we have
\begin{itemize}
\item 
if $c_1 \neq 0 $, $c_2=c_3=c_4=0$, then $P \in X$ is of type~\type{cAx/2};
\item 
if $c_1 = c_4 = 0 $, $c_2 \neq 0 $, $c_3 \neq 0 $, then $P \in X$ is of type~\type{cD/2};
\item 
if $c_1 = c_3 = 0 $, $c_2 \neq 0 $,  $c_4 \neq 0 $, then $P \in X$ is of type~\type{cE/2}.
\end{itemize} 
\end{sexample}

\subsection{Construction}
\label{subs:def}
In many cases to show the existence of certain extremal curve germs one can use deformation arguments as follows. 
Suppose we are given a normal surface germ $(H, C)$ along a proper curve $C$ 
and a contraction $f_H : H \to H_Z$ such that $C$ is a fiber and such that $K_H$ is negative on the components of $C$. Let $o:= f_H(C)$.
The contraction $f_H$ must be either birational or a rational curve fibration.
Let $P_1,\dots,P_r \in H$ be singular points. Assume also that for a neighborhood $U_i\subset H$ of each point $P_i$ 
there is a small one-parameter deformation 
$V_i/\Delta$ over a small disc $0\in \Delta\subset \CC$ 
such that $U_{P_i}$ is the fiber over $0\in \Delta$ and
the total space $V_i$ has a terminal singularity at~$P_i$. 
On can show that the natural map
\[
\Def H \longrightarrow \coprod \Def(U_i, P_i)
\]
is smooth. 
Hence there is a global one-parameter deformation $X/\Delta$ of $H$ inducing a local deformation of $V_i/\Delta$ near each $P_i$
and such that $H$ is the fiber over $0\in \Delta$. 
This shows the existence of a threefold $X \supset H\supset C$ that has the desired structure near each $P_i$. The contraction $f : X \to Z$ exists by arguments similar to those in 
\cite[Theorem~6.3]{MP-1p} and \cite[11.4.1]{KM92}. 

\begin{sremark}
\label{rem:computH}
In the above notation the negativity of $K_H$ on a component $C_i\subset C$ can be checked as follows. 
Let $\mu: \tilde H\to H$ be the minimal resolution and let $\tilde C_i\subset \tilde H$ 
be the proper transform. Then we can write $K_{\tilde H}=\mu^* K_H-\Delta$, where $\Delta$ is the codiscrepancy divisor
which must be effective. Therefore, 
\begin{equation}
\label{eq:KH:C}
K_H\cdot C_i=K_{\tilde H}\cdot \tilde C_i+\Delta\cdot \tilde C_i\ge K_{\tilde H}\cdot \tilde C_i.
\end{equation} 
Thus $K_H\cdot C_i$ is negative if and only if $\tilde C_i$ is a $(-1)$-curve and $\Delta\cdot \tilde C_i<1$.
\end{sremark}

\begin{snotation}
In the examples below the surface $H$ has only rational singularities (cf. \cite[Theorem~1.8]{KM92}
and \cite[Lemma~2.4]{MP:IA}).
Hence its minimal resolution $\mu:\tilde H\to H$ can be described in terms of dual graphs. 
We will use the usual notation: each vertex labeled by $\bullet$ or $\scriptstyle{\blacksquare}$ corresponds to an irreducible component of $C$
which is a $(-1)$-curve, and each one labeled by $\circ$ corresponds to a component $E_i \subset \Supp(\Delta)$ 
which is a smooth rational curve. 
A number attached to a vertex denotes minus the self-intersection number. For short, we omit $2$ when this number equals~$-2$.
\end{snotation}

\begin{example}[case {\ref{thm:main:cD/3}}]
\label{ex:cd/3}
Consider a smooth surface $\tilde H$ containing a configuration of smooth rational curves whose dual graph is one of
the following:
\def\sizec{0.2em}
\[
\begin{tikzpicture}
\draw (1,0.3) node {a)};
\coordinate (1) at (1,1);
\coordinate (2) at (2,1);
\coordinate (3) at (3,1);
\coordinate (4) at (4,1);
\coordinate (5) at (5,1);
\coordinate (6) at (6,1);
\coordinate (7) at (4,0);
\coordinate (8) at (5,0);

	\path (1) edge (2);
	\path (2) edge (3);
	\path (2) edge (3);
	\path (3) edge (4);
	\path (4) edge (5);
	\path (5) edge (6);
	\path (4) edge (7);
	\path (7) edge (8);

\path[fill=white,draw=black] (1) circle (\sizec);
\path[fill=black,draw=black] (2) circle (\sizec);
\path[fill=white,draw=black] (3) circle (\sizec) node [above, yshift=1.5] {$\scriptstyle 3$};
\path[fill=white,draw=black] (4) circle (\sizec);
\path[fill=white,draw=black] (5) circle (\sizec) node [above, yshift=1.5] {$\scriptstyle 3$};
\path[fill=black,draw=black] (6) circle (\sizec);
\path[fill=white,draw=black] (7) circle (\sizec) node [below, yshift=-1.5] {$\scriptstyle 3$};
\path[fill=black, draw=black] (8) circle (\sizec);

\end{tikzpicture}
\hspace{3em}
\begin{tikzpicture}
\draw (2,0.3) node {b)};

\coordinate (2) at (2,1);
\coordinate (3) at (3,1);
\coordinate (4) at (4,1);
\coordinate (5) at (5,1);
\coordinate (6) at (6,1);
\coordinate (7) at (4,0);
\coordinate (8) at (5,0);

	\path (2) edge (3);
	\path (2) edge (3);
	\path (3) edge (4);
	\path (4) edge (5);
	\path (5) edge (6);
	\path (4) edge (7);
	\path (7) edge (8);

\path[fill=black,draw=black] (2) circle (\sizec);
\path[fill=white,draw=black] (3) circle (\sizec) node [above, yshift=1.5] {$\scriptstyle 3$};
\path[fill=white,draw=black] (4) circle (\sizec);
\path[fill=white,draw=black] (5) circle (\sizec) node [above, yshift=1.5] {$\scriptstyle 3$};
\path[fill=black,draw=black] (6) circle (\sizec);
\path[fill=white,draw=black] (7) circle (\sizec) node [below, yshift=-1.5] {$\scriptstyle 3$};
\path[fill=black, draw=black] (8) circle (\sizec);

\end{tikzpicture}
\hspace{3em}
\begin{tikzpicture}
\draw (2,0.3) node {c)};

\coordinate (2) at (2,1);
\coordinate (3) at (3,1);
\coordinate (4) at (4,1);
\coordinate (5) at (5,1);

\coordinate (7) at (4,0);
\coordinate (8) at (5,0);

	\path (2) edge (3);
	\path (2) edge (3);
	\path (3) edge (4);
	\path (4) edge (5);

	\path (4) edge (7);
	\path (7) edge (8);

\path[fill=black,draw=black] (2) circle (\sizec);
\path[fill=white,draw=black] (3) circle (\sizec) node [above, yshift=1.5] {$\scriptstyle 3$};
\path[fill=white,draw=black] (4) circle (\sizec);
\path[fill=white,draw=black] (5) circle (\sizec) node [above, yshift=1.5] {$\scriptstyle 3$};

\path[fill=white,draw=black] (7) circle (\sizec) node [below, yshift=-1.5] {$\scriptstyle 3$};
\path[fill=black, draw=black] (8) circle (\sizec);

\end{tikzpicture}
\]
The configuration can be contracted in two stages:
\[
\varphi: \tilde H \overset{\mu}\longrightarrow H \overset{f_H}\longrightarrow H_Z 
\]
where $\mu$ is the (birational) contraction of the curves corresponding to white vertices 
and $f_H$ is the contraction of the remaining curves $C_i\subset H$ corresponding to the $\bullet$-vertices.
The surface $H$ has a unique singular point $P$, which is log canonical; the curves $C_i$ are smooth rational and pass through $P$.
Here $\mu$ is the minimal resolution of $P$. 
In the notation of Remark~\ref{rem:computH} we have $\Delta\cdot \tilde C_i=2/3<1$, hence $K_H$ is negative on~$C_i$. 
According to \cite[6.7]{KM92} a neighborhood $P\in U \subset H$ can be realized as a hyperplane section of a terminal 
\type{cD/3}-singularity. By the arguments of~\ref{subs:def} this gives an example of an extremal curve germ
of type
\ref{thm:main:cD/3}.
\end{example}

\begin{sclaim}
The extremal curve germ $(X,C)$ constructed in  Example~\xref{ex:cd/3}  is a $\QQ$-conic bundle with $N=3$ in the case~\textup{a)}, divisorial with $N=3$ in the case~\textup{b)}, and flipping with 
$N=2$ in the case~\textup{c)}.
\end{sclaim}

\begin{proof}
Indeed, in the case~a) the self-intersection of the central curve $C\subset H$ equals $0$.
Hence $f_H : H\to H_Z$ is a fibration and so $f$~is. In the case~b) the contraction $f_H$ is birational and the image $H_Z$ has  a Du Val \type{D_4}-singularity at $o=f_H(C)$. Hence $Z$ is a 
Gorenstein threefold and $f$ is divisorial.

Finally, consider the case~c), the contraction $f_H$ is birational and the point $H_Z \ni o$ is log
terminal but it is not Du Val neither a cyclic quotient. Therefore, it is not of type~\type{T} \cite[Proposition~3.10]{KSh88} and so the singularity $Z \ni o$ is not $\QQ$-Gorenstein. Let $C_1, C_2$ 
be components of
$C$. Then $C_1 \cap C_2$ is a \type{cD/3}-point of $X$. Run the analytic MMP on $X$ over $Z$:
$$
X \overset{f_1}{\dasharrow} X' \overset{f_2}{\dasharrow} X'' \dasharrow \cdots .
$$
Here each step $X^{i} \dasharrow X^{i+1}$ is either an analytic flip or divisorial contraction with irreducible
central fiber. 

If $f_1 : X \to X'$ is a divisorial contraction, then $X'$ is Gorenstein \cite[Theorem~1.5]{MP:IA}
and $C' := f_1(C)$ is irreducible,
in which case, the dual graph of~c) does not quite fit with those in \cite[Theorem~1.5]{MP:IA}. 
Therefore, the first step $X \dasharrow X'$ is a
flip and the central fiber $C' \subset X'$ over o has two components: $C' = C'_1 \cup C'_2$. 
We may assume that $C'_1$ is the flipped curve. Thus $\operatorname{index}(X')\le 2$ and 
$K_{X'} \cdot C'_1=1/\operatorname{index}(X')\le 1$ by Table~\ref{table2} and $K_{X'} 
\cdot C'_2 < 0$ by Lemma~\ref{lemma:flip}.

If the second step of the MMP is a divisorial contraction $f_2 : X' \to X''$, which necessarily contracts $C'_2$, then $X''$ must be
Gorenstein \cite[Theorem~4.7]{KM92}. As above $C'' := f_2(C'_1)$ is irreducible, $K_{X''} \cdot C'' <1$ by Lemma \ref{lemma:div}. Since $X''$ is Gorenstein, this implies that $K_{X''} \cdot C'' \le 0$ 
and $Z$ is also Gorenstein, while we saw earlier that $Z$ is not. Thus we get 
a contradiction. Therefore, $f_2 : X' \dasharrow X''$ is a flip.

In this case, $X'$ is has a (unique) point of index~$2$
and $K_{X'} \cdot C'_1 = 1/2$ (see Table~\ref{table2}). Then $X''$ is Gorenstein 
\cite[Theorem 4.2]{KM92} and the central
fiber $C'' \subset X''$ over $o$ has two components. For the $f_2$-flipped curve $C''_2$ we have $K_{X''} \cdot C''_2 = 1$ (see Lemma~\ref{lemma:flips2-2}). For another component $C'_1\subset C''$ we 
have 
$K_{X''} \cdot C''_1<1/2$  by Lemma~\ref{lemma:flip}. Since $X''$ is Gorenstein, we have $K_{X''} \cdot C''_1\le 0$.
If $K_{X''} \cdot C''_1=0$, then $X''$  is a minimal model over $Z$, hence the contraction $X''\to Z$ is small and our original contraction $X\to Z$ is flipping. Assume that $K_{X''} \cdot C''_1<0$, 
then 
the next step of the MMP is a divisorial contraction $f_3: X'' \to X'''$ so that $f_3(C''_1)$ is a point, the central fiber $C''' := f_3(C''_2)$ is irreducible, and $X'''$ is Gorenstein.
Moreover, we have $K_{X'''} \cdot C''' <K_{X''} \cdot C''_2=1$. Hence $K_{X'''} \cdot C'''\le 0$.
As above we obtain that the $Z$ is also Gorenstein, while we saw earlier that $Z$ is not.
\end{proof}

\begin{example}[case~\ref{thm:main:IIA}]
As above, consider the following configuration
\[
\def\sizec{0.2em}
\def\sl{0.7}
\begin{tikzpicture}[square/.style={regular polygon,regular polygon sides=4}]
\def\sizec{0.2em}
\coordinate (1) at (0,0);
\coordinate (2) at (1,0);
\coordinate (3) at (2,0);
\coordinate (4) at (3,0);
\coordinate (5) at (4,0);
\coordinate (6) at (1,1*\sl);
\coordinate (7) at (2,1*\sl);
\coordinate (8) at (3,1*\sl);
\coordinate (9) at (1,-1*\sl);

	\path (1) edge (2);
	\path (2) edge (3);
	\path (2) edge (3);
	\path (3) edge (4);
	\path (4) edge (5);
	\path (6) edge (2);
	\path (3) edge (7);
	\path (8) edge (4);
	\path (9) edge (2);
\node at (1) {$\bullet$};
\path[fill=white,draw=black] (2) circle (\sizec) node [below, yshift=-2,xshift=-5] {$\scriptstyle 4$};
\path[fill=white,draw=black] (3) circle (\sizec);
\path[fill=white,draw=black] (4) circle (\sizec) node [below, yshift=-3] {$\scriptstyle 4$};
\path[fill=black,draw=black] (5) circle (\sizec) ;
\path[fill=black,draw=black] (6) circle (\sizec);
\path[fill=white,draw=black] (7) circle (\sizec);
\path[fill=black, draw=black] (8) circle (\sizec);
\node at (9) {$\bullet$};
\end{tikzpicture}
\]
Using the local computations from \cite[7.7.1]{KM92} we can construct a $\QQ$-conic bundle germ $(X,C)$ with $N=5$ whose components are 
of type \typec{IIA} (as in~\ref{thm:main:IIA}). Removing some $\bullet$-vertices we obtain 
divisorial germs with $N=4$ and flipping ones with $N\le 3$.
\end{example}

\subsection{}
Extremal curve germs as in~\ref{thm:main:IIdual2} were described by explicit equations in \cite[(1.3.2)]{MP:cb2}. 
Note however that in this case $(X, C)$ is a $\QQ$-conic bundle germ 
over a surface having an \type{A_1}-singularity.
Then any hyperplane section $H\in |\OOO_X|_C$ is analytically reducible
and our construction~\ref{subs:def} does not work.

\begin{example}[case~\ref{thm:main:IIdual}]
Consider a smooth surface $\tilde H$ containing a configuration of smooth rational curves with 
the following dual graph
\[
\begin{tikzpicture}[square/.style={regular polygon,regular polygon sides=4}]
\def\sizec{0.2em}
\def\sl{0.7}
\coordinate (1) at (0,0);
\coordinate (2) at (1,0);
\coordinate (3) at (2,0);
\coordinate (4) at (3,0);
\coordinate (5) at (4,0);
\coordinate (6) at (1,1*\sl);
\coordinate (7) at (2,1*\sl);
\coordinate (8) at (3,1*\sl);
\coordinate (9) at (2,2*\sl);

	\path (1) edge (2);
	\path (2) edge (3);
	\path (2) edge (3);
	\path (3) edge (4);
	\path (4) edge (5);
	\path (6) edge (2);
	\path (3) edge (7);
	\path (8) edge (4);
	\path (9) edge (7);
\path[fill=black,draw=black] (1) circle (\sizec);
\path[fill=white,draw=black] (2) circle (\sizec) node [below, yshift=-1.5] {$\scriptstyle 4$};
\path[fill=white,draw=black] (3) circle (\sizec);
\path[fill=white,draw=black] (4) circle (\sizec) node [below, yshift=-1.5] {$\scriptstyle 4$};
\path[fill=black,draw=black] (5) circle (\sizec) ;
\path[fill=black,draw=black] (6) circle (\sizec);
\path[fill=white,draw=black] (7) circle (\sizec);
\path[fill=black, draw=black] (8) circle (\sizec);
\node at (9) {$\scriptstyle{\blacksquare}$};
\end{tikzpicture}
\]
According to \cite[7.7.1]{KM92} a neighborhood $P\in U \subset H$ can be realized as a hyperplane section of a terminal 
\type{cAx/4}-singularity. 
This gives us an example of a $\QQ$-conic bundle extremal curve germ whose central fiber has $5$ components
$C_{0},\dots,C_4$ meeting at the \type{cAx/4}-point $P$.
Let $E_{0},\dots,E_{3}$ be the $\mu$-exceptional curves on $\tilde H$ numbered so that
$E_{0}$ corresponds to the central vertex,
$E_1$ and $E_2$ correspond to the $\overset{4}\circ$-vertices, 
and 
$E_{3}$ corresponds to the remaining $\circ$-vertex. 
Then for the codiscrepancy of $P\in H$ (see Remark~\ref{rem:computH}) we have 
$\Delta=E_{0}+\frac 34 E_1+ \frac 34 E_2+ \frac 12 E_{3}$. 
We may assume that $C_1,\dots,C_4$ correspond to $\bullet$-vertices
and $C_{0}$ corresponds to the $\scriptstyle{\blacksquare}$-vertex.
Then 
$K_X\cdot C_i=\Delta\cdot \tilde C_i-1=-1/4$ for $i=1,\dots,4$ and $K_X\cdot C_{0}=-1/2$ (see~\eqref{eq:KH:C}).
Since $K_X$ is a generator of the group $\Clsc(X,P)\simeq \ZZ/4\ZZ$,
the natural map $\Clsc(X,P)\to \frac14 \ZZ/\ZZ$ given by local intersections with $C_i$ 
is surjective for $i=1,\dots,4$ and it is not surjective for $i=0$
(see \eqref{eq:def:imp}). 
Hence $C_1,\dots,C_4$ are locally primitive and $C_{0}$ locally imprimitive at~$P$. 
Removing some $\bullet$-components we obtain 
divisorial germs with $N=3$ or $4$ and flipping ones with $N=2$.
\end{example}

\begin{example}[case~\ref{thm:main:IIB}]
Consider a smooth surface $\tilde H$ containing a configuration of smooth rational curves with 
the following dual graph
\[
\begin{tikzpicture}
\def\sizec{0.2em}
\def\sl{0.7}
\coordinate (1) at (0,0);
\coordinate (2) at (1,0);
\coordinate (3) at (2,0);
\coordinate (4) at (3,0);
\coordinate (5) at (4,0);
\coordinate (6) at (5,0);
\coordinate (7) at (2,-1*\sl);
\coordinate (8) at (3,-1*\sl);
\coordinate (9) at (1,-1*\sl);
\coordinate (10) at (4,-1*\sl);
\draw (1) -- (2);
\draw (2) -- (3);
\draw (3) -- (4);
\draw (4) -- (5);
\draw (5) -- (6);
\draw (3) -- (7);
\draw (4) -- (8);
\draw (5) -- (6);
\draw (7) -- (9);
\draw (8) -- (10);
\node at (1) {$\scriptstyle{\blacksquare}$};
\path[fill=white,draw=black] (2) circle (\sizec) node [above, yshift=1.5] {$\scriptstyle 3$};
\path[fill=white,draw=black] (3) circle (\sizec) node [above, yshift=1.5] {$\scriptstyle 4$};
\path[fill=white,draw=black] (4) circle (\sizec);
\path[fill=white,draw=black] (5) circle (\sizec);
\path[fill=white,draw=black] (6) circle (\sizec);

\path[fill=white,draw=black] (7) circle (\sizec) node [below, yshift=-1.5] {$\scriptstyle 3$};
\path[fill=white,draw=black] (8) circle (\sizec);
\node at (9) {$\scriptstyle{\blacksquare}$};
\path[fill=black,draw=black] (10) circle (\sizec);
\end{tikzpicture}
\]
According to \cite[\S~3]{MP:ICIIB} a neighborhood $P\in U \subset H$ can be realized as a hyperplane section of a terminal 
\type{cAx/4}-singularity $P\in V=\{y_1^2-y_2^3+y_3y_4+\phi(y_1,\dots,y_4)=0\}/\mumu_4(3,2,1,1)$. 
Moreover, the pull-back to the index-$1$ cover of $V$ of the curve $C_1$ corresponding to the $\bullet$-vertex 
is singular. 
This means that $(V\supset C_1)$ is of type \typec{IIB} at~$P$ \cite[4.2]{Mori:flip}.
Hence by the arguments of~\ref{subs:def} this gives an example of an extremal curve germ $(X,C)$
having a type~\typec{IIB} component $C_1$. By \cite[1.4 (i)]{MP20} and
\ref{case:IIvee} any other component $C_j\subset C$ is of type~\typec{IIA}.
Thus $(X,C)$ as in~\ref{thm:main:IIB}. One can see that $(X,C)$ is a conic bundle germ with $N=3$. Removing one component 
corresponding to a $\scriptstyle{\blacksquare}$-vertex we obtain a divisorial germ with $N=2$.
\end{example}

\begin{example}[case~\ref{thm:main:IC}]
As above, consider the following configuration
\[
\begin{tikzpicture}
\def\sizec{0.2em}
\def\sl{0.7}
\coordinate (1) at (0,0);
\coordinate (2) at (1,0);
\coordinate (3) at (-1,1*\sl);
\coordinate (4) at (0,1*\sl);
\coordinate (5) at (1,1*\sl);
\coordinate (6) at (2,1*\sl);
\coordinate (7) at (0,2*\sl);
\coordinate (8) at (1,2*\sl);
\coordinate (9) at (2,2*\sl);
\coordinate (10) at (1,3*\sl);
\draw (1) -- (4);
\draw (2) -- (5);
\draw (3) -- (4);
\draw (4) -- (5);
\draw (5) -- (6);
\draw (5) -- (8);
\draw (8) -- (9);
\draw (8) -- (10);
\draw (8) -- (7);
\path[fill=black,draw=black] (1) circle (\sizec);
\path[fill=white,draw=black] (2) circle (\sizec);
\path[fill=white,draw=black] (3) circle (\sizec);
\path[fill=white,draw=black] (4) circle (\sizec);
\path[fill=white,draw=black] (5) circle (\sizec) node [above, yshift=1.5, xshift=4.5 ] {$\scriptstyle 4$};
\path[fill=white,draw=black] (6) circle (\sizec);
\node at (7) {$\scriptstyle{\blacksquare}$};
\path[fill=white,draw=black] (8) circle (\sizec) node [above, yshift=1.5, xshift=4.5 ] {$\scriptstyle 4$};
\node at (9) {$\scriptstyle{\blacksquare}$};
\node at (10) {$\scriptstyle{\blacksquare}$};
\end{tikzpicture}
\]
Using \cite[10.7.3.1]{KM92} we construct a $\QQ$-conic bundle germ $(X,C)$ with $N=4$ having a component $C_1\subset C$ 
of type \typec{IC} of index $5$ (as in~\ref{thm:main:IC}).
By \cite[Corollary~1.4(ii)]{MP20}
all other components $C_j\subset C$ are of type~\typec{k1A}.
Removing some $\scriptstyle{\blacksquare}$-components we obtain 
a divisorial germ with $N=3$ and a flipping one with $N=2$.
\end{example}

\begin{example}[case~\ref{thm:main:k1A}]
\label{ex:case:k1A-a}
Consider the following configuration
\[
\begin{tikzpicture}
\def\sizec{0.2em}
\def\sl{1.1}
\coordinate (1) at (0,0);
\coordinate (2) at (1,0);
\coordinate (3) at (2,0);
\coordinate (4) at (3,0);
\coordinate (5) at (4,0);
\coordinate (6) at (5,0);
\coordinate (7) at (7,1*\sl);
\coordinate (8) at (7,-1*\sl);
\coordinate (9) at (7,0*\sl);
\coordinate (10) at (7,0.5*\sl);
\coordinate (11) at (7,-0.5*\sl);
\draw (1) -- (2);
\draw (2) -- (3);
\draw [loosely dotted] (3) -- (4);\draw (4) -- (5);\draw (5) -- (6);
\draw (6) -- (7);\draw (6) -- (8); \draw (6) -- (9);

\path[fill=black,draw=black] (1) circle (\sizec);
\path[fill=white,draw=black] (2) circle (\sizec);

\path[fill=white,draw=black] (5) circle (\sizec);
\path[fill=white,draw=black] (6) circle (\sizec) node [below, yshift=-1.5, xshift=-4.5 ] {$\scriptstyle {m+2}$};
\path[fill=black,draw=black] (7) circle (\sizec);
\path[fill=black,draw=black] (8) circle (\sizec);
\path[fill=black,draw=black] (9) circle (\sizec);
\node at (10) {$\vdots$};
\node at (11) {$\vdots$};
\draw [thick, black,decorate,decoration={brace,amplitude=10pt,mirror, raise=5pt},xshift=0.4pt,yshift=-1.4pt](2) -- (5) node[black,midway,yshift=-23] {\footnotesize $m-2$};
\draw [thick, black,decorate,decoration={brace,amplitude=10pt, raise=5pt},xshift=0.4pt,yshift=-1.4pt] (7) -- (8) node[black,midway,xshift=23] {\footnotesize $N$};
\end{tikzpicture}
\]
where $m\ge3$ and $2\le N\le m+1$.
Then in the above notation, the surface $H$ has a unique singular point $P$ which is a cyclic quotient of type 
$\frac{1}{m^2}\big(1,\, m(m-1)-1\big)$.
This is a singularity of type \type{T} \cite[Proposition~3.10]{KSh88}. 
This means that $(H,P)$ has a one-parameter deformation with terminal
total space. By~\ref{subs:def} we obtain an example of extremal curve germ whose central fiber has $N$ components.
The germ is a $\QQ$-conic bundle if $N=m+1$, it is divisorial if $N= m$ or $m-1$, and it is flipping if $N<m-1$.
\end{example}

The following example shows that in the case~\ref{thm:main:k1A} some of the components of $C$ can be locally imprimitive.

\begin{example}[case~\ref{thm:main:k1A}]
\label{ex:case:k1A-c}
Let $k$ be an integer $\ge 2$ and let $m:=2k(2k-1)$. 
Consider the following configuration
\[
\begin{tikzpicture}
\def\sizec{0.2em}
\def\sl{0.7}
\coordinate (Q2) at (1,1*\sl);
\coordinate (P1) at (0,0);
\coordinate (P2) at (1,0);
\coordinate (P3) at (2,0);
\coordinate (P4) at (3,0);
\coordinate (P5) at (4,0);
\coordinate (P6) at (5,0);
\coordinate (P7) at (6,0);
\coordinate (P8) at (7,0);
\coordinate (P9) at (8,0);
\coordinate (P10) at (9,0);
\coordinate (P11) at (10,0);
\coordinate (Q12) at (11,0);

\draw (P1) -- (P2);
\draw (P2) -- (P3);
\draw [loosely dotted] (P3) -- (P4);
\draw (P4) -- (P5);
\draw (P5) -- (P6);
\draw (P6) -- (P7);
\draw (P7) -- (P8);
\draw (P8) -- (P9);
\draw [loosely dotted] (P9) -- (P10);
\draw (P10) -- (P11);
\draw (P2) -- (Q2);
\draw (P11) -- (Q12);

\path[fill=white,draw=black] (P1) circle (\sizec)node [below, yshift=0, xshift=0] {$\scriptstyle 2k-1$};
\path[fill=white,draw=black] (P2) circle (\sizec)node [below, yshift=0, xshift=0] {};
\path[fill=white,draw=black] (P5) circle (\sizec)node [below, yshift=0, xshift=0] {};
\path[fill=white,draw=black] (P6) circle (\sizec)node [below, yshift=0, xshift=0] {$\scriptstyle 5$};\path[fill=white,draw=black] (P7) circle (\sizec)node [below, yshift=0, xshift=0] {$\scriptstyle 
k+2$};\path[fill=white,draw=black] (P8) circle (\sizec)node [below, yshift=0, xshift=0] {};
\path[fill=white,draw=black] (P11) circle (\sizec)node [below, yshift=0, xshift=0] {};
\path[fill=black,draw=black] (Q2) circle (\sizec)node [below, yshift=0, xshift=0] {};
\node at (Q12) {$\scriptstyle{\blacksquare}$};

\draw [thick, black,decorate,decoration={brace,amplitude=10pt,mirror, raise=5pt},xshift=0.4pt,yshift=-1.4pt](P2) -- (P5) node[black,midway,yshift=-23] {\footnotesize $k-1$};
\draw [thick, black,decorate,decoration={brace,amplitude=10pt,mirror, raise=5pt},xshift=0.4pt,yshift=-1.4pt](P8) -- (P11) node[black,midway,yshift=-23] {\footnotesize $2k-3$};
\end{tikzpicture}
\]
As above the chain of white vertices is contracted to a cyclic quotient singularity $P\in H$ 
of type $\frac1{m^2}(1,\, m(2k+1)-1)$ which is a singularity $H\ni P$ of type \type{T} and index $m$ \cite[Proposition~3.10]{KSh88}. 
Thus this $H\ni P$ can be realized as a hyperplane section of a
terminal cyclic quotient singularity of type $\frac1{m}(1,\,-1,\, 2k+1)$.
This gives us an example of a flipping extremal curve germ whose central fiber has $2$ components
meeting at the point $P$ of index $m$.
Let $E_1,\dots,E_{3k-1}$ be the curves on $\tilde H$ corresponding to the white vertices numbered from the 
left to the right. One can compute that in the notation of Remark~\ref{rem:computH} the coefficient of $E_2$ 
in $\Delta$ equals $(2k-1)/(2k)$. Hence, for the component $C_1\subset C$ corresponding to 
$\bullet$-vertex we have $K_X\cdot C_1=\Delta\cdot \tilde C_1-1=-1/(2k)$.
This implies that the natural map $\ZZ/m\ZZ\simeq \Clsc(X,P)\to \frac1m \ZZ/\ZZ$ given by 
the local intersection with $C_1$ is not surjective and, moreover, 
the component $C_1$ is locally imprimitive of splitting degree $2k-1$ (see~\eqref{eq:def:imp}). Another component, 
that corresponds $\scriptstyle\blacksquare$-vertex, is primitive.
\end{example}

Unfortunately, we don't know any examples of extremal curve germs as in~\ref{thm:main:k3A}.
The method outlined in~\ref{subs:def} is not applicable here because a general hyperplane section 
$H\in |\OOO_X|_C$ cannot be normal in this case.

\begin{example}[case~\ref{thm:main:kAD}]
Consider the following configuration
\[
\begin{tikzpicture}
\def\sizec{0.2em}
\def\sl{0.7}
\coordinate (11) at (-3,1*\sl);
\coordinate (1) at (-2,1*\sl);
\coordinate (2) at (1,0);
\coordinate (3) at (-1,1*\sl);
\coordinate (4) at (0,1*\sl);
\coordinate (5) at (1,1*\sl);
\coordinate (6) at (2,1*\sl);
\coordinate (7) at (0,2*\sl);
\coordinate (8) at (1,2*\sl);
\coordinate (9) at (2,2*\sl);
\coordinate (10) at (1,3*\sl);
\draw (1) -- (3);
\draw (1) -- (11);
\draw (2) -- (5);
\draw (3) -- (4);
\draw (4) -- (5);
\draw (5) -- (6);
\draw (5) -- (8);
\draw (8) -- (9);
\draw (8) -- (10);
\draw (8) -- (7);
\path[fill=black,draw=black] (1) circle (\sizec);
\path[fill=white,draw=black] (2) circle (\sizec);
\path[fill=white,draw=black] (3) circle (\sizec);
\path[fill=white,draw=black] (4) circle (\sizec);
\path[fill=white,draw=black] (5) circle (\sizec) node [above, yshift=1.5, xshift=4.5 ] {$\scriptstyle 4$};
\path[fill=white,draw=black] (6) circle (\sizec);
\node at (7) {$\scriptstyle{\blacksquare}$};
\path[fill=white,draw=black] (8) circle (\sizec) node [above, yshift=1.5, xshift=4.5 ] {$\scriptstyle 4$};
\node at (9) {$\scriptstyle{\blacksquare}$};
\node at (10) {$\scriptstyle{\blacksquare}$};
\path[fill=white,draw=black] (11) circle (\sizec) node [below, yshift=-1.5] {$\scriptstyle 4$};
\end{tikzpicture}
\]
Using \cite[10.7.3.1]{KM92} we construct a $\QQ$-conic bundle germ $(X,C)$ with $N=4$ having a component $C_1\subset C$ 
of type \typec{kAD} (as in~\ref{thm:main:kAD}). 
The $\underset{4}\circ$-vertex on the left hand side 
corresponds to a cyclic quotient singularity. 
By \cite[Corollary~1.4(iii)]{MP20}
all other components $C_j\subset C$ are of type~\typec{k1A}.
Removing some $\scriptstyle{\blacksquare}$-components we obtain 
divisorial germs with $N=3$ and flipping ones with $N=2$.
\end{example}

\appendix 
\section{General facts on extremal curve germs}
\label{sect:app}

\begin{definition}[{\cite[1.7]{Mori:flip}}]
\label{def:imp}
Let $(X, P )$ be a terminal three-dimensional singularity of index $m$ and let $C\subset X$ 
be a smooth curve passing through $P$. We say that $C$ is (locally) \textit{primitive} at~$P$ if the natural map 
\begin{equation}
\label{eq:def:imp}
\varrho: \Clsc(X,P) \longrightarrow \QQ/\ZZ
\end{equation} 
given by intersection with $C$ is injective. 
Otherwise $C$ is said to be (locally) \textit{imprimitive} at~$P$. 
The order of $\ker(\varrho)$ is called the (local) \textit{splitting degree} of $C$ at~$P$.
\end{definition}

The splitting degree is equal to the number of analytic components of the pull-back of $C$ 
under the index-$1$ cover \cite[1.7]{Mori:flip}.

\subsection{}
\label{not:app}
Let $(X,C)$ be the germ of a threefold with terminal singularities along a proper connected curve.
Assume that there exists a boundary $\Delta$ on $X$ such that the pair $(X,\Delta)$ is terminal.
Assume also that there exists a contraction 
\[
f: (X,C) \longrightarrow(Z,o)
\]
such that $f^{-1}(o)_{\red}=C$ and $-(K_X+\Delta)$ is $f$-ample.
Note that the boundary $\Delta$ as above automatically exists in the case where $(X,C)$ is an extremal curve
or flipped germ.
Similar to \cite[Corollary~1.3]{Mori:flip} we see that $C$ is a union of smooth rational curves 
such that $\p(C)=0$, $\Pic(X)\simeq \ZZ^\uprho(X)$, where $\uprho(X)$ is the number of components of $C$, and 
there is the following exact sequence (see \cite[(1.8.1)]{Mori:flip} and \eqref{eq:Term-sing}):
\begin{equation}
\label{exact-Clcs}
\vcenter{
\xymatrix@R=-4pt{
0\ar[r] & \Pic(X)\ar[r] & \Clsc (X)\ar[r]& \bigoplus \Clsc(X,P_i)\ar[r] & 0
\\
& \rotatebox{90}{$\simeq$}& &\rotatebox{90}{$\simeq$}&
\\
& \ZZ^{\uprho(X)}&& \bigoplus \ZZ/m_i\ZZ
}}
\end{equation}
where the sum runs through a finite number of singular points.
Hence, the group $\Clsc(X)$ is finitely generated and the group $\Cl(X)_{\tors}$ is finite.
Moreover, 
\begin{equation}
\label{eq:Cl=Cl}
\Cl(Z,o)_{\tors}\simeq \Cl(X)_{\tors}\subset \Oplus \Clsc(X,P_i).
\end{equation} 

\begin{sdefinition}
In the above notation we say that $(X,C)$ is (globally) \textit{primitive} if the group $\Clsc(X)$
is torsion free. Otherwise $(X,C)$ is said to be \textit{imprimitive}. The order of $\Cl(X)_{\tors}$
is called the \textit{splitting degree} of $(X,C)$.
\end{sdefinition}

\begin{lemma}
\label{lemma:cov}
Let $(X,C)$ be an extremal curve germ and let $\tau: (X',C')\to (X,C)$
a finite \'etale-in-codimension-two
cover, where $C'$ is connected.
Then $\tau$ is a cyclic cover and the preimage $\tau^{-1}(P)$ is a point for some $P\in C$.
Moreover, the degree of $\tau$ divides the index of $P\in X$.
\end{lemma}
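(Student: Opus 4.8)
The plan is to reduce the statement to the behaviour of the Weil divisor class group, using the exact sequence~\eqref{exact-Clcs} and the identification~\eqref{eq:Cl=Cl}, and to control the monodromy of $\tau$ by pushing the cover down to the base of the contraction. First I would note that, since $X$ has only terminal and hence isolated singularities, say at $P_1,\dots,P_k$, the hypothesis that $\tau$ is \'etale in codimension two means exactly that $\tau$ is \'etale over $X_\reg=X\setminus\{P_1,\dots,P_k\}$; by Zariski--Nagata purity $X'$ is then the normalization of $X$ in the connected \'etale cover $\tau^{-1}(X_\reg)\to X_\reg$. So $\tau$ is classified by a finite-index subgroup of $\uppi_1(X_\reg)$, and the whole problem becomes to show that this group is cyclic and to locate the ramification.

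Next I would push the cover down to the base via $f$. Since $f_*\OOO_X=\OOO_Z$ and every fibre of $f$---in particular the central fibre $C$, a tree of smooth rational curves with $\p(C)=0$, hence simply connected---is connected and simply connected, while $-K_X$ is $f$-ample, pullback along $f$ sets up a bijection between connected covers of $(Z,o)$ \'etale in codimension two and such covers of $(X,C)$. Thus $\tau$ is the normalized pullback of a connected cover $\sigma\colon (Z',o')\to (Z,o)$ \'etale in codimension two, namely the Stein factorization of $f\comp\tau$. It then suffices to inspect $(Z,o)$. If $f$ is birational then $(Z,o)$ is a terminal threefold point, so by~\eqref{eq:Term-sing} we have $\uppi_1(Z_\reg)\simeq\Clsc(Z,o)\simeq\ZZ/m_o\ZZ$, which is cyclic; if $f$ is a $\QQ$-conic bundle then, by the classification of \cite{MP:cb1} and \cite{MP:cb2}, $(Z,o)$ is smooth or a cyclic quotient singularity, so $\uppi_1(Z_\reg)$ is again cyclic. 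In every case $\tau$ is a connected cyclic cover, corresponding under~\eqref{eq:Cl=Cl} to a torsion class $\delta\in\Cl(X)_{\tors}\simeq\Clsc(Z,o)$ of order $n=\deg\tau$.

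Finally I would pin down the totally ramified point. A connected cyclic cover of the germ $(Z,o)$ that is \'etale away from $o$ is totally ramified there, so $o'=\sigma^{-1}(o)$ is a single point. Writing $X'=\Spec_X\bigoplus_{j=0}^{n-1}\OOO_X(j\delta)$ and projecting $\delta$ to its local components $\delta_i\in\Clsc(X,P_i)\simeq\ZZ/m_i\ZZ$ via~\eqref{exact-Clcs}, the fibre $\tau^{-1}(P_i)$ consists of $n/\mathrm{ord}(\delta_i)$ points, and $\tau$ is totally ramified over $P_i$ precisely when $\mathrm{ord}(\delta_i)=n$, which forces $n\mid m_i$. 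The required $P$ is then the non-Gorenstein point over $o$ at which the local class of $\delta$ still has order $n$; the existence of such a point is exactly the statement that the injection $\Clsc(Z,o)\hookrightarrow\Oplus\Clsc(X,P_i)$ of~\eqref{eq:Cl=Cl} is order-preserving into a single summand, which I would read off from the local description of the points of $C$ lying over $o$ together with the splitting-degree count of Definition~\ref{def:imp}. With this $P$, the fibre $\tau^{-1}(P)$ is a single point and $\deg\tau=n\mid m_P$, as claimed.

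The main obstacle is the middle step: establishing that $\tau$ genuinely descends to the base, i.e.\ that \'etale-in-codimension-two covers of $X$ and of $Z$ correspond despite the reducible central fibre $C$, and then proving the order-preservation statement that singles out $P$ (rather than having the torsion of $\delta$ split across several local class groups). Once these are in place the remainder is bookkeeping with~\eqref{exact-Clcs} and~\eqref{eq:Term-sing}.
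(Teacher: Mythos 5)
Your reduction to the base $(Z,o)$ breaks down exactly where it is needed most: in the flipping case. You assert that if $f$ is birational then $(Z,o)$ is a terminal threefold point, so that \eqref{eq:Term-sing} gives cyclicity of $\uppi_1(Z_{\reg})$. That is fine for divisorial contractions (cf.\ Corollary~\ref{cor:divI}), but it is false for flipping ones: if $K_Z$ were $\QQ$-Cartier at $o$, then $K_X=f^*K_Z$ (the two agree in codimension one since $f$ is small), so $K_X\cdot C=0$, contradicting $f$-ampleness of $-K_X$. Thus the base of a flipping contraction is not even $\QQ$-Gorenstein, Kawamata's result \eqref{eq:Term-sing} does not apply to it, and nothing general is available about $\uppi_1(Z\setminus\{o\})$. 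Worse, in the flipping case your correspondence (which is correct there, since $X\setminus C\simeq Z\setminus\{o\}$ and purity extends covers across $C$) identifies covers of $X$ \'etale in codimension two with \emph{all} finite covers of $Z\setminus\{o\}$; so ``every such cover of $Z$ is cyclic'' is not an input you can quote --- it is equivalent to the statement being proved. The paper avoids the base entirely: it reduces to a Galois cover with group $G$, uses the tree structure of $C'$ to produce either a $G$-fixed point $P'\in\Sing(C')$ or a $G$-invariant component $C_i'$; in the first case $G$ is a quotient of the cyclic group $\uppi_1(U\setminus\{P\})$ of the terminal point $P=\tau(P')$ (this is where \eqref{eq:Term-sing} is legitimately used --- on $X$, not on $Z$), and in the second case $G$ acts on $C_i'\simeq\PP^1$ with all branch points of $C_i'\to C_i$ lying at non-Gorenstein points of $X$, of which an irreducible extremal curve germ has at most two, forcing $G$ to be cyclic.

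There is a second gap at the end: the existence of a single point $P$ with $\tau^{-1}(P)$ a point and $\deg\tau\mid\operatorname{index}(P)$ is asserted rather than proved. The ``order-preservation of \eqref{eq:Cl=Cl} into a single summand'' that you propose to read off is precisely Corollary~\ref{cor:Clsc}, which the paper deduces \emph{from} Lemma~\ref{lemma:cov}; invoking it here is circular, and the splitting-degree count of Definition~\ref{def:imp} is local at one point, so it cannot by itself rule out the torsion class distributing its order over several local class groups. In the paper's argument this point comes for free from the group action: the fixed point $P'$ (respectively, a fixed point of the cyclic group acting on $C_i'\simeq\PP^1$) satisfies $\tau^{-1}(\tau(P'))=\{P'\}$, and the surjection $\uppi_1(U\setminus\{P\})\twoheadrightarrow G$ then gives the divisibility of the degree by the index.
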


\begin{proof}
We keep the notation of~\ref{not:app}.
We may assume that 
the cover $\tau$ is Galois with $G:=\operatorname{Gal}(X'/X)$.
By taking the Stein factorization we obtain the following commutative diagram
\begin{equation*}
\vcenter{
\xymatrix@R=1.5em{
(X',C')\ar[r]^{\tau}\ar[d]^{f'}& (X,C)\ar[d]^f
\\
(Z',o')\ar[r]^{\theta}&(Z,o)
}}
\end{equation*}
where $f'$ is an extremal curve germ.
By the construction the group $G$ 
acts on $(X',C')$ so that this action is free on the open set $X'\setminus \tau^{-1}(\Sing(X))$.
We will show that $G$ is cyclic.
Since $C^\prime=\cup C_i^\prime$ is a tree of smooth 
rational curves, it is easy to prove by induction 
on the number of components of $C^\prime$ that the group $G$ has either an
invariant component $C_i^\prime\subset C^\prime$ or a fixed point $P^\prime\in \Sing(C^\prime)$.
In the latter case, let $P=\tau(P^\prime)$. There is a surjection $\uppi_1(U\setminus
\{P\})\twoheadrightarrow G$, where $U\subset X$ is a small neighborhood of~$P$. Since $\uppi_1(U\setminus
\{P\})$ is cyclic (see~\eqref{eq:Term-sing}), we are done.

In the former case, let $C_i:=\tau (C^\prime_i)$.
Then both $(X,C_i)$ and $(X',C_i')$ are extremal curve germs, and the group $G$ acts on $(X',C_i')$
so that $(X',C_i')/G=(X,C_i)$.
We may replace $(X^\prime, C^\prime)$ with $(X^\prime, C^\prime_i)$ 
and $(X, C)$ with $(X, C_i)$. Thus
$C=C_i$, $C^\prime=C^\prime_i$, and $C^\prime/G=C\simeq \PP^1$. 
The branch points $P_1,\dots, P_n\in C$ of the cover $C^\prime\to C$ are non-Gorenstein points of $X$.
Assume that the group $G$ is not cyclic. Then $n>2$, hence $X$ has at least three non-Gorenstein points.
This contradicts \cite[Theorem~6.7 and~(10.6)]{Mori:flip} and \cite[Theorems~1.2 and~1.3]{MP:cb1}.
\end{proof}

\begin{scorollary}
\label{cor:Clsc}
Let $(X,C)$ be either an extremal curve germ or a flipped germ. Then $\Cl(X)_{\tors}$ is a cyclic group.
Moreover, there exists a point $P\in X$ such that the natural homomorphism
\begin{equation}
\label{eq:homo}
\Cl(X)_{\tors}\longrightarrow \Clsc(X,P)
\end{equation} 
is injective.
\end{scorollary}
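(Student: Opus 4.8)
The plan is to attach to the torsion group $T:=\Cl(X)_{\tors}$ a connected abelian cover of $(X,C)$ and to feed it into Lemma~\ref{lemma:cov}. By~\ref{not:app} the group $T$ is finite, and both assertions are vacuous when $T=0$ (the zero map is injective), so I assume $T\neq 0$.

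First I would form the cover $\tau\colon X'\to X$ associated with the finite group of torsion Weil divisor classes $T\subset\Cl(X)$, namely $X'=\Spec_X\bigl(\bigoplus_{t\in T}\OOO_X(D_t)\bigr)$ for a choice of representatives $D_t$. Since $T$ injects into $\Cl(X)$, the scheme $X'$ is connected, and $\tau$ is finite and Galois with group $\operatorname{Gal}(X'/X)\simeq\Hom(T,\CC^*)$. On the smooth locus $X\setminus\Sing X$, whose complement consists of the isolated (hence codimension three) non-Gorenstein points, the classes $D_t$ are Cartier and $\tau$ restricts to a torsor under $\Hom(T,\CC^*)$, in particular to an étale cover; hence $\tau$ is étale in codimension two. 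It remains to verify that $C':=\tau^{-1}(C)_{\red}$ is connected: passing to the Stein factorization $X'\xrightarrow{f'}Z'\xrightarrow{\theta}Z$, one has $\Cl(Z,o)_{\tors}\simeq T$ by~\eqref{eq:Cl=Cl}, and $\theta$ is the torsion cover of the germ $(Z,o)$, which is totally ramified over $o$; thus $\theta^{-1}(o)=\{o'\}$ is a single point and $C'=f'^{-1}(o')_{\red}$ is connected. This is precisely the content of~\cite[Construction~4.5]{MP-1p}.

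Now Lemma~\ref{lemma:cov} applies to $\tau\colon(X',C')\to(X,C)$ and gives at once that $\tau$ is cyclic, so $T\simeq\operatorname{Gal}(X'/X)$ is cyclic — the first assertion. The lemma also produces a point $P\in C$, of index $m$ say, such that $\tau^{-1}(P)$ is a single point and $\deg\tau\mid m$. Writing $G:=\operatorname{Gal}(X'/X)$, the cover corresponds to a surjection $\uppi_1(X\setminus\Sing X)\twoheadrightarrow G$, whose restriction to a punctured neighbourhood $U\setminus\{P\}$ has image the local monodromy group $G_P$; the number of points of $\tau^{-1}(P)$ equals $[G:G_P]$, so the condition that $\tau^{-1}(P)$ be a single point means exactly $G_P=G$. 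Under the identifications $T\simeq\Hom(G,\CC^*)$ and $\Clsc(X,P)\simeq\ZZ/m\ZZ\simeq\Hom(\uppi_1(U\setminus\{P\}),\CC^*)$ coming from~\eqref{eq:Term-sing}, the kernel of the restriction map~\eqref{eq:homo} $\Cl(X)_{\tors}\to\Clsc(X,P)$ is identified with $\Hom(G/G_P,\CC^*)$; since $G_P=G$ this kernel vanishes and~\eqref{eq:homo} is injective, as required.

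The step I expect to be the main obstacle is the applicability of Lemma~\ref{lemma:cov}, that is, producing a cover which is simultaneously étale in codimension two and connected along $C'$: the former is immediate from the torsion nature of $T$, but the connectedness of $C'$ genuinely needs the total ramification of the induced base cover $\theta$ over $o$, which is exactly where~\cite[Construction~4.5]{MP-1p} is used. The remaining monodromy/duality bookkeeping identifying $\ker$ of~\eqref{eq:homo} with $\Hom(G/G_P,\CC^*)$ is routine. For a flipped germ the same argument goes through verbatim, since the exact sequence~\eqref{exact-Clcs}, the isomorphism~\eqref{eq:Cl=Cl}, and the covering Lemma~\ref{lemma:cov} (whose proof uses only that $C$ is a tree of rational curves, that the local fundamental groups are cyclic, and the classification bound on the number of non-Gorenstein points) are all available in that case as well by~\ref{not:app}.
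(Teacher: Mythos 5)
Your treatment of the extremal curve germ case is correct and is essentially the paper's own argument in a slightly different packaging: where the paper extracts from Lemma~\ref{lemma:cov} a \emph{maximal} cyclic cover and then tests each torsion class against it, you build one abelian cover from the whole subgroup $T=\Cl(X)_{\tors}$ and let Lemma~\ref{lemma:cov} deliver cyclicity of the Galois group and total ramification over a point $P$ in a single stroke; your identification $\ker\eqref{eq:homo}\simeq\Hom(G/G_P,\CC^*)$ with $G_P=G$ is the same mechanism as the paper's ``pulled-back torsion classes become principal, hence are non-Cartier at the totally ramified point,'' just phrased in monodromy language. Your connectedness step is also fine, though the honest justification is not merely that $T$ injects into $\Cl(X)$: one needs that on these germs $H^1(X,\ZZ)=H^1(C,\ZZ)=0$ (since $C$ is a tree of rational curves by~\ref{not:app}), so units are divisible and torsion classes correspond faithfully to characters of $\uppi_1(X\setminus\Sing (X))$, whence the associated cover is irreducible.

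The genuine gap is the flipped case. You assert that Lemma~\ref{lemma:cov} and the whole argument go through \emph{verbatim} because the lemma's proof ``uses only that $C$ is a tree of rational curves, that the local fundamental groups are cyclic, and the classification bound on the number of non-Gorenstein points,'' all available by~\ref{not:app}. The last ingredient is precisely what is \emph{not} available for flipped germs, and it is exactly where $K_X$-negativity enters: the concluding contradiction in the proof of Lemma~\ref{lemma:cov} (``$G$ not cyclic $\Rightarrow$ at least three non-Gorenstein points on an irreducible germ'') rests on \cite[Theorem~6.7 and~(10.6)]{Mori:flip} and \cite[Theorems~1.2 and~1.3]{MP:cb1}, which are classification results for irreducible \emph{extremal} ($K$-negative) curve germs; moreover the Stein factorization step of that proof uses that the cover $(X',C'_i)$ is again an extremal curve germ. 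None of this is supplied by~\ref{not:app} when $K_{X^+}$ is positive on $C^+$, so your argument does not close for flipped germs. The paper avoids rerunning the covering argument on $X^+$ altogether: it deduces the flipped case from the extremal one through the base of the flip, using~\eqref{eq:Cl=Cl} to identify $\Cl(X^+)_{\tors}\simeq\Cl(Z,o)_{\tors}\simeq\Cl(X)_{\tors}$, so that cyclicity transfers, and the injectivity statement is inherited from the cover constructed on the flipping side rather than reproved on $X^+$. To complete your proof you would need either this reduction or an independent substitute for the missing bound on non-Gorenstein points of flipped germs.
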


\begin{proof}[Sketch of the proof]
Assume that $(X,C)$ is an extremal curve germ.
By Lemma~\ref{lemma:cov} there exists the \textit{maximal} finite cyclic cover $\tau_\flat:(X^\flat, C^\flat)\to (X,C)$
which is \'etale outside $\Sing(X)$ so that any other cover $\tau: (X',C')\to (X,C)$ which is \'etale outside $\Sing(X)$ 
is induced by $\tau_\flat$. Moreover, $\tau_\flat^{-1} (P)$ is a point for some $P\in C$. 
Any non-trivial element $D\in \Cl(X)_{\tors}$ defines a cover $\tau: (X',C')\to (X,C)$ as above, 
hence $\tau_\flat^{*}D\sim 0$ and $D$ is not Cartier at~$P$.
This implies that the group $\Clsc(X^\flat)$ is torsion free and the homomorphism~\eqref{eq:homo}
is injective. 
Hence $\Cl(X)_{\tors}$ is cyclic.
If $(X,C)$ is a flipped germ, then the assertion follows from the above
and the isomorphism~\eqref{eq:Cl=Cl}.
\end{proof}

\begin{scorollary}[cf. {\cite[Corollary~1.10]{Mori:flip}}]
\label{cor:Clsc-gen}
Let $(X,C)$ be either an extremal curve germ or a flipped germ, where $C$ is irreducible.
Assume that $(X,C)$ is primitive. Then $\Clsc(X)\simeq \ZZ$ and for 
a generator $D$ of $\Clsc(X)$ we have 
\[
D\cdot C=\frac{\pm 1}{\mathrm{index}(X)}.
\]
\end{scorollary}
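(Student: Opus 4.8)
The plan is to extract $\Clsc(X)\simeq\ZZ$ directly from the exact sequence \eqref{exact-Clcs} and then to pin down the intersection number by studying the degree homomorphism $\delta\colon\Clsc(X)\to\QQ$, $\delta(E)=E\cdot C$. First I would use that $C$ is irreducible, so $\uprho(X)=1$ and \eqref{exact-Clcs} reads $0\to\ZZ\to\Clsc(X)\to\bigoplus_i\ZZ/m_i\ZZ\to 0$. Thus $\Clsc(X)$ has rank $1$, and since $(X,C)$ is primitive, i.e. $\Clsc(X)$ is torsion free, we get $\Clsc(X)\simeq\ZZ$, which is the first assertion. Moreover $\bigoplus_i\ZZ/m_i\ZZ$, being a quotient of $\ZZ\simeq\Clsc(X)$, is cyclic; hence the local indices $m_i$ are pairwise coprime, $\operatorname{index}(X)=\prod_i m_i=:m$, and $\Pic(X)=m\,\Clsc(X)$.

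Next I would show that $\delta$ is injective: a homomorphism from $\Clsc(X)\simeq\ZZ$ to $\QQ$ is injective unless it is zero, and $\delta(-K_X)=-K_X\cdot C\neq 0$ because $-K_X$ is $f$-ample. Since $mE$ is Cartier for every Weil divisor $E$, one has $\delta(\Clsc(X))\subseteq\tfrac1m\ZZ$. Writing $\delta(\Clsc(X))=(D\cdot C)\,\ZZ$ for a generator $D$, it then suffices to prove that $\tfrac1m$ itself lies in the image: indeed, $\tfrac1m=k\,(D\cdot C)$ together with $D\cdot C=\tfrac bm$ forces $bk=1$, whence $D\cdot C=\pm\tfrac1m=\pm1/\operatorname{index}(X)$.

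The hard part, and the step I expect to be the main obstacle, is producing a divisor whose intersection with $C$ equals $\tfrac1m$ exactly — equivalently, a Cartier divisor $L$ with $L\cdot C=\pm1$, i.e. the surjectivity of $\Pic(X)\to\Pic(C)\simeq\ZZ$. Here the geometry of the contraction and the primitivity hypothesis must genuinely be used. My plan is local-to-global: at a non-Gorenstein point $P_i$ of index $m_i$ a hyperplane section transverse to $C$ has local intersection number $\tfrac1{m_i}$ with $C$, and I would assemble these local contributions into a single global Weil divisor using the surjectivity of $\Clsc(X)\to\bigoplus_i\Clsc(X,P_i)$ together with the pairwise coprimality of the $m_i$ (a Chinese-remainder argument). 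The delicate point is to control the integral contributions coming from the locus where $X$ is Gorenstein, so that the total value is exactly $\tfrac1m$ and not merely $\equiv\tfrac1m\bmod\ZZ$; this is precisely the content of \cite[Corollary~1.10]{Mori:flip}, whose argument I would adapt. Once such a divisor is available, the implication isolated in the previous paragraph immediately yields $D\cdot C=\pm1/\operatorname{index}(X)$, and the flipped-germ case follows from the extremal case via the isomorphism \eqref{eq:Cl=Cl}.
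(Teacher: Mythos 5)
Your first half is correct and is essentially the intended argument: for irreducible $C$ the sequence \eqref{exact-Clcs} reads $0\to\Pic(X)\to\Clsc(X)\to\bigoplus_i\ZZ/m_i\ZZ\to 0$; primitivity (torsion-freeness of $\Clsc(X)$) gives $\Clsc(X)\simeq\ZZ$; cyclicity of the quotient forces the $m_i$ to be pairwise coprime, so $[\Clsc(X):\Pic(X)]=\prod_i m_i=\lcm(m_i)=\operatorname{index}(X)=:m$ and $\Pic(X)=\ZZ\cdot(mD)$ for a generator $D$ of $\Clsc(X)$; and the problem indeed reduces to the surjectivity of the degree map $\Pic(X)\to\Pic(C)\simeq\ZZ$.

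The gap is in how you handle that last step. Your local-to-global plan is not a proof: you yourself flag the unresolved ``delicate point'' of pinning the value down to exactly $\tfrac1m$ rather than $\tfrac1m \bmod \ZZ$, and you propose to close it by ``adapting'' \cite[Corollary~1.10]{Mori:flip} --- but that corollary is precisely the statement being proved, so as written the argument is circular. The local input is also misstated: a hyperplane section is Cartier, so its local intersection number with $C$ is an integer, not $\tfrac1{m_i}$; what you would need is a generator of $\Clsc(X,P_i)$, and realizing local degree exactly $\tfrac1{m_i}$ requires local primitivity at $P_i$, which is itself unproven at this point of the paper. In fact no such construction is needed: the isomorphism $\Pic(X)\simeq\ZZ^{\uprho(X)}$ appearing in \eqref{exact-Clcs} \emph{is} the multidegree map $L\mapsto\deg\bigl(L|_{C_i}\bigr)$ --- this is \cite[Corollary~1.3]{Mori:flip} (the restriction $\Pic(X)\to\Pic(C)$ is bijective, coming from $R^1f_*\OOO_X=0$ and the exponential sequence), it is built into the setup of \ref{not:app}, and it uses neither primitivity nor any local analysis. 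Granting it, your own reduction finishes at once: the generator $mD$ of $\Pic(X)$ satisfies $(mD)\cdot C=\pm1$, hence $D\cdot C=\pm1/m$. Finally, your dispatch of the flipped-germ case ``via \eqref{eq:Cl=Cl}'' is a non sequitur: that isomorphism concerns torsion subgroups, which are trivial here, and says nothing about degrees on $C^+$. The correct observation is that \ref{not:app} --- and hence the entire argument above --- applies verbatim to flipped germs, as the paper notes there.
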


\begin{scorollary}
\label{corImp}
Let $(X,C)$ be either an extremal curve germ or a flipped germ, where $C$ is irreducible.
Then the germ $(X,C)$ is imprimitive if and only if
one of the following folds:
\begin{enumerate}
\item 
\label{corImp:a}
$(X,C)$ has a locally imprimitive point $P$, or

\item
\label{corImp:b}
$(X,C)$ has two primitive points $P_1$ and $P_2$ of indices $m_1$ and $m_2$ with $\gcd(m_1,m_2)\neq 1$.
\end{enumerate}
If furthermore 
$(X,C)$ is an imprimitive extremal curve germ of
splitting degree $m$, then in the case 
\ref{corImp:a} the splitting degree of $C$ at~$P$ equals $m$ and
$(X,C)$ has no 
other non-Gorenstein points, and in the case 
\ref{corImp:b} we have $\gcd(m_1,m_2)=m$ and $\Sing(X)=\{P_1,\, P_2\}$.
\end{scorollary}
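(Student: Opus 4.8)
The plan is to reduce the entire statement to one computation of $\Cl(X)_{\tors}$ inside the local class groups, via the exact sequence \eqref{exact-Clcs}; I treat extremal curve germs, the flipped case being identical in view of \eqref{eq:Cl=Cl} and Corollary~\ref{cor:Clsc}. Let $P_1,\dots,P_n$ be the non-Gorenstein points lying on $C$, of indices $m_i\ge 2$. Since $C$ is irreducible, $X$ has at most two such points (by \cite[Theorem~6.7 and (10.6)]{Mori:flip} and \cite[Theorems~1.2 and~1.3]{MP:cb1}, exactly as invoked in the proof of Lemma~\ref{lemma:cov}); if $n=0$ the germ is visibly primitive and neither \ref{corImp:a} nor \ref{corImp:b} holds, so I assume $n\in\{1,2\}$. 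By \eqref{exact-Clcs} we have $\Pic(X)\simeq\ZZ$, and since $X$ deformation retracts onto $C\simeq\PP^1$, a generator $L$ of $\Pic(X)=H^2(X,\ZZ)$ satisfies $L\cdot C=\pm1$. For each $i$ let $\varrho_i\colon\Clsc(X,P_i)\simeq\ZZ/m_i\ZZ\to\QQ/\ZZ$ be the local intersection homomorphism \eqref{eq:def:imp}; by Definition~\ref{def:imp}, $s_i:=|\ker\varrho_i|$ is the local splitting degree, and $P_i$ is primitive iff $s_i=1$.

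The engine of the argument is the isomorphism
\[
T:=\Clsc(X)_{\tors}\ \simeq\ K:=\ker\Big[\lambda:=\textstyle\sum_i\varrho_i\colon\textstyle\bigoplus_i\Clsc(X,P_i)\longrightarrow\QQ/\ZZ\Big],
\]
induced by the surjection $\psi$ of \eqref{exact-Clcs}. First, $\psi|_T$ is injective because $\ker\psi=\Pic(X)$ is torsion free. Next, $\lambda\circ\psi$ is precisely the map $D\mapsto D\cdot C\bmod\ZZ$, since Cartier divisors meet $C$ in integers and the fractional part is computed locally. Because $L\cdot C=\pm1$, every $D$ with $D\cdot C\in\ZZ$ lies in $T+\ZZ L$; applying $\psi$, which kills $L$ and is surjective, yields $\psi(T)=\ker\lambda=K$. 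Hence $(X,C)$ is imprimitive iff $K\ne0$, and the splitting degree is $m=|T|=|K|$. With this identity the simple cases follow by inspection. If $n=1$ then $K=\ker\varrho_1$, so $m=s_1$; thus the germ is imprimitive iff $s_1>1$, i.e. iff $P_1$ is imprimitive (case~\ref{corImp:a}), and then the local splitting degree at $P_1$ equals $m$ and $P_1$ is the unique non-Gorenstein point. If $n=2$ with $P_1,P_2$ both primitive, then $|\operatorname{im}\lambda|=\lcm(m_1,m_2)$ and $|K|=m_1m_2/\lcm(m_1,m_2)=\gcd(m_1,m_2)$, so the germ is imprimitive iff $\gcd(m_1,m_2)>1$ (case~\ref{corImp:b}), with $m=\gcd(m_1,m_2)$ and $\Sing(X)=\{P_1,P_2\}$.

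It remains to exclude the mixed possibility $n=2$ with a locally imprimitive point, and this is where the real work lies. Suppose $s_1>1$. Corollary~\ref{cor:Clsc} furnishes the maximal cyclic cover $\tau_\flat\colon(X^\flat,C^\flat)\to(X,C)$, étale outside $\Sing(X)$, of degree $m$, with $\Clsc(X^\flat)$ torsion free and $\tau_\flat^{-1}(P_{i_0})$ a single point for some $i_0$. Since $\ker\varrho_1\oplus 0\subset K$ is nonzero, $i_0\neq 2$ (otherwise $\operatorname{proj}_2|_K$ would be injective, forcing $s_1=1$), so $\tau_\flat^{-1}(P_1)$ is a single point $P_1^\flat$. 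Then $C^\flat\to C\simeq\PP^1$ is a degree-$m$ cover étale away from $\{P_1,P_2\}$ whose fibre over $P_1$ is the single point $P_1^\flat$; as every component of $C^\flat$ surjects onto $C$ and hence passes through $P_1^\flat$, the curve $C^\flat$ is irreducible and smooth at $P_1^\flat$. The index-one cover of $X$ at $P_1$ is simultaneously the index-one cover of $X^\flat$ at $P_1^\flat$, and under it the preimages of $C$ and of $C^\flat$ are the same analytic germ; comparing the numbers of branches shows that the local splitting degree of $C^\flat$ at $P_1^\flat$ is again $s_1>1$.

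But $(X^\flat,C^\flat)$ is a primitive extremal curve germ with irreducible central curve, so the isomorphism $T^\flat\simeq K^\flat$ above (applied to $X^\flat$, whose generator of $\Pic$ also meets $C^\flat$ in $\pm1$) gives $K^\flat=0$; since $\ker\varrho_k^\flat$ embeds into $K^\flat$ for each non-Gorenstein point of $X^\flat$, all of them are primitive, contradicting $s_1>1$ at $P_1^\flat$. Hence an imprimitive germ with $n=2$ has both points primitive, and we are in case~\ref{corImp:b}. Assembling the cases $n=1$ and $n=2$ yields the equivalence together with the two quantitative assertions. The main obstacle is this last step: transporting the local imprimitivity to the maximal cover and using the primitivity of $(X^\flat,C^\flat)$ to force a contradiction; everything else is formal manipulation of \eqref{exact-Clcs} once $L\cdot C=\pm1$ is in hand.
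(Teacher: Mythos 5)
Your first two paragraphs are fine: the identification $\Cl(X)_{\tors}\simeq K=\ker\bigl[\sum_i\varrho_i\bigr]$ via \eqref{exact-Clcs} and the fact that a generator $L$ of $\Pic(X)$ has $L\cdot C=\pm1$ is sound, and it reproves what the paper simply cites as \cite[Corollary~1.12]{Mori:flip} (the ``division into cases''). The genuine gap is in your exclusion of the mixed case ($n=2$ with $P_1$ locally imprimitive), which is exactly the content of the ``furthermore'' assertions. The step ``as every component of $C^\flat$ surjects onto $C$ and hence passes through $P_1^\flat$, the curve $C^\flat$ is irreducible and smooth at $P_1^\flat$'' is a non sequitur: several components can pass through one point, and here they do. Indeed, since $\tau_\flat^{-1}(P_1)$ is a single point, the cover near $P_1$ corresponds to a surjection $\pi_1(U_1\setminus\{P_1\})\simeq\ZZ/m_1\ZZ\twoheadrightarrow\ZZ/m\ZZ$, while the monodromy of the punctured curve germ $C\cap U_1\setminus\{P_1\}$ generates the subgroup of index $s_1$ of $\ZZ/m_1\ZZ$ (that is what local splitting degree $s_1$ means). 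Its image in $\ZZ/m\ZZ$ has index $\gcd(s_1,m)=s_1$ (note $s_1\mid m$ by Lagrange, since $\ker\varrho_1\oplus 0\subset K$). Hence $C^\flat$ has exactly $s_1>1$ branches at $P_1^\flat$, one on each of its $s_1$ irreducible components, and each component is locally \emph{primitive} there; this splitting of preimages is precisely the phenomenon recorded in \cite[(1.11)]{Mori:flip} and used in~\ref{case:IIvee-a} of the paper. So $(X^\flat,C^\flat)$ is a primitive germ with \emph{reducible} central curve, your final paragraph (which applies the $T\simeq K$ isomorphism to $X^\flat$ as if $C^\flat$ were irreducible) does not apply, and no contradiction is obtained.

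This is not a fixable slip within the class-group formalism. That an extremal curve germ with a locally imprimitive point has no other non-Gorenstein point is part of the hard classification of extremal neighborhoods, and that is exactly what the paper's proof invokes: \cite[Theorems~6.7 and~9.4]{Mori:flip} in the birational case and \cite[Theorem~1.2]{MP:cb1} in the $\QQ$-conic bundle case. The same applies to the assertion $\Sing(X)=\{P_1,P_2\}$ in case~\ref{corImp:b}, which you state without any argument: it excludes additional \emph{Gorenstein} singular points (e.g.\ type~\typec{III} points, which do occur on germs of type~\typec{k3A}), and these are completely invisible to your method because their local groups $\Clsc(X,P)$ vanish. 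In sum, your proposal establishes the ``if and only if'' dichotomy (the formal half, where your argument is a legitimate alternative to citing \cite[Corollary~1.12]{Mori:flip}), but the assertions on singularities remain unproved, and for them the citations in the paper's proof are genuinely needed.
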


\begin{proof}
The division into cases follows from \cite[Corollary~1.12]{Mori:flip} 
and the assertions on singularities follow from \cite[Theorems~6.7 and 9.4]{Mori:flip} and 
\cite[Theorem~1.2]{MP:cb1}.
\end{proof}

\begin{sremark}
\label{rem:cla}
The case~\ref{corImp:a} occurs if and only if $(X,C)$ is of type~\typec{ID^\vee}, \typec{IE^\vee}, \typec{II^\vee} or \typec{IA^\vee} 
(the last is a part of \typec{k1A}).
In fact, the case \typec{k1A} can be primitive or imprimitive, and the 
imprimitive \typec{k1A} is equivalent to \typec{IA^\vee}.
The case~\ref{corImp:b} occurs if and only if $(X,C)$ is of type~\typec{k2A} with $\gcd(m_1,m_2)>1$.
\end{sremark}

\begin{scorollary}
\label{cor:divI}
Let $(X,C)$ be an extremal curve germ of  splitting degree $m\ge 1$.
\begin{enumerate}
\item
\label{cor:divIb}
If $(X,C)$ is a $\QQ$-conic bundle,
then $(Z,o)$ is a Du Val singularity of type \type{A_{m-1}}.
In particular, the point $(Z,o)$ is smooth if and only if
$\Cl(X)_\tors=0$. 

\item
\label{cor:divIa}
If  $(X,C)$ is divisorial and $C$ is irreducible, then 
the $f$-exceptional locus $E$ is
a $\QQ$-Cartier divisor and $(Z,o)$ is a terminal singularity of index~$m$.
In particular, the point $(Z,o)$ is Gorenstein if and only if $\Cl(X)_\tors=0$.
\end{enumerate}
\end{scorollary}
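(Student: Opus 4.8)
The plan is to treat the two parts separately, reducing each to the isomorphism~\eqref{eq:Cl=Cl} together with the cyclicity furnished by Corollary~\ref{cor:Clsc}. Throughout I write $m=|\Cl(X)_{\tors}|$ for the splitting degree of $(X,C)$. By Corollary~\ref{cor:Clsc} the group $\Cl(X)_{\tors}$ is cyclic, hence $\Cl(X)_{\tors}\simeq\ZZ/m\ZZ$, and by~\eqref{eq:Cl=Cl} we have $\Cl(Z,o)_{\tors}\simeq\Cl(X)_{\tors}\simeq\ZZ/m\ZZ$. So in both cases the invariant of $(Z,o)$ to be computed (its Du Val type, resp.\ its index) is read off from the order of this finite cyclic group.

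For part~\ref{cor:divIb}, since $f$ is a contraction with connected fibers and $X$ is normal, $(Z,o)$ is a normal surface germ. The structural input I would invoke is that the base of a $\QQ$-conic bundle germ has only Du Val singularities of type $\mathrm{A}$ (from the classification of $\QQ$-conic bundles, \cite{MP:cb1} and, for the singular base, \cite{MP:cb2}). A Du Val singularity is rational, so $\Cl(Z,o)$ is a finite group, and for type $\mathrm{A}_n$ one has $\Cl(Z,o)\simeq\ZZ/(n+1)\ZZ$. Comparing orders with $\Cl(Z,o)_{\tors}\simeq\ZZ/m\ZZ$ forces $n+1=m$, i.e.\ $(Z,o)$ is of type $\mathrm{A}_{m-1}$; in particular $(Z,o)$ is smooth exactly when $m=1$, that is, when $\Cl(X)_{\tors}=0$.

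For part~\ref{cor:divIa}, the $\QQ$-Cartier property of $E$ is where I would begin. Since $C$ is irreducible we have $\uprho(X)=1$, so $\Pic(X)\simeq\ZZ$ in~\eqref{exact-Clcs}, and the cokernel of $\Pic(X)\hookrightarrow\Cl(X)$ embeds into the finite group $\bigoplus\Clsc(X,P_i)$. Hence a suitable multiple of every Weil divisor, in particular of the exceptional divisor $E$, lies in $\Pic(X)$, so $E$ is $\QQ$-Cartier. As $\uprho(X/Z)=1$ and $K_X$, $E$ are both $\QQ$-Cartier, there is $a\in\QQ$ with $K_X-aE$ numerically trivial over $Z$, hence equal to $f^{\ast}K_Z$ (so that $K_Z$ is $\QQ$-Cartier); the relations $K_X\cdot C<0$ and $E\cdot C<0$ give $a>0$, and then the standard discrepancy comparison for three-dimensional divisorial contractions (see~\cite{KM92,Mori:flip}) shows that $(Z,o)$ is terminal. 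Finally, for a terminal threefold point of index $r$ the local class group is $\Clsc(Z,o)\simeq\ZZ/r\ZZ$ by~\eqref{eq:Term-sing}, and this is all of $\Cl(Z,o)_{\tors}$; matching with $\Cl(Z,o)_{\tors}\simeq\ZZ/m\ZZ$ yields $r=m$, so $(Z,o)$ has index $m$ and is Gorenstein precisely when $m=1$, i.e.\ when $\Cl(X)_{\tors}=0$.

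The main obstacle I anticipate is the structural identification of the base singularity in each case, namely that $(Z,o)$ is Du Val of type $\mathrm{A}$ in~\ref{cor:divIb} and terminal in~\ref{cor:divIa}. The type, respectively the index, then drops out painlessly from the class-group bookkeeping, but these base-singularity facts genuinely rely on the classification of $\QQ$-conic bundles and on the theory of three-dimensional terminal divisorial contractions, rather than on the exact sequences of~\ref{not:app} alone.
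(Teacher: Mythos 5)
Your part~\xref{cor:divIb} is correct and is essentially the paper's own argument: the paper proves it by quoting \cite[Theorem~1.2.7]{MP:cb1} together with Corollary~\xref{cor:Clsc}, and your additional bookkeeping (a Du Val point of type \type{A_n} has local class group $\ZZ/(n+1)\ZZ$, so \eqref{eq:Cl=Cl} forces $n+1=m$) is a valid way to pin down the subscript.

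Part~\xref{cor:divIa}, however, has a genuine gap, located exactly at the step you treat as formal: the $\QQ$-Cartier property of $E$. The middle term of the exact sequence~\eqref{exact-Clcs} is the \emph{semi-Cartier} class group $\Clsc(X)$, not $\Cl(X)$: it consists of those Weil divisor classes whose restriction to each germ $(X,P_i)$ is $\QQ$-Cartier, i.e.\ lies in the finite group $\Clsc(X,P_i)\simeq\ZZ/m_i\ZZ$ of \eqref{eq:Term-sing}. The cokernel of $\Pic(X)\hookrightarrow\Cl(X)$ need not be finite, because a terminal threefold germ need not be analytically $\QQ$-factorial: already an ordinary double point $\{xy-zw=0\}$ (index $1$, so $\Clsc(X,P)=0$) has local analytic class group $\ZZ$, generated by a plane which is not $\QQ$-Cartier. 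So the assertion ``a suitable multiple of every Weil divisor lies in $\Pic(X)$'' is false in this analytic setting, and whether the class of $E$ lies in $\Clsc(X)$ --- that is, whether $E$ is $\QQ$-Cartier at the singular points of $X$ --- is precisely what has to be proved; your derivation is circular. This is where the real content of part~\xref{cor:divIa} sits: the paper's proof is a citation of \cite[Theorem~3.1]{MP:IA}, where the $\QQ$-Cartier property of $E$ and the terminality of $(Z,o)$ are established by a genuine (non-formal) argument, and the references you substitute for it (\cite{KM92}, \cite{Mori:flip}) do not contain this statement for analytic germs. Your remaining steps --- writing $K_X\equiv f^*K_Z+aE$ with $a>0$, the discrepancy comparison giving terminality of $(Z,o)$, and the identification $\operatorname{index}(Z,o)=|\Cl(Z,o)_{\tors}|=|\Cl(X)_{\tors}|=m$ via \eqref{eq:Cl=Cl} and \eqref{eq:Term-sing} --- are all sound, but only after the $\QQ$-Cartier input has been supplied.
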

\begin{proof}
In view of Corollary~\xref{cor:Clsc} 
the assertion~\ref{cor:divIb} follows from \cite[Theorem~1.2.7]{MP:cb1} and
\ref{cor:divIa} follows from 
\cite[Theorem~3.1]{MP:IA}.
\end{proof}

\begin{slemma}
\label{lemma:flips2-2}
Let $(X,C)\dashrightarrow (X^+,C^+)$ be a flip of threefolds with terminal singularities 
with irreducible $C$ and $C^+$.
\begin{enumerate}
\item \label{lemma:flips2-2a}
The germ $(X,C)$ is primitive if and only if so $(X^+,C^+)$ is.
\item \label{lemma:flips2-2b}
Assume that $(X,C)$ is primitive.
Then
\begin{equation*}
\label{eq:KCindexa}
\operatorname{index}(X^+)\, (K_{X^+}\cdot C^+)= -\operatorname{index}(X)\, (K_X\cdot C).
\end{equation*} 
\end{enumerate} 
\end{slemma}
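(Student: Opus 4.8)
The plan is to treat the two assertions separately, using the class-group machinery of the appendix. For~(i), the key point is that a flip $\chi\colon X\dashrightarrow X^{+}$ is an isomorphism in codimension one, hence induces an isomorphism $\chi_{*}\colon\Cl(X)\xrightarrow{\sim}\Cl(X^{+})$ carrying $K_{X}$ to $K_{X^{+}}$ and, in particular, an isomorphism $\Cl(X)_{\tors}\xrightarrow{\sim}\Cl(X^{+})_{\tors}$. Since a torsion divisor class is automatically $\QQ$-Cartier, one has $\Cl(X)_{\tors}=\Clsc(X)_{\tors}$, and likewise for $X^{+}$; so by the definition of primitivity in~\ref{not:app} the germ $(X,C)$ is primitive iff $\Cl(X)_{\tors}=0$ iff $\Cl(X^{+})_{\tors}=0$ iff $(X^{+},C^{+})$ is primitive. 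This proves~(i).

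For~(ii) I would first invoke~(i): assuming $(X,C)$ is primitive, $(X^{+},C^{+})$ is a primitive flipped germ, so Corollary~\ref{cor:Clsc-gen} applies on both sides. Write $m=\operatorname{index}(X)$, $m^{+}=\operatorname{index}(X^{+})$, and let $f\colon X\to Z$, $f^{+}\colon X^{+}\to Z$ be the two small contractions. Choose a generator $D$ of $\Clsc(X)\simeq\ZZ$ with $D\cdot C=1/m$ and a generator $D^{+}$ of $\Clsc(X^{+})\simeq\ZZ$ with $D^{+}\cdot C^{+}=1/m^{+}$, and write $K_{X}=kD$, $K_{X^{+}}=k^{+}D^{+}$ with $k,k^{+}\in\ZZ$. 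Then $m(K_{X}\cdot C)=k$ and $m^{+}(K_{X^{+}}\cdot C^{+})=k^{+}$, so the assertion is exactly $k^{+}=-k$. The signs are immediate: $-K_{X}$ is $f$-ample and $K_{X^{+}}$ is $f^{+}$-ample, whence $k<0<k^{+}$. Moreover $\gcd(k,m)=1$ and $\gcd(k^{+},m^{+})=1$, because the image of $K_{X}$ in $\bigoplus_{i}\Clsc(X,P_{i})\simeq\ZZ/m\ZZ$ of~\eqref{exact-Clcs} is a generator (its order at each singular point equals the local index), and the same for $K_{X^{+}}$. Thus it remains to prove $|k|=|k^{+}|$.

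The hard part is precisely this last equality, which amounts to showing that $\chi_{*}$ matches the two rank-one lattices: granting that $\chi_{*}$ restricts to an isomorphism $\Clsc(X)\xrightarrow{\sim}\Clsc(X^{+})$, it must carry the generator $D$ to $\pm D^{+}$, so that $K_{X}=kD$ maps to $K_{X^{+}}=\pm kD^{+}$ and $|k^{+}|=|k|$, which combined with the sign computation gives $k^{+}=-k$. The obstacle is that, unlike torsion classes, a $\QQ$-Cartier generator of $\Clsc(X)$ need not remain $\QQ$-Cartier under proper transform, so one must show that $\chi_{*}$ respects $\QQ$-Cartierness here. I would settle this by using that the extremal and flipped germs under consideration are analytically $\QQ$-factorial, so that $\Clsc=\Cl\simeq\ZZ$ on both sides and $\chi_{*}$ is an isomorphism of these copies of $\ZZ$ carrying generator to generator. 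If one prefers to avoid $\QQ$-factoriality, the alternative is to pass to a common resolution $p\colon W\to X$, $q\colon W\to X^{+}$ with $fp=f^{+}q$: since $K_{X}$ and $K_{X^{+}}$ are both $\QQ$-Cartier, $p^{*}K_{X}-q^{*}K_{X^{+}}$ is effective and exceptional (the flip increases discrepancies), and intersecting this identity with the strict transforms of $C$ and $C^{+}$, together with the local index data at the singular points, recovers $k^{+}=-k$; controlling the exceptional contributions is the technical core of that route.
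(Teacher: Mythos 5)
Your part (i) is correct, and it is in fact a cleaner route than the paper's: you use only that the flip is an isomorphism in codimension one, so that strict transform gives $\Cl(X)\simeq\Cl(X^+)$ preserving linear equivalence, together with the observation that torsion classes are automatically $\QQ$-Cartier, whence $\Clsc(X)_{\tors}=\Cl(X)_{\tors}\simeq\Cl(X^+)_{\tors}=\Clsc(X^+)_{\tors}$ and primitivity is preserved; the paper instead asserts outright that $\Clsc(X)\simeq\Clsc(X^+)$. Your plan for (ii) is the same as the paper's: once the strict-transform isomorphism is known to restrict to $\Clsc(X)\simeq\Clsc(X^+)$, carrying $K_X$ to $K_{X^+}$, the two canonical classes generate subgroups of the same index in $\ZZ$, and Corollary~\ref{cor:Clsc-gen} together with the signs $K_X\cdot C<0<K_{X^+}\cdot C^+$ gives exactly the displayed formula.

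The genuine gap is in your justification of that restriction step, which you yourself single out as the hard part. The claim that the germs in question are analytically $\QQ$-factorial, so that $\Clsc=\Cl$ on both sides, is neither proved nor true in general: the lemma concerns arbitrary flips with $C$, $C^+$ irreducible, and such germs may contain a Gorenstein singular point of type~\typec{III} which is an ordinary double point $\{xy=zw\}$, or a \type{cA/m} point whose index-one cover is $\{xy=gh\}$ with $g$, $h$ semi-invariant factors; in either case there are Weil divisor germs that are not $\QQ$-Cartier, so $\Clsc(X)\subsetneq\Cl(X)$. Your fallback through a common resolution is left incomplete precisely at its ``technical core'' (note moreover that the strict transform of $C$ on the resolution is contracted by $q$, so the proposed intersection bookkeeping does not close). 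The correct argument is short and uses only tools you already invoked: given a $\QQ$-Cartier Weil divisor $D$ on $X$, choose $\lambda\in\QQ$ with $(D+\lambda K_X)\cdot C=0$; a suitable integral multiple $n(D+\lambda K_X)$ is then a Cartier divisor of degree zero on $C$, hence trivial in $\Pic(X)\simeq\ZZ$ (the degree isomorphism of~\ref{not:app} and~\eqref{exact-Clcs}), so $D+\lambda K_X$ is a torsion class in $\Cl(X)$. Torsion classes are carried to torsion classes by the flip and are $\QQ$-Cartier, so $D^++\lambda K_{X^+}$ is $\QQ$-Cartier, and therefore so is $D^+$, since $K_{X^+}$ is. Applying the same reasoning to $\chi^{-1}$ shows that $\chi_*$ restricts to an isomorphism $\Clsc(X)\simeq\Clsc(X^+)$ with $K_X\mapsto K_{X^+}$, after which your derivation of $k^+=-k$ --- which is the paper's proof --- goes through.
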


\begin{proof}
The assertion~\ref{lemma:flips2-2a} is obvious because $\Pic(X)\simeq \Pic(X^+)\simeq \ZZ$ and $\Clsc(X)\simeq \Clsc(X^+)$.
For the assertion~\ref{lemma:flips2-2b} we note that $K_X$ and $K_{X^+}$ generate subgroups of the same index in 
$\Clsc(X)$ and $\Clsc(X^+)$, respectively. Then~\ref{lemma:flips2-2b} 
follows from~\ref{lemma:flips2-2a} and Corollary~\ref{cor:Clsc-gen}.
\end{proof}

\begin{lemma}
\label{lemma:div}
Let $f: X\to X'$ be a divisorial $K$-negative extremal contraction of threefolds with terminal singularities
and let $E$ be the exceptional divisor.
Let $L\subset X$ be a proper curve such that $L\cap E\neq \varnothing$ and $L\not\subset E$, and let $L':=f(L)$. Then 
\[
K_X\cdot L\ge K_{X'}\cdot L'+\frac 1n,
\]
where $n$ is the index of $X$ near $E\cap L$.
\end{lemma}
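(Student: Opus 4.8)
The plan is to combine the discrepancy formula for $f$ with the fact that the local class group at a point of index $n$ is $n$-torsion. Since $f$ is an extremal divisorial contraction, its exceptional locus is an irreducible divisor $E$, and both $K_X$ and $f^*K_{X'}$ are $\QQ$-Cartier; their difference is supported on $E$, so I may write $K_X = f^*K_{X'} + aE$ for a unique $a\in\QQ$. To see $a>0$, I would test against a curve $\ell\subset E$ contracted by $f$: from $f^*K_{X'}\cdot\ell=0$ we get $K_X\cdot\ell = a\,(E\cdot\ell)$, where $K_X\cdot\ell<0$ by $K$-negativity and $E\cdot\ell<0$ by negativity of the exceptional divisor, forcing $a>0$ (equivalently, $a>0$ because $X'$ is terminal, cf.\ Corollary~\ref{cor:divI}).

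First I would reduce the claim to an estimate for $a\,(E\cdot L)$. As $f$ is an isomorphism over $X'\setminus f(E)$ and $L\not\subset E$, the map $f|_L\colon L\to L'$ is birational, so $f_*L=L'$, and the projection formula gives $f^*K_{X'}\cdot L = K_{X'}\cdot L'$. Intersecting the discrepancy formula with $L$ then yields
\[
K_X\cdot L - K_{X'}\cdot L' = a\,(E\cdot L),
\]
so it suffices to prove $a\,(E\cdot L)\ge \tfrac1n$. Positivity is immediate: $a>0$, the divisor $E$ is effective with $\Supp E=E$, and $L\cap E\neq\varnothing$ while $L\not\subset E$, whence $E\cdot L>0$.

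The one real obstacle is to show that $a\,(E\cdot L)$ lies in $\tfrac1n\ZZ$, since a \emph{positive} element of $\tfrac1n\ZZ$ is automatically $\ge\tfrac1n$. I would argue locally at a point $P\in E\cap L$ of index $n$. By \eqref{eq:Term-sing} one has $\Clsc(X,P)\simeq\ZZ/n\ZZ$, so $nD$ is Cartier near $P$ for \emph{every} $\QQ$-Cartier Weil divisor $D$. Applying this to $K_X$ and to $f^*K_{X'}$ (which is $\QQ$-Cartier, being the pullback of the $\QQ$-Cartier divisor $K_{X'}$), I conclude that $naE=n\bigl(K_X-f^*K_{X'}\bigr)$ is Cartier near $P$. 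The subtlety this circumvents is that $f^*K_{X'}$ may a priori acquire a $\QQ$-Cartier index larger than $n$ at $P$, inherited from the index of $X'$ at $f(P)$; the local class group being $n$-torsion shows that multiplication by $n$ clears all denominators regardless.

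Finally I would conclude. The number $a\,(E\cdot L)$ is computed in a neighborhood of $E\cap L$, because $aE$ is supported on $E$; there $naE$ is effective and Cartier with $L\not\subset E$, so the local intersection $naE\cdot L$ is a positive integer, hence $\ge 1$. Therefore $a\,(E\cdot L)\ge\tfrac1n$, and the displayed identity gives $K_X\cdot L\ge K_{X'}\cdot L'+\tfrac1n$, as claimed. (If $E\cap L$ consists of several points, one takes $n$ to be the index at the point where $L$ meets $E$, and runs the same local argument there.)
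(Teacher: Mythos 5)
You correctly reduce the lemma to the inequality $a\,(E\cdot L)\ge\tfrac1n$, but the step carrying all the weight---that $naE=n\bigl(K_X-f^*K_{X'}\bigr)$ is Cartier near a point $P\in E\cap L$---is a genuine gap. The isomorphism $\Clsc(X,P)\simeq\ZZ/n\ZZ$ of \eqref{eq:Term-sing} (i.e.\ \cite[Lemma~5.1]{Kaw:Crep}) makes $nD$ Cartier at $P$ for every \emph{integral} Weil divisor $D$; it cannot be applied to $f^*K_{X'}$, which is only a $\QQ$-divisor: $f^*K_{X'}=K_X-aE$ is not integral as soon as $a\notin\ZZ$, and then $naE$ need not even have integer coefficients, let alone be Cartier. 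Moreover the gap cannot be patched in the generality in which you argue, because without control on $a$ the asserted inequality is itself false. Let $f\colon X\to X'$ contract $E\simeq\PP^2$ with normal bundle $\OOO_{\PP^2}(-2)$ to a point of type $\frac12(1,1,1)$ (e.g.\ the blowup of the singular point of $X'=\PP(1,1,1,2)$); here $X$ is smooth and, by adjunction, $K_X|_E=\OOO_{\PP^2}(-1)$ while $E|_E=\OOO_{\PP^2}(-2)$, so $K_X=f^*K_{X'}+\tfrac12E$, i.e.\ $a=\tfrac12$. For a curve $L$ meeting $E$ transversally at one point we get $E\cdot L=1$ and $K_X\cdot L=K_{X'}\cdot L'+\tfrac12$, although $n=1$: both your intermediate claim (that $naE=\tfrac12E$ is Cartier) and the inequality with $\tfrac1n=1$ fail for this contraction.

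The missing ingredient---and the actual content of the paper's one-line proof---is the equality $K_X=f^*K_{X'}+E$, i.e.\ $a=1$. It holds in all situations where the lemma is applied in the paper, because those divisorial contractions arise from extremal curve germs: the fibers of $f$ are one-dimensional, so $E$ is contracted onto a \emph{curve} $\Gamma\subset X'$; as $X'$ is terminal, it is smooth at the generic point of $\Gamma$, over which $f$ is the blowup of $\Gamma$, forcing $a=1$ (any bound $a\ge1$ would suffice, and this holds for any divisor whose center is a codimension-two subvariety whose generic point is a smooth point of $X'$). Once $a$ is pinned down, the torsion fact is applied to the integral divisor $E$ itself: $nE$ is Cartier at the points of $E\cap L$, so $E\cdot L\in\tfrac1n\ZZ_{>0}$, whence $E\cdot L\ge\tfrac1n$. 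So your skeleton (discrepancy formula, projection formula, integrality) matches the paper's, but the integrality input must be routed through $E$, not through $f^*K_{X'}$, and doing so requires first establishing $a=1$---precisely the point your argument tries to bypass.
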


\begin{proof}
Follows from the formula $K_X=f^*K_{X'}+E$ and the fact that the divisor
$nE$ is Cartier at the points of the set $E\cap L$ \cite[Lemma~5.1]{Kaw:Crep}.
\end{proof}

\begin{lemma}
\label{lemma:flip}
Let $\psi: X\dashrightarrow X^+$ be a flip
and let $L\subset X$ be a proper curve that meets the flipping locus but 
is not contained in it.
Let $L^+:=\psi_* L$. Then 
\[
K_X\cdot L>K_{X^+}\cdot L^+.
\]
\end{lemma}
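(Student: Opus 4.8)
The plan is to compare $K_X$ and $K_{X^{+}}$ on a common resolution, following the standard argument that a flip strictly increases discrepancies over the flipping point. First I would introduce the flipping contraction $f\colon X\to Z$ and the flipped contraction $f^{+}\colon X^{+}\to Z$ defining $\psi$, so that $-K_X$ is $f$-ample, $K_{X^{+}}$ is $f^{+}$-ample, and the flipping locus is $\Exc(f)$, contracted to a point $o\in Z$. I would then pick a common resolution $p\colon W\to X$, $q\colon W\to X^{+}$ with $g:=f\circ p=f^{+}\circ q\colon W\to Z$, chosen so that $p$ is an isomorphism over $X\setminus\Exc(f)$ and $q$ over $X^{+}\setminus\Exc(f^{+})$. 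Since $f$ is small, any $g$-exceptional divisor must be $p$-exceptional (otherwise its image in $X$ is a divisor, whose image in $Z$ is again a divisor, contradicting $g$-exceptionality), and with $p$ an isomorphism away from $\Exc(f)$ this forces every $g$-exceptional divisor to be centered at $o$ on $Z$.

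Next I would set $D:=p^{*}K_X-q^{*}K_{X^{+}}$, a $g$-exceptional $\QQ$-divisor. Because $-K_X$ is $f$-ample, $-p^{*}K_X$ is $g$-nef; because $K_{X^{+}}$ is $f^{+}$-ample, $q^{*}K_{X^{+}}$ is $g$-nef; hence $-D$ is $g$-nef. The negativity lemma, equivalently the fact that flips increase discrepancies (cf. \cite[Proposition~5.1.11]{KMM} and \cite[2.13]{Shokurov:nV}), then shows that $D$ is effective; in its strict form it shows that every $g$-exceptional prime divisor occurs in $D$ with strictly positive coefficient, precisely because each is centered at $o$.

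Then I would take the strict transform $\tilde L\subset W$ of $L$, so that $p_{*}\tilde L=L$ and $q_{*}\tilde L=L^{+}$, and apply the projection formula:
\[
K_X\cdot L-K_{X^{+}}\cdot L^{+}=(p^{*}K_X-q^{*}K_{X^{+}})\cdot\tilde L=D\cdot\tilde L .
\]
Since $g(\tilde L)=f(L)$ is a curve while each $g$-exceptional divisor maps to the point $o$, the curve $\tilde L$ lies in no component of $\Supp(D)$, so $D\cdot\tilde L\ge 0$ with every local intersection number nonnegative. Finally, as $L$ meets $\Exc(f)=f^{-1}(o)$, its image $f(L)$ passes through $o$, whence $\tilde L$ meets the fibre $g^{-1}(o)\subset\Exc(g)=\Supp(D)$; therefore at least one local contribution is positive and $D\cdot\tilde L>0$, which is the asserted inequality.

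The hard part will be the strict inequality rather than the weak one: I must guarantee that $\tilde L$ actually meets a component of $D$ carrying a positive coefficient, i.e. that the portion of $g^{-1}(o)$ hit by $\tilde L$ lies in $\Supp(D)$. This is exactly where the careful choice of $W$ (an isomorphism away from the flip loci, so that all exceptional divisors are centered over $o$) together with the strict form of the discrepancy increase for flips is essential; the bare effectivity $D\ge 0$ would yield only $K_X\cdot L\ge K_{X^{+}}\cdot L^{+}$.
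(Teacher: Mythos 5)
Your proof is correct and takes essentially the same route as the paper: compare the two pullbacks of the canonical class on a common resolution, so that $K_X\cdot L-K_{X^+}\cdot L^+=\sum_i(a_i^+-a_i)\,E_i\cdot\tilde L$, and conclude by the monotonicity of discrepancies under flips (\cite[2.13]{Shokurov:nV}, \cite[Proposition~5.1.11(3)]{KMM}). You are in fact more careful than the paper on the one delicate point: the paper's proof ends by noting only ``$a_i^+\ge a_i$'', leaving the strict inequality implicit, whereas you derive it from the strict increase of discrepancies over the flipping locus together with the observation that $\tilde L$ meets the fibre over $o$ inside $\Supp D$; the only (harmless) inaccuracy is your requirement that a \emph{smooth} common resolution be an isomorphism over $X\setminus\Exc(f)$, which is impossible if $X$ happens to be singular away from the flipping locus, but this is not actually needed---since $\psi$ is an isomorphism off the flipping locus, one has $a_i^+=a_i$ for every divisor not centered in it, so $\Supp\bigl(p^*K_X-q^*K_{X^+}\bigr)\subset g^{-1}(o)$ for \emph{any} common resolution (alternatively, run your argument on the normalization of the graph of $\psi$, since the negativity lemma requires no smoothness).
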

\begin{proof}
Let 
\[
\xymatrix@R=1em{
&\hat X\ar[dr]^q\ar[dl]_p&
\\
X\ar@{-->}[rr]&&X^+
}
\]
be a common resolution
and let $\hat L\subset \hat X$
be the proper transform of $L$. We can write 
\[
K_{\hat X}=p^* K_X+\sum a_i E_i = q^* K_{X^+}+\sum a_i^+ E_i,
\]
where the $E_i$ are prime divisors. Note that all these divisors are exceptional for both $p$ and $q$. Then
\[
K_{\hat X}\cdot \hat L=K_X\cdot L+\sum a_i E_i\cdot \hat L = K_{X^+}\cdot L^++\sum a_i^+ E_i\cdot \hat L.
\]
Therefore,
\[
K_X\cdot L= K_{X^+}\cdot L^++\sum (a_i^+-a_i) E_i\cdot \hat L.
\]
It remains to note that $a_i^+\ge a_i$ for all $i$ (see \cite[2.13]{Shokurov:nV} or \cite[Proposition~5.1.11(3)]{KMM}).
\end{proof}

\subsection{}
In the table below we list some properties of all the irreducible 
extremal curve flipping germs, which are not of type \typec{k1A} nor 
\typec{k2A}.
Thus we let $(X,C)$ be a flipping extremal curve germ with irreducible $C$ and let $(X,C)\dashrightarrow(X^+,C^+)$
be the corresponding flip. 
\begin{table}[H]
\setlength{\tabcolsep}{12pt}
\renewcommand{\arraystretch}{1.7}
\begin{tabularx}{0.96\textwidth}{|l|l|l|l|X|}
\hline
type & \cite{KM92} & $K_X\cdot C$ & $K_{X^+}\cdot C^+$ & $\Sing^{\mathrm{nG}}(X^+)$
\\ \hline
\multirow{2}{*} {\typec{cD/3}} & A.1.2.1, A1.2.2 & \multirow{2}{*} {$-1/3$} & $1/2$ & one index $2$ point 
\\ \hhline{~-~--}
& A.1.2.3 & & 1 & $\varnothing$
\\ \hline
\multirow{3}{*} {\typec{ IIA}} & A.2.2.1 & \multirow{3}{*} {$-1/4$} & $1/6$ & index $2$ point and index $3$ point 
\\ \hhline{~-~--}
~ & A.2.2.3, A.2.2.3 & & $1/2$ & one index 2 point 
\\ \hhline{~-~--}
~ & A.2.2.4, A.2.2.5 & & $1/2$ & one index 2 point 
\\ \hline
\multirow{2}{*} {\typec{IC}} & A.3.2.1 & \multirow{2}{*} {$-1/m$} & $1/2$ & one index $2$ point 
\\ \hhline{~-~--}
& A.3.2.2. & & $1$ &$\varnothing$
\\ \hline
\typec{kAD} & A.4.3.2 & $-1/2m$ & $1/2$ & one index $2$ point 
\\\hline
\end{tabularx}
\caption{Flips that are not of type \typec{k1A} nor 
\typec{k2A}}
\label{table2}
\end{table}

\begin{cremark}
The flips and their singularities
are described in the Appendix of \cite{KM92} (see the second column).
In all cases we have
$-K_X\cdot C=1/\operatorname{index}(X)$
(the third column). 
This follows from the formula $-K_X\cdot C=1-\sum w_{P_i}(0)$ \cite[(2.3.2)]{Mori:flip},
computations of the invariant $w_{P}(0)$ \cite[Corollary~2.10]{Mori:flip},
and the local description of singularities of $(X,C)$ \cite[Appendix]{KM92}.
Then the value $K_{C^+}\cdot C^+$ in the fourth column follows from Lemma~\ref{lemma:flips2-2}
since in all cases the germ $(X,C)$ is primitive.
\end{cremark}
\def\cprime{$'$}

\end{document}